\documentclass[11pt,twoside]{article}
\usepackage[textwidth=6in,textheight=8.2in,centering]{geometry}
\hyphenation{trig-test}
\usepackage{fancyhdr}
\usepackage{tikz}
\usepackage{pgfplots}
\usepackage{amsmath}
\usepackage{amssymb}
\usepackage{amsthm}
\usepackage{mathtools}
\usepackage{setspace}
\usepackage{mathrsfs}
\usepackage{graphicx}
\usepackage{subcaption}
\usepackage{accents}
\usepackage{hyperref}
\usepackage[english]{babel}
\usepackage{enumitem}
\usepackage{float}
\usepackage{comment}
\usepackage{hyperref}
\mathtoolsset{showonlyrefs}

\hypersetup{
    colorlinks=true,
    linkcolor=blue}
\usetikzlibrary{arrows,automata}

\DeclareMathOperator*{\esssup}{ess\,sup}

\DeclareMathOperator*{\essinf}{ess\,inf}
\DeclareMathOperator{\leb}{\mathrm{Leb}}

\newcommand{\dleb}{d\mathrm{Leb}}

\newtheorem{theorem}{Theorem}[section] 

\theoremstyle{definition}
\newtheorem{remark}[theorem]{Remark}
\newtheorem{definition}[theorem]{Definition}
\newtheorem{notation}[theorem]{Notation}
\newtheorem{example}[theorem]{Example}

\theoremstyle{plain}
\newtheorem{lemma}[theorem]{Lemma} 

\newtheorem{corollary}[theorem]{Corollary}

\newtheorem{step}{Step}
\newtheorem{stp}{Step}

\newcommand{\tnorm}{|\mkern-1.5mu|\mkern-1.5mu|}
\usepackage{amsmath, amssymb}

\newcommand {\Sec}[1] {Section~\ref{#1}}

\newcommand {\Step}[1] {Step~\ref{#1}}

\newcommand {\fig}[1] {Figure~\ref{#1}}

\newcommand {\thrm}[1] {Theorem~\ref{#1}} 
\newcommand {\cor}[1] {Corollary~\ref{#1}} 
 
\newcommand {\dfn}[1] {Definition~\ref{#1}} 
\newcommand {\rem}[1] {Remark~\ref{#1}} 
\newcommand {\lem}[1] {Lemma~\ref{#1}}

\newcommand{\Change}[1]{\textcolor{black}{#1}}
\newcommand{\jp}[1]{\textcolor{black}{#1}}
\newcommand{\jpt}[1]{\textcolor{black}{#1}}

\usepackage{dsfont}
\DeclareMathOperator*{\BV}{BV}
\newcommand{\norm}[1]{\left\lVert #1 \right\rVert}

\DeclareMathOperator*{\var}{var}
\newcommand\restr[2]{{
  \left.\kern-\nulldelimiterspace 
  #1 
  \vphantom{\big|} 
  \right|_{#2} 
  }}
\setlength\parindent{0pt}
\pgfplotsset{compat=1.18}
\setlength{\headheight}{16.0pt}
\pagestyle{fancy}
\fancyhf{}
\fancyhead[RO,RE]{\small\thepage}
\fancyhead[LO]{\small Cecilia Gonz\'alez-Tokman and Joshua Peters}
\fancyhead[LE]{\small Jumping for diffusion in random metastable systems}
\fancyfoot[L,R,C]{}
\emergencystretch 3em
\usepackage[multiple]{footmisc}

\begin{document}

\title{\textbf{JUMPING FOR DIFFUSION IN RANDOM METASTABLE SYSTEMS}}

\author{Cecilia Gonz\'alez-Tokman\thanks{School of Mathematics and Physics, University of Queensland, St Lucia QLD 4072, Australia. \\
\texttt{cecilia.gt@uq.edu.au}}, Joshua Peters\thanks{School of Mathematics and Physics, University of Queensland, St Lucia QLD 4072, Australia. \\
\texttt{joshua.peters@uq.edu.au}}}

\maketitle

\begin{abstract}\noindent
Random metastability occurs when an externally forced or noisy system possesses more than one state of apparent equilibrium. This work investigates fluctuations in a class of random dynamical systems, arising from randomly perturbing a piecewise smooth expanding interval map with more than one invariant subinterval. Upon perturbation, this invariance is destroyed, allowing trajectories to switch between subintervals, giving rise to metastable behaviour. We show that the distributions of jumps of a time-homogeneous Markov chain approximate the distributions of jumps for random metastable systems. Additionally, we demonstrate that this approximation extends to the diffusion coefficient for (random) observables of such systems. As an example, our results are applied to Horan's random paired tent maps.    

\end{abstract}

\newpage
\begingroup
\hypersetup{linkcolor=[rgb]{0.0,0.0,0.0}}
\tableofcontents
\endgroup
\newpage
\noindent

\section{Introduction}	
Metastability describes systems that exhibit multiple states of apparent equilibrium. Such systems arise in numerous examples of natural phenomena. In molecular dynamics, transitions between different conformations are rare events, enabling the macroscopic dynamical behaviour to be modelled as a flipping process between metastable states \cite{conformations2,conformations}. In the context of oceanic flows, metastable systems have been used to analyse slow mixing regions of the ocean, known as gyres, which have contributed to phenomena such as the \textit{Great Pacific Garbage Patch} \cite{DFH,FPE,FSM,FSN,FSS}. Random metastable systems emerge when these transition patterns or ocean currents are influenced by external forces, such as variations in chemical potentials or wind patterns, respectively.  
\\
\\
Through a dynamical systems approach, Keller and Liverani pioneered in \cite{KL_escape} the study of metastability. Such systems may emerge by adding a small perturbation to a map $T:I\to I$ on a state space $I$, possessing more than two
ergodic invariant measures supported on their respective invariant subintervals. The perturbations are made in such a way that a \textit{hole} emerges, allowing the initially invariant sets to communicate, making the system ergodic on the entire space. For piecewise expanding maps of the interval with metastable states, \cite{GTHW_metastable} provides an approximation of the invariant density in terms of the system's induced finite state Markov chain for small hole sizes. In \cite{SD}, Dolgopyat and Wright show that this Markov chain approximation extends to the diffusion coefficient (or variance) for the Central Limit Theorem (CLT), admitted by \cite{Liv_CLTDet}. We refer the reader to \cite{wael_intermittent,gibbs,open,hitting} where the authors investigate various other questions on metastability in related settings.   
\\
\\
In random dynamical systems we consider a probability space $(\Omega,\mathcal{F},\mathbb{P})$ and a family of maps $(T_\omega)_{\omega\in\Omega}$ acting on a state space $I$. The dynamics is driven by $\sigma:\Omega \to \Omega$ to form a cocycle where for $n\in\mathbb{N}$, iterates are given by $T_\omega^{(n)}:=T_{\sigma^{n-1}\omega}\circ T_{\sigma^{n-2}\omega}\circ \cdots \circ T_{\omega}$. Typically, minimal assumptions are made on the driving $\sigma:\Omega\to \Omega$, namely, invertible, ergodic, and measure-preserving, to retain the most generality for applications. In the random setting, metastability is characterised by the second Lyapunov exponent in the Perron-Frobenius operator's so-called Oseledets decomposition, discovered by Oseledets in \cite{Oseledets} and later developed by Froyland, Lloyd, and Quas in \cite{FLQ_coherent,FLQ_semi} for Perron-Frobenius operator cocycles. Since the second Lyapunov exponent is an asymptotic quantity, constructing random metastable systems proves to be a difficult task. In \cite{horan_lin} and \cite{Horan}, Horan provides an upper estimate on the second Lyapunov exponent for so-called random paired tent maps. This estimate is refined in \cite{GTQ_quarantine} where it is shown that the top two Lyapunov exponents are both simple and the only exceptional exponents outside a readily computed range. Recently, in \cite{gtp_met}, we generalise the results of \cite{GTHW_metastable} and \cite{BS_rand}, providing a quenched approximation of the functions spanning the leading and second Oseledets spaces of Perron-Frobenius operator cocycles of random metastable systems subject to small perturbations. Further, we relate such results to the system's averaged finite state Markov chain. Results in related settings have been investigated in \cite{BS_rand,Crimmins,GS_entropy,GTQ_stab}, we refer the reader to \cite{gtp_met} for further details.       
\\
\\
Limit theorems in the setting of random dynamical systems provide fundamental insights into the statistical behaviour of compositions of random transformations. When the driving $\sigma:\Omega\to \Omega$ is a Bernoulli shift, the sequence of maps $(T_\omega)_{\omega\in\Omega}$ becomes an i.i.d. process, and in such cases, the focus is typically on annealed limit laws. In this setting, one studies iterates of the annealed Perron-Frobenius operator, which captures averaged statistical properties across realisations of $\omega\in\Omega$ \cite{RDS_A,Annealed_O}. We refer the reader to \cite{MR_Limthrm,LIMTHRM_ANV,quenchedLT_DS} and the references therein for results in this direction. On the other hand, quenched (or fibrewise) limit theorems describe the statistical behaviour for individual realisations. In this setting, the quenched limit theorems in \cite{DFGTV_limthrm} apply to a large class of observables that may depend on $\omega\in\Omega$. The family of observables must satisfy a fibrewise centering condition. A discussion regarding why this assumption is necessary in the quenched setting may be found in \cite{MR_Limthrm}. Unlike annealed results, the quenched CLT accounts for fluctuations about a time-dependent mean. For instance, if the observable represents the number of flowers on a university campus, the quenched CLT describes fluctuations about a mean that varies with the seasons. The diffusion coefficient (or variance) given by the CLT characterises the spread of data relative to this time-dependent mean. Further details and results on quenched limit theorems can be found in \cite{MR_Limthrm,quenchedLT_DS} and related references.  

\subsection{Statement of main results}
In this paper, we consider a Perron-Frobenius operator cocycle of so-called random metastable systems driven by an ergodic, invertible and measure-preserving transformation $\sigma$ of the probability space $(\Omega,\mathcal{F},\mathbb{P})$. The initial system $T^0$ is deterministic and admits $m\geq 2$ ergodic absolutely continuous invariant measures ($\mu_1,\dots,\mu_m$) with associated invariant densities ($\phi_1,\dots,\phi_m$) supported on $m$ disjoint invariant subintervals ($I_1,\dots,I_m$). We consider small random perturbations of $T^0$, denoted $T_\omega^\varepsilon$, which introduce random holes into our system (defined as $H_{i,j,\omega}^\varepsilon:= I_i \cap (T_\omega^\varepsilon)^{-1}(I_j)$ for $i,j\in\{1,\cdots,m\}$ and $\omega\in\Omega$), allowing trajectories to randomly switch between the initially invariant subintervals (see \fig{fig:ptm}), giving rise to a unique ergodic absolutely continuous random invariant measure on $I_1\cup\cdots\cup I_m$, denoted $(\mu_\omega^\varepsilon)_{\omega\in\Omega}$. The perturbations are made so that for $i,j\in\{1,\cdots,m\}$ there exists $\beta^*>0$ such that $\mu_i(H_{i,j,\omega}^\varepsilon)=\varepsilon \beta_{i,j,\omega}+o_{\varepsilon\to 0}(\varepsilon)$ where $\beta_{i,j} \in  L^\infty(\mathbb{P})$ satisfies $\beta_{i,j,\omega}\geq \beta^*$ for all $\omega\in\Omega$. \jp{For all $\varepsilon\geq 0$ we assume that the mapping $\omega\mapsto T_{\omega}^\varepsilon$ has finite range, with $\mathcal{L}_\omega^\varepsilon$ (the Perron-Frobenius operator associated with $T_\omega^\varepsilon$) acting on $(\BV(I),\|\cdot\|_{\BV(I)})$ enjoying good spectral properties. For further details on the class of maps for which our results apply, we refer the reader to \Sec{sec:map+pert} where assumptions \hyperref[list:I1]{\textbf{(I1)}}-\hyperref[list:I6]{\textbf{(I6)}} and \hyperref[list:P1]{\textbf{(P1)}}-\hyperref[list:P7]{\textbf{(P7)}} are defined, and to \Sec{sec:ptm} for a class of examples.} For presentation purposes, we assume that under one iteration of $T_\omega^\varepsilon$, for some $j\in\{1,\cdots,m\}$, points in $I_j$ can only map to neighbouring sets $I_{j-1},I_{j+1}$ or remain in $I_j$.\footnote{Here we take $I_0=I_{m+1}=\emptyset$.} We note that this assumption can be relaxed without difficulty.  
\\ \\ 
Our approach follows similar arguments to \cite{SD} and utilises the recent sequential perturbation results of \cite{thermoformalism} applied to an open system derived from the sequence of random metastable systems. For $j\in\{1,\cdots,m\}$ consider $H_{j,\omega}^\varepsilon:= H_{j,j-1,\omega}^\varepsilon \cup H_{j,j+1,\omega}^\varepsilon$ as a hole. For $f\in\BV(I_j)$ we let 
\begin{equation}
    \mathcal{L}_{j,\omega}^\varepsilon (f)(x):= \mathcal{L}_\omega^\varepsilon(\mathds{1}_{I_j\setminus H_{j,\omega}^\varepsilon}\cdot f)(x)\label{eqn:open-operator}
\end{equation}
be the Perron-Frobenius operator acting on $(\BV(I_j),\|\cdot\|_{\BV(I_j)})$. For $n\in\mathbb{N}$, the evolution of $f\in\BV(I_j)$ under $(\mathcal{L}_{j,\omega}^\varepsilon)_{\omega\in\Omega}$ is given by $\mathcal{L}_{j,\omega}^{\varepsilon\,(n)}:=\mathcal{L}_{j,\sigma^{n-1}\omega}^\varepsilon\circ\cdots\circ\mathcal{L}_{j,\sigma\omega}^\varepsilon\circ\mathcal{L}_{j,\omega}^\varepsilon$. \jp{We assume that the open system satisfies \hyperref[list:O1]{\textbf{(O1)}}, ensuring $\mathcal{L}_{j,\omega}^\varepsilon$ satisfies a uniform Lasota-Yorke inequality. We refer the reader to \Sec{sec:open} for further details.}\\
\\
Let {$\leb_j(f):= {\leb}( f\cdot\mathds{1}_{I_j})$} denote the Lebesgue measure on $I_j$. Our first result, \thrm{thrm:openopprop}, describes the spectral properties of $\mathcal{L}_{j,\omega}^\varepsilon$ for small $\varepsilon>0$.
\begin{theorem}
Let $\{(\Omega,\mathcal{F},\mathbb{P},\sigma,\BV(I),\mathcal{L}^\varepsilon)\}_{\varepsilon\geq 0}$ be a sequence of random dynamical systems of metastable maps $T_\omega^\varepsilon:I\to I$ satisfying \hyperref[list:I1]{\textbf{(I1)}}-\hyperref[list:I6]{\textbf{(I6)}} and \hyperref[list:P1]{\textbf{(P1)}}-\hyperref[list:P7]{\textbf{(P7)}}. For each $j\in \{1,\cdots,m\}$, if $\mathcal{L}_{j,\omega}^\varepsilon:\BV(I_j)\to \BV(I_j)$ is given by \eqref{eqn:open-operator} and satisfies \hyperref[list:O1]{\textbf{(O1)}}, then for $\varepsilon\geq 0$ sufficiently small and $\mathbb{P}$-a.e. $\omega\in\Omega$ there is a functional $\nu_{j,\omega}^\varepsilon\in \BV^*(I_j)$ that can be identified with a real finite Borel measure on $I_j$, $\lambda_{j,\omega}^\varepsilon\in [0,1]$, and $\phi_{j,\omega}^\varepsilon\in \BV(I_j)$ such that for all $f\in\BV(I_j)$ 
        $$\mathcal{L}_{j,\omega}^\varepsilon (\phi_{j,\omega}^\varepsilon)=\lambda_{j,\omega}^\varepsilon\phi_{j,\sigma\omega}^\varepsilon \quad \mathrm{and} \quad \nu_{j,\sigma\omega}^\varepsilon(\mathcal{L}_{j,\omega}^\varepsilon (f))=\lambda_{j,\omega}^\varepsilon\nu_{j,\omega}^\varepsilon(f).$$
Further, 
\begin{itemize}
    \item[(a)] For $\mathbb{P}$-a.e. $\omega\in\Omega$, $$\lambda_{j,\omega}^\varepsilon=1-\varepsilon(\beta_{j,j-1,\omega}+\beta_{j,j+1,\omega}) +o_{\varepsilon\to 0}(\varepsilon);$$ 
    \item[(b)] $$\lim_{\varepsilon\to 0} \esssup_{\omega\in\Omega} \|\nu_{j,\omega}^\varepsilon - \leb_j \|_{\BV^*(I_j)}=0;$$
    \item[(c)]  $$\lim_{\varepsilon\to 0} \esssup_{\omega\in\Omega} \|\phi_{j,\omega}^\varepsilon - \phi_j\|_{L^1(\leb_j)} = 0;$$
    \item[(d)] For any $k\in\{1,\cdots,m\}$, $$\lim_{\varepsilon\to 0}\esssup_{\omega\in\Omega}\sup_{x\in H_{j,k,\omega}^\varepsilon}|\phi_{j,\omega}^\varepsilon(x) - \phi_j(x)|=0;$$
\end{itemize}
and,
\begin{itemize}
    \item[(e)] There exist constants $C>0$ and $\theta\in(0,1)$ such that for all $\varepsilon>0$ sufficiently small, $f\in\BV(I_j)$, $n\in\mathbb{N}$, and $\mathbb{P}$-a.e. $\omega\in\Omega$
    $$\|(\lambda_{j,\omega}^{\varepsilon\, (n)})^{-1}\mathcal{L}_{j,\omega}^{\varepsilon\, (n)}(f)-\nu_{j,\omega}^\varepsilon(f)\phi_{j,\sigma^n\omega}^\varepsilon\|_{\BV(I_j)}\leq C\theta^n\|f\|_{\BV(I_j)},$$
    where $\lambda_{j,\omega}^{\varepsilon\,(n)}:=\lambda_{j,\sigma^{n-1}\omega}^\varepsilon\cdots\lambda_{j,\sigma\omega}^\varepsilon\lambda_{j,\omega}^\varepsilon$.
\end{itemize}
\label{thrm:openopprop}
\end{theorem}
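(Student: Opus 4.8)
The plan is to realise $(\mathcal{L}_{j,\omega}^\varepsilon)_{\omega\in\Omega}$ as a sequential open system on $\BV(I_j)$ and invoke the sequential perturbation machinery of \cite{thermoformalism}. Fix $j$. The reference map is $\varepsilon=0$: here $I_j$ is invariant under $T^0$, the hole $H_{j,\omega}^0$ is empty, $\mathcal{L}_{j,\omega}^0$ coincides with the closed Perron--Frobenius operator of $T^0|_{I_j}$, it has a spectral gap on $(\BV(I_j),\|\cdot\|_{\BV(I_j)})$ with leading eigendata $\lambda_{j,\cdot}^0\equiv 1$, $\phi_{j,\cdot}^0\equiv\phi_j$, $\nu_{j,\cdot}^0=\leb_j$ (by assumptions \hyperref[list:I1]{\textbf{(I1)}}--\hyperref[list:I6]{\textbf{(I6)}}), and in particular a uniform Lasota--Yorke inequality. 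The first step is to show that $(\mathcal{L}_{j,\omega}^\varepsilon)$ is a small perturbation of $(\mathcal{L}_{j,\omega}^0)$ in the sense required by \cite{thermoformalism}, \emph{uniformly in $\omega$}: a uniform (in $\omega,\varepsilon$) Lasota--Yorke inequality, which follows from \hyperref[list:P1]{\textbf{(P1)}}--\hyperref[list:P7]{\textbf{(P7)}} plus \hyperref[list:O1]{\textbf{(O1)}} controlling the geometry of the holes (so that cutting by $\mathds{1}_{I_j\setminus H_{j,\omega}^\varepsilon}$ adds only a bounded amount of variation), together with the smallness estimate
\[
\esssup_{\omega\in\Omega}\,\bigl\| \mathcal{L}_{j,\omega}^\varepsilon - \mathcal{L}_{j,\omega}^0 \bigr\|_{\BV(I_j)\to L^1(\leb_j)} = O(\varepsilon),
\]
which comes from $\mathcal{L}_{j,\omega}^\varepsilon f - \mathcal{L}_{j,\omega}^0 f = \mathcal{L}_{\omega}^\varepsilon(\mathds{1}_{I\setminus I_j}\cdot(\text{stuff})) - \mathcal{L}_{\omega}^\varepsilon(\mathds{1}_{H_{j,\omega}^\varepsilon}f)$ and from the hole-measure hypothesis $\mu_i(H_{i,k,\omega}^\varepsilon)=\varepsilon\beta_{i,k,\omega}+o(\varepsilon)$. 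Once these uniform bounds are in place, the abstract sequential perturbation theorem produces, for $\mathbb{P}$-a.e.\ $\omega$ and all small $\varepsilon$, the triple $(\lambda_{j,\omega}^\varepsilon,\phi_{j,\omega}^\varepsilon,\nu_{j,\omega}^\varepsilon)$ with the stated equivariance relations, with $\lambda_{j,\omega}^\varepsilon\in[0,1]$, and the exponential memory-loss bound (e) with constants $C,\theta$ independent of $\omega$; this is exactly the conclusion (e), and (b), (c) are the $\|\cdot\|_{\BV^*}$- and $L^1$-continuity statements furnished by the same theorem.

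The second step is to extract the first-order expansion (a). Using $\nu_{j,\sigma\omega}^\varepsilon(\mathcal{L}_{j,\omega}^\varepsilon f)=\lambda_{j,\omega}^\varepsilon\nu_{j,\omega}^\varepsilon(f)$ with $f=\phi_{j,\omega}^\varepsilon$ and the normalisation $\nu_{j,\omega}^\varepsilon(\phi_{j,\omega}^\varepsilon)=1$, write
\[
\lambda_{j,\omega}^\varepsilon = \nu_{j,\sigma\omega}^\varepsilon\bigl(\mathcal{L}_{j,\omega}^\varepsilon(\phi_{j,\omega}^\varepsilon)\bigr)
= \nu_{j,\sigma\omega}^\varepsilon\bigl(\mathcal{L}_{\omega}^\varepsilon(\phi_{j,\omega}^\varepsilon)\bigr) - \nu_{j,\sigma\omega}^\varepsilon\bigl(\mathcal{L}_{\omega}^\varepsilon(\mathds{1}_{H_{j,\omega}^\varepsilon}\phi_{j,\omega}^\varepsilon)\bigr).
\]
The leading term is $1-o(\varepsilon)$ corrections away from $\int_{I_j}\mathcal{L}_\omega^\varepsilon(\phi_j)\,d\leb_j$, which equals $\int_{I}\mathds{1}_{I_j}\phi_j\,d(\text{pushforward})$; the key point is that the loss term is, to leading order, $\leb_j\bigl(\mathcal{L}_\omega^\varepsilon(\mathds{1}_{H_{j,\omega}^\varepsilon}\phi_j)\bigr)=\int_{H_{j,\omega}^\varepsilon}\phi_j\,d\leb = \mu_j(H_{j,\omega}^\varepsilon)=\varepsilon(\beta_{j,j-1,\omega}+\beta_{j,j+1,\omega})+o(\varepsilon)$, where I use (c), (d) to replace $\phi_{j,\omega}^\varepsilon$ by $\phi_j$ on the hole with an error that is $o(1)$ uniformly in $\omega$ and hence $o(\varepsilon)$ after multiplying by $\mu_j(H_{j,\omega}^\varepsilon)=O(\varepsilon)$, and (b) to replace $\nu_{j,\sigma\omega}^\varepsilon$ by $\leb_j$ with a similarly absorbed error. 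Statement (d) itself is obtained from the fixed-point equation $\phi_{j,\sigma\omega}^\varepsilon = (\lambda_{j,\omega}^\varepsilon)^{-1}\mathcal{L}_{j,\omega}^\varepsilon\phi_{j,\omega}^\varepsilon$ pushed backward: the $\BV$-convergence $\phi_{j,\omega}^\varepsilon\to\phi_j$ (which refines (c) and is part of the perturbation package) gives uniform convergence on $I_j$ away from a shrinking neighbourhood of the finitely many discontinuity points, and on $H_{j,k,\omega}^\varepsilon$ one uses that, by \hyperref[list:O1]{\textbf{(O1)}}, $H_{j,k,\omega}^\varepsilon$ lies at a point where $\phi_j$ is continuous (the holes open at images of branch points of the perturbation, not at the fixed discontinuities of $\phi_j$), so the $\BV$ bound on $\phi_{j,\omega}^\varepsilon-\phi_j$ controls the sup there.

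The main obstacle I anticipate is the \emph{uniformity in $\omega$} throughout: the abstract theorem of \cite{thermoformalism} requires the perturbation smallness and the Lasota--Yorke constants to be controlled by quantities that do not depend on $\omega$, so I must verify that $\beta_{i,k}\in L^\infty(\mathbb{P})$ with $\beta_{i,k,\omega}\ge\beta^*$, the $o_{\varepsilon\to0}(\varepsilon)$ in the hole-measure hypothesis, and the geometric control \hyperref[list:O1]{\textbf{(O1)}} on the holes are all genuinely uniform, and then track that every $o(\varepsilon)$ and $o(1)$ appearing above is an $\esssup_\omega$-level statement. A secondary technical point is checking that $\lambda_{j,\omega}^\varepsilon$ is real and in $[0,1]$: positivity of $\mathcal{L}_{j,\omega}^\varepsilon$ on the cone of nonnegative $\BV$ functions gives $\lambda_{j,\omega}^\varepsilon\ge 0$, and $\lambda_{j,\omega}^\varepsilon\le 1$ follows because $\int_{I_j}\mathcal{L}_{j,\omega}^\varepsilon f\,d\leb \le \int_{I_j} f\,d\leb$ for nonnegative $f$ (mass can only be lost through the hole), so the equivariant family of leading eigenvalues is bounded by $1$; the rest is bookkeeping within the framework already set up in \cite{gtp_met,thermoformalism}.
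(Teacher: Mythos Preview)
Your overall strategy---realise the open operator as a small perturbation of the closed operator on $\BV(I_j)$, invoke an abstract perturbation theorem for the spectral data and the exponential decay (e), then extract the first-order expansion (a)---matches the paper's. The paper in fact splits the abstract input in two: it uses Crimmins' random Keller--Liverani theorem \cite{Crimmins} for the existence of $(\lambda_{j,\omega}^\varepsilon,\phi_{j,\omega}^\varepsilon,\nu_{j,\omega}^\varepsilon)$, the convergence statements (b), (c), and the uniform spectral gap (e), and then separately invokes \cite[Theorem~2.1.2]{thermoformalism} for the first-order formula (a), which requires checking a list of hypotheses ($\mathcal{P}1$)--($\mathcal{P}9$), including a nontrivial computation that the ``return-to-hole'' quantities $q_{j,\omega}^{0\,(k)}$ vanish in the limit. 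Your direct eigenvalue computation for (a) is morally the same content, but you should be aware that making the $o(\varepsilon)$ errors rigorous and uniform in $\omega$ is exactly what the ($\mathcal{P}1$)--($\mathcal{P}9$) verification does; in particular the $q_k=0$ step (the paper's Lemma~\ref{lem:qk}) is where one rules out short returns to the hole contributing at order $\varepsilon$, and this uses \hyperref[list:I5]{\textbf{(I5)}} in an essential way.

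There is, however, a genuine gap in your argument for (d). You claim that ``the $\BV$-convergence $\phi_{j,\omega}^\varepsilon\to\phi_j$ \ldots\ is part of the perturbation package''. It is not: Keller--Liverani/Crimmins type results give convergence of the leading eigenvector only in the \emph{weak} norm, here $L^1(\leb_j)$, together with a uniform bound in the strong norm $\BV(I_j)$. Strong-norm convergence of eigenvectors is generally false under these hypotheses. Uniform $\BV$ bounds plus $L^1$ convergence do not by themselves yield $\sup_{x\in H_{j,k,\omega}^\varepsilon}|\phi_{j,\omega}^\varepsilon(x)-\phi_j(x)|\to 0$: a sequence bounded in $\BV$ and converging in $L^1$ can have jumps that drift across the hole. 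The paper obtains (d) by a separate argument (Lemma~\ref{lem:phiepsj_open}) that follows \cite[Lemma~3.14]{BS_rand}: one decomposes $\phi_{j,\omega}^\varepsilon$ into its regular (continuous) and saltus (pure-jump) parts and controls each on the shrinking hole, using \hyperref[list:I5]{\textbf{(I5)}} to guarantee that $\phi_j$ is continuous at the infinitesimal holes and \hyperref[list:P1]{\textbf{(P1)}} (finite range) to get uniformity in $\omega$. You will need to supply this or an equivalent argument; your sketch as written does not close.
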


Our second contribution, \thrm{thrm:conv_jump}, extends the results of \cite{SD} and \cite{gtp_met}, making the connection between random metastable systems and their associated averaged finite state Markov chain more precise. In particular, it shows that the distribution of jumps of an averaged Markov jump process approximates the distribution of jumps for random metastable systems.
\\
\\
{Consider a continuous time stochastic process $(X_t)_{t \geq 0}\subset \{1,\cdots, m\}^{[0,\infty)}$, whose evolution is governed by $(P(t))_{t\geq 0}:=(e^{t \bar{G}})_{t\geq 0}$, where $\bar{G}\in M_{m\times m}(\mathbb{R})$ has entries $(\bar{G})_{ij}=\int_\Omega \beta_{i,j,\omega}\, d\mathbb{P}(\omega)\jp{=:\bar{\beta}_{i,j}}$ for $i\neq j$, and $(\bar{G})_{ii}=-\sum_{j\neq i}(\bar{G})_{ij}$. Set $t_0^M:=0$, and for $i>0$, let $t_i^M = \inf\{t> t_{i-1}^M\ | \ X_t\neq X_{t_{i-1}^M}\}$. For $i\geq 1$, set $\mathcal{T}_i^M = t_i^M - t_{i-1}^M$. Let $z_i^M$ denote the state of the process following the $i^{\mathrm{th}}$ transition, that is, $z_i^M:=X_{t_i^M}$. For $j\in\{1,\cdots, m\}$, let $\mathbb{P}^j$ denote the probability measure constructed on $\{1,\cdots, m\}^{[0,\infty)}$ with the initial condition $z_0^M=j$ that is {evolved by $P(t)$}}. \\
\\
For the collection of random maps $(T_\omega^\varepsilon)_{\omega\in\Omega}$, let $t_{0,\omega}^\varepsilon(x):=0$ for all $\omega\in\Omega, x\in I$ and $\varepsilon>0$. Define $z:I\to \{1,\cdots,m\}$ such that $z(x)=j$ if $x\in I_j$. For $i> 0$ we let $t_{i,\omega}^\varepsilon(x):=\inf\{n>t_{i-1,\omega}^\varepsilon(x) \ | \ z(T_\omega^{\varepsilon\, (n)}(x))\neq z(T_\omega^{\varepsilon\, (t_{i-1,\omega}^\varepsilon(x))}(x))\}$. Finally, for $i\geq 1$ we define $\mathcal{T}_{i,\omega}^\varepsilon(x):=t_{i,\omega}^\varepsilon(x) - t_{i-1,\omega}^\varepsilon(x)$. We refer the reader to \Sec{sec:jump} where the above is described in further depth. 

\begin{theorem}
In the setting of \thrm{thrm:openopprop}, fix $j\in \{1,\cdots,m\}$ and $p\in \mathbb{N}$. For $k=1,\dots, p$, take a sequence of intervals $\Delta_k = [a_k,b_k]$ and numbers $r_k\in\{1,\cdots,m\}$, then for $\mathbb{P}$-a.e. $\omega\in\Omega$
\begin{align*}
&\lim_{\varepsilon\to 0}\mu_j\left(\left\{x\in I \ \big| \ \varepsilon\mathcal{T}_{k,\omega}^\varepsilon(x)\in \Delta_k \ \mathrm{and} \ z(T_{\omega}^{\varepsilon \, (t_{k,\omega}^\varepsilon(x))}(x))=r_k \ \mathrm{for} \ k=1,\dots, p\right\}  \right) \\
&\qquad  = \mathbb{P}^j(\mathcal{T}_k^M\in \Delta_k\ \mathrm{and} \ z_k^M=r_k \ \mathrm{for}\, k=1,\dots,p),
\end{align*}
where $\mu_j$ is as in \hyperref[list:I4]{\textbf{(I4)}}. 
\label{thrm:conv_jump}
\end{theorem}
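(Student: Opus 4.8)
The plan is to realise the left-hand side as the integral of a composition of the open transfer operators $\mathcal L_{\cdot,\omega}^\varepsilon$ against indicator functions of the holes, insert the spectral description of \thrm{thrm:openopprop}, and then run two averaging mechanisms in tandem: the deterministic rescaling that turns $n\approx s/\varepsilon$ iterates into a continuous sojourn time $s$ (producing Riemann sums), and Birkhoff's ergodic theorem for $\sigma$, which replaces the fibrewise exit rates $\beta_{i,r,\sigma^k\omega}$ by their $\mathbb P$-averages, i.e.\ by the entries of $\bar G$. Writing $r_0=j$ and $N_k=n_1+\dots+n_k$, the event decomposes according to the values $t_{k,\omega}^\varepsilon(x)=N_k$ of the jump times, and its $\mu_j$-measure equals
\[
\sum_{\substack{n_1,\dots,n_p\\ \varepsilon n_k\in\Delta_k}}\ \int_{H_{r_{p-1},r_p,\sigma^{N_p-1}\omega}^\varepsilon}\mathcal L_{r_{p-1},\sigma^{N_{p-1}}\omega}^{\varepsilon\,(n_p-1)}\Bigl(\cdots\,\mathcal L_{\sigma^{N_1-1}\omega}^\varepsilon\bigl(\mathds 1_{H_{j,r_1,\sigma^{N_1-1}\omega}^\varepsilon}\,\mathcal L_{j,\omega}^{\varepsilon\,(n_1-1)}(\phi_j)\bigr)\cdots\Bigr)\,d\leb .
\]

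The basic building block is the analysis of a single sojourn. Fix $i$, a $\BV(I_i)$-density $g$, and $r\in\{i-1,i+1\}$; then $\int_{H_{i,r,\sigma^{n-1}\omega}^\varepsilon}\mathcal L_{i,\omega}^{\varepsilon\,(n-1)}(g)\,d\leb$ is the mass that remains in $I_i$ for $n-1$ steps and then lands in $I_r$. By \thrm{thrm:openopprop}(e) it equals $\lambda_{i,\omega}^{\varepsilon\,(n-1)}\,\nu_{i,\omega}^\varepsilon(g)\int_{H_{i,r,\sigma^{n-1}\omega}^\varepsilon}\phi_{i,\sigma^{n-1}\omega}^\varepsilon\,d\leb$ up to an error $O(\varepsilon\,\theta^{\,n-1}\norm{g}_{\BV(I_i)})$; by \thrm{thrm:openopprop}(d) and $\mu_i(H_{i,r,\omega}^\varepsilon)=\varepsilon\beta_{i,r,\omega}+o(\varepsilon)$ the last integral is $\varepsilon\beta_{i,r,\sigma^{n-1}\omega}+o(\varepsilon)$ uniformly in $\omega$, and by \thrm{thrm:openopprop}(a), $\log\lambda_{i,\omega}^{\varepsilon\,(n-1)}=-\varepsilon\sum_{\ell=0}^{n-2}(\beta_{i,i-1,\sigma^\ell\omega}+\beta_{i,i+1,\sigma^\ell\omega})+o(n\varepsilon)$. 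Summing over $n$ with $\varepsilon n\in\Delta=[a,b]$ — and, when $a=0$, first discarding the $O(K\varepsilon)$ mass coming from $n\le K$ and letting $K\to\infty$ afterwards — reduces everything to the following ergodic averaging lemma, which I would isolate: for $\mathbb P$-a.e.\ $\omega$, every $0\le a<b$, every $f\in L^1(\mathbb P)$ and every $h\in C([a,b])$,
\[
\varepsilon\sum_{n:\,\varepsilon n\in[a,b]}h(\varepsilon n)\,f(\sigma^n\omega)\ \xrightarrow[\varepsilon\to0]{}\ \Bigl(\int_a^b h\Bigr)\int_\Omega f\,d\mathbb P .
\]
This follows by chopping $[a,b]$ into short intervals and using that along any window $[M,M{+}N)$ with $M,N\to\infty$ and $M/N$ bounded one has $\tfrac1N\sum_{n=M}^{M+N-1}f(\sigma^n\omega)\to\int f\,d\mathbb P$ (write $\sum_M^{M+N-1}=\sum_0^{M+N-1}-\sum_0^{M-1}$ and apply the pointwise ergodic theorem). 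Consequently $\sum_{n:\,\varepsilon n\in\Delta}\int_{H_{i,r,\sigma^{n-1}\omega}^\varepsilon}\mathcal L_{i,\omega}^{\varepsilon\,(n-1)}(g)\,d\leb\to\leb_i(g)\int_{\Delta}(\bar G)_{ir}\,e^{(\bar G)_{ii}s}\,ds$ for $\mathbb P$-a.e.\ $\omega$, which for $g=\phi_i$ is exactly $\mathbb P^i(\mathcal T_1^M\in\Delta,\ z_1^M=r)$, settling the case $p=1$.

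For general $p$ the only additional issue is controlling the intermediate densities through the displayed iterated integral. The density passed from block $k$ to block $k{+}1$ has $\BV$-norm of order $1$ but $L^1$-mass of order $\varepsilon^{k}$, so $\norm{\nu^\varepsilon-\leb_i}_{\BV^*(I_i)}=o(1)$ from \thrm{thrm:openopprop}(b) is, on its own, too weak to give the $o(\varepsilon)$-level control of the scalars $\nu^\varepsilon(\cdot)$ in the blockwise reduction. The remedy is a \emph{reset}: before running block $k{+}1$ one inserts $\ell=\ell(\varepsilon)=O(\log(1/\varepsilon))$ extra iterates of the open operator, whereupon the uniform Lasota--Yorke inequality drives the variation down to the $L^1$-scale, so the reset density has $\BV$-norm $O(\varepsilon^{k})$ and then $\nu^\varepsilon(\cdot)=\leb_i(\cdot)+o(\varepsilon^{k+1})$; moreover the reset costs only $o(1/\varepsilon)$ iterates (harmless for the rescaled sojourn times, as the limiting law has no atoms at endpoints), and since the open operator preserves positivity it leaves $\leb_i(\cdot)=\norm{\cdot}_{L^1}$ changed only by a factor $1+o(1)$. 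Inserting a reset after each hole-crossing and peeling the blocks off from the outside in, block $k$ contributes a scalar $\leb_{r_{k-1}}(\text{incoming density})(1+o(1))$, a survival factor $\lambda_{r_{k-1},\cdot}^{\varepsilon\,(n_k-1)}$ and a crossing factor $\varepsilon\beta_{r_{k-1},r_k,\sigma^{N_k-1}\omega}+o(\varepsilon)$; the $o(\varepsilon)$ corrections survive the $p$-fold sum since $\sum_{n_1,\dots,n_p}\prod_k\lambda_{r_{k-1},\cdot}^{\varepsilon\,(n_k-1)}=\prod_kO(1/\varepsilon)$ while each correction carries an extra factor $o(\varepsilon)$ (or a summable $\theta^{n_k}$). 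Applying the ergodic lemma in each of the $p$ $\varepsilon$-scale windows $[N_{k-1},N_k)$ and refining a grid in $(s_1,\dots,s_p)$, the whole sum tends, for $\mathbb P$-a.e.\ $\omega$, to $\prod_{k=1}^p\int_{\Delta_k}(\bar G)_{r_{k-1}r_k}\,e^{(\bar G)_{r_{k-1}r_{k-1}}s_k}\,ds_k$, which by the Markov property of $(X_t)_{t\ge0}$ is $\mathbb P^j(\mathcal T_k^M\in\Delta_k,\ z_k^M=r_k,\ k=1,\dots,p)$.

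The main obstacle is the interaction between the $\varepsilon\to0$ scaling limit and the ergodic averaging over the base, made worse by the $p$-fold composition. Treating the blocks one at a time would force a restart at the shifted, $x$- and $\varepsilon$-dependent base point $\sigma^{N_k}\omega$, where the convergence coming from the $(p{-}1)$-block statement is only $\mathbb P$-a.e., not uniform; this is why the estimate should be carried out in a single pass anchored at the original $\omega$, at the price of a genuinely multi-parameter version of the ergodic lemma (and a separate $\delta\to0$ argument to handle the $a_k=0$ endpoints). The second delicate point is the bookkeeping of $\BV$- versus $L^1$-scales for the intermediate densities: keeping them on the correct scale is precisely what makes the reset essential rather than cosmetic, and it is where the uniformity in $\omega$ of the estimates in \thrm{thrm:openopprop}(b)--(e) is used most heavily.
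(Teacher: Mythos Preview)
Your proposal follows the same skeleton as the paper's proof --- induction on $p$, spectral decomposition of the open operator via \thrm{thrm:openopprop}(e), and Birkhoff-type averaging --- and your treatment of the base case $p=1$ is essentially identical to the paper's Steps~1--3. The two genuinely different choices you make for $p>1$ are (i) a reset of length $\ell=O(\log(1/\varepsilon))$ rather than the paper's $\xi/\varepsilon$ with $\xi\to 0$ afterwards, and (ii) tracking individual $(n_1,\dots,n_p)$-terms with $\BV$-norm $O(\varepsilon^k)$ rather than the summed density $\gamma_{\xi,j,r_p,\omega}^\varepsilon$ of \eqref{eqn:gamma}, whose $\BV$-norm the paper shows is $O(1)$ by a separate induction (Step~4). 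Both are reasonable, and the shorter reset looks attractive because it promises to bypass \lem{lem:no-jump} and the Growth-Lemma machinery (\lem{lem:growth}--\lem{lem:lebj_ess}) feeding into it.

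There is, however, a real gap in your inductive step. The density handed to block $k{+}1$ --- the image of $\mathds 1_H\cdot(\text{block-}k\text{ output})$ under one closed step --- is supported on a set of measure $O(\varepsilon)$ and has $\BV$-to-$L^1$ ratio $O(1/\varepsilon)$. Your reset certainly drives the $\BV$-norm down via Lasota--Yorke, but your assertion that it ``leaves $\leb_i(\cdot)=\|\cdot\|_{L^1}$ changed only by a factor $1+o(1)$'' is \emph{not} a consequence of positivity: positivity only says mass cannot increase, and the crude escape bound $\int_H g\le \|g\|_{\BV}\,O(\varepsilon)$ gives a loss of order $\|g\|_{L^1}$ over the first few reset steps precisely because $\|g\|_{\BV}/\|g\|_{L^1}=O(1/\varepsilon)$. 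To show the loss is actually small you must know that the image of a hole does not revisit a hole for the first $M_0$ steps --- the consequence of \hyperref[list:I5]{\textbf{(I5)}} the paper invokes inside the proof of \lem{lem:no-jump} --- and even then Lasota--Yorke alone yields only a relative loss $O(r^{M_0})+o(1)$, forcing a further $M_0\to\infty$ limit. So you have not avoided the ``no quick return'' input; you have only suppressed it. The paper confronts the same issue head-on by working with the summed density, identifying $\leb_{r_p}(\gamma)$ with $\mu_j(\{\mathcal T^\varepsilon_{p+1}>\xi/\varepsilon\}\cap\Gamma_{p,\omega}^\varepsilon)$ and invoking \lem{lem:no-jump} to pass to $\mu_j(\Gamma_{p,\omega}^\varepsilon)$. (A smaller point: from $\|g\|_{\BV}=O(\varepsilon^k)$ and $\|\nu^\varepsilon-\leb_i\|_{\BV^*}=o(1)$ you only get $\nu^\varepsilon(g)=\leb_i(g)+o(\varepsilon^k)$, not $o(\varepsilon^{k+1})$; the argument still closes once multiplied by the $O(\varepsilon)$ hole factor, but the exponent should be corrected.)
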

In addition to \thrm{thrm:conv_jump}, we show that in our setting, random metastable systems satisfy the quenched CLT of \cite{DFGTV_limthrm} for a large class of fibrewise centered random observables. Further, we extend on the results of \cite{SD} by providing an approximation of the diffusion coefficient (or variance) in terms of an averaged Markov jump process when $\varepsilon>0$ is small. 
\\
\\
Suppose that $\phi_\omega^0:= \sum_{j=1}^m p_j\phi_j$ is the limiting invariant density, $\lim_{\varepsilon\to 0}d\mu_\omega^\varepsilon/\dleb$, admitted by \cite[Theorem 7.2]{gtp_met}. For a measurable observable $\psi:\Omega\times I \to \mathbb{R}$ and each $\varepsilon>0$ let
\begin{equation*}
\tilde{\psi}_\omega^\varepsilon:=\psi_\omega - \mu_{\omega}^\varepsilon(\psi_\omega).    
\end{equation*}
For $j\in\{1,\cdots, m\}$, set $\Psi_\omega(j):= \int_{I_j}\psi_\omega(x) \phi_j(x)\, \dleb(x)$. 
\begin{theorem}
    Fix $\varepsilon>0$. In the setting of \thrm{thrm:openopprop}, assume that the observable $\psi:\Omega\times I \to \mathbb{R}$ satisfies \jp{regularity and centering conditions} \eqref{eqn:obs-reg} and \eqref{eqn:obs-centering2}. Assume that the variance defined in \eqref{eqn:var} satisfies $(\Sigma^\varepsilon(\tilde{\psi}^\varepsilon))^2>0$. Then, for every bounded and continuous function $f:\mathbb{R}\to \mathbb{R}$ and $\mathbb{P}$-a.e. $\omega\in\Omega$, we have 
    \begin{equation}
        \lim_{n\to \infty} \int_{\mathbb{R}}f\left( \frac{1}{\sqrt{n}}\sum_{k=0}^{n-1}(\tilde{\psi}^\varepsilon_{\sigma^{k}\omega}\circ T_{\omega}^{\varepsilon\, (k)})\right)(x)\, d\mu_{\omega}^\varepsilon(x) = \int_\mathbb{R}f(x)\, d\mathcal{N}(0,(\Sigma^\varepsilon(\tilde{\psi}^\varepsilon))^2)(x),\label{eqn:clt-limit}
    \end{equation}
    where $\mathcal{N}(0,(\Sigma^\varepsilon(\tilde{\psi}^\varepsilon))^2)$ is the normal distribution with mean $0$ and variance $(\Sigma^\varepsilon(\tilde{\psi}^\varepsilon))^2$.
    Furthermore, if $p:=\begin{pmatrix} p_1 &\cdots & p_m \end{pmatrix}^T$ and $\Psi_\omega:=\begin{pmatrix} \Psi_\omega(1) &\cdots & \Psi_\omega(m) \end{pmatrix}^T$, then
    \begin{equation}
        \lim_{\varepsilon\to 0} \varepsilon(\Sigma^\varepsilon(\tilde{\psi}^\varepsilon))^2 = 2\left\langle p\odot\int_\Omega \Psi_\omega\, d\mathbb{P}(\omega),\int_0^\infty e^{t\bar{G}}\left(\int_\Omega \Psi_\omega\, d\mathbb{P}(\omega)\right) \, dt
 \right\rangle,\label{eqn:var-limit}
    \end{equation}
where $\odot$ denotes the Hadamard product, $\bar{G}$ is the generator for the averaged Markov jump process defined in \Sec{sec:avg_markov}, and $\langle\cdot,\cdot \rangle$ denotes the standard dot product.
 \label{thrm:variance-est}
\end{theorem}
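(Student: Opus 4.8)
The plan is to prove Theorem~\ref{thrm:variance-est} in two parts, mirroring the two displayed conclusions \eqref{eqn:clt-limit} and \eqref{eqn:var-limit}. For the quenched CLT \eqref{eqn:clt-limit}, I would verify the hypotheses of the quenched limit theorem of \cite{DFGTV_limthrm} for the cocycle $(\mathcal{L}_\omega^\varepsilon)_{\omega\in\Omega}$ acting on $\BV(I)$ with the random invariant density $\phi_\omega^\varepsilon = d\mu_\omega^\varepsilon/\dleb$. The key ingredients are: (i) a uniform Lasota--Yorke inequality and quasi-compactness with a simple top Oseledets space, which follows from the structure of $T_\omega^\varepsilon$ and the perturbation assumptions \hyperref[list:P1]{\textbf{(P1)}}--\hyperref[list:P7]{\textbf{(P7)}} (this is where \thrm{thrm:openopprop}, applied on each $I_j$ and then pieced together across the whole interval, provides the required spectral gap); (ii) the fibrewise centering condition, which holds by construction since $\tilde{\psi}_\omega^\varepsilon = \psi_\omega - \mu_\omega^\varepsilon(\psi_\omega)$ gives $\mu_\omega^\varepsilon(\tilde{\psi}_\omega^\varepsilon) = 0$; and (iii) the regularity/integrability bounds \eqref{eqn:obs-reg} and \eqref{eqn:obs-centering2} on $\psi$, which place $\tilde{\psi}_\omega^\varepsilon$ in the admissible class of observables and guarantee the variance $(\Sigma^\varepsilon(\tilde{\psi}^\varepsilon))^2$ given by \eqref{eqn:var} is well-defined; positivity is assumed. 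Given these, \eqref{eqn:clt-limit} is a direct citation.

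The substantive part is the asymptotic \eqref{eqn:var-limit}. Here I would follow the strategy of \cite{SD}, adapted to the random setting via the open-system machinery of \thrm{thrm:openopprop}. The variance \eqref{eqn:var} is a Green--Kubo-type series $\sum_{n\in\mathbb{Z}} \int_\Omega \mu_\omega^\varepsilon(\tilde{\psi}_\omega^\varepsilon \cdot (\tilde{\psi}_{\sigma^n\omega}^\varepsilon \circ T_\omega^{\varepsilon\,(n)}))\, d\mathbb{P}(\omega)$ (with the $n=0$ diagonal term plus twice the $n\geq 1$ sum after rearrangement). As $\varepsilon\to 0$ the correlations inside each metastable well decay exponentially fast on an $O(1)$ timescale, so their contribution is $O(1)$; but the number of relevant lags grows like $1/\varepsilon$ because the invariant density equilibrates across wells only on the slow timescale $t = \varepsilon n$. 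Multiplying by $\varepsilon$ and passing to the limit turns the sum into a Riemann integral over the slow time $t\in[0,\infty)$. The bookkeeping is: decompose $\tilde{\psi}_\omega^\varepsilon$ into its fast (mean-zero-within-each-well) part and its slow part, the latter being essentially the vector $\Psi_\omega \in \mathbb{R}^m$ of well-averages; show the fast part contributes only $O(1)$ to the unscaled variance and hence vanishes after multiplying by $\varepsilon$; and show the slow part's contribution is governed by the transfer operator restricted to the $m$-dimensional "slow" subspace, which by \thrm{thrm:openopprop}(a)--(c) and the metastable approximation of \cite{gtp_met} converges (after rescaling time by $\varepsilon$) to the semigroup $e^{t\bar G}$ generated by the averaged rate matrix $\bar G$. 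Summing the resulting geometric-type series across well-transitions and integrating in $t$ produces $2\langle p\odot \int_\Omega \Psi_\omega\,d\mathbb{P},\ \int_0^\infty e^{t\bar G}\,dt \int_\Omega \Psi_\omega\,d\mathbb{P}\rangle$, with the Hadamard weight $p\odot(\cdot)$ arising because the stationary weight of well $j$ under $\mu_\omega^0$ is $p_j$ and the inner sum over the second observable factor is an unweighted average over $\Omega$ (so that $\int_\Omega \Psi_\omega\, d\mathbb{P}$ appears on both slots, one weighted by $p$).

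The main obstacle I anticipate is controlling the interchange of the $\varepsilon\to 0$ limit with the infinite sum over lags $n$ uniformly enough to justify the Riemann-sum convergence — i.e., a dominated-convergence argument for the rescaled Green--Kubo series. Concretely, one needs $\varepsilon$-uniform exponential bounds of the form $|\int_\Omega \mu_\omega^\varepsilon(\tilde\psi_\omega^\varepsilon \cdot (\tilde\psi_{\sigma^n\omega}^\varepsilon\circ T_\omega^{\varepsilon\,(n)}))\,d\mathbb{P}| \leq C(e^{-cn} + \varepsilon)$ for the fast component and $\leq C\theta_\varepsilon^{\,\lfloor \varepsilon n\rfloor}$-type decay on the slow timescale for the slow component, with constants independent of $\varepsilon$; the first follows from \thrm{thrm:openopprop}(e) applied well-by-well together with the quenched decay of correlations for the base (closed) system, and the second requires showing the slow matrix cocycle's top Lyapunov exponent is $\lesssim -\varepsilon\beta^* $, i.e. that the spectral gap of the $m$-dimensional reduction scales linearly in $\varepsilon$ with the rate predicted by $\bar G$ — precisely the content one extracts by combining \thrm{thrm:openopprop}(a) with the averaged-Markov-chain identification from \cite[Theorem 7.2]{gtp_met}. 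Once these two uniform bounds are in hand, splitting the sum at $n\sim \varepsilon^{-1/2}$ (fast regime) and $n \gtrsim \varepsilon^{-1/2}$ (slow regime), applying dominated convergence on each, and recognising the slow-regime limit as $\int_0^\infty e^{t\bar G}\,dt$ completes the argument; the remaining manipulations — reindexing the double sum over successive well-visits, and matching the Hadamard-weighted bilinear form — are routine linear algebra.
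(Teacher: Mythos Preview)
Your plan for \eqref{eqn:var-limit} has the right architecture --- rescale the Green--Kubo series, separate a fast $O(1)$ contribution from a slow contribution that, on the timescale $t=\varepsilon n$, is governed by $e^{t\bar G}$. The paper's execution differs in two ways worth noting. First, it does not split the observable into fast and slow parts; rather, it uses the mixing of $\mathcal{L}^0$ on each $I_j$ to replace $\mathcal{L}_\omega^{\varepsilon\,(n_0)}(\tilde\psi_\omega^\varepsilon\phi_\omega^\varepsilon)$ by $\sum_j\big(\int_{I_j}\tilde\psi_\omega^\varepsilon\phi_\omega^\varepsilon\,\dleb\big)\phi_j$ after a \emph{fixed} number $n_0$ of steps (up to errors $O_{n_0\to\infty}(\kappa^{n_0})+o_{\varepsilon\to 0}(1)$), and then evaluates the key quantity $\int_{I_k}\mathcal{L}^{\varepsilon\,(n-2n_0)}\phi_j\,\dleb=\mu_j(T^{\varepsilon\,(n-2n_0)}(x)\in I_k)$ by a direct appeal to \thrm{thrm:conv_jump}. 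That theorem --- not \thrm{thrm:openopprop} or \cite{gtp_met} by themselves --- is the bridge from the map to the Markov semigroup: it gives $\mu_j(T^{\varepsilon\,(n)}\in I_k)\to (e^{\varepsilon n\bar G})_{jk}$, and the paper controls the $\omega$-dependent error by dominated convergence. Your proposal invokes this convergence without naming its source; the work needed to justify ``the slow part converges to $e^{t\bar G}$'' is precisely \thrm{thrm:conv_jump}. Second, the truncation scheme is different: the paper cuts the sum at $n=t/\varepsilon$ for fixed $t$, controls the tail via a contraction lemma $\|\mathcal{L}_\omega^{\varepsilon\,(nt/\varepsilon)}f\|_{\BV_0(I)}\leq\rho^n\|f\|_{\BV_0(I)}$ (itself proved using \thrm{thrm:conv_jump} to show mass equilibrates across wells), and then partitions the retained sum into blocks of length $\delta/\varepsilon$ to form a Riemann sum in $u\delta\in[0,t]$, with a Birkhoff average handling the $\omega$-dependence of $\Psi_{\sigma^n\omega}$ --- rather than a single split at $n\sim\varepsilon^{-1/2}$. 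One minor correction: the quenched CLT \eqref{eqn:clt-limit} does not use \thrm{thrm:openopprop}; the hypotheses of \cite[Theorem~B]{DFGTV_limthrm} are checked directly from \hyperref[list:P1]{\textbf{(P1)}}--\hyperref[list:P7]{\textbf{(P7)}} (in particular the uniform-covering part of \hyperref[list:P4]{\textbf{(P4)}}) for the closed system.
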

\jpt{
\begin{remark}
Note that the integral over $t$ in \eqref{eqn:var-limit} converges due to the centering condition \eqref{eqn:obs-centering2} which ensures that $e^{t\bar{G}}\left(\int_\Omega \Psi_\omega\, d\mathbb{P}(\omega)\right)$ decays exponentially fast to zero as $t\to\infty$. \label{rmk:defined_int} \end{remark}
}
The paper is structured in the following manner. In \Sec{sec:prelim} we introduce relevant definitions and results related to random dynamical systems and Perron-Frobenius operators. The initial system and the perturbations made to it are introduced in \Sec{sec:map+pert}. Here we discuss how this gives rise to so-called random metastable systems. In \Sec{sec:open} we study the spectral properties of the Perron-Frobenius operators associated with a sequence of random open dynamical systems derived from the sequence of random metastable systems and prove \thrm{thrm:openopprop}. This allows us to prove \thrm{thrm:conv_jump} in \Sec{sec:jump}, providing an approximation for the distribution of jumps for random metastable systems through the distribution of jumps of an averaged Markov jump process when $\varepsilon>0$ is small. In \Sec{sec:var}, we utilise \thrm{thrm:conv_jump} to establish \thrm{thrm:variance-est}, connecting the diffusion coefficient for random metastable systems with an averaged Markov jump process. Finally, in \Sec{sec:ptm}, we apply our results to Horan's random paired tent maps.

\section{Preliminaries}
In this section, we collate definitions and results relevant to this paper. Primarily, we introduce transfer operator techniques for random dynamical systems.   
\begin{definition}
\sloppy A \textit{semi-invertible random dynamical system} is a tuple $(\Omega,\mathcal{F},\mathbb{P},\sigma,X,\mathcal{L})$, where the base $\sigma:\Omega\to \Omega$ is an \textit{invertible},\footnote{$\sigma^{-1}$ is measurable and exists for $\mathbb{P}$-a.e. $\omega\in \Omega$.} measure-preserving transformation of the probability space $(\Omega,\mathcal{F},\mathbb{P})$, $(X,\| \cdot \|)$ is a Banach space, and $\mathcal{L}:\Omega\to L(X)$ is a family of bounded linear operators of $X$, called the generator.\footnote{Here $L(X)$ denotes the space of bounded linear operators preserving the Banach space $X$.} 
\label{def:seminvertrds}
\end{definition}
\begin{remark}
    For convenience, whenever we refer to a random dynamical system, we assume it is semi-invertible. That is, the base map $\sigma:\Omega\to \Omega$ is invertible, but its generators need not be. 
\end{remark}
\noindent
In general, $X$ can be any given Banach space. In this paper, we will be interested in $X$ being the Banach space of functions with \textit{bounded variation}.
\begin{definition}
Let $(S,\mathcal{D},\mu)$ be a measure space with $S=[a,b]$. The space $\BV_\mu(S)$ is called the space of \textit{bounded variation} on $S$ where
$$\|f\|_{\BV_\mu(S)} = \inf_{\tilde{f} = f\, \mu-\mathrm{a.e.}}\var(\tilde{f})+\|\tilde{f}\|_{L^1(\mu)}$$
and $\var(f)$ is the total variation of $f$ over $S$;
    $$\var(f) = \sup\left\{ \sum_{i=1}^n |f(x_i)-f(x_{i-1})| \ | \ n\geq 1, \ a\leq x_0 <x_1<\cdots < x_n\leq b \right\}.$$
    \label{dfn:BV}
\end{definition}\noindent
Elements of $\BV_\mu(S)$, denoted $[f]_\mu$, are equivalence classes of functions with bounded variation on $S$. In this paper, we consider functions $f\in \BV(S)$ with norm $\|f\|_{\BV(S)}=\var(f)+\|f\|_{L^1(\mu)}$ and emphasise that $f$ is identified through a representative of minimal variation from the equivalence class $[f]_\mu$. 
\\
\\
\noindent
We associate with a \textit{non-singular transformation} $T$ a unique \textit{Perron-Frobenius operator}. Since it describes the evolution of ensembles of points, or densities, such operators serve as a powerful tool when studying the statistical behaviour of trajectories of $T$. 
\begin{definition}
    A measurable function $T:I\to I$ on a measure space $(I,\mathcal{D},\mu)$ is a \textit{non-singular transformation} if $\mu(T^{-1}(D))=0$ for all $D\in \mathcal{D}$ such that $\mu(D)=0$.
    \label{dfn:non-sing}
\end{definition}

\begin{definition}
\label{def:pfoperator}
     Let $(I,\mathcal{D},\mu)$ be a measure space and $T:I\to I$ be a non-singular transformation. \jp{The unique operator $\mathcal{L}_T:L^1(\mu)\to L^1(\mu)$ satisfying
    $$\int_D\mathcal{L}_T(f)(x)\, d\mu(x) = \int_{T^{-1}(D)}f(x)\, d\mu(x)$$
    for all $f\in L^1(\mu)$ and $D\in\mathcal{D}$ is called the \textit{Perron-Frobenius operator} (associated with $T$).}
\end{definition}\noindent
In certain cases, $\mathcal{L}_T$ may be restricted (or extended) to a bounded linear operator on another Banach space (e.g. $X=\BV(I)$), in which case the operators are still referred to as Perron-Frobenius operators. Combining \dfn{def:seminvertrds} and \dfn{def:pfoperator}, one can construct a random dynamical system from a family of Perron-Frobenius operators $(\mathcal{L}_{T_\omega})_{\omega\in\Omega}$ associated with the set of non-singular transformations $(T_\omega)_{\omega\in\Omega}.$ This forms a Perron-Frobenius operator cocycle.
\begin{remark}
\sloppy    For notational purposes, we denote by $(\mathcal{L}_\omega)_{\omega\in\Omega}$ the family of Perron-Frobenius operators associated with the family of non-singular transformations $(T_\omega)_{\omega\in\Omega}$.
\end{remark}
\begin{example}[Perron-Frobenius operator cocycle]
Consider a random dynamical system $(\Omega,\mathcal{F},\mathbb{P},\sigma,X,\mathcal{L})$, where $\sigma:\Omega\to \Omega$ is an invertible, ergodic and measure-preserving transformation, and its generators $\mathcal{L}:\Omega \to L(X)$ are Perron-Frobenius operators associated with the non-singular transformations $T_\omega$ of the measure space $(I,\mathcal{D},\mu)$, given by $\omega \mapsto \mathcal{L}_{\omega}$. This gives rise to a Perron-Frobenius operator cocycle
$$(n,\omega)\mapsto \mathcal{L}_{\omega}^{(n)}=\mathcal{L}_{\sigma^{n-1}\omega}\circ \cdots \circ \mathcal{L}_{\omega}.$$
Here, the evolution of a density $f$ is governed by a cocycle of Perron-Frobenius operators driven by the base dynamics $\sigma:\Omega\to \Omega$. That is, if $f$ represents the initial mass distribution in the system, then $\mathcal{L}_\omega^{(n)}$ describes the mass distribution after the application of $T_{\sigma^{n-1}\omega}\circ \cdots \circ T_\omega$. 
\end{example}
\noindent
The \textit{random fixed points} of the Perron-Frobenius operator are of interest in random dynamical systems. 
\begin{definition}
Let $(\Omega,\mathcal{F},\mathbb{P},\sigma,X,\mathcal{L})$ be a random dynamical system, with associated non-singular transformations $T_\omega:I\to I$. A family $(\mu_\omega)_{\omega\in\Omega}$ is called a \textit{random invariant measure} for $(T_\omega)_{\omega\in \Omega}$ if $\mu_\omega$ is a probability measure on $I$, for any Borel measurable subset $D$ of $I$ the map $\omega \mapsto \mu_\omega(D)$ is measurable, and 
$$\mu_\omega(T^{-1}_\omega(D)) = \mu_{\sigma\omega}(D)$$
for $\mathbb{P}$-a.e. $\omega\in \Omega$. A family $(h_\omega)_{\omega\in\Omega}$ is called a \textit{random invariant density} for $(T_\omega)_{\omega\in \Omega}$ if $h_\omega\geq 0$, $h_\omega\in L^1(\mu)$, $\|h_\omega\|_{L^1(\mu)}=1$, the map $\omega\mapsto h_\omega$ is measurable, and 
$$\mathcal{L}_\omega h_\omega = h_{\sigma\omega}$$
for $\mathbb{P}$-a.e. $\omega\in \Omega$. We also say $(h_\omega)_{\omega\in\Omega}$ is a \textit{random fixed point} of $(\mathcal{L}_\omega)_{\omega\in\Omega}$.
\label{def:RIM-RID}
\end{definition}
\begin{remark}
    If the random invariant measure $(\mu_\omega)_{\omega\in\Omega}$ is absolutely continuous with respect to Lebesgue, we refer to it as a random absolutely continuous invariant measure (RACIM). In this case, its density $(h_\omega)_{\omega\in\Omega} = (\frac{d\mu_\omega}{\dleb})_{\omega\in\Omega}$ is a random invariant density.
    \label{rem:RACIM}
\end{remark}
\jp{We focus on the setting where the mapping $\omega\mapsto \mathcal{L}_\omega$ is $\mathbb{P}$\textit{-continuous}, a concept introduced by Thieullen in \cite{IOC_TP}. This gives rise to a \textit{$\mathbb{P}$-continuous random dynamical system}.}
 \begin{definition}
      Let $(\Omega,\mathcal{F},\mathbb{P})$ be a Borel probability space and $(Y, \tau)$ a topological space. A mapping $\mathcal{L}:\Omega \to Y$ is said to be \textit{$\mathbb{P}$-continuous} if $\Omega$ can be expressed as a countable union of Borel sets such that the restriction of $\mathcal{L}$ to each of them is continuous. 
      \label{def:P-cont}
 \end{definition}
\begin{definition}  
    Let $(\Omega,\mathcal{F},\mathbb{P},\sigma,X,\mathcal{L})$ be a random dynamical system. If its generators $\mathcal{L}:\Omega \to L(X)$, given by $\omega\mapsto\mathcal{L}_\omega$, are $\mathbb{P}$-continuous with respect to the norm topology on $L(X)$, then the cocycle $(\mathcal{L}_\omega)_{\omega\in\Omega}$ is called $\mathbb{P}$-continuous, and the tuple $(\Omega, \mathcal{F}, \mathbb{P}, \sigma, X , \mathcal{L})$ is a \textit{$\mathbb{P}$-continuous random dynamical system}. 
\end{definition}
\noindent
The asymptotic behaviour of the spectral picture for the Perron-Frobenius operator cocycle is of great interest when studying the statistical properties of random systems. 
\begin{definition}
The \textit{index of compactness} of an operator $\mathcal{L}_\omega$ denoted $\alpha(\mathcal{L}_\omega)$, is the infimum of those real numbers $t$ such that the image of the unit ball in $X$ under $\mathcal{L}_\omega$ may be covered by finitely many balls of radius $t$.
\label{def:IndexOfCompactness}
\end{definition}\noindent
The index of compactness provides a notion of `how far' an operator is from being compact. This definition was extended by Thieullen to random compositions of operators in \cite{IOC_TP}.
\begin{definition}
The \textit{asymptotic index of compactness} for the cocycle $(\mathcal{L}_\omega)_{\omega\in\Omega}$ on $X$ is
$$\kappa(\omega):=\lim_{n\to \infty} \frac{1}{n}\log \alpha(\mathcal{L}_\omega^{(n)}).$$
\label{def:AssymptoticIndexOfCompactness}
\end{definition}\noindent
We call the cocycle \textit{quasi-compact} if $\kappa(\omega)<\lim_{n\to \infty} \frac{1}{n}\log\|\mathcal{L}_\omega^{(n)}\|=:l_1(\omega)$, whose limit exists for $\mathbb{P}$-a.e. $\omega\in\Omega$, {and is independent of $\omega\in\Omega$}, by the Kingman sub-additive ergodic theorem {\cite{Kingman}}, under the assumption that $\int \log\|\mathcal{L}_\omega\|\, d\mathbb{P}(\omega)<\infty$. The limit $l_1(\omega)$ is referred to as the top Lyapunov exponent of the cocycle, and under some assumptions on the random dynamical system, one can obtain a spectrum of these exponents through {multiplicative ergodic theorems}. One example is \textit{Oseledets decomposition}, which splits $X$ into $\omega$-dependent subspaces which decay/expand according to its associated Lyapunov exponent $l_i(\omega)$. These are constant $\mathbb{P}$-a.e. when $\sigma$ is ergodic. 
\begin{definition}
    Consider a random dynamical system $\mathcal{R}=(\Omega,\mathcal{F},\mathbb{P},\sigma,X,\mathcal{L})$. An \textit{Oseledets splitting} for $\mathcal{R}$ consists of isolated (exceptional) Lyapunov exponents 
    $$\infty>l_1>l_2>\cdots>l_c>\kappa\geq -\infty,$$
    where the index $c\geq 1$ is allowed to be finite or countably infinite, and Oseledets subspaces $V_1(\omega),\dots, V_c(\omega), W(\omega)$ such that for all $i=1,\dots, c$ and $\mathbb{P}$-a.e. $\omega\in\Omega$ we have
   \begin{itemize}
    \item[(a)] $\dim(V_i(\omega))=m_i<\infty$;
    \item[(b)] $\mathcal{L}_\omega V_i(\omega)=V_i(\sigma\omega)$ and $\mathcal{L}_\omega W(\omega)\subseteq W(\sigma\omega)$;
    \item[(c)] $V_1(\omega)\oplus\cdots \oplus V_c(\omega)\oplus W(\omega)=X$;
    \item[(d)] for $f\in V_i(\omega)\setminus\{0\}$, $\lim_{n\to\infty}\frac{1}{n}\log\|\mathcal{L}_\omega^{(n)}f\|\to l_i$;
    \item[(e)] for $f\in W(\omega)\setminus\{0\}$, $\lim_{n\to\infty}\frac{1}{n}\log\|\mathcal{L}_\omega^{(n)}f\|\leq \kappa$.
\end{itemize}
\label{def:Osc-splitting}
\end{definition}
The Lyapunov exponents of $\mathcal{R}$ counted with multiplicities is the sequence $$\overbrace{l_1,\dots, l_1}^{m_1},\overbrace{l_2,\cdots,l_2}^{m_2},l_3,\dots l_{c}.$$ 
\jpt{\begin{remark}
  For $i\in\{1,\cdots , \sum_{i=1}^c m_i\}$, we set $\gamma_i$ to be the $i^{\mathrm{th}}$ element of this sequence (from left to right). \label{rmk:gamma}  
\end{remark}}
\begin{theorem}[{\cite[Theorem 17]{FLQ_semi}}]
Let $\Omega$ be a Borel subset of a separable complete metric space, $\mathcal{F}$ the Borel $\sigma$-algebra and $\mathbb{P}$ a Borel probability. Let $X$ be a Banach space and consider a random dynamical system $\mathcal{R}=(\Omega,\mathcal{F},\mathbb{P},\sigma,X,\mathcal{L})$ with base transformation $\sigma:\Omega\to \Omega$ an ergodic homeomorphism, and suppose that the generator $\mathcal{L} : \Omega \to L(X)$ is $\mathbb{P}$-continuous and satisfies
$$\int_\Omega \log^+ \|\mathcal{L}_\omega\|\, d\mathbb{P}(\omega)<\infty.$$
If $\kappa<l_1$, $\mathcal{R}$ admits a unique $\mathbb{P}$-continuous Oseledets splitting.
\label{the:OsceledetsDecomp}
\end{theorem}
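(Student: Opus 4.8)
The plan is to reconstruct the proof of the semi-invertible multiplicative ergodic theorem of Froyland--Lloyd--Quas. \textbf{Step 1 (scalar exponents).} For each $k\geq 1$ introduce the $k$-dimensional volume-growth functional $V_k(A):=\sup\{\mathrm{vol}_k(Ax_1,\dots,Ax_k):\|x_i\|\leq 1\}$ on $L(X)$. It is submultiplicative, $V_k(AB)\leq V_k(A)V_k(B)$, and $\log^+ V_k(\mathcal{L}_\omega)\leq k\log^+\|\mathcal{L}_\omega\|$ is $\mathbb{P}$-integrable by hypothesis. Hence $n\mapsto \log V_k(\mathcal{L}_\omega^{(n)})$ is subadditive along $\sigma$-orbits, and Kingman's subadditive ergodic theorem \cite{Kingman} gives $\frac1n\log V_k(\mathcal{L}_\omega^{(n)})\to \Lambda_k$ for $\mathbb{P}$-a.e. $\omega$, with $\Lambda_k$ constant by ergodicity of $\sigma$; set $\lambda_k:=\Lambda_k-\Lambda_{k-1}$ (decreasing) and $l_1=\lambda_1=\Lambda_1$. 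Applying Kingman to the subadditive sequence $\log\alpha(\mathcal{L}_\omega^{(n)})$ (Thieullen, \cite{IOC_TP}) yields the asymptotic index of compactness $\kappa$. The standing assumption $\kappa<l_1$ forces only finitely many $\lambda_k$ to exceed $\kappa$; collecting equal values gives isolated exponents $l_1>l_2>\cdots>l_p>\kappa$ with finite multiplicities $m_i$, and we write $d_i=m_1+\cdots+m_i$, $d=d_p$.

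\textbf{Step 2 (fast equivariant subspaces).} Let $E_n(\omega)\subseteq X$ be the span of the images under $\mathcal{L}_{\sigma^{-n}\omega}^{(n)}$ of its top $d$ right singular directions (defined through $V_1,\dots,V_d$). Using the identity $\mathcal{L}_\omega\circ\mathcal{L}_{\sigma^{-n}\omega}^{(n)}=\mathcal{L}_{\sigma^{-(n+1)}(\sigma\omega)}^{(n+1)}$ --- here invertibility of the base $\sigma$ is essential --- together with the spectral gap $\lambda_d>\kappa\geq\lambda_{d+1}$, a graph-transform/Cauchy estimate shows $(E_n(\omega))_n$ is Cauchy in the Grassmannian of $d$-planes, with geometric rate governed by $e^{\kappa-\lambda_d}<1$. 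Its limit $U(\omega)$ is $d$-dimensional and equivariant, $\mathcal{L}_\omega U(\omega)=U(\sigma\omega)$. Running the same argument on restrictions of the cocycle, peeling off one exponent block at a time, produces the splitting $U(\omega)=V_1(\omega)\oplus\cdots\oplus V_p(\omega)$ with $\dim V_i(\omega)=m_i$ and exact growth rate $l_i$ on $V_i(\omega)\setminus\{0\}$, yielding properties (a) and (d) of \dfn{def:Osc-splitting}.

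\textbf{Step 3 (equivariant complement --- the main obstacle).} Since the $\mathcal{L}_\omega$ need not be injective, the slow subspace cannot be obtained by iterating forward the complements of the fast directions. The resolution, which is the heart of \cite[Theorem 17]{FLQ_semi}, is to dualize: the adjoint cocycle $(\mathcal{L}_\omega^*)$ on $X^*$ has the same volume-growth exponents and index of compactness, and since $\sigma$ is still invertible Step 2 applies to it, yielding a $d$-dimensional equivariant family $Y(\omega)\subseteq X^*$. Define $W(\omega):=\{x\in X:\phi(x)=0\text{ for all }\phi\in Y(\omega)\}$. Equivariance of $Y$ gives $\mathcal{L}_\omega W(\omega)\subseteq W(\sigma\omega)$, and nondegeneracy of the pairing between $U(\omega)$ and $Y(\omega)$ --- both expanded at rates $\geq l_p>\kappa$ by the forward cocycle and its adjoint respectively --- gives $X=U(\omega)\oplus W(\omega)$, i.e. (b), (c). The slow bound (e) follows by contradiction from the volume-growth count of Step 1: a direction in $W(\omega)$ growing faster than $\kappa$ would force an extra exponent above $\kappa$. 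Finally, $\mathbb{P}$-continuity of $\omega\mapsto(V_i(\omega),W(\omega))$ descends from $\mathbb{P}$-continuity of $\mathcal{L}$ --- singular directions depend continuously on the operator over the Oseledets-regular set, and $\mathbb{P}$-continuity survives the uniform geometric limit of Step 2 --- via a measurable-selection argument on the Grassmannian; uniqueness follows by matching growth rates, since any competing equivariant splitting with the same exponents and index obeys the same forward and dual estimates, so its $i$-th subspace is squeezed between the constructed ones and must coincide with $V_i(\omega)$.
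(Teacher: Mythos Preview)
The paper does not contain a proof of this statement: it is quoted as \cite[Theorem 17]{FLQ_semi} and used as a black box, so there is no ``paper's own proof'' to compare against. Your sketch is a reasonable outline of the Froyland--Lloyd--Quas argument, capturing the essential dualization trick (Step 3) that distinguishes the semi-invertible case from Oseledets' original invertible setting. A few technical points are glossed over --- in particular, the precise definition of ``top $d$ right singular directions'' in an infinite-dimensional Banach space without inner product, and the measurable-selection argument for $\mathbb{P}$-continuity, both require more care than indicated --- but as a proof plan for a result the paper merely cites, this is broadly on target.
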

We refer to the set of all $l_i$ as the Lyapunov spectrum of the Perron-Frobenius operator cocycle $(\mathcal{L}_\omega)_{\omega\in\Omega}$. We recall that in \thrm{the:OsceledetsDecomp} the Lyapunov spectrum and asymptotic index of compactness are constant for $\mathbb{P}$-a.e. $\omega\in\Omega$ since $\sigma$ is ergodic. When the Oseledets splitting of $(\mathcal{L}_\omega)_{\omega\in\Omega}$ \jp{satisfies uniform hyperbolicity conditions}, by adopting the same terminology of \cite{Crimmins}, we call this a \textit{hyperbolic Oseledets splitting}. \jp{Let $\mathbb{X}=\Omega\times X$, and define the map $\pi:\mathbb{X}\to\Omega$ as the projection onto $\Omega$. Denote by $\mathrm{End}(\mathbb{X},\sigma)$ the set of all bounded linear endomorphisms of $\mathbb{X}$ covering $\sigma$, i.e.
$$\mathrm{End}(\mathbb{X},\sigma)=\{\mathcal{L}:\mathbb{X}\to \mathbb{X} \ | \ \pi\circ \mathcal{L} = \sigma \circ \pi \ \mathrm{and} \ f\mapsto \tau_{\sigma\omega}(\mathcal{L}(\omega,f))\in L(X{})\}$$
where $\tau_\omega:\pi^{-1}(\omega)\to X$ is given by $\tau_\omega(\omega,f)=f$. Let $\mathcal{G}(X)$ denote the Grassmanian of $X$ (the set of all closed, {complemented} subspaces of $X$). For $E,F\in\mathcal{G}(X)$ with $E\oplus F=X$, let $\Pi_{E||F}$ be the projection onto $E$ parallel to $F$ so that the image of $\Pi_{E||F}$ is $E$ and $\mathrm{ker}(\Pi_{E||F})=F$. For a fixed $d\in\mathbb{Z}^+$ we let $\mathcal{G}_d(X)$ and $\mathcal{G}^d(X)$ denote the set of closed $d$-dimensional and $d$-codimensional subspaces of $X$ respectively.
\begin{definition} 
    Suppose that $\mathcal{L}\in \mathrm{End}(\mathbb{X},\sigma)$, $d\in\mathbb{Z}^+$, $0\leq \mu<\lambda$, $(E_\omega)_{\omega\in\Omega}\in \mathcal{G}_d(X)^\Omega$ and $(F_\omega)_{\omega\in\Omega}\in \mathcal{G}^d(X)^\Omega$. We say that the family of subspaces $(E_\omega)_{\omega\in\Omega}$ and $(F_\omega)_{\omega\in\Omega}$ form a \textit{$(\lambda,\mu,d)$-hyperbolic splitting} for $\mathcal{L}$, and that $\mathcal{L}$ has a \textit{hyperbolic splitting} of index $d$, if there exists constants $C_{\lambda},C_\mu,\Theta>0$ such that:
    \begin{enumerate}
    \item[(a)] For $\mathbb{P}$-a.e. $\omega\in\Omega$ we have $E_\omega \oplus F_\omega = X$ and
    $$\max\{\|\Pi_{F_\omega||E_\omega} \|,\|\Pi_{E_\omega||F_\omega} \|\}\leq \Theta.$$
    \item[(b)] For $\mathbb{P}$-a.e. $\omega\in\Omega$ we have $\mathcal{L}_\omega E_\omega = E_{\sigma\omega}$. Moreover, for every $n\in\mathbb{N}$ and $f\in E_\omega$ we have 
    $$\|\mathcal{L}_\omega^{(n)}f\|\geq C_\lambda \lambda^n \|f\|.$$
    \item[(c)] For $\mathbb{P}$-a.e. $\omega\in\Omega$ we have $\mathcal{L}_\omega F_\omega \subseteq F_{\sigma\omega}$ and for every $n\in\mathbb{N}$ we have 
    $$\|\mathcal{L}_\omega^{(n)}|_{F_\omega}\|\leq C_\mu \mu^n.$$
    \end{enumerate}
    \label{def:hyperbolic-splitting}
\end{definition}}
\jp{\begin{remark}
    In the deterministic setting ($\mathcal L_\omega =\mathcal L_0$ for $\mathbb{P}$-a.e. $\omega \in \Omega$), if $\mathcal L_0$ is quasi-compact, the random dynamical system $(\Omega, \mathcal{F}, \mathbb{P}, \sigma, X, \mathcal{L})$ has a hyperbolic Oseledets splitting (see \cite[Example 5.6]{Crimmins}). Furthermore, if the leading eigenvalue of $\mathcal L_0$ is 1, with multiplicity $d\geq 1$, and $\mathcal L_0$ has no other eigenvalues of modulus one, then the random dynamical system has a hyperbolic Oseledets splitting of index $d$.  \label{rmk:free-splitting}
\end{remark}} 
\noindent As opposed to studying the asymptotic index of compactness \jp{directly}, one can often prove that the Perron-Frobenius operator cocycle is quasi-compact by showing the collection $(\mathcal{L}_\omega)_{\omega\in\Omega}$ satisfies a \textit{uniform Lasota-Yorke inequality}.
\begin{definition}
    We say that $\mathcal{L}_\omega:(X,\| \cdot\|)\to (X,\| \cdot\|)$ satisfies a \textit{uniform Lasota-Yorke inequality} with constants $C_1,C_2,r,R>0$ and $0<r<R\leq 1$, \jpt{if there exists a Banach space $(Y,|\cdot|)$ such that $X\subset Y$ and the embedding $X\hookrightarrow Y$ is compact,} and for $\mathbb{P}$-a.e. $\omega\in \Omega$, $f\in X$, and $n\in \mathbb{N}$ we have 
    $$\|\mathcal{L}_\omega^{(n)} f\|\leq C_1r^n\|f\|+C_2 R^n|f|$$
    where $|\cdot|$ is a weak norm on $(X,\| \cdot\|)$ \jpt{(i.e. there exists a constant $C>0$ such that $|f|\leq C \|f\|$ for all $f\in X$).}
    \label{def:ULY}
\end{definition}
\label{sec:prelim} \noindent
In the proceeding sections, we will consider perturbations of Perron-Frobenius operator cocycles. One way to quantify the size of perturbations is through the \textit{operator triple norm}.
\begin{definition}
    Let $\mathcal{L}_\omega :(X,\| \cdot\|_s)\to (X,\| \cdot\|_s)$ where $X$ is a Banach space equipped strong and weak norm $\|\cdot\|_s$ and $|\cdot|_w$, respectively. The \textit{operator triple norm} of $\mathcal{L}_\omega$ is
    $$\tnorm \mathcal{L}_\omega\tnorm_{s-w}:=\sup_{\|f\|_s=1}|\mathcal{L}_\omega f|_w.$$
    \label{def:triple-norm}
\end{definition} \noindent

\jp{Our results rely on the random perturbation theorem of Crimmins \cite{Crimmins}. For the reader's convenience we state the version of this result (developed in \cite{thermoformalism}) that may be applied in our setting. We refer the reader to \cite[Section 2]{Crimmins} for a primer on Saks spaces in the dynamical systems setting.
\\ \\
Let $\mathcal{LY}(C_1, C_2, r, R)$ be the collection of operators $\mathcal{L}\in\mathrm{End}(\mathbb{X},\sigma)$ that satisfy a uniform Lasota-Yorke inequality. We denote by $\mathrm{End}_S(\mathbb{X},\sigma)$ the set of all Saks space equicontinuous endomorphisms meaning that $\sup_{\omega\in\Omega}\|\mathcal{L}_\omega\|<\infty$ and for each $\eta>0$ there exists $C_\eta>0$ such that for $\mathbb{P}$-a.e. $\omega\in\Omega$ and $f\in X$
$$|\mathcal{L}_\omega f|\leq \eta \| f\| + C_\eta |f|.$$
For $\mathcal{L}\in \mathrm{End}_S(\mathbb{X},\sigma)$ and each $\varepsilon>0$ let
$$\mathcal{O}_\varepsilon(\mathcal{L})= \left\{ \mathcal{S}\in \mathrm{End}(\mathbb{X},\sigma) \ | \ \mathcal{S} \ \mathrm{is} \ \mathbb{P}\text{-}\mathrm{continuous \ with \ } \esssup_{\omega\in\Omega} \tnorm \mathcal{L}_\omega - \mathcal{S}_\omega \tnorm<\varepsilon \right\}.$$
In what follows, we write the Perron-Frobenius operator in subscripts to distinguish between projectors, Oseledets subspaces, and Lyapunov exponents.}   
\jp{\begin{theorem}[\cite{Crimmins,thermoformalism}]
    Suppose that $(X, \norm{\cdot}, |{\cdot}|)$ is a Saks space, with $(X, \norm{\cdot})$ being a Banach space, that $\mathcal{Q} = (\Omega, \mathcal{F}, \mathbb{P}, \sigma, X, Q)$ is a $\mathbb{P}$-continuous random dynamical system with ergodic invertible base and a hyperbolic Oseledets splitting of dimension $d \in \mathbb{Z}^+$, and that $Q \in \mathcal{LY}(C_1, C_2, r, R) \cap \mathrm{End}_S(\mathbb{X},\sigma)$ for some $C_1,C_2,R > 0$ and $r \in [0, e^{\kappa_Q})$.
  There exists $\varepsilon_0 >0$ such that if $\mathcal{P} = (\Omega, \mathcal{F}, \mathbb{P}, \sigma, X, P)$ is a $\mathbb{P}$-continuous random dynamical system with $P \in \mathcal{LY}(C_1, C_2, r, R) \cap \mathcal{O}_{\varepsilon_0}(Q)$ then $\mathcal{P}$ also has an Oseledets splitting of dimension $d$.
  In addition, there exists $c_0 < 2^{-1}\min_{1 \le i \le c_Q} \{l_{i,Q} - l_{i+1,Q}\}$ such that each $I_{i} = (l_{i,Q} - c_0, \max\{l_{i,Q}, \log(\delta_{1i} R)\} + c_0)$, $i \in \{1, \dots, c_Q\}$, separates the Lyapunov spectrum of $\mathcal{P}$, and the corresponding projections satisfy
  \begin{equation}\label{eq:stability_lyapunov_0000}
    \forall i \in \{1, \dots, c_Q\}, \text{ a.e. } \omega \in \Omega \quad \mathrm{rank}(\Pi_{I_i, P,\omega}) = m_{i,Q},
  \end{equation}
  and
  \begin{equation}\label{eq:stability_lyapunov_00000}
    \sup \left\{ \esssup_{\omega \in \Omega} \norm{\Pi_{I_i, P,\omega} } : P \in \mathcal{LY}(C_1, C_2, r, R) \cap \mathcal{O}_{\varepsilon_0}(Q), 1\le i \le c_Q \right\} < \infty.
  \end{equation}
  Moreover, for every $\nu > 0$ there exists $\varepsilon_\nu \in (0, \varepsilon_0)$ so that if $P \in \mathcal{LY}(C_1, C_2, r, R) \cap \mathcal{O}_{\varepsilon_\nu}(Q)$ then
  \begin{equation}\label{eq:stability_lyapunov_0}
      \sup_{1 \le i \le d} |{\gamma_{i, Q} - \gamma_{i, P}}| \le \nu,
  \end{equation}
  \begin{equation}\label{eq:stability_lyapunov_00}
      \sup_{1 \le i \le c_Q} \esssup_{\omega \in \Omega} \tnorm{\Pi_{I_{i},Q,\omega} - \Pi_{I_{i},P,\omega}}\tnorm \le \nu,
  \end{equation}
  and
  \begin{equation}\label{eq:stability_lyapunov_000}
      \esssup_{\omega \in \Omega} d_H(F_{c_Q,Q,\omega}, F_{c_P,P,\omega} ) \le \nu.
  \end{equation} \label{thrm:crimmins}
\end{theorem}
Note that for $i\in\{1,\cdots, c_Q\}$, $\Pi_{I_{i},Q,\omega}$ is the projection onto the fast Oseledets subspaces of $\mathcal{Q}$, denoted $E_{i,Q,\omega}$ with Lyapunov exponent $l_{i,Q}$ \jpt{(see \dfn{def:Osc-splitting} and \rem{rmk:gamma})}. Further, in \eqref{eq:stability_lyapunov_000}, $d_H$ is the metric given at the beginning of Section 2.1 in \cite{Crimmins}.} \\ \\
Finally, we make use of Landau notation throughout the paper. In the following definitions, we consider functions $f,g:\mathbb{R}\to \mathbb{R}$.
\begin{notation}
     We write $f(x)=O_{x\to a}(g(x))$ if there exists $M,\delta>0$ such that for all $x$ satisfying $|x-a|<\delta$,
     $$|f(x)|\leq M|g(x)|.$$
     \label{def:O}
\end{notation}
\begin{notation}
    We write $f(x)=o_{x\to a}(g(x))$ if for all $C>0$ there exists $\delta>0$ such that for all $x$ satisfying $|x-a|<\delta$,
    $$|f(x)|\leq C|g(x)|.$$
    \label{def:o}
\end{notation}
\begin{remark}
   In many situations, the constants $C,M$ involved in the above asymptotic approximations may depend on a second variable, say $\omega$. In this case we write $f(x)=O_{\omega,x\to a}(g(x))$ and $f(x)=o_{\omega,x\to a}(g(x))$, respectively.
\end{remark}

\section{Metastable systems and their perturbations}
\label{sec:map+pert}
In this section, we introduce a class of random dynamical systems with $m$ metastable states. We define these maps as perturbations of an autonomous system possessing $m\geq 2$ initially invariant intervals $I_1,\dots,I_m$, each supporting a unique ergodic absolutely continuous invariant measure (ACIM). Upon perturbation, so-called \textit{random holes} emerge, allowing trajectories to switch between $I_1,\dots,I_m$. This gives rise to systems with a unique ergodic random absolutely continuous invariant measure (RACIM) on $I_1\cup \cdots \cup I_m$, describing the long-term statistical behaviour of metastable systems.    

\subsection{The initial system}
Let $I=[-1,1]$ be equipped with the Borel $\sigma$-algebra $\mathcal{B}$, and Lebesgue measure $\leb$. Suppose that $T^0: I\to I$ is a piecewise $C^2$ uniformly expanding map with $m\geq 2$ invariant subintervals. This means $T^0$ satisfies the following conditions.
\begin{itemize}
    \item[\textbf{(I1)}] Piecewise $C^2$.\\
    There exists a critical set $\mathcal{C}^0 = \{-1=c_{0}<c_{1}<\cdots < c_{d}=1\}$ such that for each $i=0,\dots,d-1$, the map $T^0|_{(c_i,c_{i+1})}$ extends to a $C^2$ function $\hat{T}_i^0$ on a neighbourhood of $[c_i,c_{i+1}]$.  \label{list:I1}
    \end{itemize}
    \begin{itemize}    
    \item[\textbf{(I2)}] Uniform expansion.\\
    There exists $\Lambda>1$ such that 
    $$ \inf_{x\in I\setminus \mathcal{C}^0}|(T^0)^\prime(x)|\geq \Lambda.$$ \label{list:I2}
    \end{itemize}
    \begin{itemize}
    \item[\textbf{(I3)}] Existence of boundary points and invariant subintervals.\\
    There are boundary points $\mathfrak{B}:=\{b_0,\cdots, b_m\}$ where $(b_i)_{i=1}^{m-1}\subset(-1,1),\ b_0=-1$ and $b_m=1$, such that for $i=1,\dots,m$ the sets $I_i := [b_{i-1},b_i]$ are invariant under $T^0$ (for $i\in\{ 1,\cdots,m \}$, $T^0(I_i)\jpt{\subseteq} I_i$).\label{list:I3}
    \end{itemize}
    Denote by $\mathcal{L}^0$ the Perron-Frobenius operator associated with $T^0$ acting on $(\BV(I),\|\cdot\|_{\BV(I)})$ with weak norm $\|\cdot\|_{L^1(\leb)}$.
    \begin{remark}
     Thanks to \cite{EG_LY}, conditions \hyperref[list:I1]{\textbf{(I1)}} and \hyperref[list:I2]{\textbf{(I2)}} ensure that the Perron-Frobenius operator $\mathcal{L}^0$ acting on $(\BV(I),\|\cdot\|_{\BV(I)})$ satisfies a Lasota-Yorke inequality. This fact will be used when we wish to emphasise that $\mathcal{L}^0$ is quasi-compact. \label{rem:LY0}
    \end{remark}\noindent
The existence of an ACIM of bounded variation for $T^0|_{I_i}$ is guaranteed by the classical work of Lasota and Yorke \cite{LY_acim}. We assume in addition the following. 
\begin{itemize}
    \item[\textbf{(I4)}] Unique ACIMs on invariant sets.\\
    For $i \in \{1,\cdots,m\}$, $T^0|_{I_i}$ has only one ergodic ACIM $\mu_i$, whose density is $\phi_i:= d\mu_i/\dleb$.
    \label{list:I4}
\end{itemize}
\hyperref[list:I4]{\textbf{(I4)}} implies that all ACIMs of $T^0$ may be expressed as convex combinations of those ergodic measures supported on $I_1,\dots,I_m$. Conditions guaranteeing that \hyperref[list:I4]{\textbf{(I4)}} is satisfied are outlined in \cite[Theorem 1]{LY_acim}.
\\
\\
Let $\mathfrak{B}$ be as in \hyperref[list:I3]{\textbf{(I3)}}. We call the set of points belonging to $H^0:=(T^0)^{-1}(\mathfrak{B})\setminus \mathfrak{B}$ infinitesimal holes.
\begin{itemize}
    \item[\textbf{(I5)}] No return of the critical set to infinitesimal holes.\\
    For every $k>0$, $T^{0 \,(k)}(\mathcal{C}^0)\cap H^0 = \emptyset$.
    \label{list:I5}
\end{itemize}
As discussed in \cite[Section 2.1]{GTHW_metastable}, condition \hyperref[list:I5]{\textbf{(I5)}} is essential to ensure that for $i\in \{1,\cdots, m\}$, each unique invariant density $\phi_i$, guaranteed by \hyperref[list:I4]{\textbf{(I4)}}, is continuous at all points in $I_i \cap H^0$. Finally, we require that for $i\in \{1,\cdots, m\}$, the invariant densities $\phi_i$ are positive at each point in $I_i \cap H^0$. 
\begin{itemize}
    \item[\textbf{(I6)}] Positive ACIMs at infinitesimal holes.\\
   For $i \in\{1,\cdots, m\}$, $\phi_i$ is positive at each of the points in $I_i\cap H^0$.
   \label{list:I6}
\end{itemize}
Condition \hyperref[list:I6]{\textbf{(I6)}} is satisfied if, for example, the maps $T^0|_{I_i}$ are weakly covering for $i\in\{1,\cdots, m\}$ \cite{Liverani_DOC}.\footnote{\jp{For $j\in\{1,\cdots,m\}$, a piecewise expanding map $T|_{I_j}:{I_j\to I_j}$ with critical set $\mathcal{S} = \{b_{j-1}=s_{0}<s_{1}<\cdots < s_{d}=b_j\}$ is weakly covering if there is some $N\in\mathbb{N}$ such that for every $i$, $\cup_{k=0}^N T^{(k)}|_{I_j}([s_i,s_{i+1}])=I_j$.}}\\

\subsection{The perturbations} 
\label{sec:perts}
In what follows, let $(\Omega,\mathcal{F},\mathbb{P})$ be a probability space. Fix $\varepsilon>0$ and let $\omega\in\Omega$. We consider $C^2$-small perturbations of $T^0:I\to I$, denoted $T^\varepsilon:\Omega\times I \to I$, driven by an ergodic transformation $\sigma:\Omega\to \Omega$.\footnote{For notational convenience $T^0$ will also denote the \textit{random} map $T^0:\Omega\times I \to I$ which satisfies $T^0_\omega :=T^0_{\omega_0}$ for any $\omega_0\in\Omega$.} This means $\sigma:\Omega\to \Omega$ and $T^\varepsilon:\Omega\times I \to I$ satisfy the following.
\begin{itemize}
    \item[\textbf{(P1)}] Ergodic driving and finite range.\\
    $\sigma:\Omega\to \Omega$ is an ergodic, $\mathbb{P}$-preserving homeomorphism of the probability space $(\Omega,\mathcal{F},\mathbb{P})$; for all $\varepsilon\geq0$, the mapping $\omega \mapsto T_{\omega}^\varepsilon$ has finite range; and the skew-product
    $$(\omega,x)\mapsto (\sigma\omega,T_\omega^\varepsilon(x))$$
    is measurable with respect to the product $\sigma$-algebra $\mathcal{F}\otimes \mathcal{B}$ on $\Omega\times I$.
    \label{list:P1}
\end{itemize}
\begin{itemize}
    \item[\textbf{(P2)}] $C^2$-small perturbations.\\
    \jpt{The critical set for $T_\omega^\varepsilon$ is given by} $\mathcal{C}^\varepsilon_\omega = \{-1 = c_{0,\omega}^\varepsilon<\cdots < c_{d,\omega}^\varepsilon = 1\}$ where for each $i=0,\dots,d$, $\omega \mapsto c_{i,\omega}^\varepsilon$ is measurable and $\varepsilon \mapsto c_{i,\omega}^\varepsilon$ is $C^2$. Furthermore, there exists a $\delta>0$ such that: 
    \label{list:P2}
\begin{itemize}
    \item[(a)] for $i=1,\dots,d-2$, $[c_{i}+\delta,c_{i+1}-\delta]\subset [c_{i,\omega}^\varepsilon,c_{i+1,\omega}^\varepsilon] \subset [c_{i}-\delta,c_{i+1}+\delta]$,\footnote{In this way, we have a one-to-one correspondence between the critical sets of $T_\omega^\varepsilon$ and $T^0$ given by $\mathcal{C}_\omega^\varepsilon$ and $\mathcal{C}^0$, respectively.}
    \item[(b)] for $i=0,\cdots,d$, \jpt{$\lim_{\varepsilon\to 0}\esssup_{\omega\in\Omega}|c_{i,\omega}^\varepsilon-c_{i}|=0$, and}
    \item[(c)] for $i=0,\dots,d-1$, there is a $C^2$ extension $\hat{T}_{i,\omega}^\varepsilon:[c_{i}-\delta,c_{i+1}+\delta]\to \mathbb{R}$ of $T_\omega^\varepsilon|_{(c_{i,\omega}^\varepsilon,c_{i+1,\omega}^\varepsilon)}$ that converges in $C^2$, and uniformly over $\omega\in\Omega$ away from a  $\mathbb{P}$-null set to the $C^2$ extension $\hat{T}_{i}^0:[c_{i}-\delta,c_{i+1}+\delta]\to \mathbb{R}$ of $T^0|_{(c_{i},c_{i+1})}$ as $\varepsilon\to 0$.
\end{itemize}
\end{itemize}
Denote by $\mathcal{L}_\omega^\varepsilon$ the Perron-Frobenius operator associated with $T_\omega^\varepsilon$ acting on $(\BV(I),\|\cdot\|_{\BV(I)})$ with weak norm $\|\cdot\|_{L^1(\leb)}$. 
\begin{remark}
  We impose \hyperref[list:P1]{\textbf{(P1)}} to ensure the mapping $\omega\mapsto \mathcal{L}_\omega^\varepsilon$ is $\mathbb{P}$-continuous (recall \dfn{def:P-cont}). This is satisfied through the relaxed condition that $\omega \mapsto T_{\omega}^\varepsilon$ has countable range. In our setting, to ensure uniform convergence of certain quantities, we instead require the stricter condition that $\omega \mapsto T_{\omega}^\varepsilon$ has finite range (as discussed in \cite[Remark 3.10]{BS_rand}).
\end{remark}

\begin{remark}
    Thanks to \cite[Proposition 3.12]{FGTQ_LYmapStab} (which follows from \cite[Lemma 13]{K_StochStab}) and \cite[Example 5.2]{EG_LY}, \hyperref[list:P2]{\textbf{(P2)}} asserts that 
     $$\lim_{\varepsilon\to 0} \esssup_{\omega\in\Omega} \tnorm \mathcal{L}_\omega^\varepsilon - \mathcal{L}^0 \tnorm_{\BV(I)-L^1(\leb)}=0,$$
     where $\tnorm\cdot\tnorm_{\BV(I)-L^1(\leb)}$ denotes the $\BV-L^1$ triple norm (see \dfn{def:triple-norm}).  
     \label{rem:p1-imp-trip}
\end{remark} 
\noindent
\noindent
For $i,j\in \{1,\cdots, m\}$ we define the random holes $H_{i,j,\omega}^\varepsilon$ as the set of all points mapping from $I_i$ to $I_j$ under one iteration of $T_\omega^\varepsilon$. Namely, $H_{i,j,\omega}^\varepsilon:= I_i \cap (T_\omega^\varepsilon)^{-1}(I_j)$. 
\begin{remark}
    Without loss of generality, we assume that under one iteration of $T_\omega^\varepsilon$, for some $i\in\{2,\cdots,m-1\}$, points in $I_i$ can only map to neighbouring sets $I_{i-1},I_{i+1}$ or remain in $I_i$. When $i=1$, points in $I_i$ can only map to $I_{i+1}$ or remain in $I_i$, whereas if $i=m$, points in $I_i$ can only map to $I_{i-1}$ or remain in $I_i$. With this in mind, $H_{i,j,\omega}^\varepsilon\neq \emptyset$ if and only if $|i-j|=1$. 
    \label{rem:neighbour}
\end{remark}
Recall that $H^0=(T^0)^{-1}(\mathfrak{B})\setminus \mathfrak{B}$ consists of infinitesimal holes.
\begin{itemize}
    \item[\textbf{(P3)}] Convergence of holes.\\
    For $i,j\in\{1,\cdots, m\}$ and each $\omega\in\Omega$, $H_{i,j,\omega}^\varepsilon$ is a union of finitely many intervals, and as $\varepsilon\to 0$, $H_{i,j,\omega}^\varepsilon$ converges to $I_{i}\cap H^0 $ (in the Hausdorff metric) uniformly over $\omega\in\Omega$ away from a $\mathbb{P}$-null set.
    \label{list:P3}
\end{itemize}
Since the holes are themselves functions of $\omega\in\Omega$, we place some constraints on how their measures vary across fibres.   
\begin{itemize}
    \item[\textbf{(P4)}] Measure of holes and uniform covering.\\
    There exists a $\beta^*>0$ such that for $i,j\in\{1,\cdots,m\}$, $\mu_i(H_{i,j,\omega}^\varepsilon)=\varepsilon \beta_{i,j,\omega}+o_{\varepsilon\to 0}(\varepsilon)$ where $\beta_{i,j} \in  L^\infty(\mathbb{P})$ satisfies $\beta_{i,j,\omega}\geq \beta^*$ for all $\omega\in\Omega$.\footnote{We emphasise that the error in the measure of the hole is independent of $\omega\in\Omega$.}
    \label{list:P4}
\end{itemize}
\begin{remark}
    In \hyperref[list:P4]{\textbf{(P4)}}, the uniform (over $\omega\in\Omega$) lower bound on $\beta_{i,j}\in L^\infty(\mathbb{P})$ ensures that $(T_\omega^\varepsilon)_{\omega\in \Omega}$ is uniformly (over $\omega\in\Omega$) covering. That is, for every $\varepsilon>0$ and every subinterval $J\subset I$, there exists $k^\varepsilon:=k^\varepsilon(J)$ such that for $\mathbb{P}$-a.e. $\omega\in\Omega$, $T_{\omega}^{\varepsilon\, (k^\varepsilon)}(J)=I$. This assumption is required so that random metastable systems satisfy the quenched CLT of \cite{DFGTV_limthrm}.  
\end{remark}
\noindent
\jp{With the aim to apply \thrm{thrm:crimmins}, we require the perturbed system to satisfy a Lasota-Yorke inequality. In particular, we assume the following.}
\begin{itemize}
    \item[\textbf{(P5)}] Uniform Lasota-Yorke inequality.\\
    The Perron-Frobenius operator associated with $T_\omega^\varepsilon$, denoted $\mathcal{L}_\omega^\varepsilon$, acting on \linebreak $(\BV(I),\|\cdot\|_{\BV(I)})$ with weak norm $\|\cdot\|_{L^1(\leb)}$ satisfies a uniform Lasota-Yorke inequality across both $\omega\in\Omega$ and $\varepsilon>0$ (see \dfn{def:ULY}).
    \label{list:P5}
\end{itemize}
Further, we require $T_\omega^\varepsilon$ to admit a unique ergodic RACIM (see \rem{rem:RACIM}).
\begin{itemize}
    \item[\textbf{(P6)}] Unique RACIM.\\
    For $\varepsilon>0$, $(T_\omega^\varepsilon)_{\omega\in\Omega}$ has only one ergodic RACIM $(\mu_\omega^\varepsilon)_{\omega\in\Omega}$, with density $(\phi_\omega^\varepsilon)_{\omega\in\Omega}:=\left(d\mu_\omega^\varepsilon/\dleb\right)_{\omega\in\Omega}$.
    \label{list:P6}
\end{itemize}
Cases in which \hyperref[list:P6]{\textbf{(P6)}} is satisfied are outlined in \cite{Unique-RACIM}. Finally, as discussed in \cite[Section 2.4]{GTHW_metastable}, we enforce a condition that ensures no holes emerge near the boundary. 
\begin{itemize}
    \item[\textbf{(P7)}] Boundary condition.\\
    For all boundary points $b_i\in \mathfrak{B}$.
    \begin{itemize}
        \item[(a)] If $b_i\notin \mathcal{C}_0$, then $T^0(b_i)=b_i$ and for all $\varepsilon>0$ and $\mathbb{P}$-a.e. $\omega\in\Omega$, $T_\omega^\varepsilon(b_i)=b_i$. 
        \item[(b)] If $b_i\in \mathcal{C}_0$, then $T^0(b^-_i)<b_i<T^0(b^+_i)$ and for all $\varepsilon>0$ and $\mathbb{P}$-a.e. $\omega\in\Omega$, $b_i\in \mathcal{C}_\omega^\varepsilon$.\footnote{We denote by  $T^0(b^\mp_i)$ the left and right limits of $T^0(x)$ as $x\to b_i$, respectively.}
    \end{itemize}
    \label{list:P7}
\end{itemize}

\noindent
Throughout the remainder of the paper, we assume that the conditions of \Sec{sec:map+pert} are satisfied. 
 
\section{The open system}
In this section, we consider a sequence of random open dynamical systems derived from the sequence of random dynamical systems of metastable maps $T_\omega^\varepsilon:I\to I$. We define the Perron-Frobenius operator associated with such systems and study its spectral properties. \\
\\
For $j\in\{1,\cdots,m\}$ and $\varepsilon\geq 0$ consider $H_{j,\omega}^\varepsilon:= H_{j,j-1,\omega}^\varepsilon \cup H_{j,j+1,\omega}^\varepsilon$ as a hole. For $f\in\BV(I_j)$ we let 
\begin{equation}
    \mathcal{L}_{j,\omega}^\varepsilon (f)(x):= \mathcal{L}_\omega^\varepsilon(\mathds{1}_{I_j\setminus H_{j,\omega}^\varepsilon}\cdot f)(x)\label{eqn:openop}
\end{equation}
be the Perron-Frobenius operator acting on $(\BV(I_j),\|\cdot\|_{\BV(I_j)})$ where we recall that $\mathcal{L}_\omega^\varepsilon$ is the Perron-Frobenius operator associated with $T_\omega^\varepsilon$ acting on $(\BV(I),\|\cdot\|_{\BV(I)})$. Let $T_{j,\omega}^\varepsilon:I_j\to I_j$ denote the open map associated with the operator $\mathcal{L}_{j,\omega}^\varepsilon$. When $\varepsilon=0$ in \eqref{eqn:openop}, we denote by $\mathcal{L}_j^0$ the resulting Perron-Frobenius operator associated with the map $T_j^0:I_j\to I_j$.\footnote{The map $T_j^0:I_j\to I_j$ can also be interpreted as the map $T^0|_{I_j}:I_j\to I_j$.}$^{,}$\footnote{For notational convenience $T^0_j$ will also denote the \textit{random} map $T^0_j:\Omega\times I \to I$ which satisfies $T^0_{j,\omega} :=T^0_{j,\omega_0}$ for any $\omega_0\in\Omega$.} In the spirit of \cite[Section 2.C]{thermoformalism}, for all $\varepsilon>0$, and each $\omega\in\Omega$, $n\in\mathbb{N}$, and $j\in\{1,\cdots,m\}$, let $X_{j,\omega,n}^\varepsilon := \{x\in I_j \ | \ T_{j,\omega}^{\varepsilon\, (i)}(x)\notin H_{j,\sigma^i\omega}^\varepsilon \ \mathrm{for\ all} \ 0\leq i \leq n\}$, let $\mathcal{Z}_{j}^{0\,(n)}$ denote the partition of monotonicity of $T_j^{0\, (n)}$, and with $\Lambda>1$ from \hyperref[list:I2]{\textbf{(I2)}}, let $\mathscr{A}_{j}^{0\, (n)}$ be the collection of all finite partitions of $I_j$ such that
\begin{align}
\var_{A_{j,i}}(|(T_j^{0\, (n)})^\prime|^{-1})\leq 2\Lambda^{-n}
\end{align}
for each $\mathcal{A}_j=\{A_{j,i}\}\in\mathscr{A}_{j}^{0\, (n)}$.
Given $\mathcal{A}_j\in\mathscr{A}_{j}^{0\,(n)}$, we set $\mathcal{Z}_{j,\omega,*}^{\varepsilon\, (n)}(\mathcal{A}_j):=\{Z\in \widehat{\mathcal{Z}}_{j,\omega}^{\varepsilon\, (n)}(\mathcal{A}_j): Z\subseteq X_{j,\omega,n-1}^\varepsilon \}$ where $\widehat{\mathcal{Z}}_{j,\omega}^{\varepsilon\, (n)}(\mathcal{A}_j)$ is the coarsest partition among all those finer than $\mathcal{A}_j$ and $\mathcal{Z}_{j}^{0\,(n)}$ so that all elements of $\widehat{\mathcal{Z}}_{j,\omega}^{\varepsilon\, (n)}(\mathcal{A}_j)$ are either disjoint from $X_{j,\omega,n-1}^\varepsilon$ or contained in $X_{j,\omega,n-1}^\varepsilon$. \jpt{In other words, the finite partition $\mathcal{A}_j$ of $I_j$ refines $\mathcal{Z}_j^{0\, (n)}$ and gives control of the expansion of $T_j^{0\, (n)}$. Then $\widehat{\mathcal{Z}}_{j,\omega}^{\varepsilon\, (n)}(\mathcal{A}_j)$ is a further refinement of $\mathcal{A}_j$ distinguishing between intervals which land in the hole by time $n-1$, and those which remain outside the hole for atleast $n-1$ steps.} In what follows, for $f\in \BV(I_j)$, let \Change{$\leb_j(f):= {\leb}( f\cdot\mathds{1}_{I_j})$} denote the Lebesgue measure on $I_j$.
\Change{\begin{remark}
For $j\in \{1,\cdots, m\}$ we note that $\leb_j$ is not necessarily a probability measure. In \Sec{sec:open} and \Sec{sec:jump} we use this as a matter of notational convenience when studying the open maps $(T_{j,\omega}^\varepsilon)_{\omega\in \Omega}$ for $\varepsilon\geq 0$.            
\end{remark}}
We impose the following condition, altered in our setting from \cite{thermoformalism}.
\begin{itemize}
    \item[\textbf{(O1)}] Non-vanishing surviving branches.\\
    \jp{Let $\Lambda>1$ be as in \hyperref[list:I2]{\textbf{(I2)}}. There exists $n'\in \mathbb{N}$ and $\varepsilon_0>0$ such that
    \begin{align}
        {(\Lambda)^{-n'}}<\frac{1}9\label{eqn:enough-expansion}
    \end{align}
    and for each $j \in \{1,\cdots, m\}$}
    \begin{equation}
        \jp{\inf_{0\leq \varepsilon\leq \varepsilon_0 }}\essinf_{\omega\in \Omega}\min_{Z\in \mathcal{Z}_{j,\omega,*}^{\varepsilon\, (n')}(\mathcal{A}_j)}\leb_j(Z)>0.\label{eqn:large-survival}
    \end{equation}
    \label{list:O1}
\end{itemize}
\begin{remark}
    We note that \eqref{eqn:enough-expansion} implies that for $j\in\{1,\cdots,m\}$, 
    $$\sup_{0\leq \varepsilon\leq \varepsilon_0}\esssup_{\omega\in \Omega}\left\| \frac{1}{(T_{j,\omega}^{\varepsilon\, (n')})^\prime}\right\|_{L^\infty(\leb_j)}<\frac{1}{9}.$$
    This follows since \hyperref[list:I2]{\textbf{(I2)}} and \hyperref[list:P2]{\textbf{(P2)}}  ensures that for all $\varepsilon>0$ sufficiently small, and $\mathbb{P}$-a.e. $\omega \in\Omega$,
    $$\left\| \frac{1}{(T_{j,\omega}^{\varepsilon\, (n')})^\prime}\right\|_{L^\infty(\leb_j)}\leq (\Lambda)^{-n'}.$$\label{rem:T-eps'-bound}
\end{remark}
\jpt{The following remark provides an alternative approach to verifying that \eqref{eqn:large-survival} holds in \hyperref[list:O1]{\textbf{(O1)}}.}
\begin{remark}
  \jp{Note that \eqref{eqn:enough-expansion} holds whenever $n'$ is sufficiently large}. For \eqref{eqn:large-survival}, thanks to \hyperref[list:P1]{\textbf{(P1)}}, particularly the fact that $\omega\mapsto T_\omega^\varepsilon$ has finite range for all $\varepsilon\geq 0$, one can obtain a lower bound, uniform over $\omega\in\Omega$, for $\min_{Z\in \mathcal{Z}_{j,\omega,*}^{\varepsilon\, (n')}(\mathcal{A}_j)}\leb_j(Z)$. We now provide alternative checkable conditions to \eqref{eqn:large-survival} \jpt{once $n'$ has been fixed satisfying \eqref{eqn:enough-expansion},} that ensures $\jpt{\inf_{0\leq \varepsilon\leq \varepsilon_0}}\essinf_{\omega\in \Omega}\min_{Z\in \mathcal{Z}_{j,\omega,*}^{\varepsilon\, (n')}(\mathcal{A}_j)}\leb_j(Z)$ does not vanish.   
  \begin{itemize}
      \item[{(\ref*{eqn:large-survival}a)}] Uniform open covering. \\
      Given $n'\in\mathbb{N}$ (\jpt{satisfying \eqref{eqn:enough-expansion}}), for each $j\in \{1,\cdots,m\}$ there exists $k_o(n')\in\mathbb{N}$ such that for $\mathbb{P}$-a.e. $\omega\in\Omega$, all $\varepsilon>0$ sufficiently small, and all $Z\in\mathcal{Z}_{j,\omega,*}^{\varepsilon\, (n')}$ we have $T_j^{0\, (k_o(n'))}(Z) = I_j$. \label{list:7a}
      \item[{(\ref*{eqn:large-survival}b)}] Restriction on periodic critical points.\\
      For each $j\in \{1,\cdots, m\}$ and $c_i\in I_j \cap \mathcal{C}^0$
      $$\essinf_{1\leq k\leq n'}\min_{1\leq i,i'\leq d}|T_j^{0\, (k)}(c_i)-c_{i'}|>0$$
      where $n'$ \jpt{satisfies \eqref{eqn:enough-expansion}}.
 \label{list:7b}
  \end{itemize}
(\hyperref[list:7a]{\ref*{eqn:large-survival}a}) implies \eqref{eqn:large-survival} through a similar computation to that made in the proof of \cite[Lemma 2.5.10]{thermoformalism}. Indeed, for any $Z\in \mathcal{Z}_{j,\omega,*}^{\varepsilon\, (n)}$, (\hyperref[list:7a]{\ref*{eqn:large-survival}a}) implies that $$\leb_j(Z)=\leb_j(\mathcal{L}_j^{0\, (k_o(n'))}\mathds{1}_Z)\geq \frac{\leb_j((T_j^{0\, (k_o(n'))})(Z))}{\|(T_j^{0\, (k_o(n'))})'\|_{L^\infty(\leb_j)}}= \frac{\leb_j(I_j)}{\|(T_j^{0\, (k_o(n'))})'\|_{L^\infty(\leb_j)}}>0.$$
Finally, (\hyperref[list:7b]{\ref*{eqn:large-survival}b}) implies \eqref{eqn:large-survival} through a similar argument to that made in \cite[Section 3.3]{B_positiveop}. (\hyperref[list:7b]{\ref*{eqn:large-survival}b}) ensures that for every $\varepsilon>0$, each element in $\mathcal{Z}_{j,\omega,*}^{\varepsilon\, (n')}$ may be identified with an element in $\mathcal{Z}_{j,\omega,*}^{0\, (n')}$. Thus for $\varepsilon>0$ sufficiently small, the size of the shortest branch for $T_{j,\omega}^{\varepsilon\, (n')}$ can be made arbitrarily close to the size of the smallest branch of $T_j^{0\, (n')}$.    
\end{remark}
In this section, we aim to prove \thrm{thrm:openopprop}. This result is obtained through the following sequence of lemmata. 
\\
\\
Due to \eqref{eqn:openop}, we can express compositions of the open operator in terms of the closed operator.
\begin{lemma}
    In the setting of \thrm{thrm:openopprop}, for any $n\in\mathbb{N}$ and $f\in\BV(I_j)$, 
    $$\mathcal{L}_{j,\omega}^{\varepsilon\, (n)}(f) = \mathcal{L}_\omega^{\varepsilon\, (n)}\left( f\cdot \prod_{i=0}^{n-1}\mathds{1}_{(T_\omega^{\varepsilon\, (i)})^{-1}(I_j\setminus H_{j,\sigma^i\omega}^\varepsilon)} \right).$$
    \label{lem:open-comp}
\end{lemma}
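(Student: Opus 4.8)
The plan is to prove the identity by induction on $n$, unwinding the definition of the open operator $\mathcal{L}_{j,\omega}^\varepsilon$ given in \eqref{eqn:openop} one fibre at a time and tracking how the indicator functions accumulate.

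\textbf{Base case.} For $n=1$ the claim is exactly the definition \eqref{eqn:openop}: $\mathcal{L}_{j,\omega}^{\varepsilon}(f) = \mathcal{L}_\omega^{\varepsilon}(\mathds{1}_{I_j\setminus H_{j,\omega}^\varepsilon}\cdot f)$, and the product $\prod_{i=0}^{0}\mathds{1}_{(T_\omega^{\varepsilon\,(i)})^{-1}(I_j\setminus H_{j,\sigma^i\omega}^\varepsilon)}$ is just $\mathds{1}_{I_j\setminus H_{j,\omega}^\varepsilon}$, since $T_\omega^{\varepsilon\,(0)}$ is the identity.

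\textbf{Inductive step.} Assume the formula holds for $n$. Write $\mathcal{L}_{j,\omega}^{\varepsilon\,(n+1)}(f) = \mathcal{L}_{j,\sigma^n\omega}^\varepsilon\big(\mathcal{L}_{j,\omega}^{\varepsilon\,(n)}(f)\big)$. Apply the definition \eqref{eqn:openop} at the fibre $\sigma^n\omega$ to get $\mathcal{L}_{\sigma^n\omega}^\varepsilon\big(\mathds{1}_{I_j\setminus H_{j,\sigma^n\omega}^\varepsilon}\cdot \mathcal{L}_{j,\omega}^{\varepsilon\,(n)}(f)\big)$, then substitute the inductive hypothesis for $\mathcal{L}_{j,\omega}^{\varepsilon\,(n)}(f)$. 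The key algebraic point is the pull-back identity for transfer operators: for any $g\in L^1$ and any bounded measurable $h$ one has $\mathcal{L}_\omega^\varepsilon\big((h\circ T_\omega^\varepsilon)\cdot g\big) = h\cdot \mathcal{L}_\omega^\varepsilon(g)$, and more generally $\mathcal{L}_\omega^{\varepsilon\,(n)}\big((h\circ T_\omega^{\varepsilon\,(n)})\cdot g\big) = h\cdot \mathcal{L}_\omega^{\varepsilon\,(n)}(g)$. Using this with $h = \mathds{1}_{I_j\setminus H_{j,\sigma^n\omega}^\varepsilon}$, we can move the factor $\mathds{1}_{I_j\setminus H_{j,\sigma^n\omega}^\varepsilon}$ inside $\mathcal{L}_\omega^{\varepsilon\,(n)}$, where it becomes $\mathds{1}_{(T_\omega^{\varepsilon\,(n)})^{-1}(I_j\setminus H_{j,\sigma^n\omega}^\varepsilon)}$. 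Multiplying this new indicator onto the product $\prod_{i=0}^{n-1}\mathds{1}_{(T_\omega^{\varepsilon\,(i)})^{-1}(I_j\setminus H_{j,\sigma^i\omega}^\varepsilon)}$ gives $\prod_{i=0}^{n}\mathds{1}_{(T_\omega^{\varepsilon\,(i)})^{-1}(I_j\setminus H_{j,\sigma^i\omega}^\varepsilon)}$, which is exactly the formula at level $n+1$.

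\textbf{Main obstacle.} There is no serious obstacle; this is a bookkeeping lemma. The only point requiring a modicum of care is the justification of the pull-back identity $\mathcal{L}_\omega^{\varepsilon\,(n)}\big((h\circ T_\omega^{\varepsilon\,(n)})\cdot g\big) = h\cdot \mathcal{L}_\omega^{\varepsilon\,(n)}(g)$ at the level of $\BV$: one should check that multiplication by the indicator of a finite union of intervals preserves $\BV(I)$ (so the manipulations stay within the function spaces in play), and that the pull-back identity, classically stated for $L^1$, remains valid here since $\BV(I)\subset L^1(\leb)$ and both sides agree as $L^1$ functions, hence as elements of $\BV$ via their representatives of minimal variation. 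One also needs the cocycle relation $T_\omega^{\varepsilon\,(i+1)} = T_{\sigma^i\omega}^\varepsilon\circ T_\omega^{\varepsilon\,(i)}$ to identify $(T_\omega^{\varepsilon\,(i)})^{-1}(T_{\sigma^i\omega}^\varepsilon)^{-1}(\cdot)$ with $(T_\omega^{\varepsilon\,(i+1)})^{-1}(\cdot)$ when reconciling the two indicator conventions, but this is immediate from the definitions in \Sec{sec:map+pert}.
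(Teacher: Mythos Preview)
Your proof is correct and follows essentially the same inductive approach as the paper, which simply records the recursion $\mathcal{L}_{j,\omega}^{\varepsilon\,(n)}(f)=\mathcal{L}_{\sigma^{n-1}\omega}^\varepsilon\big(\mathds{1}_{I_j\setminus H_{j,\sigma^{n-1}\omega}^\varepsilon}\cdot \mathcal{L}_{j,\omega}^{\varepsilon\,(n-1)}(f)\big)$ and declares the result follows by induction. Your write-up is more detailed, making explicit the pull-back identity $\mathcal{L}_\omega^{\varepsilon\,(n)}\big((h\circ T_\omega^{\varepsilon\,(n)})\cdot g\big)=h\cdot \mathcal{L}_\omega^{\varepsilon\,(n)}(g)$ that the paper leaves implicit.
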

\begin{proof}
    Thanks to \eqref{eqn:openop}, for $n\in\mathbb{N}$ 
    \begin{align*}
        \mathcal{L}_{j,\omega}^{\varepsilon\, (n)}(f) &= \mathcal{L}_{\sigma^n\omega}^\varepsilon\left( \mathds{1}_{I_j \setminus H_{j,\sigma^n \omega}^\varepsilon}\cdot \mathcal{L}_{j,\omega}^{\varepsilon\, (n-1)}( f) \right).
    \end{align*}
    By an inductive procedure, the result follows.
\end{proof}

We are interested in the spectral properties of $\mathcal{L}_{j,\omega}^\varepsilon$ when acting on functions $f\in \BV(I_j)$. The following lemma will be used to show that the Lyapunov exponents and Oseledets projections of $\mathcal{L}_{j,\omega}^\varepsilon$ depend continuously on the perturbations.    
\begin{lemma}
In the setting of \thrm{thrm:openopprop},  
$$\lim_{\varepsilon\to 0}\esssup_{\omega\in \Omega}\tnorm\mathcal{L}_{j,\omega}^\varepsilon - \mathcal{L}_j^0 \tnorm_{\BV(I_j)-L^1(\leb_j)} =0.$$
\label{lem:trip_norm}
\end{lemma}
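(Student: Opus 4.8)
The plan is to bound the triple norm $|||\mathcal{L}_{j,\omega}^\varepsilon - \mathcal{L}_j^0|||_{\BV(I_j)-L^1(\leb_j)}$ by splitting the difference into two pieces: one coming from the difference of the full (closed) operators $\mathcal{L}_\omega^\varepsilon$ and $\mathcal{L}^0$, and one coming from the fact that the open operator multiplies by the indicator $\mathds{1}_{I_j\setminus H_{j,\omega}^\varepsilon}$ whereas the closed restriction effectively multiplies by $\mathds{1}_{I_j}$. Concretely, for $f\in\BV(I_j)$ with $\|f\|_{\BV(I_j)}=1$, write, using \eqref{eqn:openop},
\begin{align*}
\mathcal{L}_{j,\omega}^\varepsilon(f) - \mathcal{L}_j^0(f)
&= \left(\mathcal{L}_\omega^\varepsilon - \mathcal{L}^0\right)\!\left(\mathds{1}_{I_j\setminus H_{j,\omega}^\varepsilon}\cdot f\right)
+ \mathcal{L}^0\!\left(\mathds{1}_{I_j\setminus H_{j,\omega}^\varepsilon}\cdot f - \mathds{1}_{I_j}\cdot f\right)\\
&= \left(\mathcal{L}_\omega^\varepsilon - \mathcal{L}^0\right)\!\left(\mathds{1}_{I_j\setminus H_{j,\omega}^\varepsilon}\cdot f\right)
- \mathcal{L}^0\!\left(\mathds{1}_{H_{j,\omega}^\varepsilon}\cdot f\right),
\end{align*}
where I have used that $\mathcal{L}^0(\mathds{1}_{I_j}\cdot g) = \mathcal{L}_j^0(g)$ for $g\in\BV(I_j)$ by \hyperref[list:I3]{\textbf{(I3)}} (the invariance of $I_j$ under $T^0$). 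Taking $L^1(\leb_j)$ norms and the supremum over $\|f\|_{\BV(I_j)}=1$, it suffices to show both terms go to $0$ uniformly in $\omega$ as $\varepsilon\to 0$.

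For the first term, I would invoke \rem{rem:p1-imp-trip}, which gives $\lim_{\varepsilon\to 0}\esssup_{\omega}|||\mathcal{L}_\omega^\varepsilon - \mathcal{L}^0|||_{\BV(I)-L^1(\leb)}=0$. The only subtlety is that the triple norm there is taken over the unit ball of $\BV(I)$, so I need $\|\mathds{1}_{I_j\setminus H_{j,\omega}^\varepsilon}\cdot f\|_{\BV(I)}$ to be uniformly bounded when $\|f\|_{\BV(I_j)}\leq 1$. Multiplication by an indicator of a union of finitely many intervals (which $H_{j,\omega}^\varepsilon$ is, by \hyperref[list:P3]{\textbf{(P3)}}, and hence so is $I_j\setminus H_{j,\omega}^\varepsilon$) is bounded on $\BV$ with a norm controlled by the number of intervals plus a universal constant; since $\omega\mapsto T_\omega^\varepsilon$ has finite range by \hyperref[list:P1]{\textbf{(P1)}}, this number of intervals is bounded uniformly over $\omega$ and over small $\varepsilon$. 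Thus $\|\mathds{1}_{I_j\setminus H_{j,\omega}^\varepsilon}\cdot f\|_{\BV(I)}\leq K$ for a constant $K$ independent of $\omega$ and small $\varepsilon$, and the first term is at most $K\cdot|||\mathcal{L}_\omega^\varepsilon - \mathcal{L}^0|||_{\BV(I)-L^1(\leb)}\to 0$ uniformly in $\omega$.

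For the second term, I would estimate $\|\mathcal{L}^0(\mathds{1}_{H_{j,\omega}^\varepsilon}\cdot f)\|_{L^1(\leb_j)}\leq \|\mathds{1}_{H_{j,\omega}^\varepsilon}\cdot f\|_{L^1(\leb)}$, since the Perron–Frobenius operator is an $L^1$-contraction. Then $\|\mathds{1}_{H_{j,\omega}^\varepsilon}\cdot f\|_{L^1(\leb)}\leq \|f\|_{L^\infty}\cdot\leb(H_{j,\omega}^\varepsilon)$. Now $\|f\|_{L^\infty}\leq \|f\|_{\BV(I_j)}\leq 1$ (a BV function is bounded by its BV norm on an interval), and $\leb(H_{j,\omega}^\varepsilon)=\leb(H_{j,j-1,\omega}^\varepsilon)+\leb(H_{j,j+1,\omega}^\varepsilon)$. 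By \hyperref[list:P4]{\textbf{(P4)}}, $\mu_j(H_{j,k,\omega}^\varepsilon)=\varepsilon\beta_{j,k,\omega}+o_{\varepsilon\to0}(\varepsilon)$ with $\beta_{j,k}\in L^\infty(\mathbb P)$; combined with the fact that $\phi_j=d\mu_j/\dleb$ is bounded below away from $0$ on a neighbourhood of the infinitesimal holes $I_j\cap H^0$ (by \hyperref[list:I6]{\textbf{(I6)}} and continuity from \hyperref[list:I5]{\textbf{(I5)}}) and that $H_{j,k,\omega}^\varepsilon$ shrinks into $I_j\cap H^0$ uniformly in $\omega$ by \hyperref[list:P3]{\textbf{(P3)}}, we get $\leb(H_{j,k,\omega}^\varepsilon)\leq C\mu_j(H_{j,k,\omega}^\varepsilon) = O(\varepsilon)$ uniformly over $\omega$ (the error term in \hyperref[list:P4]{\textbf{(P4)}} being $\omega$-independent). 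Hence the second term is $O(\varepsilon)$ uniformly in $\omega$, and letting $\varepsilon\to 0$ completes the proof.

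The main obstacle is the uniform-in-$\omega$ control, and it is handled by the finite-range assumption \hyperref[list:P1]{\textbf{(P1)}}: it guarantees a uniform bound on the number of subintervals making up $H_{j,\omega}^\varepsilon$ (hence uniform boundedness of the multiplication operators on $\BV$), and together with \hyperref[list:P3]{\textbf{(P3)}}–\hyperref[list:P4]{\textbf{(P4)}} it ensures the Lebesgue measure of the holes is $O(\varepsilon)$ with $\omega$-independent constants. Everything else is a routine combination of the $L^1$-contraction property of Perron–Frobenius operators, the embedding $\BV\hookrightarrow L^\infty$ on an interval, and \rem{rem:p1-imp-trip}.
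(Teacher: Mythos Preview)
Your argument is correct, but the paper's proof uses a slightly different and cleaner splitting. Instead of adding and subtracting $\mathcal{L}^0(\mathds{1}_{I_j\setminus H_{j,\omega}^\varepsilon}\cdot f)$ as you do, the paper adds and subtracts $\mathcal{L}_\omega^\varepsilon(\mathds{1}_{I_j}\cdot f)$, obtaining
\[
\|\mathcal{L}_{j,\omega}^\varepsilon f - \mathcal{L}_j^0 f\|_{L^1(\leb_j)}
\leq \|\mathcal{L}_\omega^\varepsilon(\mathds{1}_{H_{j,\omega}^\varepsilon}\cdot f)\|_{L^1(\leb_j)}
+ \|(\mathcal{L}_\omega^\varepsilon-\mathcal{L}^0)(\mathds{1}_{I_j}\cdot f)\|_{L^1(\leb_j)}.
\]
The gain is that the first term vanishes \emph{exactly}: by definition of the hole, $T_\omega^\varepsilon(H_{j,\omega}^\varepsilon)\subset I_{j-1}\cup I_{j+1}$, so $\mathcal{L}_\omega^\varepsilon(\mathds{1}_{H_{j,\omega}^\varepsilon}\cdot f)$ is supported off $I_j$ and its $L^1(\leb_j)$ norm is zero. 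The second term is then bounded by $|||\mathcal{L}_\omega^\varepsilon-\mathcal{L}^0|||_{\BV(I)-L^1(\leb)}$ (up to a fixed constant coming from the extension $\mathds{1}_{I_j}\cdot f$) and handled via \rem{rem:p1-imp-trip}. This avoids both of your auxiliary steps: you needed a uniform-in-$\omega$ BV bound for multiplication by $\mathds{1}_{I_j\setminus H_{j,\omega}^\varepsilon}$ (relying on \hyperref[list:P1]{\textbf{(P1)}} for a uniform bound on the number of subintervals), and you needed the estimate $\leb(H_{j,\omega}^\varepsilon)=O(\varepsilon)$, which in the paper is only proved later (\lem{lem:leb_estimate}) and requires \hyperref[list:I5]{\textbf{(I5)}}, \hyperref[list:I6]{\textbf{(I6)}}, \hyperref[list:P3]{\textbf{(P3)}}, \hyperref[list:P4]{\textbf{(P4)}}. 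Your route is perfectly valid and more ``generic'' in that it does not exploit where the hole lands under $T_\omega^\varepsilon$; the paper's route is shorter and uses fewer hypotheses at this stage.
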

\begin{proof}
    We compute $\tnorm\mathcal{L}_{j,\omega}^\varepsilon - \mathcal{L}_j^0 \tnorm_{\BV(I_j)-L^1(\leb_j)}$. Indeed, recalling \dfn{def:triple-norm} and \eqref{eqn:openop},
    \begin{align}
     \tnorm\mathcal{L}_{j,\omega}^\varepsilon - \mathcal{L}_j^0 \tnorm_{\BV(I_j)-L^1(\leb_j)}&= \sup_{\|f\|_{\BV(I_j)}=1}\| (\mathcal{L}_{j,\omega}^\varepsilon - \mathcal{L}_j^0)(f)\|_{L^1(\leb_j)}  \nonumber \\
     &\leq \sup_{\|f\|_{\BV(I_j)}=1}\| \mathcal{L}_\omega^\varepsilon(\mathds{1}_{I_j\setminus H_{j,\omega}^\varepsilon}\cdot f) - \mathcal{L}_\omega^\varepsilon(\mathds{1}_{I_j}\cdot f)\|_{L^1(\leb_j)} \nonumber \\&\quad+\sup_{\|f\|_{\BV(I_j)}=1}\| \mathcal{L}_\omega^\varepsilon(\mathds{1}_{I_j}\cdot f) - \mathcal{L}_j^0(\mathds{1}_{I_j}\cdot f)\|_{L^1(\leb_j)} \nonumber \\
     &\leq \sup_{\|f\|_{\BV(I_j)}=1}\| \mathcal{L}_\omega^\varepsilon(\mathds{1}_{ H_{j,\omega}^\varepsilon}\cdot f)\|_{L^1(\leb_j)}+ \tnorm\mathcal{L}_\omega^\varepsilon - \mathcal{L}^0\tnorm_{\BV(I)-L^1(\leb)} \nonumber \\
     &=  \tnorm\mathcal{L}_\omega^\varepsilon - \mathcal{L}^0\tnorm_{\BV(I)-L^1(\leb)}. \label{eqn:zeropush}
    \end{align}
     In \eqref{eqn:zeropush} we have used the fact that $\| \mathcal{L}_\omega^\varepsilon(\mathds{1}_{ H_{j,\omega}^\varepsilon}\cdot f)\|_{L^1(\leb_j)}=0$. Therefore, due to \hyperref[list:P2]{\textbf{(P2)}}, which thanks to \rem{rem:p1-imp-trip}, asserts that $\tnorm\mathcal{L}_\omega^\varepsilon - \mathcal{L}^0\tnorm_{\BV(I)-L^1(\leb)}=o_{\varepsilon\to 0}(1)$, the result follows. 
\end{proof}
\noindent
\begin{lemma} In the setting of \thrm{thrm:openopprop}, for all $f\in \BV(I_j)$, $\omega\in\Omega$, $\varepsilon>0$, and $n\in\mathbb{N}$ we have that for $A\subseteq I_j$, 
$$\|\mathcal{L}_{\omega}^{\varepsilon\, (n)}(f\cdot \mathds{1}_A)\|_{L^1(\leb_j)}\leq \|f\|_{L^1(\leb_j)}.$$
\label{lem:L1bound-closed}
\end{lemma}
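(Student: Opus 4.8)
The plan is to deduce the bound from the elementary fact that Perron--Frobenius operators are weak contractions on $L^1$, together with the trivial monotonicity $\|h\|_{L^1(\leb_j)}\le\|h\|_{L^1(\leb)}$ coming from $I_j\subseteq I$. Throughout I would read $\mathcal{L}_{\omega}^{\varepsilon}$ as the $L^1(\leb)$ Perron--Frobenius operator of $T_\omega^\varepsilon$ (its restriction to $\BV(I)$ is the same operator), identify $f\in\BV(I_j)$ with its extension by zero to $I$, and note that $f\cdot\mathds{1}_A$ lies in $L^1(\leb)$ since $|f\cdot\mathds{1}_A|\le|f|$, so $\mathcal{L}_{\omega}^{\varepsilon\,(n)}(f\cdot\mathds{1}_A)$ is well defined regardless of the regularity of $A$.

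The argument is then a three-step chain of inequalities. First, since $I_j\subseteq I$, for $h:=\mathcal{L}_{\omega}^{\varepsilon\,(n)}(f\cdot\mathds{1}_A)$ we have $\|h\|_{L^1(\leb_j)}=\int_{I_j}|h|\,\dleb\le\int_I|h|\,\dleb=\|h\|_{L^1(\leb)}$. Second, I would invoke the defining duality of the Perron--Frobenius operator (\dfn{def:pfoperator}), $\int_I(\mathcal{L}_\omega^\varepsilon g)\,\psi\,\dleb=\int_I g\,(\psi\circ T_\omega^\varepsilon)\,\dleb$ for $g\in L^1(\leb)$, $\psi\in L^\infty(\leb)$; taking $\psi=\operatorname{sgn}(\mathcal{L}_\omega^\varepsilon g)$ gives $\|\mathcal{L}_\omega^\varepsilon g\|_{L^1(\leb)}\le\|g\|_{L^1(\leb)}$, so each fibre operator, and hence the composition $\mathcal{L}_{\omega}^{\varepsilon\,(n)}=\mathcal{L}_{\sigma^{n-1}\omega}^\varepsilon\circ\cdots\circ\mathcal{L}_{\omega}^\varepsilon$, is a weak contraction of $L^1(\leb)$; thus $\|h\|_{L^1(\leb)}\le\|f\cdot\mathds{1}_A\|_{L^1(\leb)}$. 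Third, since $A\subseteq I_j$, $\|f\cdot\mathds{1}_A\|_{L^1(\leb)}=\int_A|f|\,\dleb\le\int_{I_j}|f|\,\dleb=\|f\|_{L^1(\leb_j)}$. Concatenating the three estimates yields $\|\mathcal{L}_{\omega}^{\varepsilon\,(n)}(f\cdot\mathds{1}_A)\|_{L^1(\leb_j)}\le\|f\|_{L^1(\leb_j)}$.

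There is essentially no obstacle here; the only point requiring a moment's care is the bookkeeping between the operator as it acts on $\BV(I_j)$ in the statement and the global $L^1(\leb)$ contraction used in the proof — one must pass through the zero-extension of $f\cdot\mathds{1}_A$ to $I$ and observe that the $\BV$ operator agrees with the $L^1$ one on this function, so the contraction property transfers verbatim. Everything else is routine.
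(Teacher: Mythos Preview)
Your proof is correct and follows essentially the same route as the paper: both reduce the estimate to the $L^1(\leb)$ weak-contraction property of the Perron--Frobenius cocycle together with $A\subseteq I_j$. The only cosmetic difference is that the paper pulls the indicator $\mathds{1}_{I_j}$ inside the operator via the identity $\mathds{1}_{I_j}\cdot\mathcal{L}_\omega^{\varepsilon\,(n)}(g)=\mathcal{L}_\omega^{\varepsilon\,(n)}\bigl(g\cdot\mathds{1}_{(T_\omega^{\varepsilon\,(n)})^{-1}(I_j)}\bigr)$ before applying contraction, whereas you more simply bound $\|\cdot\|_{L^1(\leb_j)}\le\|\cdot\|_{L^1(\leb)}$ at the outset.
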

\begin{proof}
This is a direct computation. Observe that 
\begin{align*}
\|\mathcal{L}_{\omega}^{\varepsilon\, (n)}(f\cdot \mathds{1}_A)\|_{L^1(\leb_j)}&= \int_I \mathds{1}_{I_j}|\mathcal{L}_{\omega}^{\varepsilon\, (n)}(f\cdot \mathds{1}_A)|\, {\dleb(x)}{}\\
&= \int_I |\mathcal{L}_{\omega}^{\varepsilon\, (n)}(f\cdot \mathds{1}_{A\cap (T_\omega^{\varepsilon\, (n)})^{-1}(I_j)})|\, {\dleb(x)}{}\\
&\leq \int_I |f\cdot \mathds{1}_{A\cap (T_\omega^{\varepsilon\, (n)})^{-1}(I_j)}|\, {\dleb(x)}{}\\
&= \int_{I\cap A\cap (T_\omega^{\varepsilon\, (n)})^{-1}(I_j)} |f|\, \dleb(x)\\
&\leq \|f\|_{L^1(\leb_j)}.
\end{align*}
In the above we have used the fact that $(T_\omega^{\varepsilon\, (n)})^{-1}(I)=I$ and $A\subseteq I_j$.
\end{proof}
\jp{We now show that $\mathcal{L}_{j,\omega}^\varepsilon$ satisfies a uniform Lasota-Yorke inequality across both $\varepsilon>0$ and $\omega\in\Omega$.}
\begin{lemma}
    In the setting of \thrm{thrm:openopprop}, there exist constants $C_1,C_2,r>0$ with $r<1$ such that for all $f\in \BV(I_j)$, $\varepsilon>0$, $\omega\in\Omega$, and $n\in \mathbb{N}$, 
    \begin{equation}
        \|\mathcal{L}_{j,\omega}^{\varepsilon\, (n)}f \|_{\BV(I_j)}\leq C_1r^n\| f\|_{\BV(I_j)}+ C_2\| f\|_{L^1(\leb_j)}. \label{eqn:ULY-open}
    \end{equation}
    \label{lem:ULY_open}
\end{lemma}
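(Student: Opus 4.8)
The plan is to prove \eqref{eqn:ULY-open} in the classical two-step fashion: first establish a variation estimate with a contraction factor strictly below $1$ for the single block of length $n'$ (the integer furnished by \hyperref[list:O1]{\textbf{(O1)}}), and then bootstrap to arbitrary $n$. Since the $L^{1}$-part of the $\BV(I_j)$-norm is already controlled by \lem{lem:L1bound-closed}, which gives $\|\mathcal{L}_{j,\omega}^{\varepsilon\,(n)}f\|_{L^1(\leb_j)}\le\|f\|_{L^1(\leb_j)}$, the entire content is the variation bound $\var(\mathcal{L}_{j,\omega}^{\varepsilon\,(n')}f)\le\alpha\,\var(f)+\beta\|f\|_{L^1(\leb_j)}$ with $\alpha<1$ and $\alpha,\beta$ independent of $f$, of $\omega\in\Omega$, and of small $\varepsilon>0$.

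For the block estimate I would use \lem{lem:open-comp}: for $x\in I_j$ the product $\prod_{i=0}^{n'-1}\mathds{1}_{(T_\omega^{\varepsilon\,(i)})^{-1}(I_j\setminus H_{j,\sigma^i\omega}^\varepsilon)}(x)$ is exactly the indicator of the surviving set $X_{j,\omega,n'-1}^\varepsilon$, on which $T_\omega^{\varepsilon\,(n')}$ coincides with the open map $T_{j,\omega}^{\varepsilon\,(n')}$ and sends $X_{j,\omega,n'-1}^\varepsilon$ into $I_j$. Fixing $\mathcal{A}_j\in\mathscr{A}_j^{0\,(n')}$ and decomposing $X_{j,\omega,n'-1}^\varepsilon$ into the surviving monotonicity branches $Z\in\mathcal{Z}_{j,\omega,*}^{\varepsilon\,(n')}(\mathcal{A}_j)$, I would write $\mathcal{L}_{j,\omega}^{\varepsilon\,(n')}f=\sum_{Z}\mathcal{L}_\omega^{\varepsilon\,(n')}(f\mathds{1}_Z)$ and estimate branch by branch. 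Writing $\psi$ for the restriction of $T_{j,\omega}^{\varepsilon\,(n')}$ to $Z$, the standard one-branch inequality (extension by zero adds at most $2\|f/|\psi'|\|_{L^\infty(Z)}$, plus change of variables) reads
\[
\var_{I_j}\!\bigl(\mathcal{L}_\omega^{\varepsilon\,(n')}(f\mathds{1}_Z)\bigr)\le\Bigl\|\tfrac{1}{\psi'}\Bigr\|_{L^\infty(Z)}\bigl(\var_Z f+2\|f\|_{L^\infty(Z)}\bigr)+\|f\|_{L^\infty(Z)}\,\var_Z\!\bigl(\tfrac{1}{|\psi'|}\bigr).
\]
Here \rem{rem:T-eps'-bound} gives $\|1/\psi'\|_{L^\infty(Z)}\le\Lambda^{-n'}<\tfrac19$; the defining property of $\mathscr{A}_j^{0\,(n')}$ together with the $C^2$-closeness \hyperref[list:P2]{\textbf{(P2)}} of $T_{j,\omega}^\varepsilon$ to $T_j^0$ (for $\varepsilon$ small) bounds $\var_Z(1/|\psi'|)$ by a fixed multiple of $\Lambda^{-n'}$; and the pointwise term is controlled by $\|f\|_{L^\infty(Z)}\le\essinf_Z|f|+\var_Z f\le\frac{1}{\leb_j(Z)}\int_Z|f|\,\dleb+\var_Z f$. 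This last bound is exactly where \hyperref[list:O1]{\textbf{(O1)}}, specifically the non-vanishing \eqref{eqn:large-survival}, is used, supplying a uniform lower bound $\essinf_{\omega}\min_Z\leb_j(Z)\ge c_0>0$.

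Summing over the (essentially disjoint) intervals $Z$ and using $\sum_Z\var_Z f\le\var(f)$ and $\sum_Z\int_Z|f|\,\dleb\le\|f\|_{L^1(\leb_j)}$, the coefficient of $\var(f)$ collects into a fixed multiple of $\Lambda^{-n'}$ (with the numerical factors produced above it comes out around $5\Lambda^{-n'}$), which by \eqref{eqn:enough-expansion} is $<\tfrac59<1$; the remaining terms are a constant times $\|f\|_{L^1(\leb_j)}$. Adding $\|\mathcal{L}_{j,\omega}^{\varepsilon\,(n')}f\|_{L^1(\leb_j)}\le\|f\|_{L^1(\leb_j)}$ from \lem{lem:L1bound-closed} yields $\|\mathcal{L}_{j,\omega}^{\varepsilon\,(n')}f\|_{\BV(I_j)}\le\alpha\|f\|_{\BV(I_j)}+\beta'\|f\|_{L^1(\leb_j)}$ with $\alpha<1$. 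Iterating over $\lfloor n/n'\rfloor$ blocks and absorbing the at most $n'-1$ leftover steps through a crude single-step bound — for which $\var(\mathds{1}_{I_j\setminus H_{j,\omega}^\varepsilon}f)$ is bounded by a constant times $\|f\|_{\BV(I_j)}$ since $H_{j,\omega}^\varepsilon$ has a uniformly bounded number of connected components by \hyperref[list:P1]{\textbf{(P1)}} and \hyperref[list:P3]{\textbf{(P3)}}, combined with the uniform Lasota--Yorke inequality \hyperref[list:P5]{\textbf{(P5)}} for $\mathcal{L}_\omega^\varepsilon$ on $\BV(I)$ — gives \eqref{eqn:ULY-open} with $r=\alpha^{1/n'}<1$. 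The main obstacle is the boundary (pointwise-value) term $\|f\|_{L^\infty(Z)}$ intrinsic to the open operator: unlike in the closed case it cannot be discarded, and controlling it uniformly over $\omega$ and $\varepsilon$ is precisely what forces the uniform lower bound on surviving-branch lengths in \hyperref[list:O1]{\textbf{(O1)}}; the only other care needed is bookkeeping the numerical constants so that the contraction factor stays $<1$ after summation (guaranteed by \eqref{eqn:enough-expansion}) and checking that the distortion constants, the number of hole components, and the Lasota--Yorke constants are all uniform over $\omega$ and small $\varepsilon$, which is ensured by \hyperref[list:P1]{\textbf{(P1)}}, \hyperref[list:P2]{\textbf{(P2)}} and \hyperref[list:P5]{\textbf{(P5)}}.
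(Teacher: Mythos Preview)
Your proposal is correct and follows essentially the same route as the paper: establish a one-block variation contraction at the fixed length $n'$ supplied by \hyperref[list:O1]{\textbf{(O1)}}, control the $L^1$-part via \lem{lem:L1bound-closed}, and then iterate over blocks. The only cosmetic difference is that the paper outsources the branch-by-branch variation inequality to \cite[Lemma~2.C.1]{thermoformalism} (obtaining the constants $9$ and $8$ in \eqref{eqn:varbound}), whereas you derive it explicitly and arrive at a coefficient of roughly $5\Lambda^{-n'}$ in front of $\var(f)$; both are below $1$ by \eqref{eqn:enough-expansion}, and the paper also dispatches the leftover steps with the terse ``one proceeds in the usual way'' where you spell out the use of \hyperref[list:P3]{\textbf{(P3)}} and \hyperref[list:P5]{\textbf{(P5)}}.
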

\begin{proof}
\Change{Following \cite[Lemma 2.C.1]{thermoformalism}, one can show that given $n'\in\mathbb{N}$ from $\hyperref[list:O1]{\textbf{(O1)}}$, for each $j\in\{1,\cdots, m\}$ and $f\in\BV(I_j)$,
\begin{equation}
    \var_{I_j}(\mathcal{L}_{j,\omega}^{\varepsilon\, (n')}f)\leq \left\| \frac{1}{(T_{j,\omega}^{\varepsilon\, (n')})^\prime}\right\|_{L^\infty(\leb_j)}\left(9 \var_{I_j}(f) + \frac{8}{\min_{Z\in \mathcal{Z}_{j,\omega,*}^{\varepsilon\, (n')}(\mathcal{A}_j)}\leb_j(Z)} \|f\|_{L^1(\leb_j)}\right).
    \label{eqn:varbound}
\end{equation}
Thus, by \hyperref[list:O1]{\textbf{(O1)}} and \rem{rem:T-eps'-bound}, for all $\varepsilon>0$ there exists $K_1,r>0$ such that $9\left\|{1}/{(T_{j,\omega}^{\varepsilon\, (n')})^\prime}\right\|_{L^\infty(\leb_j)}\leq r<1$ and $\min_{Z\in \mathcal{Z}_{j,\omega,*}^{\varepsilon\, (n')}(\mathcal{A}_j)}\leb_j(Z)\geq K_1^{-1}$. Therefore from \eqref{eqn:varbound},
\begin{equation}
     \var_{I_j}(\mathcal{L}_{j,\omega}^{\varepsilon\, (n')}f)\leq r^{n'} \var_{I_j}(f) + K_1r^{n'} \|f\|_{L^1(\leb_j)}. \label{eqn:updated-varbound}
\end{equation}
Next, through \lem{lem:open-comp} and \lem{lem:L1bound-closed} 
\begin{align}
    \left\|\mathcal{L}_{j,\omega}^{\varepsilon\, (n')}f\right\|_{L^1(\leb_j)}&=\left\| \mathcal{L}_{\omega}^{\varepsilon\, (n')}\left(f\cdot \prod_{i=0}^{n'-1}\mathds{1}_{(T_{\omega}^{\varepsilon\, (i)})^{-1}(I_j\setminus H_{j,\sigma^{i}\omega}^\varepsilon)}\right)
 \right\|_{L^1(\leb_j)} \nonumber\\
 &\leq \|f\|_{L^1(\leb_j)}, \label{eqn:L1bound}
\end{align}
since $\bigcap_{i=0}^{n-1} (T_{\omega}^{\varepsilon\, (i)})^{-1}(I_j\setminus H_{j,\sigma^{i}\omega}^\varepsilon) \subset I_j$. Thus, \eqref{eqn:updated-varbound} and \eqref{eqn:L1bound} imply that there exists a $K_2>0$ such that
\begin{align*}
    \|\mathcal{L}_{j,\omega}^{\varepsilon\, (n')}f \|_{\BV(I_j)}&\leq  r^{n'} \var_{I_j}(f) + (K_1r^{n'}+1) \|f\|_{L^1(\leb_j)}\\
    &\leq r^{n'}\|f\|_{\BV(I_j)} + K_2 \|f\|_{L^1(\leb_j)}.
\end{align*}
To obtain \eqref{eqn:ULY-open}, one proceeds in the usual way by using blocks of length $kn'$.} 
\end{proof}
We now prove that the Lyapunov exponents and Oseledets projections of $\mathcal{L}_{j,\omega}^\varepsilon$ depend continuously on $\varepsilon$ through Crimmins' random perturbation theorem, \jp{\thrm{thrm:crimmins}}. As discussed in \Sec{sec:prelim}, the results of \cite{Crimmins} require the Banach space on which the Perron-Frobenius operator is acting to be separable. It is argued in \cite[Appendix 2.B]{thermoformalism} that Crimmins' stability result, in particular \thrm{thrm:crimmins}, holds for the non-separable Banach space $\BV(I_j)$ with \hyperref[list:P1]{\textbf{(P1)}}. Therefore, if 
\begin{itemize}
    \item[1.] $(\Omega,\mathcal{F},\mathbb{P},\sigma, \BV(I_j),\mathcal{L}^0_j)$ has a hyperbolic Oseledets splitting and is $\|\cdot\|_{L^1(\leb_j)}$-bounded (see \dfn{def:hyperbolic-splitting});
    \item[2.] the set $\{(\Omega,\mathcal{F},\mathbb{P},\sigma, \BV(I_j),\mathcal{L}^\varepsilon_j)\}_{\varepsilon\geq 0}$ satisfies a uniform Lasota-Yorke inequality (see \dfn{def:ULY}); and 
    \item[3.] $\lim_{\varepsilon\to 0} \esssup_{\omega\in\Omega} \tnorm \mathcal{L}_{j,\omega}^\varepsilon - \mathcal{L}^0_j \tnorm_{\BV(I_j)-L^1(\leb_j)}=0$ (see \dfn{def:triple-norm}),
\end{itemize}
then $\mathcal{L}_{j,\omega}^\varepsilon$ has an Oseledets splitting for sufficiently small $\varepsilon$, and the Lyapunov exponents and Oseledets projections of $\mathcal{L}_{j,\omega}^\varepsilon$ converge to those of $\mathcal{L}^0_j$ as $\varepsilon\to 0$. Furthermore, \thrm{thrm:crimmins} shows that $\mathcal{L}_{j,\omega}^\varepsilon$ has a uniform spectral gap.  
\begin{lemma}
    In the setting of \thrm{thrm:openopprop}, $\mathcal{L}_{j,\omega}^\varepsilon$ has an Oseledets splitting for sufficiently small $\varepsilon$, and the Lyapunov exponents and Oseledets projections of $\mathcal{L}_{j,\omega}^\varepsilon$ converge to those of $\mathcal{L}^0_j$ as $\varepsilon\to 0$ uniformly over $\omega\in\Omega$ away from a $\mathbb{P}$-null set. Further, for $\mathbb{P}$-a.e. $\omega\in\Omega$ and $\varepsilon\geq 0$ 
    \begin{itemize}
        \item[(a)] There is a functional $\nu_{j,\omega}^\varepsilon\in \BV^*(I_j)$, $\lambda_{j,\omega}^\varepsilon\in \mathbb{R}^+$, and $\phi_{j,\omega}^\varepsilon\in \BV(I_j)$ such that for all $f\in\BV(I_j)$ 
        $$\mathcal{L}_{j,\omega}^\varepsilon (\phi_{j,\omega}^\varepsilon)=\lambda_{j,\omega}^\varepsilon\phi_{j,\sigma\omega}^\varepsilon \quad \mathrm{and} \quad \nu_{j,\sigma\omega}^\varepsilon(\mathcal{L}_{j,\omega}^\varepsilon (f))=\lambda_{j,\omega}^\varepsilon\nu_{j,\omega}^\varepsilon(f).$$
        \item[(b)] There is an operator $Q_{j,\omega}^\varepsilon:\BV(I_j)\to \BV(I_j)$ such that for each $f\in \BV(I_j)$ we have 
        $$(\lambda_{j,\omega}^\varepsilon)^{-1}\mathcal{L}_{j,\omega}^\varepsilon (f)=: \nu_{j,\omega}^\varepsilon(f)\phi_{j,\sigma\omega}^\varepsilon + Q_{j,\omega}^\varepsilon(f).$$
        Furthermore, we have 
        $$Q_{j,\omega}^\varepsilon(\phi_{j,\omega}^\varepsilon)=0 \quad \mathrm{and} \quad \nu_{j,\sigma\omega}^\varepsilon(Q_{j,\omega}^\varepsilon(f))=0.$$
        \item[(c)] The leading Oseledets space of $\mathcal{L}_{j,\omega}^\varepsilon$ is one-dimensional and spanned by $\phi_{j,\omega}^\varepsilon\in \BV(I_j)$. Further, $$\lim_{\varepsilon\to 0} \esssup_{\omega\in\Omega} \|\phi_{j,\omega}^\varepsilon - \phi_j\|_{L^1(\leb_j)} = 0.$$
        \item[(d)] There exists a constant $C>0$ and $\theta \in (0,1)$ such that for all $f\in \BV(I_j)$, $n\geq 0$ and $\varepsilon>0$,
     $$\|Q_{j,\omega}^{\varepsilon\, (n)}(f)\|_{\BV(I_j)}\leq C\theta^n\|f\|_{\BV(I_j)}.$$
    \end{itemize}
     \label{lem:crim_open}
\end{lemma}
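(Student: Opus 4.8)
The plan is to deduce the whole lemma from Crimmins' random perturbation theorem \cite[Theorem~A]{Crimmins}, in the form valid on the non-separable space $\BV(I_j)$ established in \cite[Appendix~2.B]{thermoformalism} under the finite-range hypothesis \hyperref[list:P1]{\textbf{(P1)}} (which also secures $\mathbb{P}$-continuity of $\omega\mapsto\mathcal{L}_{j,\omega}^\varepsilon$). Of the three hypotheses that theorem requires, two are already available: the uniform Lasota--Yorke inequality for $\{(\Omega,\mathcal{F},\mathbb{P},\sigma,\BV(I_j),\mathcal{L}^\varepsilon_j)\}_{\varepsilon\geq0}$ is \lem{lem:ULY_open}, and the triple-norm convergence $\lim_{\varepsilon\to0}\esssup_\omega|||\mathcal{L}_{j,\omega}^\varepsilon-\mathcal{L}_j^0|||_{\BV(I_j)-L^1(\leb_j)}=0$ is \lem{lem:trip_norm}. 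So the first step is to verify the remaining hypothesis: that the constant cocycle generated by $\mathcal{L}_j^0$ has a hyperbolic Oseledets splitting (\dfn{def:hyperbolic-splitting}) and is $\|\cdot\|_{L^1(\leb_j)}$-bounded.

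For this I would show that $\mathcal{L}_j^0$ has a genuine spectral gap on $\BV(I_j)$. By \hyperref[list:I1]{\textbf{(I1)}}--\hyperref[list:I2]{\textbf{(I2)}} and \cite{EG_LY} (cf.\ \rem{rem:LY0} applied to $T^0|_{I_j}$) it satisfies a Lasota--Yorke inequality, hence is quasi-compact with essential spectral radius below $1$; being the transfer operator of a self-map of $I_j$, its spectral radius equals $1$; by \hyperref[list:I4]{\textbf{(I4)}} the eigenvalue $1$ is simple with eigenspace $\mathrm{span}(\phi_j)$; and the remainder of the peripheral spectrum is excluded by mixing of $\mu_j$ (automatic, for instance, when $T^0|_{I_j}$ is weakly covering as in the footnote to \hyperref[list:I6]{\textbf{(I6)}}, and part of the setup inherited from \cite{gtp_met}). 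A constant quasi-compact cocycle with a spectral gap trivially admits a hyperbolic Oseledets splitting up to dimension $1$: take $V_1(\omega)=\mathrm{span}(\phi_j)$ with $l_1=0$, and $W(\omega)$ the complementary spectral subspace with $\kappa<0$. Boundedness in $\|\cdot\|_{L^1(\leb_j)}$ is immediate, since $(T_j^0)^{-1}(I_j)=I_j$ makes $\mathcal{L}_j^0$ an $L^1(\leb_j)$-contraction.

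Crimmins' theorem then provides, for all small $\varepsilon$ and $\mathbb{P}$-a.e.\ $\omega$: an Oseledets splitting of $\mathcal{L}_{j,\omega}^\varepsilon$ with one-dimensional top space; convergence of the Lyapunov exponents and of the equivariant spectral projections $\Pi_{j,\omega}^\varepsilon$ to those of $\mathcal{L}_j^0$, uniformly over $\omega$ off a $\mathbb{P}$-null set; and a uniform (in $\varepsilon$ and $\omega$) spectral gap. I would then read off (a)--(d). Let $\phi_{j,\omega}^\varepsilon$ span the top space, normalized by $\leb_j(\phi_{j,\omega}^\varepsilon)=1$ with a sign making it nonnegative (possible since $\mathcal{L}_{j,\omega}^\varepsilon$ is positive and $\phi_{j,\omega}^\varepsilon$ lies in the range of its iterates), and write $\Pi_{j,\omega}^\varepsilon(f)=\nu_{j,\omega}^\varepsilon(f)\,\phi_{j,\omega}^\varepsilon$ for the dual functional $\nu_{j,\omega}^\varepsilon\in\BV^*(I_j)$. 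Equivariance of the projections gives the eigenrelations in (a) with a scalar $\lambda_{j,\omega}^\varepsilon$, positive by positivity of $\mathcal{L}_{j,\omega}^\varepsilon$ together with $\phi_{j,\omega}^\varepsilon\geq0$ (not annihilated for small $\varepsilon$). Setting $Q_{j,\omega}^\varepsilon:=(\lambda_{j,\omega}^\varepsilon)^{-1}\mathcal{L}_{j,\omega}^\varepsilon(\mathrm{Id}-\Pi_{j,\omega}^\varepsilon)$ yields the decomposition of (b), and $Q_{j,\omega}^\varepsilon\phi_{j,\omega}^\varepsilon=0$, $\nu_{j,\sigma\omega}^\varepsilon(Q_{j,\omega}^\varepsilon f)=0$ follow from $\Pi_{j,\omega}^\varepsilon\phi_{j,\omega}^\varepsilon=\phi_{j,\omega}^\varepsilon$ and the fact that $\nu_{j,\sigma\omega}^\varepsilon$ vanishes on the slow space; part (d) is the uniform spectral gap applied to $Q_{j,\omega}^{\varepsilon\,(n)}$; and for (c), applying $\Pi_{j,\omega}^\varepsilon\to\Pi_j^0$ to $\mathds{1}_{I_j}$ gives $\nu_{j,\omega}^\varepsilon(\mathds{1}_{I_j})\phi_{j,\omega}^\varepsilon\to\leb_j(\mathds{1}_{I_j})\phi_j$ in $L^1(\leb_j)$ uniformly, and dividing by $\nu_{j,\omega}^\varepsilon(\mathds{1}_{I_j})\to\leb_j(\mathds{1}_{I_j})=|I_j|>0$ yields $\esssup_\omega\|\phi_{j,\omega}^\varepsilon-\phi_j\|_{L^1(\leb_j)}\to0$.

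The main obstacle is not the application of Crimmins' theorem — that is essentially mechanical once its hypotheses hold — but securing its unperturbed input, namely a bona fide spectral gap for $\mathcal{L}_j^0$: quasi-compactness and simplicity of the eigenvalue $1$ are routine, but excluding any further peripheral spectrum requires mixing of $\mu_j$, which is exactly the role of the covering-type strengthening behind \hyperref[list:I6]{\textbf{(I6)}}, and is also what keeps the perturbed top Oseledets space one-dimensional. The only other care needed is confirming that the constants and normalizations delivered by \cite[Theorem~A]{Crimmins} are uniform over both $\omega$ and small $\varepsilon$, which is built into its conclusion once \hyperref[list:P1]{\textbf{(P1)}} is invoked.
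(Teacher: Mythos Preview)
Your proposal is correct and follows essentially the same route as the paper: both apply \cite[Theorem~A]{Crimmins} (in the non-separable version from \cite[Appendix~2.B]{thermoformalism} via \hyperref[list:P1]{\textbf{(P1)}}), invoking \lem{lem:trip_norm} and \lem{lem:ULY_open} for two of the three hypotheses, and then verifying the hyperbolic Oseledets splitting and $L^1$-boundedness for the constant cocycle $\mathcal{L}_j^0$ via quasi-compactness and \hyperref[list:I4]{\textbf{(I4)}}. Your treatment is in fact slightly more explicit than the paper's in two places: you spell out how (a)--(d) are read off from the conclusion of Crimmins' theorem, and you flag that one-dimensionality of the top Oseledets space requires ruling out other peripheral eigenvalues (mixing), a point the paper uses later (e.g.\ in \lem{lem:L1/2ctrl}) but glosses over here by simply citing \hyperref[list:I4]{\textbf{(I4)}}.
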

\begin{proof}
    This is a direct application of \thrm{thrm:crimmins}. Equip the Banach space $(\BV(I_j),\|\cdot\|_{\BV(I_j)})$ with weak norm $\|\cdot\|_{L^1(\leb_j)}$. Due to \lem{lem:trip_norm} and \lem{lem:ULY_open} it remains to show that $(\Omega,\mathcal{F},\mathbb{P},\sigma, \BV(I_j),\mathcal{L}^0_j)$ has a hyperbolic Oseledets splitting and is $\|\cdot\|_{L^1(\leb_j)}$-bounded. Indeed, observe that $\mathcal{L}_j^0$ is the Perron-Frobenius operator associated with a piecewise $C^2$ uniformly expanding map satisfying \hyperref[list:I1]{\textbf{(I1)}} and \hyperref[list:I2]{\textbf{(I2)}}. Due to \rem{rem:LY0}, this implies that $\mathcal{L}_j^0$ satisfies a Lasota-Yorke inequality and is thus quasi-compact when acting on $\BV(I_j)$. Since $\mathcal{L}_j^0$ is independent of $\omega\in\Omega$, quasi-compactness ensures that $(\Omega,\mathcal{F},\mathbb{P},\sigma, \BV(I_j),\mathcal{L}^0_j)$ has a hyperbolic Oseledets splitting. \jp{We note that in our setting, due to \hyperref[list:I4]{\textbf{(I4)}} and \rem{rmk:free-splitting}, $(\Omega,\mathcal{F},\mathbb{P},\sigma,\BV(I_j),\mathcal{L}^0_j)$ has a hyperbolic Oseledets splitting of index $d=1$}. Further, \lem{lem:L1bound-closed} ensures that for any $f\in\BV(I_j)$, $\|\mathcal{L}_j^0f\|_{L^1(\leb_j)}\leq \|f\|_{L^1(\leb_j)}$. We now address the dimensionality of the leading Oseledets space of $\mathcal{L}_{j,\omega}^\varepsilon$ in the statement of (c). Due to \hyperref[list:I4]{\textbf{(I4)}}, the top Oseledets space of $\mathcal{L}_j^0$ is one-dimensional, spanned by $\phi_j$ with associated Lyapunov exponent $l_{j,1}^0=0$. By continuity of the Oseledets projections, this implies that the leading Oseledets space of $\mathcal{L}_{j,\omega}^\varepsilon$ is also one dimensional for all $\varepsilon>0$ sufficiently small. Further, by \cite[Corollary 2.5]{DFGTV_limthrm}, the leading Oseledets space for the backward adjoint cocycle of $\mathcal{L}_{j,\omega}^\varepsilon$ is one dimensional and spanned by $\nu_{j,\omega}^\varepsilon\in \BV^*(I_j)$.         
\end{proof}

\begin{corollary}
In the setting of \thrm{thrm:openopprop}, the family of functionals $(\nu_{j,\omega}^\varepsilon)_{\omega\in\Omega}\in \BV^*(I_j)$ satisfy 
     $$\lim_{\varepsilon\to 0} \esssup_{\omega\in\Omega} \|\nu_{j,\omega}^\varepsilon - \leb_j\|_{\BV^*(I_j)}=0.$$ 
     \label{cor:meas_conv}
     \begin{proof}
      Let $\Pi_{j,\omega}^\varepsilon$ denote the projection onto the leading Oseledets space of $\mathcal{L}_{j,\omega}^\varepsilon$. That is, for $f\in\BV(I_j)$, let $\Pi_{j,\omega}^\varepsilon(f):=\nu_{j,\omega}^\varepsilon(f)\phi_{j,\omega}^\varepsilon$. Due to \lem{lem:crim_open}, the Oseledets projections of $\mathcal{L}_{j,\omega}^\varepsilon$ converge to those of $\mathcal{L}_j^0$ as $\varepsilon\to 0$ in the sense that  
      \begin{equation}
          \lim_{\varepsilon\to 0}\esssup_{\omega\in\Omega}\tnorm\Pi_{j,\omega}^\varepsilon - \Pi_{j}^0 \tnorm_{\BV(I_j)-L^1(\leb_j)} = 0.\label{eqn:proj_conv}
      \end{equation}
      But, \begingroup \allowdisplaybreaks
      \begin{align*}
         \tnorm\Pi_{j,\omega}^\varepsilon - \Pi_{j}^0 \tnorm_{\BV(I_j)-L^1(\leb_j)}&= \sup_{\|f\|_{\BV(I_j)}=1}\|(\Pi_{j,\omega}^\varepsilon - \Pi_{j}^0)(f)\|_{L^1(\leb_j)}\\
         &=\sup_{\|f\|_{\BV(I_j)}=1}\|\nu_{j,\omega}^\varepsilon(f)\phi_{j,\omega}^\varepsilon - \leb_j(f)\phi_j\|_{L^1(\leb_j)}\\
         &\geq \sup_{\|f\|_{\BV(I_j)}=1} \left| |\nu_{j,\omega}^\varepsilon(f)|\|\phi_{j,\omega}^\varepsilon\|_{L^1(\leb_j)}- |\leb_j(f)| \|\phi_{j}\|_{L^1(\leb_j)} \right|\\
         &= \|\phi_{j}\|_{L^1(\leb_j)}\sup_{\|f\|_{\BV(I_j)}=1} \left\| |\nu_{j,\omega}^\varepsilon(f)|(1+o_{\varepsilon\to 0}(1)) - |\leb_j(f)| \right|\\
         &\geq\sup_{\|f\|_{\BV(I_j)}=1} |\nu_{j,\omega}^\varepsilon(f)- \leb_j(f)|- \sup_{\|f\|_{\BV(I_j)}=1}|\nu_{j,\omega}^{\varepsilon}(f)o_{\varepsilon\to 0}(1)| \\
         &= \|\nu_{j,\omega}^\varepsilon- \leb_j\|_{\BV^*(I_j)} - o_{\varepsilon\to 0}(1)\|\nu_{j,\omega}^{\varepsilon}\|_{\BV^*(I_j)}.
      \end{align*} \endgroup
       \lem{lem:crim_open}(a) asserts that \Change{for all $\varepsilon>0$ and $\mathbb{P}$-a.e. $\omega\in\Omega$, $\nu_{j,\omega}^\varepsilon\in \BV^*(I_j)$}. Therefore, there exists a constant $C>0$ such that for all $\varepsilon>0$ sufficiently small, $\esssup_{\omega\in\Omega}\|\nu_{j,\omega}^{\varepsilon}\|_{\BV^*(I_j)}<C$. Thus,
      \begin{align*}
      \|\nu_{j,\omega}^\varepsilon- \leb_j\|_{\BV^*(I_j)}\leq \tnorm\Pi_{j,\omega}^\varepsilon - \Pi_{j}^0 \tnorm_{\BV(I_j)-L^1(\leb_j)} + o_{\varepsilon\to 0}(1)   
      \end{align*}
      meaning 
      \begin{align*}
          \lim_{\varepsilon\to 0}\esssup_{\omega\in\Omega}\|\nu_{j,\omega}^\varepsilon- \leb_j\|_{\BV^*(I_j)}&\leq \lim_{\varepsilon\to 0}\esssup_{\omega\in\Omega} \tnorm\Pi_{j,\omega}^\varepsilon - \Pi_{j}^0 \tnorm_{\BV(I_j)-L^1(\leb_j)}=0.
      \end{align*}
      In the last line, we have used \eqref{eqn:proj_conv}. 
     \end{proof}
\end{corollary}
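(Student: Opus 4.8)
The plan is to reduce the claim to the convergence of the rank-one Oseledets projections supplied by \lem{lem:crim_open}, peeling off the leading Oseledets vector using the $L^1$-convergence $\phi_{j,\omega}^\varepsilon\to\phi_j$ of \lem{lem:crim_open}(c). First I would pin down the unperturbed data: since $\mathcal{L}_j^0$ is the Perron--Frobenius operator of $T^0|_{I_j}$ and $T^0(I_j)\subseteq I_j$, the defining duality of the Perron--Frobenius operator gives $\int_{I_j}\mathcal{L}_j^0 f\,\dleb=\int_{I_j}f\,\dleb$ for all $f\in\BV(I_j)$, so $\leb_j$ is a left fixed point of $\mathcal{L}_j^0$; combined with $\leb_j(\phi_j)=\mu_j(I_j)=1$ this identifies $\leb_j$ as the normalised leading dual Oseledets functional of $\mathcal{L}_j^0$, and hence (using that the top Oseledets space of $\mathcal{L}_j^0$ is one-dimensional by \hyperref[list:I4]{\textbf{(I4)}}) identifies the corresponding spectral projection as $\Pi_j^0(f)=\leb_j(f)\phi_j$. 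By \lem{lem:crim_open}, $\esssup_{\omega\in\Omega}|||\Pi_{j,\omega}^\varepsilon-\Pi_j^0|||_{\BV(I_j)-L^1(\leb_j)}\to 0$ as $\varepsilon\to0$, where $\Pi_{j,\omega}^\varepsilon(f)=\nu_{j,\omega}^\varepsilon(f)\phi_{j,\omega}^\varepsilon$.

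Next, for $f$ in the unit ball of $\BV(I_j)$ I would add and subtract $\nu_{j,\omega}^\varepsilon(f)\phi_j$ to get $\Pi_{j,\omega}^\varepsilon(f)-\Pi_j^0(f)=(\nu_{j,\omega}^\varepsilon(f)-\leb_j(f))\phi_j+\nu_{j,\omega}^\varepsilon(f)(\phi_{j,\omega}^\varepsilon-\phi_j)$, take $L^1(\leb_j)$-norms, and use $\|\phi_j\|_{L^1(\leb_j)}=1$ together with $|\nu_{j,\omega}^\varepsilon(f)|\leq\|\nu_{j,\omega}^\varepsilon\|_{\BV^*(I_j)}$ to obtain, after taking the supremum over the unit ball,
$$\|\nu_{j,\omega}^\varepsilon-\leb_j\|_{\BV^*(I_j)}\leq|||\Pi_{j,\omega}^\varepsilon-\Pi_j^0|||_{\BV(I_j)-L^1(\leb_j)}+\|\nu_{j,\omega}^\varepsilon\|_{\BV^*(I_j)}\,\delta(\varepsilon),$$
where $\delta(\varepsilon):=\esssup_{\omega\in\Omega}\|\phi_{j,\omega}^\varepsilon-\phi_j\|_{L^1(\leb_j)}\to0$ by \lem{lem:crim_open}(c). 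Bounding $\|\nu_{j,\omega}^\varepsilon\|_{\BV^*(I_j)}\leq\|\leb_j\|_{\BV^*(I_j)}+\|\nu_{j,\omega}^\varepsilon-\leb_j\|_{\BV^*(I_j)}$ and noting $\|\leb_j\|_{\BV^*(I_j)}\leq1$, the inequality self-improves: for $\varepsilon$ small enough that $\delta(\varepsilon)\leq\tfrac12$ (uniformly in $\omega$, since $\delta$ is) one gets $\|\nu_{j,\omega}^\varepsilon-\leb_j\|_{\BV^*(I_j)}\leq2|||\Pi_{j,\omega}^\varepsilon-\Pi_j^0|||_{\BV(I_j)-L^1(\leb_j)}+2\delta(\varepsilon)$; taking $\esssup_{\omega\in\Omega}$ and sending $\varepsilon\to0$ finishes the proof. (Equivalently, one could skip the self-improvement step by invoking the uniform bound $\esssup_{\omega\in\Omega}\|\nu_{j,\omega}^\varepsilon\|_{\BV^*(I_j)}\leq C$, which follows from the uniform spectral gap in Crimmins' theorem as recorded before \lem{lem:crim_open}.)

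I expect the computation itself to be routine; the only genuinely delicate points are getting the normalisation of $\nu_j^0=\leb_j$ right, so that the limiting projection is exactly $\leb_j(\cdot)\phi_j$ rather than a scalar multiple of it, and controlling the stray factor $\|\nu_{j,\omega}^\varepsilon\|_{\BV^*(I_j)}$ without circular reasoning --- the minor obstacle handled either by the self-improvement above or by the uniform bound from Crimmins' theorem. No dynamical input beyond \lem{lem:crim_open} is needed.
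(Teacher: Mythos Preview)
Your proposal is correct and follows essentially the same approach as the paper: both deduce the convergence of $\nu_{j,\omega}^\varepsilon$ from the convergence of the rank-one projections $\Pi_{j,\omega}^\varepsilon\to\Pi_j^0$ supplied by \lem{lem:crim_open}, separating off the density factor via \lem{lem:crim_open}(c) and absorbing the residual using a uniform bound on $\|\nu_{j,\omega}^\varepsilon\|_{\BV^*(I_j)}$. Your add-and-subtract decomposition $(\nu_{j,\omega}^\varepsilon(f)-\leb_j(f))\phi_j+\nu_{j,\omega}^\varepsilon(f)(\phi_{j,\omega}^\varepsilon-\phi_j)$ is a bit cleaner than the paper's chain of reverse triangle inequalities, and your self-improvement step (absorbing $\|\nu_{j,\omega}^\varepsilon\|_{\BV^*}$ into the left side once $\delta(\varepsilon)\leq\tfrac12$) is a nice alternative to invoking the a~priori uniform bound, but these are cosmetic differences rather than a different route.
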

\begin{remark}
    By a similar argument to that made in the proof of Lemma 2.5.10 in \cite{thermoformalism}, through \lem{lem:crim_open}(a),(b), $\nu_{j,\omega}^\varepsilon(\phi_{j,\omega}^\varepsilon)=1>0$, and through \cite[Lemma 2.6]{DFGTV_limthrm}, for $g\in F_{j,\omega}^\varepsilon$, the Oseledets space of $\mathcal{L}_{j,\omega}^\varepsilon$ complementary to $\mathrm{span}\{\phi_{j,\omega}^\varepsilon\}$, $\nu_{j,\omega}^\varepsilon(g)=0$. \Change{Due to \lem{lem:crim_open}, $\mathcal{L}_{j,\omega}^\varepsilon$ admits an Oseledets splitting for $\varepsilon>0$ sufficiently small. Therefore, any $f\in \BV(I_j)$ may be expressed as \jp{$\alpha\phi_{j,\omega}^\varepsilon+g$ for $g\in F_{j,\omega}^\varepsilon$ and $\alpha\in\mathbb{R}$.}} \jp{Thanks to \cor{cor:meas_conv}, and the fact that $\leb_{j}(\phi_j^0)=1$, we may deduce that for $\varepsilon>0$ sufficiently small, $\alpha >0$}. Thus, by linearity, $\nu_{j,\omega}^\varepsilon$ is a positive linear functional, meaning that due to the Riesz-Markov Theorem (see for example \cite[Theorem A.3.11]{VO_ET}) $\nu_{j,\omega}^\varepsilon$ can be identified with a real finite Borel measure on $I_j$.  
\end{remark}
As in \cite{gtp_met}, it is important to understand the behaviour of the functions spanning the leading Oseledets space over the holes. 
\begin{lemma}
In the setting of \thrm{thrm:openopprop}, if $\phi_{j,\omega}^\varepsilon\in\BV(I_j)$ denotes the functions spanning the leading Oseledets space of $\mathcal{L}_{j,\omega}^\varepsilon$, then for any $j,k\in\{1,\cdots, m\}$
$$\lim_{\varepsilon\to 0}\esssup_{\omega\in\Omega}\sup_{x\in H_{j,k,\omega}^\varepsilon}|\phi_{j,\omega}^\varepsilon(x) - \phi_j(x)|=0.$$
    \label{lem:phiepsj_open}
\end{lemma}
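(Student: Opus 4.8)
The plan is to leverage the uniform convergence on the whole interval established in \lem{lem:crim_open}(c), namely $\esssup_{\omega}\|\phi_{j,\omega}^\varepsilon-\phi_j\|_{L^1(\leb_j)}\to 0$, together with the equicontinuity of the family $\{\phi_{j,\omega}^\varepsilon\}$ at the infinitesimal holes, to upgrade $L^1$-convergence to pointwise (indeed uniform) convergence on the holes $H_{j,k,\omega}^\varepsilon$. The key structural fact is that, by \hyperref[list:P3]{\textbf{(P3)}}, each hole $H_{j,k,\omega}^\varepsilon$ shrinks (in the Hausdorff metric, uniformly over $\omega$) to the finite set $I_j\cap H^0$ of infinitesimal holes, and by \hyperref[list:I5]{\textbf{(I5)}}--\hyperref[list:I6]{\textbf{(I6)}} the limiting density $\phi_j$ is continuous and positive at each point of $I_j\cap H^0$. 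So morally we must show that $\phi_{j,\omega}^\varepsilon$ does not develop a spurious jump or spike as $x$ ranges over the tiny interval(s) comprising $H_{j,k,\omega}^\varepsilon$.

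First I would obtain a uniform bound on $\var_{I_j}(\phi_{j,\omega}^\varepsilon)$ and on $\|\phi_{j,\omega}^\varepsilon\|_{\BV(I_j)}$: applying the uniform Lasota-Yorke inequality \eqref{eqn:ULY-open} from \lem{lem:ULY_open} to $f=\phi_{j,\omega}^\varepsilon$, using the equivariance $\mathcal{L}_{j,\omega}^{\varepsilon\,(n)}\phi_{j,\omega}^\varepsilon=\lambda_{j,\omega}^{\varepsilon\,(n)}\phi_{j,\sigma^n\omega}^\varepsilon$ with $\lambda_{j,\omega}^{\varepsilon\,(n)}$ close to $1$ (by \lem{lem:crim_open}(a), or alternatively \thrm{thrm:openopprop}(a)), and the normalisation $\|\phi_{j,\omega}^\varepsilon\|_{L^1(\leb_j)}$ bounded (which follows from $\nu_{j,\omega}^\varepsilon(\phi_{j,\omega}^\varepsilon)=1$, $\|\nu_{j,\omega}^\varepsilon\|_{\BV^*}$ bounded by \cor{cor:meas_conv}, and the $L^1$-convergence to $\phi_j$), one gets $\esssup_\omega \var_{I_j}(\phi_{j,\omega}^\varepsilon)\leq V$ for some constant $V$ independent of $\varepsilon$ small. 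Next, fix $\eta>0$. By continuity of $\phi_j$ at each of the finitely many points of $I_j\cap H^0$, choose a neighbourhood of radius $\rho$ around each such point on which $\phi_j$ oscillates by less than $\eta$; by \hyperref[list:P3]{\textbf{(P3)}}, for $\varepsilon$ small enough (uniformly in $\omega$) we have $H_{j,k,\omega}^\varepsilon$ contained in the union of these neighbourhoods, so each connected component of $H_{j,k,\omega}^\varepsilon$ sits inside one such $\rho$-ball, say centred at $p\in I_j\cap H^0$. Now on that component, for any $x\in H_{j,k,\omega}^\varepsilon$,
\[
|\phi_{j,\omega}^\varepsilon(x)-\phi_j(x)|\leq |\phi_{j,\omega}^\varepsilon(x)-\phi_j(p)| + |\phi_j(p)-\phi_j(x)| \leq |\phi_{j,\omega}^\varepsilon(x)-\phi_j(p)| + \eta,
\]
so it remains to control $|\phi_{j,\omega}^\varepsilon(x)-\phi_j(p)|$. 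To that end, pick (for each component) a reference point $x_0$ in the same $\rho$-ball but outside all holes where we have good $L^1$-type control: using that $\|\phi_{j,\omega}^\varepsilon-\phi_j\|_{L^1(\leb_j)}$ is small and $\var_{I_j}(\phi_{j,\omega}^\varepsilon)\leq V$, a standard argument shows $\phi_{j,\omega}^\varepsilon$ is within $O(\eta)$ of $\phi_j$ at $x_0$ — concretely, on a subinterval of length comparable to $\rho$ the average of $|\phi_{j,\omega}^\varepsilon-\phi_j|$ is small, hence some point is close, and then the bounded variation transfers closeness to $x_0$ up to an error controlled by $\var$ times the interval length plus $\eta$. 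Finally, $|\phi_{j,\omega}^\varepsilon(x)-\phi_{j,\omega}^\varepsilon(x_0)|\leq \var_{[x_0,x]}(\phi_{j,\omega}^\varepsilon)$, which I would like to be small — and here is where the main subtlety lies.

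The hard part will be showing that the \emph{local} variation of $\phi_{j,\omega}^\varepsilon$ across (and near) the hole is small, i.e.\ that the uniform global bound $V$ on $\var_{I_j}(\phi_{j,\omega}^\varepsilon)$ actually localises to give $o_{\varepsilon\to 0}(1)$ variation over the shrinking hole and its immediate surroundings. One clean way to do this is to exploit the explicit action of the operator: $\phi_{j,\sigma\omega}^\varepsilon = (\lambda_{j,\omega}^\varepsilon)^{-1}\mathcal{L}_\omega^\varepsilon(\mathds{1}_{I_j\setminus H_{j,\omega}^\varepsilon}\phi_{j,\omega}^\varepsilon)$, and iterate $n'$ (or enough) times; since $\mathcal{L}_{j,\omega}^{\varepsilon\,(n)}$ contracts variation up to a lower-order term (by \eqref{eqn:varbound}/\eqref{eqn:updated-varbound}), and near a point $p\in I_j\cap H^0$ the preimage branches of $T_j^{0\,(n)}$ avoid the critical set by \hyperref[list:I5]{\textbf{(I5)}} (so no new discontinuities are created there for $\varepsilon$ small, arguing as in \cite{GTHW_metastable}), the density $\phi_{j,\sigma^n\omega}^\varepsilon$ is, on a fixed neighbourhood of $p$, a finite sum of $C^1$-close-to-$\phi_j$ pieces whose total variation over a $\rho$-ball is $O(\rho)+o_\varepsilon(1)$. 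Composing this with the previous paragraph, choosing first $\rho$ then $\eta$ then $\varepsilon$, yields $\esssup_\omega\sup_{x\in H_{j,k,\omega}^\varepsilon}|\phi_{j,\omega}^\varepsilon(x)-\phi_j(x)|<C\eta$ for arbitrary $\eta$, completing the proof. I would model the localisation argument closely on \cite[Section 2.4]{GTHW_metastable} and the analogous quenched statement in \cite{gtp_met}, adapting it to the present open-operator setting and carrying the uniformity over $\omega$ using the finite-range assumption \hyperref[list:P1]{\textbf{(P1)}}.
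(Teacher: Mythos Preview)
Your proposal is correct and lands on essentially the same approach as the paper: both reduce the problem to adapting the argument of \cite[Lemma 3.14]{BS_rand} (via \cite[Lemma 5.5]{gtp_met}) to the open-operator cocycle, using the finite-range assumption \hyperref[list:P1]{\textbf{(P1)}} to secure uniformity over $\omega$ and \lem{lem:ULY_open} in place of the closed Lasota--Yorke inequality. The paper's proof is terse and simply invokes these references, phrasing the key point as control of the ``regular and saltus parts'' of the density; your write-up unpacks what that control looks like (uniform $\var$ bound, continuity of $\phi_j$ at $I_j\cap H^0$ via \hyperref[list:I5]{\textbf{(I5)}}, and localisation of variation near the infinitesimal holes), but the underlying mechanism is the same.
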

\begin{proof}
    As in \cite[Lemma 5.5]{gtp_met}, one can follow \jp{the same} argument \jpt{as} that of Lemma 3.14 in \cite{BS_rand} \jp{by keeping track of the randomness}, and replacing the assumption of finite $\Omega$ with \hyperref[list:P1]{\textbf{(P1)}}, ensuring $\omega\mapsto T_\omega^\varepsilon$ is finite. The techniques used in \cite[Lemma 3.14]{BS_rand} involve controlling the so-called regular and saltus parts of the invariant density for the annealed closed operator. Upon close inspection of the proofs, given \lem{lem:ULY_open}, such techniques can be used to provide the same control of the regular and saltus parts of the function spanning the leading Oseledets space of $\mathcal{L}_{j,\omega}^\varepsilon$. In turn, by a similar argument to that made in \cite[Lemma 3.14]{BS_rand}, the result follows. 
\end{proof}
To conclude, we provide a first order approximation for the leading Lyapunov multipliers $\lambda_{j,\omega}^\varepsilon$ from \lem{lem:crim_open}(a). This result relies on the sequential perturbation theorem \cite[
Theorem 2.1.2]{thermoformalism}. 
\begin{remark}
    We note that the conditions of \cite[Theorem 2.1.2]{thermoformalism} are required to hold for each $\omega\in\Omega$. \Change{In our setting, we find that such conditions hold uniformly over $\omega\in\Omega$ away from a $\mathbb{P}$-null set. Upon close inspection of the proof of \cite[Theorem 2.1.2]{thermoformalism}, a uniform over $\omega\in\Omega$ away from a $\mathbb{P}$-null set analogue of \cite[Theorem 2.1.2]{thermoformalism} holds.}
\end{remark}
\begin{lemma}
In the setting of \thrm{thrm:openopprop}, for $\mathbb{P}$-a.e. $\omega\in \Omega$ there exists $a,M>0$ such that \jp{for $\varepsilon>0$ sufficiently small}
$$a\varepsilon \leq \leb_j(H_{j,\omega}^\varepsilon)\leq M\varepsilon.$$
\label{lem:leb_estimate}
\end{lemma}
\begin{proof}
For the upper bound, we use a similar argument to that made in the proof of Lemma 5.8 in \cite{gtp_met}. In particular, if $K\in\mathbb{N}$, consider the set of infinitesimal holes in $I_j$ given by $H^0\cap I_j=\{h_j^1,\cdots, h_j^K\}$.\footnote{Recall that $H^0:=(T^0)^{-1}(\mathfrak{B})\setminus \mathfrak{B}$ is the set of infinitesimal holes.} Take $H_{j,\omega}^{{\varepsilon}}=\cup_{i=1}^K H_{j,\omega}^{i,{\varepsilon}}$ such that For each $i=1,\dots, K$, by \hyperref[list:P3]{\textbf{(P3)}}, $H_{j,\omega}^{i,{\varepsilon}}\to h_j^i$ in the Hausdorff metric uniformly over $\omega\in\Omega$ away from a $\mathbb{P}$-null set. Recall that by \hyperref[list:I5]{\textbf{(I5)}}, $\phi_j$ is continuous at all points in $H^0\cap I_j$. Thus, by Lebesgue's differentiation theorem, 
    \begin{equation}
        {\leb_j(H_{j,\omega}^{{\varepsilon}})} =\mu_j(H_{j,\omega}^{{\varepsilon}}) \left(\sum_{i=1}^K (\phi_j(h_j^i)+o_{{\varepsilon} \to 0}(1)) \frac{\leb_j(H_{j,\omega}^{i,{\varepsilon}})}{\leb_j(H_{j,\omega}^{{\varepsilon}})}\right)^{-1}.
        \label{eqn:meas_ratio}
    \end{equation}
    Note that $\sum_{i=1}^J\frac{\leb_j(H_{j,\omega}^{i,{\varepsilon}})}{\leb_j(H_{j,\omega}^{{\varepsilon}})}=1$. Further, \hyperref[list:I6]{\textbf{(I6)}} asserts that there exists a constant $L>0$ such that for each $i=1,\dots, K$, $\phi_j(h_j^i)\geq L>0$. Therefore, \eqref{eqn:meas_ratio} asserts that
    \begin{align}
        {\leb_j(H_{j,\omega}^{{\varepsilon}})} &\leq \frac{\mu_j(H_{j,\omega}^{{\varepsilon}})}{L+o_{\varepsilon\to 0}(1)} =\frac{\varepsilon(\beta_{j,j+1,\omega}+\beta_{j,j-1,\omega}+o_{\varepsilon\to 0}(1))}{L+o_{\varepsilon\to 0}(1)}.\label{eqn:leb_est}
    \end{align}
   But by \hyperref[list:P4]{\textbf{(P4)}}, $\beta_{i,j}\in L^{\infty}(\mathbb{P})$ for $i,j\in \{1,\cdots,m\}$, and thus we may deduce that for $\mathbb{P}$-a.e. $\omega\in\Omega$ there exists an $M>0$ such that \jp{for $\varepsilon>0$ sufficiently small}
    \begin{align*}
        \leb_j(H_{j,\omega}^{{\varepsilon}})\leq M\varepsilon. 
    \end{align*}
For the lower bound, for $j\in\{1,\cdots, m\}$, since $\phi_j\in \BV(I_j)$, we have via \hyperref[list:P4]{\textbf{(P4)}} that 
\begin{align*}
    \mu_j(H_{j,\omega}^\varepsilon)&= \int_{H_{j,\omega}^\varepsilon}\phi_j(x)\, \dleb(x)\leq \|\phi_j\|_{\BV(I_j)}\leb_j(H_{j,\omega}^\varepsilon)
\end{align*}
where $\mu_j(H_{j,\omega}^\varepsilon) \geq \varepsilon \beta^* + o_{\varepsilon \to 0}(\varepsilon)$ for some $\beta^*>0$. Thus, for $\varepsilon>0$ sufficiently small, $\leb_j(H_{j,\omega}^\varepsilon)\geq a\varepsilon$.
\end{proof}
\begin{lemma}
    In the setting of \thrm{thrm:openopprop}, for $\mathbb{P}$-a.e. $\omega\in \Omega$ and $\varepsilon\geq 0$,
    \begin{align}
        \Delta_{j,\omega}^\varepsilon&:= \leb_j((\mathcal{L}_j^0 - \mathcal{L}_{j,\omega}^\varepsilon)(\phi_j))= \varepsilon(\beta_{j,j-1,\omega}+\beta_{j,j+1,\omega}) +o_{\varepsilon\to 0}(\varepsilon).\label{eqn:delta}
        \end{align}
        Further, there exists a constant $M>0$ such that \Change{for each $j\in\{1,\cdots, m\}$, $\mathbb{P}$-a.e. $\omega\in\Omega$}, \jp{and $\varepsilon>0$ sufficiently small}
        \begin{align}  
        \eta_{j,\omega}^\varepsilon&:=\|\leb_j(\mathcal{L}_j^0 - \mathcal{L}_{j,\omega}^\varepsilon) \|_{\BV^*(I_j)}\leq \varepsilon M. \label{eqn:eta} 
    \end{align}
    \label{lem:delta-eta}
\end{lemma}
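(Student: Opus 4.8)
The plan is to derive both estimates from a single identity: for every $f\in\BV(I_j)$ (in particular for $f=\phi_j$), every $\omega\in\Omega$ and every $\varepsilon\geq0$,
$$\leb_j\bigl((\mathcal{L}_j^0-\mathcal{L}_{j,\omega}^\varepsilon)(f)\bigr)=\int_{H_{j,\omega}^\varepsilon}f\,\dleb.$$
To prove it, recall from \eqref{eqn:openop} that $\mathcal{L}_j^0(f)=\mathcal{L}^0(\mathds{1}_{I_j}\cdot f)$ (the infinitesimal holes $I_j\cap H^0$ being Lebesgue-null) and $\mathcal{L}_{j,\omega}^\varepsilon(f)=\mathcal{L}_\omega^\varepsilon(\mathds{1}_{I_j\setminus H_{j,\omega}^\varepsilon}\cdot f)$, and apply the duality $\int_I g\cdot\mathcal{L}_T(h)\,\dleb=\int_I(g\circ T)\cdot h\,\dleb$ characterising the Perron-Frobenius operator, with $g=\mathds{1}_{I_j}$. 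This gives $\leb_j(\mathcal{L}_j^0(f))=\int_{I_j\cap(T^0)^{-1}(I_j)}f\,\dleb$ and $\leb_j(\mathcal{L}_{j,\omega}^\varepsilon(f))=\int_{(I_j\setminus H_{j,\omega}^\varepsilon)\cap(T_\omega^\varepsilon)^{-1}(I_j)}f\,\dleb$. By \hyperref[list:I3]{\textbf{(I3)}} the set $I_j$ is $T^0$-invariant, so $I_j\subseteq(T^0)^{-1}(I_j)$ and the first domain reduces to $I_j$; by the neighbour assumption of \rem{rem:neighbour}, every point of $I_j\setminus H_{j,\omega}^\varepsilon$ is sent into $I_j$ by $T_\omega^\varepsilon$, so $I_j\setminus H_{j,\omega}^\varepsilon\subseteq(T_\omega^\varepsilon)^{-1}(I_j)$ up to a Lebesgue-null set and the second domain reduces to $I_j\setminus H_{j,\omega}^\varepsilon$. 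Subtracting yields the identity.

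Granting this, \eqref{eqn:delta} follows by setting $f=\phi_j$: since $\phi_j=d\mu_j/\dleb$ by \hyperref[list:I4]{\textbf{(I4)}}, the identity reads $\Delta_{j,\omega}^\varepsilon=\mu_j(H_{j,\omega}^\varepsilon)$, and because $H_{j,\omega}^\varepsilon=H_{j,j-1,\omega}^\varepsilon\cup H_{j,j+1,\omega}^\varepsilon$ with the two pieces disjoint up to a null set (no point is sent into both $I_{j-1}$ and $I_{j+1}$), \hyperref[list:P4]{\textbf{(P4)}} gives $\mu_j(H_{j,\omega}^\varepsilon)=\varepsilon(\beta_{j,j-1,\omega}+\beta_{j,j+1,\omega})+o_{\varepsilon\to0}(\varepsilon)$, with the error uniform in $\omega$ and the usual convention that a term for a missing neighbour ($j=1$ or $j=m$) is zero. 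For \eqref{eqn:eta} I would instead estimate the functional $f\mapsto\leb_j((\mathcal{L}_j^0-\mathcal{L}_{j,\omega}^\varepsilon)(f))$ on the unit ball of $\BV(I_j)$: the identity gives $|\leb_j((\mathcal{L}_j^0-\mathcal{L}_{j,\omega}^\varepsilon)(f))|\leq\|f\|_{L^\infty(I_j)}\,\leb_j(H_{j,\omega}^\varepsilon)$, and the continuous embedding $\BV(I_j)\hookrightarrow L^\infty(I_j)$, obtained from $\|f\|_{L^\infty(I_j)}\leq\var_{I_j}(f)+\leb(I_j)^{-1}\|f\|_{L^1(\leb_j)}$ applied to the minimal-variation representative, gives $\eta_{j,\omega}^\varepsilon\leq C_j\,\leb_j(H_{j,\omega}^\varepsilon)$ with $C_j=\max\{1,\leb(I_j)^{-1}\}$. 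Plugging in the upper bound $\leb_j(H_{j,\omega}^\varepsilon)\leq M_j\varepsilon+o_{\varepsilon\to0}(\varepsilon)$ of \lem{lem:leb_estimate} and taking $M:=\max_{1\leq j\leq m}C_jM_j$ completes the proof.

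The computations are routine; the only delicate points are the Lebesgue-null identifications, namely that $I_j\setminus H_{j,\omega}^\varepsilon$ agrees with $I_j\cap(T_\omega^\varepsilon)^{-1}(I_j)$ up to measure zero (which requires discarding the finite set of preimages of the shared endpoints $\mathfrak{B}$, and similarly at $\varepsilon=0$), together with checking that the $o_{\varepsilon\to0}(\varepsilon)$ errors inherited from \hyperref[list:P4]{\textbf{(P4)}} and \lem{lem:leb_estimate} are uniform over $\omega\in\Omega$ away from a $\mathbb{P}$-null set. Beyond this bookkeeping I do not anticipate any substantial obstacle.
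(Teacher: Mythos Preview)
Your proposal is correct and follows essentially the same route as the paper: both arguments reduce, via duality for the Perron--Frobenius operator, to the identity $\leb_j\bigl((\mathcal{L}_j^0-\mathcal{L}_{j,\omega}^\varepsilon)(f)\bigr)=\int_{H_{j,\omega}^\varepsilon}f\,\dleb$, then apply it with $f=\phi_j$ together with \hyperref[list:P4]{\textbf{(P4)}} for \eqref{eqn:delta}, and with general $f$ plus \lem{lem:leb_estimate} for \eqref{eqn:eta}. The only cosmetic differences are that the paper uses $\mathcal{L}_j^0\phi_j=\phi_j$ directly for the first part rather than the general identity, and that you keep track of the embedding constant $C_j$ in $\BV(I_j)\hookrightarrow L^\infty(I_j)$ whereas the paper suppresses it.
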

\begin{proof}
    We first obtain \eqref{eqn:delta}. Using \eqref{eqn:openop}, \Change{for $\mathbb{P}$-a.e. $\omega\in \Omega$} and $\varepsilon\geq 0$
    \begin{align*}
    \leb_j((\mathcal{L}_j^0 - \mathcal{L}_{j,\omega}^\varepsilon)(\phi_j))&= \int_{I_j} (\mathcal{L}_j^0 - \mathcal{L}_{j,\omega}^\varepsilon)(\phi_j)(x)\, \dleb(x)\\
    &= \int_{I_j} \phi_j(x) \dleb(x) - \int_{I_j} \mathcal{L}_{j,\omega}^\varepsilon(\phi_j)(x)\, \dleb(x)\\
    &=\int_{I_j} \phi_j(x) \dleb(x) - \int_{(T_{\omega}^\varepsilon)^{-1}(I_j)} (\mathds{1}_{I_j\setminus H_{j,\omega}^\varepsilon} \cdot \phi_j)(x)\, \dleb(x)\\
    &=\int_{I_j} \phi_j(x) \dleb(x) - \int_{I_j\setminus H_{j,\omega}^\varepsilon} \phi_j(x)\, \dleb(x)\\
    &= \mu_j(H_{j,\omega}^\varepsilon)\\
    &= \varepsilon(\beta_{j,j-1,\omega}+\beta_{j,j+1,\omega}) +o_{\varepsilon\to 0}(\varepsilon).
    \end{align*}
    In the above we have used \hyperref[list:P4]{\textbf{(P4)}} and the fact that $(T_{\omega}^\varepsilon)^{-1}(I_j)= I_j\setminus H_{j,\omega}^\varepsilon\cup (H_{j+1,j,\omega}^\varepsilon\cup H_{j-1,j,\omega}^\varepsilon)$. We now prove that \eqref{eqn:eta} holds. Indeed, by following a similar calculation to above, since $(T^0)^{-1}(I_j)=I_j$ 
    \begin{align*}
        \|\leb_j(\mathcal{L}_j^0 - \mathcal{L}_{j,\omega}^\varepsilon) \|_{\BV^*(I_j)}&=\sup_{\|f\|_{\BV(I_j)}=1}|\leb_j((\mathcal{L}_j^0 - \mathcal{L}_{j,\omega}^\varepsilon)(f))|\\
        &= \sup_{\|f\|_{\BV(I_j)}=1} \left|\int_{I_j} f(x)\, \dleb(x)-\int_{I_j\setminus H_{j,\omega}^\varepsilon} f(x)\, \dleb(x)  \right|\\
        &\leq \sup_{\|f\|_{\BV(I_j)}=1} \|\mathds{1}_{H_{j,\omega}^\varepsilon}\cdot f\|_{L^1(\leb_j)} \\
        &\leq\|\mathds{1}_{H_{j,\omega}^\varepsilon}\|_{L^1(\leb_j)}\\
        &= \leb_j(H_{j,\omega}^\varepsilon).
    \end{align*}
    By applying \lem{lem:leb_estimate} the result follows.
\end{proof}
We then obtain the following first order estimate on the Lyapunov multipliers for $\mathcal{L}_{j,\omega}^\varepsilon$.
\begin{lemma}
In the setting of \thrm{thrm:openopprop}, fix $k\in\mathbb{N}$, then 
\begin{align*}
    q_{j,\omega}^{0\, (k)} &:= \lim_{\varepsilon\to 0} \frac{\leb_j\left( (\mathcal{L}_j^0 - \mathcal{L}_{j,\omega}^\varepsilon)(\mathcal{L}_{j,\sigma^{-k}\omega}^{\varepsilon \, (k)})(\mathcal{L}_j^0 - \mathcal{L}_{j,\sigma^{-k-1}\omega}^\varepsilon)(\phi_j) \right)}{\leb_j((\mathcal{L}_j^0 - \mathcal{L}_{j,\omega}^\varepsilon)(\phi_j))}=0
\end{align*}
\Change{uniformly over $\omega\in\Omega$ away from a $\mathbb{P}$-null set.}
\label{lem:qk}
\end{lemma}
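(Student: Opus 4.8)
The plan is to show that the numerator of $q_{j,\omega}^{0\,(k)}$ is $o_{\varepsilon\to 0}(\varepsilon)$ uniformly over $\omega\in\Omega$ away from a $\mathbb{P}$-null set, since \lem{lem:delta-eta} together with \hyperref[list:P4]{\textbf{(P4)}} already gives that the denominator $\leb_j((\mathcal{L}_j^0-\mathcal{L}_{j,\omega}^\varepsilon)(\phi_j))$ equals $\varepsilon(\beta_{j,j-1,\omega}+\beta_{j,j+1,\omega})+o_{\varepsilon\to 0}(\varepsilon)\geq \beta^*\varepsilon\bigl(1+o_{\varepsilon\to 0}(1)\bigr)>0$ for all small $\varepsilon$ and a.e.\ $\omega$; the claimed limit then follows at once.

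First I would rewrite the numerator. Set $g_\omega^\varepsilon:=\mathcal{L}_{j,\sigma^{-k}\omega}^{\varepsilon\,(k)}\bigl((\mathcal{L}_j^0-\mathcal{L}_{j,\sigma^{-k-1}\omega}^\varepsilon)(\phi_j)\bigr)$. Using $\mathcal{L}_j^0\phi_j=\phi_j$ from \hyperref[list:I4]{\textbf{(I4)}} and the cocycle identity $\mathcal{L}_{j,\sigma^{-k}\omega}^{\varepsilon\,(k)}\circ\mathcal{L}_{j,\sigma^{-k-1}\omega}^\varepsilon=\mathcal{L}_{j,\sigma^{-k-1}\omega}^{\varepsilon\,(k+1)}$, we have
$$g_\omega^\varepsilon=\mathcal{L}_{j,\sigma^{-k}\omega}^{\varepsilon\,(k)}(\phi_j)-\mathcal{L}_{j,\sigma^{-k-1}\omega}^{\varepsilon\,(k+1)}(\phi_j).$$
By the same computation as in the proof of \lem{lem:delta-eta} — which rests on $T^0$ leaving $I_j$ invariant, so $\leb_j(\mathcal{L}_j^0 f)=\int_{I_j}f\,\dleb$, and on $I_j\setminus H_{j,\omega}^\varepsilon=I_j\cap(T_\omega^\varepsilon)^{-1}(I_j)$, so $\leb_j(\mathcal{L}_{j,\omega}^\varepsilon f)=\int_{I_j\setminus H_{j,\omega}^\varepsilon}f\,\dleb$ — the numerator equals $\leb_j\bigl((\mathcal{L}_j^0-\mathcal{L}_{j,\omega}^\varepsilon)(g_\omega^\varepsilon)\bigr)=\int_{H_{j,\omega}^\varepsilon}g_\omega^\varepsilon\,\dleb$, hence is bounded in absolute value by $\leb_j(H_{j,\omega}^\varepsilon)\,\sup_{x\in H_{j,\omega}^\varepsilon}|g_\omega^\varepsilon(x)|$. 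Since $\leb_j(H_{j,\omega}^\varepsilon)\leq M\varepsilon+o_{\varepsilon\to 0}(\varepsilon)$ uniformly over $\omega$ by \lem{lem:leb_estimate}, it remains to prove that $\sup_{x\in H_{j,\omega}^\varepsilon}|g_\omega^\varepsilon(x)|\to 0$ as $\varepsilon\to 0$, uniformly over $\omega$.

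The crux is the following claim: for each fixed $n\in\mathbb{N}$ there is a neighbourhood $U$ of $H^0\cap I_j=\{h_j^1,\dots,h_j^K\}$ in $I_j$ on which $\mathcal{L}_{j,\sigma^{-n}\omega}^{\varepsilon\,(n)}(\phi_j)\to\phi_j$ uniformly as $\varepsilon\to 0$, uniformly over $\omega$. Granting this with $n=k$ and $n=k+1$ yields $g_\omega^\varepsilon\to 0$ uniformly on $U$, and since \hyperref[list:P3]{\textbf{(P3)}} forces $H_{j,\omega}^\varepsilon\subseteq U$ for all small $\varepsilon$ (uniformly over $\omega$), the needed bound follows. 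To establish the claim, shrink $U$ so that $\phi_j$ is continuous on $U$ — possible because \hyperref[list:I5]{\textbf{(I5)}} ensures continuity of $\phi_j$ at each $h_j^\ell$ (cf.\ \cite[Section~2.1]{GTHW_metastable}) — and so that, for every $x\in U$ and every $y$ with $T_j^{0\,(n)}(y)=x$, the orbit $y,T_j^0(y),\dots,T_j^{0\,(n-1)}(y)$ avoids $H^0$. The latter is arranged using \hyperref[list:I5]{\textbf{(I5)}} and \hyperref[list:P7]{\textbf{(P7)}}, which together prevent the forward orbits of $\mathcal{C}^0$ and of $\mathfrak{B}$ from ever meeting $H^0$; consequently neither the branch structure of the iterates nor $\phi_j$ itself degenerates near $H^0$. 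Then, by \lem{lem:open-comp} together with \hyperref[list:P2]{\textbf{(P2)}} and \hyperref[list:P3]{\textbf{(P3)}}, for small $\varepsilon$ no surviving branch of $T_{j,\sigma^{-n}\omega}^{\varepsilon\,(n)}$ over $U$ is destroyed, so $\mathcal{L}_{j,\sigma^{-n}\omega}^{\varepsilon\,(n)}(\phi_j)$ coincides on $U$ with the full transfer operator $\mathcal{L}_{\sigma^{-n}\omega}^{\varepsilon\,(n)}(\mathds{1}_{I_j}\phi_j)$; and by the $C^2$-closeness in \hyperref[list:P2]{\textbf{(P2)}} (finitely many branches varying smoothly, $\phi_j$ continuous at the relevant preimages) and the finite range in \hyperref[list:P1]{\textbf{(P1)}}, the latter converges uniformly on $U$, uniformly over $\omega$, to $\mathcal{L}^{0\,(n)}(\mathds{1}_{I_j}\phi_j)$, which equals $\phi_j$ on $U$ since $\phi_j$ is $T^0|_{I_j}$-invariant.

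Putting this together, the numerator is $O_{\varepsilon\to 0}(\varepsilon)\cdot o_{\varepsilon\to 0}(1)=o_{\varepsilon\to 0}(\varepsilon)$ while the denominator is $\geq\beta^*\varepsilon\bigl(1+o_{\varepsilon\to 0}(1)\bigr)$, both uniformly over $\omega$ away from a $\mathbb{P}$-null set, so $q_{j,\omega}^{0\,(k)}=0$ there. I expect the main obstacle to be the uniform-on-$U$ convergence in the previous paragraph: because the open transfer operator is non-local, one cannot control $g_\omega^\varepsilon$ on the shrinking hole via $L^1$ or $\BV$ estimates alone, and must instead use \hyperref[list:I5]{\textbf{(I5)}} to rule out any return of the critical set near the infinitesimal holes (hence any degeneration of the branches or of $\phi_j$), and then \hyperref[list:P1]{\textbf{(P1)}} to upgrade the fibrewise convergence to uniformity over $\omega$.
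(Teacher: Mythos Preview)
Your argument is correct and takes a genuinely different route from the paper's. Both start the same way: the numerator equals $\int_{H_{j,\omega}^\varepsilon} g_\omega^\varepsilon\,\dleb$ with $g_\omega^\varepsilon$ as you define it. From there the paper pulls the integral back through the surviving branches of $\mathcal{L}_{j,\sigma^{-k}\omega}^{\varepsilon\,(k)}$ to land on a finite union $S_{j,k,\omega}^\varepsilon$ of intervals of total length $O(\varepsilon)$, and then rewrites the remaining integral as $\mu_j\bigl((T^0)^{-1}(S_{j,k,\omega}^\varepsilon)\bigr)-\mu_j\bigl((T_{\sigma^{-k-1}\omega}^\varepsilon)^{-1}(S_{j,k,\omega}^\varepsilon)\bigr)$. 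That difference is estimated by Lebesgue differentiation (using \hyperref[list:I5]{\textbf{(I5)}} to get continuity of $\phi_j$ on $(T^0)^{-k}(H^0)$) together with the $C^2$-closeness in \hyperref[list:P2]{\textbf{(P2)}}, giving $O_{\varepsilon\to 0}(\varepsilon)\bigl(O_{\varepsilon\to 0}(\varepsilon)+o_{\varepsilon\to 0}(1)\bigr)$. Your approach instead bounds the numerator by $\leb_j(H_{j,\omega}^\varepsilon)\cdot\sup_{H_{j,\omega}^\varepsilon}|g_\omega^\varepsilon|$ and shows the sup tends to zero by proving $\mathcal{L}_{j,\sigma^{-n}\omega}^{\varepsilon\,(n)}(\phi_j)\to\phi_j$ pointwise on the shrinking hole for $n=k,k+1$. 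The ingredient that makes this work is exactly the same one the paper uses in its Lebesgue-differentiation step: \hyperref[list:I5]{\textbf{(I5)}} forces $\phi_j$ to be continuous at every point of $(T^0)^{-n}(H^0)\cap I_j$ (if such a preimage were of the form $T^{0\,(m)}(c)$ with $c\in\mathcal{C}^0$, then $T^{0\,(n+m)}(c)\in H^0$, contradicting \hyperref[list:I5]{\textbf{(I5)}}), and also keeps $H^0$ away from the image of the branch endpoints of $T^{0\,(n)}$, so the inverse branches over a small enough $U$ are stable and carry $\phi_j$-values varying continuously. Your use of \hyperref[list:P7]{\textbf{(P7)}} to rule out ``returning'' orbits (so that the open and closed $n$-step operators agree on $U$) is an extra step the paper avoids by working directly with the surviving set $S_{j,k,\omega}^\varepsilon$; conversely, your argument sidesteps the paper's somewhat intricate derivative comparison $\sup|(T^0)'|-\inf|(T^0)'|\leq \leb(S^{i,\varepsilon})\sup|(T^0)''|$. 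A small refinement: rather than claiming uniform convergence on a \emph{fixed} neighbourhood $U$, it is cleaner to argue that $\sup_{x\in H_{j,\omega}^\varepsilon}|g_\omega^\varepsilon(x)|\to 0$ directly, since $H_{j,\omega}^\varepsilon$ Hausdorff-converges to the finite set $H^0\cap I_j$ and all relevant preimages converge to continuity points of $\phi_j$.
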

\begin{proof}
    From \lem{lem:delta-eta}, $\leb_j((\mathcal{L}_j^0 - \mathcal{L}_{j,\omega}^\varepsilon)(\phi_j))=\varepsilon(\beta_{j,j-1,\omega}+\beta_{j,j+1,\omega}) +o_{\varepsilon\to 0}(\varepsilon)$. We aim to show that $$N_{j,k,\omega}^\varepsilon:=\leb_j\left( (\mathcal{L}_j^0 - \mathcal{L}_{j,\omega}^\varepsilon)(\mathcal{L}_{j,\sigma^{-k}\omega}^{\varepsilon \, (k)})(\mathcal{L}_j^0 - \mathcal{L}_{j,\sigma^{-k-1}\omega}^\varepsilon)(\phi_j) \right)= \Change{O_{\varepsilon\to 0}(\varepsilon)(O_{\varepsilon\to 0}(\varepsilon)+o_{\varepsilon\to 0}(1))}.$$ By a similar argument to that made in \lem{lem:delta-eta}, for any $\jp{g}\in\BV(I_j)$ 
    \begin{equation}
    \leb_j((\mathcal{L}_j^0 - \mathcal{L}_{j,\omega}^\varepsilon)(\jp{g}))= \int_{H_{j,\omega}^\varepsilon}\jp{g}(x)\, \dleb(x).    
    \end{equation}
    Take $\jp{g} = (\mathcal{L}_{j,\sigma^{-k}\omega}^{\varepsilon \, (k)})(\mathcal{L}_j^0 - \mathcal{L}_{j,\sigma^{-k-1}\omega}^\varepsilon)(\phi_j)\in\BV(I_j)$, then using \lem{lem:open-comp}
    \begin{align*}
        N_{j,k,\omega}^\varepsilon &= \int_{H_{j,\omega}^\varepsilon}(\mathcal{L}_{j,\sigma^{-k}\omega}^{\varepsilon \, (k)})(\mathcal{L}_j^0 - \mathcal{L}_{j,\sigma^{-k-1}\omega}^\varepsilon)(\phi_j)(x)\, \dleb(x)\\
        &= \int_{H_{j,\omega}^\varepsilon} \mathcal{L}_{\sigma^{-k}\omega}^{\varepsilon\, (k)}\left( (\mathcal{L}_j^0 - \mathcal{L}_{j,\sigma^{-k-1}\omega}^\varepsilon)(\phi_j) \cdot \prod_{i=0}^{k-1}\mathds{1}_{(T_{\sigma^{-k}\omega}^{\varepsilon\, (i)})^{-1}(I_j\setminus H_{j,\sigma^{i-k}\omega}^\varepsilon)} \right)(x)\, \dleb(x)\\
        &= \int_{(T_{\sigma^{-k}\omega}^{\varepsilon\, (k)})^{-1}(H_{j,\omega}^\varepsilon) \cap \left( \bigcap_{i=0}^{k-1} (T_{\sigma^{-k}\omega}^{\varepsilon\, (i)})^{-1}(I_j\setminus H_{j,\sigma^{i-k}\omega}^\varepsilon) \right)} (\mathcal{L}_j^0 - \mathcal{L}_{j,\sigma^{-k-1}\omega}^\varepsilon)(\phi_j)(x)\, \dleb(x).
    \end{align*}
    Set $S_{j,k,\omega}^\varepsilon:=(T_{\sigma^{-k}\omega}^{\varepsilon\, (k)})^{-1}(H_{j,\omega}^\varepsilon) \cap \left( \bigcap_{i=0}^{k-1} (T_{\sigma^{-k}\omega}^{\varepsilon\, (i)})^{-1}(I_j\setminus H_{j,\sigma^{i-k}\omega}^\varepsilon) \right)$. We note that for $\varepsilon\geq 0$, $S_{j,k,\omega}^\varepsilon \subseteq I_j$. Thus, through \eqref{eqn:openop}
    \begin{align*}
        N_{j,k,\omega}^\varepsilon &= \int_{(T^0)^{-1}(S_{j,k,\omega}^\varepsilon)} \phi_j(x)\, \dleb(x) - \int_{(T_{\sigma^{-k-1}\omega}^\varepsilon)^{-1}(S_{j,k,\omega}^\varepsilon)} (\phi_j\cdot \mathds{1}_{I_j\setminus H_{j,\sigma^{-k-1}\omega}^\varepsilon})(x)\, \dleb(x)\\
        &= \mu_j((T^0)^{-1}(S_{j,k,\omega}^\varepsilon)) - \mu_{j}((T_{\sigma^{-k-1}\omega}^\varepsilon)^{-1}(S_{j,k,\omega}^\varepsilon) \cap \left[I_j \cap (T_{\sigma^{-k-1}\omega}^\varepsilon)^{-1}(I_j))\right])\\
        &= \mu_j((T^0)^{-1}(S_{j,k,\omega}^\varepsilon)) - \mu_{j}((T_{\sigma^{-k-1}\omega}^\varepsilon)^{-1}(I_j\cap S_{j,k,\omega}^\varepsilon)\cap I_j)\\
        &=\mu_j((T^0)^{-1}(S_{j,k,\omega}^\varepsilon)) - \mu_{j}((T_{\sigma^{-k-1}\omega}^\varepsilon)^{-1}(S_{j,k,\omega}^\varepsilon)).
    \end{align*}
   Now, observe that due to \hyperref[list:P4]{\textbf{(P4)}} and \eqref{eqn:leb_est}, the set $S_{j,k,\omega}^\varepsilon$ consists of finitely many disjoint intervals $S_{j,k,\omega}^{1,\varepsilon},\dots,S_{j,k,\omega}^{P,\varepsilon}$ with $\leb_{j}(S_{j,k,\omega}^{i,\varepsilon})=O_{\varepsilon\to 0}(\varepsilon)$ for each $i=1,\dots,P$. Thus, from above, for $\varepsilon>0$ sufficiently small \begingroup \allowdisplaybreaks
    \begin{align*}
    N_{j,k,\omega}^\varepsilon&\leq \frac{\mu_j((T^0)^{-1}(S_{j,k,\omega}^\varepsilon))}{\leb_j((T^0)^{-1}(S_{j,k,\omega}^\varepsilon))}\sum_{i=1}^P\leb_j((T^0)^{-1}(S_{j,k,\omega}^{i,\varepsilon})) \\
    & \quad - \frac{\mu_{j}((T_{\sigma^{-k-1}\omega}^\varepsilon)^{-1}(S_{j,k,\omega}^\varepsilon))}{\leb_j((T_{\sigma^{-k-1}\omega}^\varepsilon)^{-1}(S_{j,k,\omega}^\varepsilon))}\sum_{i=1}^P\leb_j((T_{\sigma^{-k-1}\omega}^\varepsilon)^{-1}(S_{j,k,\omega}^{i,\varepsilon})) \\
    &\leq \frac{\mu_j((T^0)^{-1}(S_{j,k,\omega}^\varepsilon))}{\leb_j((T^0)^{-1}(S_{j,k,\omega}^\varepsilon))}\sum_{i=1}^P\frac{\leb_j(S_{j,k,\omega}^{i,\varepsilon})}{\inf_{x\in S_{j,k,\omega}^{i,\varepsilon}}|(T^0)^\prime(x)|} \\
    & \quad - \frac{\mu_{j}((T_{\sigma^{-k-1}\omega}^\varepsilon)^{-1}(S_{j,k,\omega}^\varepsilon))}{\leb_j((T_{\sigma^{-k-1}\omega}^\varepsilon)^{-1}(S_{j,k,\omega}^\varepsilon))}\sum_{i=1}^P\frac{\leb_j(S_{j,k,\omega}^{i,\varepsilon})}{\sup_{x\in S_{j,k,\omega}^{i,\varepsilon}}|(T_\omega^\varepsilon)^\prime(x)|}\\
    &= O_{\varepsilon\to 0}(\varepsilon)\Bigg( \frac{\mu_j((T^0)^{-1}(S_{j,k,\omega}^\varepsilon))}{\leb_j((T^0)^{-1}(S_{j,k,\omega}^\varepsilon))}\sum_{i=1}^P\frac{1}{\inf_{x\in S_{j,k,\omega}^{i,\varepsilon}}|(T^0)^\prime(x)|} \\
    &\quad- \frac{\mu_{j}((T_{\sigma^{-k-1}\omega}^\varepsilon)^{-1}(S_{j,k,\omega}^\varepsilon))}{\leb_j((T_{\sigma^{-k-1}\omega}^\varepsilon)^{-1}(S_{j,k,\omega}^\varepsilon))}\sum_{i=1}^P\frac{1}{\sup_{x\in S_{j,k,\omega}^{i,\varepsilon}}|(T^0)^\prime(x)|+o_{\varepsilon\to 0}(1)} \Bigg).
    \end{align*} \endgroup
   Here we are using \hyperref[list:I5]{\textbf{(I5)}} which implies that $\mathcal{C}^0\cap (T^0)^{-k}(H^0)=\emptyset$. Thus for $\varepsilon$ sufficiently small, thanks to \hyperref[list:P2]{\textbf{(P2)}} and \hyperref[list:P3]{\textbf{(P3)}}, $(T^0)^\prime$ and $(T_\omega^\varepsilon)^\prime$ exist on $S_{j,k,\omega}^{i,\varepsilon}$, and further, $\sup_{x\in S_{j,k,\omega}^{i,\varepsilon}}|(T_\omega^\varepsilon)^\prime(x)| = \sup_{x\in S_{j,k,\omega}^{i,\varepsilon}}|(T^0)^\prime(x)|+o_{\varepsilon\to 0}(1)$. To conclude, we apply a similar argument to that used in the proof of \lem{lem:delta-eta} to obtain \eqref{eqn:meas_ratio}. Note that \hyperref[list:I5]{\textbf{(I5)}} implies that $\phi_j$ is continuous on $I_j \cap (T^0)^{-k}(H^0)$. Due to \hyperref[list:P2]{\textbf{(P2)}} and \hyperref[list:P3]{\textbf{(P3)}}, we may apply Lebesgue's differentiation theorem to say that for $\varepsilon>0$ sufficiently small, there exists $B_1>0$ such that 
    \begin{align*}
        \frac{\mu_j((T^0)^{-1}(S_{j,k,\omega}^\varepsilon))}{\leb_j((T^0)^{-1}(S_{j,k,\omega}^\varepsilon))}=B_1 + o_{\varepsilon\to 0}(1) \quad \mathrm{and} \quad \frac{\mu_{j}((T_{\sigma^{-k-1}\omega}^\varepsilon)^{-1}(S_{j,k,\omega}^\varepsilon))}{\leb_j((T_{\sigma^{-k-1}\omega}^\varepsilon)^{-1}(S_{j,k,\omega}^\varepsilon))}=B_1 + o_{\varepsilon\to 0}(1). 
    \end{align*}
    Therefore, for $\varepsilon>0$ sufficiently small 
    \begin{align*}
        N_{j,k,\omega}^\varepsilon&\leq O_{\varepsilon\to 0}(\varepsilon)\left( \sum_{i=1}^P \frac{\sup_{x\in S_{j,k,\omega}^{i,\varepsilon}}|(T^0)^\prime(x)|-\inf_{x\in S_{j,k,\omega}^{i,\varepsilon}}|(T^0)^\prime(x)|+o_{\varepsilon\to 0}(1)}{\inf_{x\in S_{j,k,\omega}^{i,\varepsilon}}|(T^0)^\prime(x)|(\sup_{x\in S_{j,k,\omega}^{i,\varepsilon}}|(T^0)^\prime(x)|+o_{\varepsilon\to 0}(1))}  \right)\\
        &\leq O_{\varepsilon\to 0}(\varepsilon)\left( \sum_{i=1}^P \frac{\leb_j(S_{j,k,\omega}^{i,\varepsilon})\sup_{x\in S_{j,k,\omega}^{i,\varepsilon}}|(T^0)^{\prime\prime}(x)|+o_{\varepsilon\to 0}(1)}{\inf_{x\in S_{j,k,\omega}^{i,\varepsilon}}|(T^0)^\prime(x)|(\sup_{x\in S_{j,k,\omega}^{i,\varepsilon}}|(T^0)^\prime(x)|+o_{\varepsilon\to 0}(1))}  \right)\\
        &= O_{\varepsilon\to 0}(\varepsilon)(O_{\varepsilon\to 0}(\varepsilon)+o_{\varepsilon\to 0}(1))
    \end{align*}
    since $\leb_{j}(S_{j,k,\omega}^{i,\varepsilon})=O_{\varepsilon\to 0}(\varepsilon)$, and $\inf_{x\in S_{j,k,\omega}^{i,\varepsilon}}|(T^0)^\prime(x)|>1$ (by \hyperref[list:I2]{\textbf{(I2)}}) for each $i=1,\dots,P$. So, we may conclude that 
    \begin{align*}
        q_{j,\omega}^{0\, (k)}&= \lim_{\varepsilon\to 0} \frac{N_{j,k,\omega}^\varepsilon}{\varepsilon(\beta_{j,j-1,\omega}+\beta_{j,j+1,\omega}) +o_{\varepsilon\to 0}(\varepsilon)} = \lim_{\varepsilon\to 0} \frac{O_{\varepsilon\to 0}(\varepsilon)(O_{\varepsilon\to 0}(\varepsilon)+o_{\varepsilon\to 0}(1))}{\varepsilon(\beta_{j,j-1,\omega}+\beta_{j,j+1,\omega}) +o_{\varepsilon\to 0}(\varepsilon)}=0 
    \end{align*}
    \Change{uniformly over $\omega \in \Omega$ away from a $\mathbb{P}$-null set} as \hyperref[list:P4]{\textbf{(P4)}} ensures that \sloppy $\beta_{j,j-1,\omega}+\beta_{j,j+1,\omega}\geq \beta^*>0$ for all $\omega\in\Omega$. 
\end{proof}

\begin{lemma}
In the setting of \thrm{thrm:openopprop}, for $\mathbb{P}$-a.e. $\omega\in\Omega$ the leading Lyapunov multipliers of $\mathcal{L}_{j,\omega}^\varepsilon$ satisfy
    $$\lambda_{j,\omega}^\varepsilon=1-\varepsilon(\beta_{j,j-1,\omega}+\beta_{j,j+1,\omega}) +o_{\varepsilon\to 0}(\varepsilon).$$
    \label{lem:lambda_j}
\end{lemma}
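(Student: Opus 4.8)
The plan is to invoke the sequential perturbation theorem \cite[Theorem 2.1.2]{thermoformalism}, in the uniform-over-$\omega$ (away from a $\mathbb{P}$-null set) form recorded in the remark preceding \lem{lem:leb_estimate}, applied fibrewise to the cocycle $(\mathcal{L}_{j,\omega}^\varepsilon)_{\omega\in\Omega}$ viewed as a perturbation of the deterministic operator $\mathcal{L}_j^0$. The hypotheses of that theorem have been assembled through the preceding lemmata: the uniform Lasota--Yorke inequality of \lem{lem:ULY_open}; the uniform spectral gap and Oseledets splitting of $\mathcal{L}_j^0$ and $\mathcal{L}_{j,\omega}^\varepsilon$ from \lem{lem:crim_open}, together with the normalisation $\nu_{j,\omega}^\varepsilon(\phi_{j,\omega}^\varepsilon)=1$; the triple-norm convergence $\lim_{\varepsilon\to 0}\esssup_{\omega\in\Omega}|||\mathcal{L}_{j,\omega}^\varepsilon-\mathcal{L}_j^0|||_{\BV(I_j)-L^1(\leb_j)}=0$ from \lem{lem:trip_norm}; the estimates $\eta_{j,\omega}^\varepsilon=O_{\varepsilon\to 0}(\varepsilon)$ and $\Delta_{j,\omega}^\varepsilon=\varepsilon(\beta_{j,j-1,\omega}+\beta_{j,j+1,\omega})+o_{\varepsilon\to 0}(\varepsilon)$ of \lem{lem:delta-eta}, which also furnish a uniform lower bound $\Delta_{j,\omega}^\varepsilon\ge a\varepsilon+o_{\varepsilon\to 0}(\varepsilon)$ with $a>0$ (coming from $\beta^*$ in \hyperref[list:P4]{\textbf{(P4)}}), so that the ratios defining the correction coefficients are well posed; and, crucially, the vanishing $q_{j,\omega}^{0\,(k)}=0$ for every $k\in\mathbb{N}$, uniformly over $\omega$, established in \lem{lem:qk}.

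Granting these inputs, \cite[Theorem 2.1.2]{thermoformalism} yields an asymptotic expansion of the leading Lyapunov multiplier of the shape
$$
1-\lambda_{j,\omega}^\varepsilon=\Delta_{j,\omega}^\varepsilon\Bigl(1-\textstyle\sum_{k\ge 1}q_{j,\omega}^{0\,(k)}\,\Theta_{k}(\varepsilon)+o_{\varepsilon\to 0}(1)\Bigr),
$$
where, up to an immaterial shift of the base point, the coefficients $q_{j,\omega}^{0\,(k)}$ are precisely those of \lem{lem:qk}, the $\Theta_k(\varepsilon)$ are uniformly bounded, and the series converges uniformly in $\varepsilon$ and $\omega$ by the uniform spectral gap of \lem{lem:crim_open}(d); the exact bookkeeping of the shift indices is as in \cite[Section 2.1]{thermoformalism}. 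Since every $q_{j,\omega}^{0\,(k)}$ vanishes uniformly over $\omega$ by \lem{lem:qk}, the bracketed factor equals $1+o_{\varepsilon\to 0}(1)$, whence
$$
1-\lambda_{j,\omega}^\varepsilon=\Delta_{j,\omega}^\varepsilon\bigl(1+o_{\varepsilon\to 0}(1)\bigr)=\varepsilon(\beta_{j,j-1,\omega}+\beta_{j,j+1,\omega})+o_{\varepsilon\to 0}(\varepsilon)
$$
by \lem{lem:delta-eta}; rearranging gives the claimed identity for $\mathbb{P}$-a.e.\ $\omega\in\Omega$, and in fact the error term is uniform over $\omega$ away from a $\mathbb{P}$-null set.

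The main obstacle is not the concluding algebra but the verification that \cite[Theorem 2.1.2]{thermoformalism}, which is stated fibrewise, applies with all constants and error terms uniform over $\omega\in\Omega$; this relies on the finite-range hypothesis \hyperref[list:P1]{\textbf{(P1)}} combined with the uniform versions of the Lasota--Yorke, spectral-gap, $\Delta$/$\eta$ and $q^{0\,(k)}$ estimates above, and is the content of the remark preceding \lem{lem:leb_estimate}. A secondary point is to ensure that the vanishing of each individual $q_{j,\omega}^{0\,(k)}$ actually forces the vanishing of the full correction term: this follows because the uniform spectral gap of \lem{lem:crim_open}(d) renders the tail of the series in \cite[Theorem 2.1.2]{thermoformalism} summable uniformly in $\varepsilon$ and $\omega$, so the limit $\varepsilon\to 0$ may be interchanged with the summation.
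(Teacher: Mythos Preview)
Your proposal is correct and follows essentially the same route as the paper: both invoke \cite[Theorem 2.1.2]{thermoformalism} in its uniform-over-$\omega$ form, assemble the hypotheses from \lem{lem:ULY_open}, \lem{lem:crim_open}, \lem{lem:delta-eta}, and \lem{lem:qk}, and conclude from $q_{j,\omega}^{0\,(k)}=0$ that $1-\lambda_{j,\omega}^\varepsilon=\Delta_{j,\omega}^\varepsilon(1+o_{\varepsilon\to 0}(1))$. The paper is slightly more explicit in separately verifying two remaining hypotheses you gloss over---namely $\nu_{j,\omega}^\varepsilon(\phi_j)\to 1$ (via \cor{cor:meas_conv}) and the tail condition $(\Delta_{j,\omega}^\varepsilon)^{-1}\leb_j((\mathcal{L}_j^0-\mathcal{L}_{j,\omega}^\varepsilon)(Q_{j,\sigma^{-n}\omega}^{\varepsilon\,(n)}\phi_j))\to 0$ (a short computation combining \lem{lem:crim_open}(d) with \eqref{eqn:leb_est})---but these are routine given the ingredients you already cite.
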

\begin{proof}
    We verify the conditions of \cite[Theorem 2.1.2]{thermoformalism}  \Change{uniformly over $\omega\in\Omega$ away from a $\mathbb{P}$-null set}, calling such conditions \textbf{($\mathcal{Q}$1)}-\textbf{($\mathcal{Q}$9)} instead of \textbf{(P1)}-\textbf{(P9)} for presentation purposes. We only verify conditions \textbf{($\mathcal{Q}$5)}-\textbf{($\mathcal{Q}$9)} as \textbf{($\mathcal{Q}$1)}-\textbf{($\mathcal{Q}$4)} follow immediately from \lem{lem:ULY_open} and \lem{lem:crim_open}. For \textbf{($\mathcal{Q}$5)}, we observe that thanks to \lem{lem:delta-eta}, $\lim_{\varepsilon\to 0}\eta_{j,\omega}^\varepsilon = 0$ \Change{uniformly over $\omega\in\Omega$ away from a $\mathbb{P}$-null set}. Using \lem{lem:delta-eta}, we find that \textbf{($\mathcal{Q}$6)} holds since for $\mathbb{P}$-a.e. $\omega\in\Omega$, there exists $M,\beta^*>0$ \jp{such that for $\varepsilon>0$ sufficiently small}
\begin{align*}
    \limsup_{\varepsilon\to 0} \frac{\eta_{j,\omega}^\varepsilon}{\Delta_{j,\omega}^\varepsilon}&\leq 
\limsup_{\varepsilon\to 0} \frac{M}{\beta_{j,j-1,\omega}+\beta_{j,j+1,\omega}+o_{\varepsilon\to 0}(1)} \leq \Change{\frac{M}{\beta^*}}<\infty.
\end{align*}
\textbf{($\mathcal{Q}$7)} follows from \cor{cor:meas_conv} as $\esssup_{\omega\in\Omega}\|\nu_{j,\omega}^\varepsilon - \leb_j\|_{\BV^*(I_j)}\geq \esssup_{\omega\in\Omega}|\nu_{j,\omega}^\varepsilon(\phi_j) - \leb_j(\phi_j)|$ where $\leb_j(\phi_j)=1$. For \textbf{($\mathcal{Q}$8)}, we observe that for $\mathbb{P}$-a.e. $\omega\in\Omega$
\begin{align*}
    (\Delta_{j,\omega}^\varepsilon)^{-1} \leb_j((\mathcal{L}_j^0 - \mathcal{L}_{j,\omega}^\varepsilon)(Q_{j,\sigma^{-n}\omega}^{\varepsilon\, (n)}\phi_j))&\leq \frac{\|Q_{j,\sigma^{-n}\omega}^{\varepsilon\, (n)}\phi_j\|_{\BV(I_j)}}{\Delta_{j,\omega}^\varepsilon} \int_{I_j}(\mathcal{L}_j^0 - \mathcal{L}_{j,\omega}^\varepsilon)(\mathds{1}_{I_j})(x)\, \dleb(x)\\
    &\leq \frac{C\theta^n\|\phi_j\|_{\BV(I_j)}}{\Delta_{j,\omega}^\varepsilon}\int_{I_j}(\mathcal{L}_j^0 - \mathcal{L}_{j,\omega}^\varepsilon)(\mathds{1}_{I_j})(x)\, \dleb(x)\\
    &=\frac{C\theta^n\|\phi_j\|_{\BV(I_j)}}{\mu_j(H_{j,\omega}^\varepsilon)} {\leb_j(H_{j,\omega}^\varepsilon)}\\
    &\leq \frac{C\theta^n\|\phi_j\|_{\BV(I_j)}}{L+o_{\varepsilon\to 0}(1)}.
\end{align*}
Here $L>0$ is the positive constant appearing in \eqref{eqn:leb_est}. In the above we have used \lem{lem:crim_open}(d) to estimate $\|Q_{j,\sigma^{-n}\omega}^{\varepsilon\, (n)}\phi_j\|_{\BV(I_j)}$, \eqref{eqn:delta} to express $\Delta_{j,\omega}^\varepsilon$, and a similar computation to that made in \lem{lem:delta-eta} to evaluate $\leb_j((\mathcal{L}_j^0 - \mathcal{L}_{j,\omega}^\varepsilon)(\mathds{1}_{I_j}))$. Since $\theta\in(0,1)$ and $\phi_j\in\BV(I_j)$, taking $\varepsilon\to 0$ and then $n\to \infty$, we obtain \textbf{($\mathcal{Q}$8)} \Change{uniformly over $\omega\in\Omega$ away from a $\mathbb{P}$-null set}. We conclude by verifying that \textbf{($\mathcal{Q}$9)} holds \Change{uniformly over $\omega\in\Omega$ away from a $\mathbb{P}$-null set}, however, this was done in \lem{lem:qk}. Therefore, using \lem{lem:delta-eta}, \lem{lem:qk} and \hyperref[list:I4]{\textbf{(I4)}}, \cite[Theorem 2.1.2]{thermoformalism} asserts that (in our notation) for $\mathbb{P}$-a.e. $\omega\in\Omega$, 
\begin{align}
    1-\lambda_{j,\omega}^\varepsilon &= \Delta_{j,\omega}^\varepsilon\left(1-\sum_{k=0}^\infty(\lambda_j^{0\, (k+1)})^{-1}q_{j,\omega}^{0\, (k)}+o_{\varepsilon\to 0}(1)\right) \label{eqn:thrm-app} \\
    &=(\varepsilon(\beta_{j,j-1,\omega}+\beta_{j,j+1,\omega}) +o_{\varepsilon\to 0}(\varepsilon))(1+o_{\varepsilon\to 0}(1))\nonumber.\label{eqn:omerr}
\end{align}
\Change{Since \textbf{($\mathcal{Q}$1)}-\textbf{($\mathcal{Q}$9)} hold uniformly over $\omega\in\Omega$ away from a $\mathbb{P}$-null set, the error appearing in \eqref{eqn:thrm-app} can be made independent of $\omega\in\Omega$}. Rearranging for $\lambda_{j,\omega}^\varepsilon$, the result follows.
\end{proof}
\jp{For $i,j\in\{1,\cdots,m\}$, recall the notation $\bar{\beta}_{i,j}:=\int_\Omega \beta_{i,j,\omega}\, d\mathbb{P}(\omega)$. \jpt{Further, as in \cite{gtp_met}, throughout this paper when upper and lower indices of products and summations take non-integer values, or iterates of a map/operator, we consider the indices/iterates integer part.}}
\begin{corollary}
In the setting of \thrm{thrm:openopprop}, fix $t>0$, then for $\mathbb{P}$-a.e. $\omega\in\Omega$
$$\lambda_{j,\omega}^{\varepsilon\, (\frac{t}{\varepsilon})}= e^{-t(\bar{\beta}_{j,j-1}+\bar{\beta}_{j,j+1}+o_{\omega,\varepsilon\to 0}(1))}.$$   
\label{cor:lambda_iter}
\end{corollary}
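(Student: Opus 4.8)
The plan is to pass to logarithms, substitute the first-order expansion of the Lyapunov multipliers from \lem{lem:lambda_j}, and then apply Birkhoff's ergodic theorem to the Birkhoff sums of $\gamma_\omega:=\beta_{j,j-1,\omega}+\beta_{j,j+1,\omega}$, which by \hyperref[list:P4]{\textbf{(P4)}} lies in $L^\infty(\mathbb{P})\subseteq L^1(\mathbb{P})$. For $\varepsilon>0$ I would set $n(\varepsilon):=\lfloor t/\varepsilon\rfloor$, so that $\varepsilon\,n(\varepsilon)\to t$ as $\varepsilon\to 0$; here $\lambda_{j,\omega}^{\varepsilon\,(t/\varepsilon)}$ is read as $\lambda_{j,\omega}^{\varepsilon\,(n(\varepsilon))}=\prod_{i=0}^{n(\varepsilon)-1}\lambda_{j,\sigma^i\omega}^\varepsilon$, using the definition of the Lyapunov multipliers in \thrm{thrm:openopprop}(e). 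Since $\gamma$ is bounded, $\lambda_{j,\sigma^i\omega}^\varepsilon=1+O_{\varepsilon\to 0}(\varepsilon)$ uniformly over $i$ and $\omega$, so these multipliers are positive for all small $\varepsilon$ and their logarithms are well defined.

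First I would record that, because the $o_{\varepsilon\to 0}(\varepsilon)$ error in \lem{lem:lambda_j} was established uniformly over $\omega\in\Omega$ away from a $\mathbb{P}$-null set, a first-order expansion of $\log(1+\cdot)$ gives $\log\lambda_{j,\sigma^i\omega}^\varepsilon=-\varepsilon\gamma_{\sigma^i\omega}+o_{\varepsilon\to 0}(\varepsilon)$, with an error uniform over $i$ and $\omega$ (the quadratic term is $O_{\varepsilon\to 0}(\varepsilon^2)$ since $\gamma$ is bounded, and is absorbed into the $o_{\varepsilon\to 0}(\varepsilon)$). Summing this identity over $i=0,\dots,n(\varepsilon)-1$ yields
\[
\log\lambda_{j,\omega}^{\varepsilon\,(n(\varepsilon))}=-\varepsilon\sum_{i=0}^{n(\varepsilon)-1}\gamma_{\sigma^i\omega}+\rho(\varepsilon,\omega),
\]
where $|\rho(\varepsilon,\omega)|\le n(\varepsilon)\cdot o_{\varepsilon\to 0}(\varepsilon)\le (t/\varepsilon)\cdot o_{\varepsilon\to 0}(\varepsilon)=t\cdot o_{\varepsilon\to 0}(1)\to 0$ uniformly over $\omega$. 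For the main term I would write $\varepsilon\sum_{i=0}^{n(\varepsilon)-1}\gamma_{\sigma^i\omega}=\big(\varepsilon\,n(\varepsilon)\big)\cdot\tfrac1{n(\varepsilon)}\sum_{i=0}^{n(\varepsilon)-1}\gamma_{\sigma^i\omega}$; since $\sigma$ is ergodic and $\gamma\in L^1(\mathbb{P})$, Birkhoff's ergodic theorem gives $\tfrac1N\sum_{i=0}^{N-1}\gamma_{\sigma^i\omega}\to\int_\Omega\gamma_s\,d\mathbb{P}(s)$ for $\mathbb{P}$-a.e. $\omega$ as the integers $N\to\infty$, in particular along $N=n(\varepsilon)$ as $\varepsilon\to 0$. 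Together with $\varepsilon\,n(\varepsilon)\to t$, this yields $\varepsilon\sum_{i=0}^{n(\varepsilon)-1}\gamma_{\sigma^i\omega}\to t\int_\Omega\gamma_s\,d\mathbb{P}(s)$, hence $\log\lambda_{j,\omega}^{\varepsilon\,(t/\varepsilon)}=-t\big(\int_\Omega\gamma_s\,d\mathbb{P}(s)+o_{\omega,\varepsilon\to 0}(1)\big)$, and exponentiating finishes the proof.

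The only genuinely delicate point is the joint asymptotic regime: one lets $\varepsilon\to 0$ while simultaneously compounding $n(\varepsilon)\sim t/\varepsilon\to\infty$ factors, so any error bound that depended on $\omega$ or degraded with the iterate index would be useless — the argument hinges entirely on the $o_{\varepsilon\to 0}(\varepsilon)$ remainder in \lem{lem:lambda_j} being uniform over $\omega$ (and hence over $i$), exactly as emphasised at the end of the proof of that lemma. One harmless loose end is that $\lfloor t/\varepsilon\rfloor$ differs from $t/\varepsilon$ by at most $1$, contributing at most one extra factor $\lambda_{j,\sigma^{n(\varepsilon)}\omega}^\varepsilon=1+O_{\varepsilon\to 0}(\varepsilon)$, whose logarithm is $O_{\varepsilon\to 0}(\varepsilon)\to 0$ and is subsumed into $o_{\omega,\varepsilon\to 0}(1)$.
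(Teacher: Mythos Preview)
Your proposal is correct and follows essentially the same approach as the paper: take logarithms, substitute the expansion from \lem{lem:lambda_j} with its $\omega$-uniform $o_{\varepsilon\to 0}(\varepsilon)$ error, and apply Birkhoff's ergodic theorem to the resulting ergodic average of $\gamma_\omega=\beta_{j,j-1,\omega}+\beta_{j,j+1,\omega}$. Your treatment of the floor $n(\varepsilon)=\lfloor t/\varepsilon\rfloor$ is in fact slightly more careful than the paper's, which simply writes $t/\varepsilon$ as an index.
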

\begin{proof}
This follows by Birkhoff's ergodic theorem. Indeed,
\begin{align*}
    \frac{1}{t}\log\left(\lambda_{j,\omega}^{\varepsilon\, (\frac{t}{\varepsilon})} \right)&=\frac{1}{t}\sum_{i=0}^{\frac{t}{\varepsilon}-1}\log(\lambda_{j,\sigma^i\omega}^\varepsilon)\\
    &=\frac{1}{t}\sum_{i=0}^{\frac{t}{\varepsilon}-1}\log(1-\varepsilon(\beta_{j,j-1,\sigma^i\omega}+\beta_{j,j+1,\sigma^i\omega}+o_{\varepsilon\to 0}(1)))\\
    &=\frac{\varepsilon}{t}\sum_{i=0}^{\frac{t}{\varepsilon}-1}-(\beta_{j,j-1,\sigma^i\omega}+\beta_{j,j+1,\sigma^i\omega}+o_{\varepsilon\to 0}(1)).
\end{align*}
\Change{\jp{In the last line we have used Taylor's theorem}, along with \hyperref[list:P4]{\textbf{(P4)}} to establish a uniform error over $\omega\in\Omega$.} Thus, by Birkhoff's ergodic theorem, for $\mathbb{P}$-a.e. $\omega\in\Omega$ 
$$\lim_{\varepsilon\to 0}\frac{1}{t}\log\left(\lambda_{j,\omega}^{\varepsilon\, (\frac{t}{\varepsilon})} \right)= -\left(\bar{\beta}_{j,j-1}+\bar{\beta}_{j,j+1}\right)$$
meaning that for $\mathbb{P}$-a.e. $\omega\in\Omega$  
$$\lambda_{j,\omega}^{\varepsilon\, (\frac{t}{\varepsilon})}= e^{-t(\bar{\beta}_{j,j-1}+\bar{\beta}_{j,j+1}+o_{\omega,\varepsilon\to 0}(1))}.$$ 
\end{proof}
\label{sec:open}
\section{The jump processes}
\label{sec:jump}
In this section, we take advantage of the results of \Sec{sec:open} to study the asymptotics of the system's \textit{jump process} \Change{associated with the partition $I_1,\dots,I_m$} for $\varepsilon$ small. We relate the distribution of jumps of the maps to those of an averaged Markov jump process. Our arguments follow those of \cite{SD}, adapting them to the random setting. 
\subsection{The averaged Markov jump process}
\label{sec:avg_markov}
We begin by introducing the relevant quantities to define the jump process of interest. Consider the $m$-state Markov chains in random environments driven by $\sigma:\Omega\to \Omega$, with transition matrices $(M_\omega^\varepsilon)_{\omega\in\Omega}$ where,
 \small 
\begin{equation}
M_{\omega}^{{\varepsilon}}:=\begin{pmatrix}
    1-{\varepsilon} \beta_{1,2,\omega} & {\varepsilon}\beta_{1,2,\omega} & 0 & 0 &  \cdots & \cdots & 0 & 0 \\
    {\varepsilon} \beta_{2,1,\omega} & 1-{\varepsilon}(\beta_{2,1,\omega}+\beta_{2,3,\omega}) & {\varepsilon} \beta_{2,3,\omega} & 0 & \cdots  & \cdots & \vdots & \vdots \\ 
    \vdots & \vdots & \vdots & \vdots & \vdots & \vdots & \vdots &\vdots \\
    0 & \cdots & \cdots & \cdots & \cdots  & 0 & {\varepsilon}\beta_{m,m-1,\omega} & 1-{\varepsilon}\beta_{m,m-1,\omega} 
\end{pmatrix}. \label{eqn:M-matrix}
\end{equation}\normalsize
Thanks to \rem{rem:neighbour}, for $i,j\in\{1,\cdots ,m\}$, $(M_{\omega}^{{\varepsilon}})_{ij}$ describes the one-step transition probabilities for the $m$-state Markov chain in a random environment, driven by $\sigma:\Omega\to \Omega$, for the map $T_\omega^\varepsilon$ from $I_i$ to $I_j$. \Change{Let $\bar{M}^\varepsilon:=\int_\Omega M_{\omega}^\varepsilon\, d\mathbb{P}(\omega)$ be the averaged $m$-state Markov chain and consider the matrix $\bar{G}\in M_{m\times m}(\mathbb{R})$ where $(\bar{G})_{ij}=\bar{\beta}_{i,j}$ for $i\neq j$, $(\bar{G})_{ii}=-\left( \bar{\beta}_{i,i-1}+\bar{\beta}_{i,i+1}\right)$. We will call $\bar{G}$ the \textit{generator} for the \textit{averaged Markov jump process}.}
\begin{remark}
\jp{Given a matrix $A\in M_{m\times m}(\mathbb{R})$, let $\mathrm{diag}(A)$ denote the diagonal matrix formed from the diagonal entries of $A$.} Note that the matrix defined in \eqref{eqn:M-matrix} is the transpose of that studied in \cite[Section 7]{gtp_met}. However, thanks to \cite[Theorem 7.2]{gtp_met} and \cite[Remark 7.6]{gtp_met}, the limiting invariant measures of the
$m$-state Markov chains in random environments driven by $\sigma :\Omega \to \Omega$ can still be determined as $\varepsilon \to 0$. For $\mathbb{P}$-a.e. $\omega\in\Omega$, this is given by the solution $p=\begin{pmatrix}p_1 & p_2 &\cdots &p_m  
\end{pmatrix}^T$ to
\begin{align*}
    \left(I+\mathrm{diag}(\bar{G})^{-1}(\bar{G}^T-\mathrm{diag}(\bar{G}))\right)p=(\mathrm{diag}(\bar{G})^{-1}\bar{G}^T)p=0
    \iff p^T\bar{G} =0 
\end{align*}
that satisfies $\sum_{i=1}^mp_i=1$ with $p_i\geq 0$ for each $i=1,\dots, m$. Note that $\mathrm{diag}(\bar{G})^{-1}$ exists \jp{since \hyperref[list:P4]{\textbf{(P4)}} implies that all the diagonal entries of $\mathrm{diag}(\bar{G})^{-1}$ are bounded below by $\beta^*>0$ for all $\omega\in\Omega$.}
\end{remark}
\Change{We now \Change{introduce the Markov jump process of concern}}. See \cite[Chapter 2]{jumps_YZ} or \cite[Chapter 2]{MC_N} for further details on Markov jump processes. We consider a similar setup to that of \cite{SD} and record it here for the reader's convenience. \\
\\
\Change{Consider a finite state continuous time stochastic process $(X_t)_{t \geq 0}\subset \{1,\cdots, m\}^{[0,\infty)}$, whose evolution is governed by $(P(t))_{t\geq 0}:=(e^{t \bar{G}})_{t\geq 0}$, where in our setting, $\bar{G}:= \frac{\bar{M}^\varepsilon - I}{\varepsilon}$. Set $t_0^M:=0$, and for $i>0$, let $t_i^M = \inf\{ t> t_{i-1}^M \ | \ X_t\neq X_{t_{i-1}^M}\}$. For $i\geq 1$, we call $\mathcal{T}_i^M = t_i^M - t_{i-1}^M$ the \textit{holding times} for $(X_t)_{t\geq 0}$. Let $z_i^M$ denote the state of the process following the $i^{\mathrm{th}}$ transition, that is, $z_i^M:=X_{t_i^M}$. We call the discrete time process $(z_i^M)_{i\in\mathbb{N}}\subset \{1,\cdots,m\}^{\mathbb{N}}$ the averaged Markov jump process. We highlight that $(X_t)_{t\geq 0}$ is a continuous time process and $(z_i^M)_{i\in\mathbb{N}}$ is a discrete time process taking values from $(X_t)_{t\geq 0}$.} \\ 
\\
For $j\in\{1,\cdots, m\}$, let $\mathbb{P}^j$ denote the probability measure constructed on $\{1,\cdots, m\}^{[0,\infty)}$ with the initial condition $z_0^M=j$ that is \Change{evolved by $P(t)$}. For $i\geq 1$, if $z_{i-1}^M=j$ for some $j\in\{1,\cdots, m\}$, then the holding times $\mathcal{T}_i^M$ are exponentially distributed random variables, more precisely, for $l\in\{1,\cdots, m\}$
$$\mathbb{P}^l(\mathcal{T}_i^M\in B \ |\ z_{i-1}^M = j)=\left( \bar{\beta}_{j,j-1}+\bar{\beta}_{j,j+1}\right) \int_B\,e^{-t\left( \bar{\beta}_{j,j-1}+\bar{\beta}_{j,j+1}\right)} dt$$
for a Borel set $B\subset[0,\infty)$ and $j\in\{1,\cdots,m\}$. Furthermore, given $z_{i-1}^M=j$, the probability of jumping to $z_{i}^M=k$ is independent of $\mathcal{T}_i^M$ where for $l\in\{1,\cdots, m\}$,
$$\mathbb{P}^l(z_{i}^M=k \ |\ z_{i-1}^M = j)= \frac{\bar{\beta}_{j,k}}{\bar{\beta}_{j,j-1}+\bar{\beta}_{j,j+1}}.$$
In this section, we aim to illustrate that the distributions of jumps \Change{between different intervals $I_j,I_k\in \{I_1,\cdots, I_m\}$} for the random maps $(T_\omega^\varepsilon)_{\omega\in\Omega}$ may be approximated by the deterministic distributions of jumps of the averaged Markov jump process.
\subsection{Approximation of jumps for random metastable systems}
\Change{Fix $\varepsilon>0$}. We now introduce the jump process for the collection of random maps $(T_\omega^\varepsilon)_{\omega\in\Omega}$. Let $t_{0,\omega}^\varepsilon(x):=0$ for all $\omega\in\Omega, x\in I$ and $\varepsilon>0$. Define the map $z:I\to \{1,\cdots,m\}$ such that $z(x)=j$ if $x\in I_j$. For $i> 0$ we let $t_{i,\omega}^\varepsilon(x):=\inf\{ n> t_{i-1,\omega}^\varepsilon(x) \ | \ z(T_\omega^{\varepsilon\, (n)}(x))\neq z(T_\omega^{\varepsilon\, (t_{i-1,\omega}^\varepsilon(x))}(x))\}$. One can interpret the function $t_{i,\omega}^\varepsilon:I\to \mathbb{N}$ as the $i^\mathrm{th}$ iteration at which the initial condition $x\in I$ has jumped from $I_j$ to $I_k$ for some $j\neq k$. Finally, for $i\geq 1$ we define the \textit{holding times} for the random maps as $\mathcal{T}_{i,\omega}^\varepsilon(x):=t_{i,\omega}^\varepsilon(x) - t_{i-1,\omega}^\varepsilon(x)$. \\
\\
The main result of this section is \thrm{thrm:conv_jump} whose proof relies on the following lemmata. 
\begin{lemma}
    Fix $t>0$. Let $(\Omega,\mathcal{F},\mathbb{P})$ be a probability space. For $j,k\in\{1,\cdots, m\}$, let  $\sigma:\Omega\to \Omega$ be as in \hyperref[list:P1]{\textbf{(P1)}} and $\beta_{j,k}$ be as in \hyperref[list:P4]{\textbf{(P4)}}. Then for $\mathbb{P}$-a.e. $\omega\in\Omega$
    \begin{align}
        &\lim_{\varepsilon\to 0} \sum_{n=0}^{{\frac{t}{\varepsilon}}-1} (\varepsilon\beta_{j,j+1,\sigma^n\omega}+o_{\varepsilon\to 0}(\varepsilon))\prod_{k=0}^{n-1}(1-\varepsilon(\beta_{j,j-1,\sigma^{k}\omega}+\beta_{j,j+1,\sigma^{k}\omega}+o_{\varepsilon\to 0}(1))) \label{eqn:expdist}\\
        &\quad  = \frac{\bar{\beta}_{j,j+1}}{\bar{\beta}_{j,j-1}+\bar{\beta}_{j,j+1}}\left(1-e^{-t\left(\bar{\beta}_{j,j-1}+\bar{\beta}_{j,j+1}\right)}\right)\nonumber.     
    \end{align}
    \label{lem:exp_dist}
\end{lemma}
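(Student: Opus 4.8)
The plan is to recognise the right-hand side as $\bar{\alpha}\int_0^t e^{-s\bar{\gamma}}\,ds$, where I abbreviate $\gamma_\omega:=\beta_{j,j-1,\omega}+\beta_{j,j+1,\omega}$, $\alpha_\omega:=\beta_{j,j+1,\omega}$, $\bar{\gamma}:=\int_\Omega\gamma_\omega\,d\mathbb{P}(\omega)$ and $\bar{\alpha}:=\int_\Omega\alpha_\omega\,d\mathbb{P}(\omega)$; by \hyperref[list:P4]{\textbf{(P4)}} we have $\beta^*\le\gamma_\omega\le\Gamma:=\norm{\gamma}_{L^\infty(\mathbb{P})}$ for all $\omega\in\Omega$, so in particular $\bar{\gamma}>0$. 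With $S_n(\omega):=\sum_{k=0}^{n-1}\gamma_{\sigma^k\omega}$ and $N_\varepsilon:=\lfloor t/\varepsilon\rfloor$, and noting that the error terms $o_{\varepsilon\to0}(\varepsilon)$ and $o_{\varepsilon\to0}(1)$ appearing in \eqref{eqn:expdist} carry no dependence on $\omega$ and hence none on $n$, the argument has four steps: (i) replace the product $\prod_{k=0}^{n-1}\bigl(1-\varepsilon\gamma_{\sigma^k\omega}(1+o_{\varepsilon\to0}(1))\bigr)$ by $e^{-\varepsilon S_n(\omega)}$ up to a factor $1+o_{\varepsilon\to0}(1)$, uniformly in $0\le n\le N_\varepsilon$; (ii) use Birkhoff's ergodic theorem to replace $\varepsilon S_n(\omega)$ by $n\varepsilon\bar{\gamma}$, uniformly in $0\le n\le N_\varepsilon$; (iii) absorb the $o_{\varepsilon\to0}(\varepsilon)$ in the outer coefficient into a negligible error; and (iv) identify $\lim_{\varepsilon\to0}\sum_{n=0}^{N_\varepsilon-1}\varepsilon\alpha_{\sigma^n\omega}e^{-n\varepsilon\bar{\gamma}}$ with $\bar{\alpha}\int_0^t e^{-s\bar{\gamma}}\,ds$.

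For step (i) I would take logarithms and use $\log(1-x)=-x+O(x^2)$: since $\gamma$ is bounded and $n\varepsilon\le t$, summing over $0\le k\le n-1$ gives $\sum_{k=0}^{n-1}\log\bigl(1-\varepsilon\gamma_{\sigma^k\omega}(1+o_{\varepsilon\to0}(1))\bigr)=-\varepsilon S_n(\omega)+o_{\varepsilon\to0}(1)$, the error being uniform in $n\le N_\varepsilon$ because there are at most $t/\varepsilon$ summands each of size $O(\varepsilon^2)+\varepsilon\,o_{\varepsilon\to0}(1)$ and because $\varepsilon S_n(\omega)\le t\Gamma$ stays bounded (the uniformity over $\omega$ being exactly as in the proof of \cor{cor:lambda_iter}); exponentiating yields step (i). For step (ii), Birkhoff's ergodic theorem gives $S_n(\omega)/n\to\bar{\gamma}$ for $\mathbb{P}$-a.e.\ $\omega$, so for each fixed $s\in[0,t]$ we have $\varepsilon S_{\lfloor s/\varepsilon\rfloor}(\omega)=\bigl(\varepsilon\lfloor s/\varepsilon\rfloor\bigr)\cdot\frac{S_{\lfloor s/\varepsilon\rfloor}(\omega)}{\lfloor s/\varepsilon\rfloor}\to s\bar{\gamma}$. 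Since $s\mapsto\varepsilon S_{\lfloor s/\varepsilon\rfloor}(\omega)$ is nondecreasing (a sum of nonnegative terms) and the limit $s\mapsto s\bar{\gamma}$ is continuous, a Dini/P\'olya-type argument---partition $[0,t]$ finely, control the values at the finitely many breakpoints, and interpolate by monotonicity---upgrades this to uniform convergence on $[0,t]$. Consequently $e^{-\varepsilon S_n(\omega)}=e^{-n\varepsilon\bar{\gamma}}+o_{\varepsilon\to0}(1)$ uniformly in $0\le n\le N_\varepsilon$.

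Combining (i)--(iii), the sum on the left of \eqref{eqn:expdist} equals $\sum_{n=0}^{N_\varepsilon-1}\varepsilon\alpha_{\sigma^n\omega}e^{-n\varepsilon\bar{\gamma}}+o_{\varepsilon\to0}(1)$, since the accumulated coefficient error is at most $N_\varepsilon\cdot o_{\varepsilon\to0}(\varepsilon)=o_{\varepsilon\to0}(1)$ and $\sum_{n=0}^{N_\varepsilon-1}\varepsilon e^{-\varepsilon S_n(\omega)}\le N_\varepsilon\varepsilon\le t$. For step (iv) I would partition $[0,t]$ into $J$ equal subintervals; by uniform continuity, $e^{-s\bar{\gamma}}$ differs from its value at the right endpoint of its subinterval by at most $\delta_J\to0$, while on the indices $n$ with $n\varepsilon$ in the $\ell$-th subinterval one has $\varepsilon\sum\alpha_{\sigma^n\omega}\to\frac{t}{J}\bar{\alpha}$ as $\varepsilon\to0$, again by Birkhoff's ergodic theorem applied to $\alpha$ combined with the monotonicity argument of step (ii). Summing over the $J$ subintervals, taking $\limsup_{\varepsilon\to0}$ and $\liminf_{\varepsilon\to0}$ (which agree up to $O(\delta_J\bar{\alpha})$), and then letting $J\to\infty$ produces the Riemann integral $\bar{\alpha}\int_0^t e^{-s\bar{\gamma}}\,ds=\frac{\bar{\alpha}}{\bar{\gamma}}\bigl(1-e^{-t\bar{\gamma}}\bigr)$, as claimed. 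The main obstacle, and where the care lies, is keeping every estimate uniform in $n$ over the range $0\le n\le N_\varepsilon$, whose length blows up as $\varepsilon\to0$---in particular, promoting the pointwise-in-$s$ conclusion of Birkhoff's theorem to uniform convergence on $[0,t]$ via monotonicity, and correctly ordering the $\varepsilon\to0$ and $J\to\infty$ limits in step (iv).
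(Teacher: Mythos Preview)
Your argument is correct and takes a more self-contained route than the paper's. The paper first strips the extra $o_{\varepsilon\to0}(1)$ from the product via \rem{rem:prod-error}, then splits the sum at $\sqrt{t/\varepsilon}$---the short initial block being trivially $O(\sqrt{t\varepsilon})$---and for the main block defers to \cite[Lemma~5.2]{gtp_met}, which produces upper and lower bounds parameterised by an auxiliary block scale $\delta>0$ and then sends $\delta\to0$. By contrast, you convert the product to $e^{-\varepsilon S_n(\omega)}$ via a second-order log expansion, promote Birkhoff's pointwise conclusion to uniformity over $0\le n\le N_\varepsilon$ using the P\'olya-type monotonicity trick (nondecreasing functions converging pointwise to a continuous limit on a compact interval converge uniformly), and finish with a direct Riemann-sum argument via a $J$-partition. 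Your treatment avoids the $\sqrt{t/\varepsilon}$ split (the monotonicity argument handles small and large $n$ simultaneously), is independent of the external reference, and makes the mechanism---ergodic averaging of $\gamma$ in the exponent and of $\alpha$ in the coefficient, followed by Riemann integration---entirely transparent; the paper's version is shorter on the page only because the substance is delegated to \cite{gtp_met}. The one point worth flagging is that your step (iv) uses the same uniformity upgrade for the Birkhoff sums of $\alpha$ as step (ii) does for $\gamma$; this is fine since $\alpha\ge0$ so the partial sums are again monotone, but it is worth stating explicitly.
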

\begin{remark}
    The proof of \lem{lem:exp_dist} relies on \cite[Lemma 5.3]{gtp_met}. However, note that the summand in \eqref{eqn:expdist} is different to that from \cite[Lemma 5.3]{gtp_met}. In particular, in \eqref{eqn:expdist}, \Change{an additional error of order $o_{\varepsilon\to 0}(1)$} appears in the products. Fortunately, \Change{the effect of this error} may be controlled since, due to \hyperref[list:P4]{\textbf{(P4)}}, \Change{for $\mathbb{P}$-a.e. $\omega\in \Omega$} one can write 
    \begin{align*}
     \prod_{k=0}^{n-1}(1-\varepsilon(\beta_{j,j-1,\sigma^{k}\omega}+\beta_{j,j+1,\sigma^{k}\omega}+o_{\varepsilon\to 0}(1)))  
     &=\prod_{k=0}^{n-1}(1-\varepsilon(\beta_{j,j-1,\sigma^{k}\omega}+\beta_{j,j+1,\sigma^{k}\omega})) + o_{\varepsilon\to 0}(\varepsilon).
    \end{align*}
    So, using \hyperref[list:P4]{\textbf{(P4)}}, there exists $M>0$ such that for $\mathbb{P}$-a.e. $\omega\in\Omega$ the error term from \eqref{eqn:expdist} arising from the product is
    \begin{align*}
    o_{\varepsilon\to 0}(\varepsilon)\sum_{n=0}^{{\frac{t}{\varepsilon}}-1} (\varepsilon\beta_{j,j+1,\sigma^n\omega}+o_{\varepsilon\to 0}(\varepsilon)) &\leq o_{\varepsilon\to 0}(\varepsilon)(\varepsilon M+o_{\varepsilon\to 0}(\varepsilon))\left(\frac{t}{\varepsilon}\right)=o_{\varepsilon\to 0}(1).
    \end{align*}
    \label{rem:prod-error}
\end{remark}
\jp{Below we discuss the key steps of the proof of  \lem{lem:exp_dist} and refer the reader to \cite{gtp_met} for further details.}
\begin{proof}[Proof of \lem{lem:exp_dist}]
Thanks to \rem{rem:prod-error}, for each $j\in\{1,\cdots,m\}$ it suffices to estimate \begingroup \allowdisplaybreaks
\begin{align*}
    R_{j,t,\omega}^\varepsilon&:=\sum_{n=0}^{{\frac{t}{\varepsilon}}-1} (\varepsilon\beta_{j,j+1,\sigma^n\omega}+o_{\varepsilon\to 0}(\varepsilon))\prod_{k=0}^{n-1}(1-\varepsilon(\beta_{j,j-1,\sigma^{k}\omega}+\beta_{j,j+1,\sigma^{k}\omega}))\\
    &=\sum_{n=0}^{\sqrt{{\frac{t}{\varepsilon}}}-1} (\varepsilon\beta_{j,j+1,\sigma^n\omega}+o_{\varepsilon\to 0}(\varepsilon))\prod_{k=0}^{n-1}(1-\varepsilon(\beta_{j,j-1,\sigma^{k}\omega}+\beta_{j,j+1,\sigma^{k}\omega}))\\
    &\quad+\sum_{n=\sqrt{{\frac{t}{\varepsilon}}}}^{{{\frac{t}{\varepsilon}}}-1} (\varepsilon\beta_{j,j+1,\sigma^n\omega}+o_{\varepsilon\to 0}(\varepsilon))\prod_{k=0}^{n-1}(1-\varepsilon(\beta_{j,j-1,\sigma^{k}\omega}+\beta_{j,j+1,\sigma^{k}\omega}))\\
    &=: N_{j,t,\omega}^\varepsilon+U_{j,t,\omega}^\varepsilon.
\end{align*}\endgroup
\jp{We split the initial sum into two smaller sums with the intention to control $N_{j,t,\omega}^\varepsilon$, and use mixing arguments to sharply bound $U_{j,t,\omega}^\varepsilon$.} Observe that in our notation, by \jp{an identical argument} to that made at Step 4 and Step 7 in the proof of Lemma 5.3 of \cite{gtp_met}, one can show that for $\mathbb{P}$-a.e. $\omega\in\Omega$ $\lim_{\varepsilon\to 0}N_{j,t,\omega}^\varepsilon=0$. Thus, it remains to estimate $U_{j,t,\omega}^\varepsilon$. However, in our notation, aside from the fibres we sum over, $U_{j,t,\omega}^\varepsilon$ is identical to the function appearing at Step 5 in the proof of Lemma 5.3 of \cite{gtp_met}. Thus, by Step 7 of the same proof (which relies on Step 6), we find that for a fixed $t,\delta>0$, for $\mathbb{P}$-a.e. $\omega \in \Omega$,
\begin{align}
     \lim_{\varepsilon\to 0}R_{j,t,\omega}^\varepsilon&\geq\delta e^{-\delta \left(\bar{\beta}_{j,j-1}+\bar{\beta}_{j,j+1}\right)}\bar{\beta}_{j,j+1}\frac{1-e^{-t\left(\bar{\beta}_{j,j-1}+\bar{\beta}_{j,j+1}\right)}}{1-e^{-\delta\left(\bar{\beta}_{j,j-1}+\bar{\beta}_{j,j+1}\right) }} 
     \label{eqn:lower}
     \end{align}
and
     \begin{align}
    \lim_{\varepsilon\to 0}R_{j,t,\omega}^\varepsilon&\leq\delta \bar{\beta}_{j,j+1} \frac{1-e^{-t\left(\bar{\beta}_{j,j-1}+\bar{\beta}_{j,j+1}\right)}}{1-e^{-\delta\left(\bar{\beta}_{j,j-1}+\bar{\beta}_{j,j+1}\right)}}.\label{eqn:upper}
\end{align}
 As in the proof of Step 8 in \cite[Lemma 5.3]{gtp_met}, taking $\delta$ small in \eqref{eqn:lower} and \eqref{eqn:upper}, the result follows.    
\end{proof}
Next, we state a random analogue to the \textit{Growth Lemma} in \cite[Lemma 2]{SD}. Its proof follows by a similar argument to that made in \cite[Lemma 4]{RGL_BE}. 
\\
\\
For $J\subset I$, let $r_{n,\omega}(x)$ be the distance of $T_\omega^{\varepsilon \, (n)} (x)$ to the boundary of $T_\omega^{\varepsilon \, (n)} (J)$ containing it. The following result allows us to study the set of points that map near, and far from the boundary of the hole $H_{j,\sigma^n\omega}^\varepsilon$ at time $n$. 
\begin{lemma}
    There exists $c>0$ such that for all $\varepsilon>0$ sufficiently small, $J\subset I$, $\omega\in\Omega$ and $n\in\mathbb{N}$
    $$\leb(\{ x\, | \ r_{n,\omega}(x)\leq \varepsilon\})\leq \leb(\{x \ | \ \Lambda^n r_{0,\omega}(x)\leq \varepsilon\})+c\varepsilon\leb(J),$$
    \label{lem:growth}
    where $\Lambda>1$ is as in \hyperref[list:I2]{\textbf{(I2)}}. 
\end{lemma}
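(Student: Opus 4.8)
The plan is to prove the estimate by induction on $n$, adapting the argument of \cite[Lemma 4]{RGL_BE} to the cocycle by applying $T_{\sigma^n\omega}^\varepsilon$ at the $n$-th step. For $n=0$ the two sets coincide and the inequality holds with any $c\ge 0$. For the inductive step the key is a geometric dichotomy. Fix $x$ with $r_{n+1,\omega}(x)\le\varepsilon$, write $y:=T_\omega^{\varepsilon\,(n)}(x)$, and let $[c,c']$ be the maximal interval of monotonicity of $T_{\sigma^n\omega}^\varepsilon$ that contains $y$ and is contained in $T_\omega^{\varepsilon\,(n)}(J)$. Since $T_{\sigma^n\omega}^\varepsilon(y)$ lies within $\varepsilon$ of an endpoint of the interval $T_\omega^{\varepsilon\,(n+1)}(J)$, and that endpoint is an extremum of $T_{\sigma^n\omega}^\varepsilon$ on $T_\omega^{\varepsilon\,(n)}(J)$, walking from $y$ towards an extremal point shows that there is $p\in\{c,c'\}$ with $|T_{\sigma^n\omega}^\varepsilon(p)-T_{\sigma^n\omega}^\varepsilon(y)|\le\varepsilon$; by uniform expansion \hyperref[list:I2]{\textbf{(I2)}} this gives $|y-p|\le\varepsilon/\Lambda$. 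As each of $c,c'$ is either an endpoint of $T_\omega^{\varepsilon\,(n)}(J)$ or a point of the critical set $\mathcal{C}_{\sigma^n\omega}^\varepsilon$, we conclude that $r_{n,\omega}(x)\le\varepsilon/\Lambda$ or $\mathrm{dist}(y,\mathcal{C}_{\sigma^n\omega}^\varepsilon)\le\varepsilon/\Lambda$. Writing $N_\rho(S)$ for the $\rho$-neighbourhood of a set $S$, this yields
\begin{equation*}
\{x\in J:\ r_{n+1,\omega}(x)\le\varepsilon\}\ \subseteq\ \{x\in J:\ r_{n,\omega}(x)\le\varepsilon/\Lambda\}\ \cup\ \big(J\cap(T_\omega^{\varepsilon\,(n)})^{-1}(N_{\varepsilon/\Lambda}(\mathcal{C}_{\sigma^n\omega}^\varepsilon))\big).
\end{equation*}

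Next I would estimate the two sets on the right. For the first, the inductive hypothesis applied with $\varepsilon/\Lambda$ in place of $\varepsilon$ gives $\leb\{x\in J:\ r_{n,\omega}(x)\le\varepsilon/\Lambda\}\le\leb\{x:\ \Lambda^{n+1}r_{0,\omega}(x)\le\varepsilon\}+c\,(\varepsilon/\Lambda)\,\leb(J)$. For the second, $\mathcal{C}_{\sigma^n\omega}^\varepsilon$ consists of the fixed number $d+1$ of points, so $N_{\varepsilon/\Lambda}(\mathcal{C}_{\sigma^n\omega}^\varepsilon)$ has Lebesgue measure at most $2(d+1)\varepsilon/\Lambda$; combining bounded distortion of $T_\omega^{\varepsilon\,(n)}$ on its branches of monotonicity — a standard consequence of \hyperref[list:I1]{\textbf{(I1)}}–\hyperref[list:I2]{\textbf{(I2)}}, uniform over $\omega$ and $\varepsilon$ by \hyperref[list:P2]{\textbf{(P2)}} and the finite range of $\omega\mapsto T_\omega^\varepsilon$ in \hyperref[list:P1]{\textbf{(P1)}} — with the control of branches of short image, exactly as in \cite[Lemma 4]{RGL_BE}, produces a constant $c_0>0$, independent of $n,\omega,\varepsilon$, with $\leb\big(J\cap(T_\omega^{\varepsilon\,(n)})^{-1}(N_{\varepsilon/\Lambda}(\mathcal{C}_{\sigma^n\omega}^\varepsilon))\big)\le c_0\,(\varepsilon/\Lambda)\,\leb(J)$. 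Adding the two estimates and choosing $c:=c_0/(\Lambda-1)$ closes the induction, and the resulting $c$ is manifestly independent of $J$, $\omega$ and all sufficiently small $\varepsilon>0$.

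I expect the main obstacle to be this last estimate — the pull-back into $J$ of an $O(\varepsilon)$-neighbourhood of the finite critical set — with the proportionality factor $\leb(J)$ held uniformly in $n$: a crude transfer-operator bound of the form $\leb\big(J\cap(T_\omega^{\varepsilon\,(n)})^{-1}(A)\big)\le\leb(A)\,\|\mathcal{L}_\omega^{\varepsilon\,(n)}\mathds{1}_J\|_{L^\infty}$ leaves, through \hyperref[list:P5]{\textbf{(P5)}}, a residual term of order $\varepsilon r^n$ (with $r<1$ the Lasota–Yorke contraction rate, which in general is not $\le\Lambda^{-1}$) rather than the required $\varepsilon\,\leb(J)$. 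Following \cite[Lemma 4]{RGL_BE}, one instead works branch by branch on the partition of monotonicity of $T_\omega^{\varepsilon\,(n)}|_J$, so that the estimate is driven only by bounded distortion and the expansion rate $\Lambda$ together with the control of short branches furnished by \hyperref[list:P5]{\textbf{(P5)}}; the finite range of $\omega\mapsto T_\omega^\varepsilon$ from \hyperref[list:P1]{\textbf{(P1)}} and the convergence in \hyperref[list:P2]{\textbf{(P2)}} then upgrade the bound to one holding uniformly over $\omega\in\Omega$ and all sufficiently small $\varepsilon>0$.
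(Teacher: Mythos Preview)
Your proposal is correct and follows essentially the same approach as the paper: the paper's own proof simply says ``Following an almost identical argument to that made in \cite[Lemma 4]{RGL_BE}, the result follows,'' and then notes that \hyperref[list:P1]{\textbf{(P1)}} (finite range of $\omega\mapsto T_\omega^\varepsilon$) is what makes the constant $c$ uniform in $\omega$. You have spelled out the induction-plus-dichotomy scheme that \cite[Lemma 4]{RGL_BE} uses, correctly identified the branch-by-branch preimage estimate as the crux, and invoked exactly the same ingredients (\hyperref[list:P1]{\textbf{(P1)}}, \hyperref[list:P2]{\textbf{(P2)}}) for uniformity that the paper relies on.
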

\begin{proof} 
Following an identical argument to that made in \cite[Lemma 4]{RGL_BE}, \jp{keeping track of the randomness}, the result follows. Note that in our setting, thanks to \hyperref[list:P1]{\textbf{(P1)}}, for all $\varepsilon\geq 0$, the mapping $\omega\mapsto T_\omega^\varepsilon$ has finite range, allowing us to show that $c>0$ is independent of $\omega\in\Omega.$ 
\end{proof}\jp{
\begin{definition}
Fix $r>0$ and $j\in\{1,\cdots, m\}$. We say that a visit of $x$ to the hole $H_{j,\sigma^n\omega}^\varepsilon$ at time $n$ is \textit{$r$-inessential} if $d(T_{\omega}^{\varepsilon\, (n)}(x),\partial H_{j,\sigma^n\omega}^\varepsilon)<r\varepsilon$. The visit is \textit{$r$-essential} otherwise.
    \label{def:r_inessential}
\end{definition}}
Fix $S,r>0$ and $j\in\{1,\cdots,m\}$. The following result shows that for all $\varepsilon>0$ the probability that $x$ will have an $r$-inessential visit to $H_{j,\sigma^n\omega}^\varepsilon$ from time $n=0,\dots, S/\varepsilon$ can be made small.
\begin{lemma}
    Fix $S,r>0$ and $j\in\{1,\cdots, m\}$. For all $\varepsilon>0$ sufficiently small there exists a constant $C>0$ such that 
    $$ \sum_{n=0}^{\frac{S}{\varepsilon}}\leb_j(\{ x\, | \ r_{n,\omega}(x)\leq r \varepsilon\})\leq CrS$$
     \Change{where $r_{n,\omega}(x)$ is the distance of $T_\omega^{\varepsilon\, (n)}(x)$ to $\partial H_{j,\sigma^n\omega}^\varepsilon$.}
    \label{lem:inessential}
\end{lemma}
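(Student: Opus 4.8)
The plan is to obtain the bound from the Growth Lemma (\lem{lem:growth}), applied with $J=I_j$ and with its parameter $\varepsilon$ taken to be $r\varepsilon$, and then to sum the resulting per-step estimate over $n=0,\dots,\lfloor S/\varepsilon\rfloor$. Fix $\omega\in\Omega$ outside a $\mathbb{P}$-null set; because $\omega\mapsto T_\omega^\varepsilon$ has finite range (\hyperref[list:P1]{\textbf{(P1)}}), the constant $c$ supplied by \lem{lem:growth} is independent of $\omega$.

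The first ingredient I would record is a bound on the time-$0$ near-boundary set. Since $T_\omega^{\varepsilon\,(0)}=\mathrm{id}$, the quantity $r_{0,\omega}(x)$ is the distance from $x$ to the (finitely many) endpoints of the smoothness components of $I_j\cap H_{j,\omega}^\varepsilon$, so $\leb_j(\{x\mid r_{0,\omega}(x)\le\delta\})\le N\delta$ for every $\delta>0$, where $N=N_j$ bounds the number of such endpoints. This $N$ is uniform over $\omega$ and over all sufficiently small $\varepsilon>0$ by \hyperref[list:P3]{\textbf{(P3)}} together with the finite range in \hyperref[list:P1]{\textbf{(P1)}}.

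With this in hand, \lem{lem:growth} gives, for each $n$,
$$\leb_j(\{x\mid r_{n,\omega}(x)\le r\varepsilon\})\le\leb_j(\{x\mid r_{0,\omega}(x)\le r\varepsilon\Lambda^{-n}\})+cr\varepsilon\,\leb(I_j).$$
Summing over $0\le n\le\lfloor S/\varepsilon\rfloor$, the error term contributes $cr\varepsilon\,\leb(I_j)\,(\lfloor S/\varepsilon\rfloor+1)\le 2c\,\leb(I_j)\,rS$ once $\varepsilon\le S$, which is the dominant contribution; the remaining terms contribute, by the time-$0$ bound and $\Lambda>1$ from \hyperref[list:I2]{\textbf{(I2)}}, at most $\sum_{n\ge 0}Nr\varepsilon\,\Lambda^{-n}=\tfrac{N\Lambda}{\Lambda-1}r\varepsilon\le\tfrac{N\Lambda}{\Lambda-1}rS$ for $\varepsilon\le S$. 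Adding the two bounds yields the claim with $C:=2c\,\leb(I_j)+\tfrac{N\Lambda}{\Lambda-1}$, independent of $\varepsilon$.

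The step I expect to demand the most care is the invocation of \lem{lem:growth} with the target boundary being that of the hole intersection $T_\omega^{\varepsilon\,(n)}(I_j)\cap H_{j,\sigma^n\omega}^\varepsilon$ rather than merely that of $T_\omega^{\varepsilon\,(n)}(I_j)$. One has to check that the argument behind \lem{lem:growth} (in the spirit of \cite[Lemma 4]{RGL_BE}) accommodates this: the boundary of the intersection consists of the endpoints of the smoothness components of $T_\omega^{\varepsilon\,(n)}(I_j)$ landing inside $H_{j,\sigma^n\omega}^\varepsilon$ — treated exactly as in \lem{lem:growth} — together with at most $2N$ further points, the endpoints of $H_{j,\sigma^n\omega}^\varepsilon$; each of the latter contributes only $O(r\varepsilon)$ per step, via the uniform sup-norm control of $\mathcal{L}_\omega^{\varepsilon\,(n)}\mathds{1}_{I_j}$ coming from the uniform Lasota--Yorke inequality (\hyperref[list:P5]{\textbf{(P5)}}, and \lem{lem:ULY_open} for the open iterates), and so is absorbed into $c$. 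Granting this — and the uniformity of $c$ and $N$, which both reduce to the finite range of \hyperref[list:P1]{\textbf{(P1)}} and the convergence of holes in \hyperref[list:P3]{\textbf{(P3)}} — the rest is the elementary geometric-series summation above.
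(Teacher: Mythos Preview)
Your argument is correct and follows the same strategy as the paper: apply the Growth Lemma (\lem{lem:growth}), bound the time-$0$ near-boundary set linearly in $r\varepsilon$, and sum the geometric series in $\Lambda^{-n}$ together with the $O(r\varepsilon)$-per-step error over $n\le S/\varepsilon$.

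The only notable difference is the choice of $J$. The paper invokes \lem{lem:growth} with $J=H_{j,\sigma^n\omega}^\varepsilon$, so that the boundary in question is already that of the hole; this makes the per-step error term $cr\varepsilon\,\leb_j(H_{j,\sigma^n\omega}^\varepsilon)$ (which is then bounded crudely by a constant times $r\varepsilon$), and the $r_{0,\omega}$ bound comes directly from \lem{lem:leb_estimate}. You instead take $J=I_j$, which gives the larger but still $O(r\varepsilon)$ error term $cr\varepsilon\,\leb(I_j)$, and then you separately account for the at most $2N$ endpoints of $H_{j,\sigma^n\omega}^\varepsilon$ via the uniform $\BV$ (hence $L^\infty$) bound on $\mathcal{L}_\omega^{\varepsilon\,(n)}\mathds{1}_{I_j}$ from \hyperref[list:P5]{\textbf{(P5)}}. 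Your route is slightly longer but has the advantage of making explicit the adaptation of \lem{lem:growth} to the intersection $T_\omega^{\varepsilon\,(n)}(I_j)\cap H_{j,\sigma^n\omega}^\varepsilon$, which the paper's formulation (with $J$ depending on $n$) leaves implicit. Either way the final constant is independent of $\omega$ and $\varepsilon$, and the bound $CrS$ follows.
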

\begin{proof}
Due to \lem{lem:growth} with $J=H_{j,\sigma^n\omega}^\varepsilon$, there exists $c>0$ such that for all $\varepsilon>0$ small enough,
\begin{align*}
     \sum_{n=0}^{\frac{S}{\varepsilon}}\leb_j(\{ x\, | \ r_{n,\omega}(x)\leq r \varepsilon\})&\leq \sum_{n=0}^{\frac{S}{\varepsilon}} \left(\leb_j(\{x \ | \ \Lambda^n r_{0,\omega}(x)\leq r \varepsilon\})+cr\varepsilon\leb_j(H_{j,\sigma^n \omega}^\varepsilon)\right)\\
        &\leq \tilde{c}r\varepsilon \left(\frac{S}{\varepsilon}+1\right)+\sum_{n=0}^{\frac{S}{\varepsilon}}\leb_j(\{x \ | \ \Lambda^n r_{0,\omega}(x)\leq r \varepsilon\})\\
        &\stackrel{(\star)}= \tilde{c}r\varepsilon \left(\frac{S}{\varepsilon}+1\right)+\leb_j(\{x \ | \ r_{0,\omega}(x)\leq r \varepsilon\}) \sum_{n=0}^{\frac{S}{\varepsilon}}\Lambda^{-n}\\
        &\stackrel{(\star\star)}\leq kr\varepsilon \left(\frac{S}{\varepsilon}+1\right)\\
        &\leq CrS.
\end{align*}
\Change{At $(\star)$ we have used \hyperref[list:I2]{\textbf{(I2)}} as $\Lambda>1$ scales $\leb_j(\{x \ | \ \Lambda^n r_{0,\omega}(x)\leq r \varepsilon\})$. At $(\star \star)$, since \lem{lem:growth} is applied to $J=H_{j,\sigma^n\omega}^\varepsilon$, we apply \lem{lem:leb_estimate} to deduce that for $\varepsilon>0$ sufficiently small, there exists a constant $\tilde{k}>0$ such that $\leb_j(\{x \ | \ r_{0,\omega}(x)\leq r \varepsilon\}) \leq \tilde{k}r\varepsilon$.} 
\end{proof}
We now study trajectories that map far from the boundary of $H_{j,\sigma^n\omega}^\varepsilon$ at time $n$. In light of \dfn{def:r_inessential}, we define $\mathcal{E}_{j,n,\omega}^\varepsilon=\{x \ | \ x \ \mathrm{has \ an} \ r\mathrm{-essential \ visit \ to} \ H_{j,\sigma^n\omega}^\varepsilon \ \mathrm{at \ time} \ n \}$. That is, for each $\omega\in\Omega$ $\mathcal{E}_{j,n,\omega}^\varepsilon:=(T_{\omega}^{\varepsilon\, (n)})^{-1}(\tilde{H}_{j,\sigma^n\omega}^\varepsilon)$ where $\tilde{H}_{j,\sigma^n\omega}^\varepsilon\subset {H}_{j,\sigma^n\omega}^\varepsilon$ is the set of points in ${H}_{j,\sigma^n\omega}^\varepsilon$ that are greater than $r\varepsilon$ away from the boundary of ${H}_{j,\sigma^n\omega}^\varepsilon$.
\begin{lemma}
    Fix $j\in \{1,\cdots, m\}$. Then for all $\varepsilon>0$ sufficiently small and $\mathbb{P}$-a.e. $\omega\in\Omega$ there exists constants $\tilde{m},\tilde{M}>0$ such that for all \Change{$n\in\mathbb{N}$}
    $$ \tilde{m}\varepsilon\leq \leb_j(\mathcal{E}_{j,n,\omega}^\varepsilon)\leq \tilde{M}\varepsilon.$$
    \label{lem:lebj_ess}
\end{lemma}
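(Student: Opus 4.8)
The plan is to rewrite $\leb_j(\mathcal{E}_{j,n,\omega}^\varepsilon)$ through the transfer operator and then bound the resulting integrand. Since $\tilde H_{j,\sigma^n\omega}^\varepsilon\subseteq H_{j,\sigma^n\omega}^\varepsilon\subseteq I_j$, the defining duality of the Perron--Frobenius operator gives $\leb_j(\mathcal{E}_{j,n,\omega}^\varepsilon)=\int_{\tilde H_{j,\sigma^n\omega}^\varepsilon}\mathcal{L}_\omega^{\varepsilon\,(n)}(\mathds{1}_{I_j})\,\dleb$. We may assume $r$ is sufficiently small. As a preliminary, note that for $\varepsilon$ small $H_{j,\sigma^n\omega}^\varepsilon$ is, by \hyperref[list:P3]{\textbf{(P3)}}, \hyperref[list:P2]{\textbf{(P2)}} and \hyperref[list:I5]{\textbf{(I5)}}, a union of at most $K:=\#(I_j\cap H^0)$ intervals, so discarding the $r\varepsilon$-collars around its endpoints costs at most $2Kr\varepsilon$; combined with the uniform (in $\omega$) lower bound $\leb_j(H_{j,\sigma^n\omega}^\varepsilon)\ge a\varepsilon+o_{\varepsilon\to 0}(\varepsilon)$ of \lem{lem:leb_estimate} and the choice $r<a/(4K)$, this yields $\leb_j(\tilde H_{j,\sigma^n\omega}^\varepsilon)\ge\tfrac a4\varepsilon$ for all $n\in\mathbb{N}$, $\mathbb{P}$-a.e.\ $\omega$ and all $\varepsilon$ small.

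For the upper bound, the uniform Lasota--Yorke inequality \hyperref[list:P5]{\textbf{(P5)}} bounds $\|\mathcal{L}_\omega^{\varepsilon\,(n)}(\mathds{1}_{I_j})\|_{\BV(I)}\le C_1r^n\|\mathds{1}_{I_j}\|_{\BV(I)}+C_2\|\mathds{1}_{I_j}\|_{L^1(\leb)}$ uniformly in $n$, $\omega$ and small $\varepsilon$, hence so is $\|\mathcal{L}_\omega^{\varepsilon\,(n)}(\mathds{1}_{I_j})\|_{L^\infty(\leb)}$ (using $\|g\|_{L^\infty}\le\|g\|_{\BV(I)}$ for $g\ge 0$). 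Together with $\leb_j(\tilde H_{j,\sigma^n\omega}^\varepsilon)\le\leb_j(H_{j,\sigma^n\omega}^\varepsilon)\le M\varepsilon+o_{\varepsilon\to 0}(\varepsilon)$ from \lem{lem:leb_estimate}, this gives $\leb_j(\mathcal{E}_{j,n,\omega}^\varepsilon)\le\tilde M\varepsilon$.

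For the lower bound it suffices, by the preliminary estimate, to show $\inf_{x\in H_{j,\sigma^n\omega}^\varepsilon}\mathcal{L}_\omega^{\varepsilon\,(n)}(\mathds{1}_{I_j})(x)\ge c_0>0$ uniformly in $n$, $\mathbb{P}$-a.e.\ $\omega$, and small $\varepsilon$. By positivity and \lem{lem:open-comp}, $\mathcal{L}_\omega^{\varepsilon\,(n)}(\mathds{1}_{I_j})\ge\mathcal{L}_{j,\omega}^{\varepsilon\,(n)}(\mathds{1}_{I_j})$ pointwise, so it is enough to control the open operator together with the closed RACIM, which I do in three ranges of $n$. \textbf{(a)} For the finitely many $n<n_0$ (with $n_0$ fixed below): since $\phi_j(h)>0$ at each $h\in I_j\cap H^0$ and $\phi_j=\mathcal{L}_j^{0\,(n)}(\phi_j)$, every such $h$ has a $T_j^{0\,(n)}$-preimage whose forward orbit avoids $\mathcal{C}^0$ (by \hyperref[list:I5]{\textbf{(I5)}} and \hyperref[list:P7]{\textbf{(P7)}}); this clean inverse branch persists under the $C^2$-small perturbation \hyperref[list:P2]{\textbf{(P2)}}--\hyperref[list:P3]{\textbf{(P3)}}, so $\mathcal{L}_{j,\omega}^{\varepsilon\,(n)}(\mathds{1}_{I_j})\ge c_n>0$ on a fixed neighbourhood of $I_j\cap H^0$ that contains $H_{j,\sigma^n\omega}^\varepsilon$ for $\varepsilon$ small. \textbf{(b)} For $n_0\le n\le K_1/\varepsilon$, with $K_1>0$ any fixed constant: \thrm{thrm:openopprop}(e) gives $\|(\lambda_{j,\omega}^{\varepsilon\,(n)})^{-1}\mathcal{L}_{j,\omega}^{\varepsilon\,(n)}(\mathds{1}_{I_j})-\nu_{j,\omega}^\varepsilon(\mathds{1}_{I_j})\phi_{j,\sigma^n\omega}^\varepsilon\|_{\BV(I_j)}\le C\theta^n$; on $H_{j,\sigma^n\omega}^\varepsilon$ one has $\phi_{j,\sigma^n\omega}^\varepsilon$ bounded below by a fixed positive constant by \lem{lem:phiepsj_open} and \hyperref[list:I6]{\textbf{(I6)}}, while $\nu_{j,\omega}^\varepsilon(\mathds{1}_{I_j})\ge|I_j|/2$ by \thrm{thrm:openopprop}(b); fixing $n_0$ so that $C\theta^{n_0}$ is small against this product, and using that $\lambda_{j,\omega}^{\varepsilon\,(n)}\ge(1-C'\varepsilon)^{K_1/\varepsilon}$ is bounded below uniformly on this range by \thrm{thrm:openopprop}(a), we obtain $\mathcal{L}_\omega^{\varepsilon\,(n)}(\mathds{1}_{I_j})\ge c_A>0$ on $H_{j,\sigma^n\omega}^\varepsilon$. \textbf{(c)} For $n>K_1/\varepsilon$ the open-operator bound degrades, and here I rely instead on the quenched metastability results of \cite{gtp_met}: the closed cocycle $(\mathcal{L}_\omega^\varepsilon)_{\omega\in\Omega}$ relaxes onto its leading Oseledets space on the timescale $O(1/\varepsilon)$, with constants uniform in $\omega$ and $\varepsilon$, and its leading co-Oseledets functional is $\leb$ with $\leb(\mathds{1}_{I_j})=|I_j|$, so $K_1$ may be chosen large enough that $\|\mathcal{L}_\omega^{\varepsilon\,(n)}(\mathds{1}_{I_j})-|I_j|\phi_{\sigma^n\omega}^\varepsilon\|_{\BV(I)}\le\tfrac12|I_j|c_\phi$ for all $n\ge K_1/\varepsilon$, $\mathbb{P}$-a.e.\ $\omega$ and small $\varepsilon$; here $c_\phi>0$ is a uniform lower bound for $\phi_\omega^\varepsilon$ on $H_{j,\omega}^\varepsilon$, which exists since $\phi_\omega^0=\sum_\ell p_\ell\phi_\ell$ by \cite[Theorem 7.2]{gtp_met}, $p_j>0$ (as $\bar G$ is an irreducible birth--death generator by \hyperref[list:P4]{\textbf{(P4)}}), $\phi_j\ge L$ near $I_j\cap H^0$, and the behaviour of $\phi_\omega^\varepsilon$ near the holes is controlled as in \cite{BS_rand}; hence $\mathcal{L}_\omega^{\varepsilon\,(n)}(\mathds{1}_{I_j})\ge\tfrac12|I_j|c_\phi=:c_B>0$ on $H_{j,\sigma^n\omega}^\varepsilon$. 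Setting $c_0=\min\{\min_{n<n_0}c_n,\ c_A,\ c_B\}$ and integrating over $\tilde H_{j,\sigma^n\omega}^\varepsilon$ gives $\leb_j(\mathcal{E}_{j,n,\omega}^\varepsilon)\ge\tfrac a4 c_0\,\varepsilon=:\tilde m\varepsilon$.

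The main obstacle is part (c): one must ensure that the ranges (b) and (c) genuinely overlap, i.e.\ that the relaxation of the closed metastable cocycle onto its leading Oseledets space occurs within $O(1/\varepsilon)$ iterations with constants uniform in $\omega$ and $\varepsilon$. This is precisely the quenched analogue of the classical metastability limit, which is available from \cite{gtp_met}. The remaining ingredients --- persistence of a clean inverse branch in (a) and the eigenfunction/eigenmeasure estimates in (b) --- are routine given \thrm{thrm:openopprop} and \lem{lem:phiepsj_open}.
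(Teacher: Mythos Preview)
Your approach differs substantially from the paper's, and while the upper bound and parts (a)--(b) of the lower bound are sound, part (c) has a genuine gap.

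The paper does not try to bound $\mathcal{L}_\omega^{\varepsilon\,(n)}(\mathds{1}_{I_j})$ pointwise from below at all. Its lower bound comes directly from the Growth Lemma (\lem{lem:growth}): the set of $r$-inessential visits at time $n$ is contained in the set of $x$ for which $T_\omega^{\varepsilon\,(n)}(x)$ lands within $r\varepsilon$ of a branch/hole boundary, and the Growth Lemma bounds the measure of that set by $Cr\varepsilon/\Lambda^n+O(\varepsilon^2)$ uniformly in $n$; subtracting this from the $a\varepsilon$ lower bound of \lem{lem:leb_estimate} gives $\tilde m\varepsilon$ for every $n$ simultaneously, with no case splitting and no spectral input.

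The difficulty with your (c) is that the statement ``the closed metastable cocycle relaxes onto its leading Oseledets space within $O(1/\varepsilon)$ iterations with constants uniform in $\omega$ and $\varepsilon$'' is \emph{not} available from \cite{gtp_met}. That paper approximates the Oseledets spaces and identifies the second Lyapunov exponent as $O(\varepsilon)$, but Oseledets theory yields only asymptotic rates, not finite-time $\BV$ decay with $\omega$- and $\varepsilon$-independent prefactors. Within the present paper, the estimate you need is exactly \lem{lem:tail-ctrl}, but its proof uses \thrm{thrm:conv_jump}, which in turn (via \lem{lem:no-jump}) uses the lemma you are trying to prove --- so invoking it here would be circular. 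A secondary issue in (c) is the claimed uniform lower bound $c_\phi$ for $\phi_\omega^\varepsilon$ on the holes: \cite[Theorem~7.2]{gtp_met} gives only $L^1$-convergence $\phi_\omega^\varepsilon\to\phi_\omega^0$, and the $L^\infty$-on-holes control stated in this paper (\lem{lem:phiepsj_open}) is for the \emph{open} eigenfunctions $\phi_{j,\omega}^\varepsilon$, not for the closed RACIM $\phi_\omega^\varepsilon$.

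In short, your spectral route is natural but runs into a chicken-and-egg problem in regime (c); the paper's Growth Lemma argument sidesteps this entirely with a geometric estimate that is uniform in $n$ from the outset.
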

\begin{proof}
    For the upper bound, recall that $\mathcal{E}_{j,n,\omega}^\varepsilon=(T_{\omega}^{\varepsilon\, (n)})^{-1}(\tilde{H}_{j,\sigma^n\omega}^\varepsilon)$ where $\tilde{H}_{j,\sigma^n\omega}^\varepsilon\subset {H}_{j,\sigma^n\omega}^\varepsilon$. Thus, due to \lem{lem:leb_estimate}, there exists an $\tilde{M}>0$ such that for $\varepsilon>0$ sufficiently small, $\leb_j(\mathcal{E}_{j,n,\omega}^\varepsilon)\leq \leb_j((T_\omega^{\varepsilon\, (n)})^{-1}({H}_{j,\sigma^n\omega}^\varepsilon))\leq \tilde{M}\varepsilon$. Fix $r>0$. For the lower estimate we observe that $$\leb_j(\mathcal{E}_{j,n,\omega}^\varepsilon)= \leb_j(H_{j,\sigma^n\omega}^\varepsilon) -  \leb_j(\{x \ | \ r_{n,\omega}(x) \leq r \varepsilon\}),$$ \Change{where $r_{n,\omega}(x)$ is the distance of $T_\omega^{\varepsilon\, (n)}(x)$ to $\partial H_{j,\sigma^n\omega}^\varepsilon$.} Thus, thanks to \lem{lem:leb_estimate} and \lem{lem:growth} applied to $H_{j,\sigma^n\omega}^\varepsilon$, for $\jp{r},\varepsilon>0$ sufficiently small,
    \begin{align*}
     \leb_j(\mathcal{E}_{j,n,\omega}^\varepsilon)&\geq \leb_j(H_{j,\sigma^n\omega}^\varepsilon) - \leb_j(\{x \ | \ \Lambda^n r_{0,\omega}(x)\leq r\varepsilon\})-c\varepsilon\leb_j(H_{j,\sigma^n\omega}^\varepsilon)\\
     &\geq a\varepsilon - \frac{Cr\varepsilon}{\Lambda^n}-M\varepsilon^2\\
     &\geq \tilde{m}\varepsilon.
    \end{align*}
\end{proof}
\begin{lemma}
    Fix $S,\delta>0$ and $j\in \{1,\cdots,m\}$, then there exists a $\xi>0$ such that for all $\varepsilon>0$ sufficiently small,
    $$\mu_{j}\left(\left\{ x\in I \ | \ \exists k \ \mathrm{with} \ t_{k,\omega}^\varepsilon(x)\leq \frac{S}{\varepsilon} \ \mathrm{and} \ \mathcal{T}_{k+1,\omega}^\varepsilon(x)\leq \frac{\xi}{\varepsilon} \right\} \right)\leq \delta.$$
    \label{lem:no-jump}
\end{lemma}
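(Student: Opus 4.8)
The plan is to recast the event in terms of consecutive visits of the orbit to the holes and then adapt the deterministic argument of \cite{SD}, with \lem{lem:growth}, \lem{lem:inessential}, \lem{lem:leb_estimate}, \lem{lem:lebj_ess} and \lem{lem:ULY_open} supplying the uniform (over $\omega$) control.

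\emph{Reduction.} Set $\mathcal{H}_{\sigma^p\omega}^\varepsilon := \bigcup_{l=1}^m H_{l,\sigma^p\omega}^\varepsilon$. Each time the orbit of $x$ lands in $\mathcal{H}_{\sigma^p\omega}^\varepsilon$, the value of $z(\cdot)$ changes at the next step, so the jump times $t_{k,\omega}^\varepsilon(x)$ are exactly $p_k+1$, where $p_1<p_2<\cdots$ are the successive times $p$ with $T_\omega^{\varepsilon\,(p)}(x)\in\mathcal{H}_{\sigma^p\omega}^\varepsilon$, and $\mathcal{T}_{k+1,\omega}^\varepsilon(x)=p_{k+1}-p_k$. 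Hence the set in the statement is contained in $\bigcup_{p=0}^{\lfloor S/\varepsilon\rfloor}A_p$, where
\[
A_p := \big\{x\in I : T_\omega^{\varepsilon\,(p)}(x)\in\mathcal{H}_{\sigma^p\omega}^\varepsilon \text{ and } T_\omega^{\varepsilon\,(q)}(x)\in\mathcal{H}_{\sigma^q\omega}^\varepsilon \text{ for some integer } p<q\le p+\xi/\varepsilon\big\},
\]
and it suffices to show $\sum_{p=0}^{\lfloor S/\varepsilon\rfloor}\mu_j(A_p)\le\delta$ for $\mathbb{P}$-a.e.\ $\omega$.

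\emph{Inessential visits.} Fix $r>0$, to be chosen. Split each $A_p$ according to whether the visit at time $p$ is $r$-inessential or $r$-essential (\dfn{def:r_inessential}). For the $r$-inessential part, sum over $p$ and over the finitely many intervals $I_l$ the orbit may occupy at time $p$; by \lem{lem:inessential} applied inside each $I_l$, together with the bound $\|\mathcal{L}_\omega^{\varepsilon\,(p)}\phi_j\|_\infty\le C\|\mathcal{L}_\omega^{\varepsilon\,(p)}\phi_j\|_{\BV(I)}\le C'$ (uniform in $p,\omega,\varepsilon$ by \hyperref[list:P5]{\textbf{(P5)}}) used to pass from $\leb$ to $\mu_j$, one obtains that the $\mu_j$-measure of the orbits having an $r$-inessential visit to some $\mathcal{H}_{\sigma^p\omega}^\varepsilon$ with $0\le p\le S/\varepsilon$ is at most $C_1 rS$, with $C_1$ independent of $r,\varepsilon$. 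Choose $r$ so that $C_1 rS<\delta/2$.

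\emph{Essential visits.} After an $r$-essential visit to $H_{l,l',\sigma^p\omega}^\varepsilon$ at time $p$, the orbit enters $I_{l'}$ within $O(\varepsilon)$ of the shared endpoint of $I_l$ and $I_{l'}$, since by \hyperref[list:P2]{\textbf{(P2)}}, \hyperref[list:P3]{\textbf{(P3)}} and \hyperref[list:I5]{\textbf{(I5)}} the hole $H_{l,l',\sigma^p\omega}^\varepsilon$ accumulates on a $T^0$-preimage of that endpoint as $\varepsilon\to0$. Refining $I_j$ into the monotonicity pieces of $T_\omega^{\varepsilon\,(p)}$ carrying this itinerary, the image of the essential part of the hole is an interval of length $\ge\Lambda r\varepsilon$ (by \hyperref[list:I2]{\textbf{(I2)}}); using \lem{lem:growth} to discard a badly-breaking exceptional set (absorbed into the $O(rS)$ bound above), this interval grows by a factor $\ge\Lambda$ per step, and since by \hyperref[list:P7]{\textbf{(P7)}} and \hyperref[list:I5]{\textbf{(I5)}} the holes of $I_{l'}$ lie at a distance bounded below from that endpoint, it reaches a fixed macroscopic scale $\delta_0>0$ after $N_1(\varepsilon)=O_r(\log(1/\varepsilon))$ steps without meeting a hole; for $\varepsilon$ small, $N_1(\varepsilon)\le\xi/(2\varepsilon)$. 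From the macroscopic scale on, bounded distortion makes the conditional measure on an interval of length $\ge\delta_0$ comparable to normalised Lebesgue, while \lem{lem:ULY_open} and \lem{lem:leb_estimate} give, for $\varepsilon$ small and any fibre $\omega'$,
\[
\leb_{l'}\big(\{y\in I_{l'} : y \text{ leaves } I_{l'} \text{ within } N \text{ steps under } T_{l',\omega'}^\varepsilon\}\big)\ \le\ \sum_{i=0}^{N-1}\big\|\mathcal{L}_{l',\omega'}^{\varepsilon\,(i)}\mathds{1}_{I_{l'}}\big\|_\infty\,\leb\big(H_{l',\sigma^i\omega'}^\varepsilon\big)\ \le\ C_2 N\varepsilon,
\]
so with $N=\lceil\xi/\varepsilon\rceil$ this is $\le 2C_2\xi$. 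Combining these facts with $\leb_j(\{x:\text{hole-hit at }p\})\le\tilde{M}\varepsilon$ (from the proof of \lem{lem:lebj_ess}) gives $\mu_j(A_p\cap\{r\text{-essential at }p\})\le C_3\xi\varepsilon$, with $C_3$ independent of $\xi,\varepsilon,p$; summing over $p\le S/\varepsilon$ yields $C_3\xi S$. Choosing $\xi$ small so that $C_3\xi S<\delta/2$ and adding the two contributions proves the claim; uniformity over $\omega$ away from a $\mathbb{P}$-null set comes from the finite range of $\omega\mapsto T_\omega^\varepsilon$ in \hyperref[list:P1]{\textbf{(P1)}} and from the cited lemmas.

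\emph{Main obstacle.} The delicate point is the geometric claim just used: after an $r$-essential visit the image interval reaches a fixed macroscopic size within $O(\log(1/\varepsilon))\ll\xi/\varepsilon$ iterations before it can meet another hole, the badly-breaking exceptional set being controlled by \lem{lem:growth}. This is where the deterministic argument of \cite{SD} must be transported to the randomly driven cocycle, and where \hyperref[list:P7]{\textbf{(P7)}} (no hole near a boundary) and \hyperref[list:I5]{\textbf{(I5)}} (no return of the critical set to infinitesimal holes) are essential.
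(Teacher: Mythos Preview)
Your reduction via $\mathcal{H}=\bigcup_l H_l$ and the treatment of $r$-inessential visits are correct and follow the paper's scheme. For the essential-visit part you take a different route, and there is a genuine gap.

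You argue that after an $r$-essential visit the image interval in $I_{l'}$ grows to a fixed macroscopic scale $\delta_0$ within $N_1(\varepsilon)=O_r(\log(1/\varepsilon))$ steps without meeting a hole, because ``the holes of $I_{l'}$ lie at a distance bounded below from that endpoint''. But this reasoning only covers the very first step. After one iterate the interval sits near $T^0(b^+)$, which in case \textbf{(P7)}(b) (where $b\in\mathcal{C}^0$) need not be near $b$ at all; condition \textbf{(I5)} guarantees $T^{0\,(k)}(b^+)\notin H^0$ for each \emph{fixed} $k$, but gives no uniform positive lower bound on $\mathrm{dist}\big(T^{0\,(k)}(b^+),H^0\big)$ over the $\varepsilon$-dependent window $1\le k\le N_1(\varepsilon)$. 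Hence for some $k$ in this range the image interval, by then of length of order $\varepsilon\Lambda^k$, may overlap an $\varepsilon$-hole before it reaches size $\delta_0$. Your invocation of \lem{lem:growth} to ``discard a badly-breaking exceptional set'' controls cutting by critical points, not hole hits, so it does not close this gap; the subsequent bounded-distortion transfer to the $\leb_{l'}$ escape-rate estimate is therefore unsupported.

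The paper's essential-visit argument is different and sidesteps this difficulty. It fixes a large integer $M_0$ \emph{independent of} $\varepsilon$; since only the finitely many iterates $T^{0\,(k)}(b)$, $1\le k\le M_0$, need to avoid $H^0$, conditions \textbf{(I5)} and \textbf{(P7)} suffice to ensure that for $\varepsilon$ small no hole is hit in the first $M_0$ steps after an essential visit. For each $k>M_0$ the growth lemma is applied directly to bound the conditional probability of a hole hit at step $n+k$ by $O(\Lambda^{-k}+\varepsilon)$; summing over $M_0<k\le\xi/\varepsilon$ and then over $n\le S/\varepsilon$ gives a bound of order $S(\Lambda^{-M_0}+\xi)$. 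One lets $\varepsilon\to0$ first, and afterwards $\xi\to0$ and $M_0\to\infty$. No equilibration to macroscopic scale, no bounded-distortion transfer, and no $\varepsilon$-dependent time window are needed.
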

\begin{proof}
    \jp{We first show that the result holds for $k=0$.} We recall that $t_{0,\omega}^\varepsilon(x)=0$, so due to \thrm{thrm:openopprop}(e) 
    \begin{align*}
        \mu_{j}\left(\left\{ x\in I \ | \  t_{0,\omega}^\varepsilon(x)\leq \frac{S}{\varepsilon} \ \mathrm{and} \ \mathcal{T}_{1,\omega}^\varepsilon(x)\leq \frac{\xi}{\varepsilon} \right\} \right)&\leq 1-\mu_j\left(t_{1,\omega}^\varepsilon(x)>  \frac{\xi}{\varepsilon}\right)\\
        &=1-\int_{I_j}\mathcal{L}_{j,\omega}^{\varepsilon\, ( \frac{\xi}{\varepsilon})}(\phi_j)(x)\, \dleb(x)\\
        &= 1-\nu_{j,\omega}^\varepsilon(\phi_j)\lambda_{j,\omega}^{\varepsilon\, ( \frac{\xi}{\varepsilon})}\int_{I_j}\phi_{j,\jp{\sigma^{\frac{\xi}{\varepsilon}}}\omega}^\varepsilon(x)\, \dleb(x)\\
        &\quad+O_{\varepsilon\to 0}(\theta^{ \frac{\xi}{\varepsilon}}).
    \end{align*}
    But thanks to \cor{cor:lambda_iter}, and \thrm{thrm:openopprop}(b),(c) and (e),
    \begin{align*}
        \lim_{\varepsilon\to 0} \mu_j\left(t_{1,\omega}^\varepsilon(x)>  \frac{\xi}{\varepsilon}\right)&=\leb_j(\phi_j)^2e^{-\xi\left(\bar{\beta}_{j,j-1}+\bar{\beta}_{j,j+1}\right)} \\
        &=1+o_{\xi\to 0}(1).
    \end{align*}
    This implies that $\mu_{j}\left(\left\{ x\in I \ | \ \mathcal{T}_{1,\omega}^\varepsilon(x) \leq \frac{\xi}{\varepsilon}\right\}\right)=o_{\xi\to 0}(1)$. We now prove the result for arbitrary $k$. We first note that since $\mu_j\ll\leb_j$, it suffices to prove the result for $\leb_j$. Set $S_{n,m,\omega}(x):=\sum_{\jp{l}=n+1}^{n+m}\mathds{1}_{ H_{j,\sigma^{\jp{l}}\omega}^\varepsilon}(T_\omega^{\varepsilon\, (\jp{l})}(x))$. Observe that  
    \begin{align*}
        &\leb_{j}\left(\left\{ x\in I \ | \ \exists k \ \mathrm{with} \ t_{k,\omega}^\varepsilon(x)\leq \frac{S}{\varepsilon} \ \mathrm{and} \ \mathcal{T}_{k+1,\omega}^\varepsilon(x)\leq \frac{\xi}{\varepsilon} \right\} \right)\\
        &\qquad= \sum_{n=0}^{\frac{S}{\varepsilon}} \int_{I_j}\mathds{1}_{ H_{j,\sigma^n\omega}^\varepsilon}(T_\omega^{\varepsilon\, (n)}(x))\mathds{1}_{S_{n,\frac{\xi}{\varepsilon},\omega}>0}(x)\, \dleb(x).
    \end{align*}
    Due to \lem{lem:inessential}, the probability that $x$ has an $r$-inessential visit to $H_{j,\sigma^n\omega}^\varepsilon$ at time $n$ can be made small by taking $r$ small enough. Thus, it suffices to show that
    \begin{equation}
        \sum_{n=0}^{\frac{S}{\varepsilon}} \int_{I_j}\mathds{1}_{\mathcal{E}_{j,n,\omega}^\varepsilon}(x)\mathds{1}_{S_{n,\frac{\xi}{\varepsilon},\omega}>0}(x)\, \dleb(x)
        \label{eqn:summand}
    \end{equation}
    tends to zero as $\varepsilon\to 0$ and then as $\xi \to 0$. We note that each term in \eqref{eqn:summand} is \Change{of the form} 
    $$\int_{I_j}\mathds{1}_{\mathcal{E}_{j,n,\omega}^\varepsilon}(x)\mathds{1}_{S_{n,\frac{\xi}{\varepsilon},\omega}>0}(x)\, \dleb(x) = \leb_j(\mathcal{E}_{j,n,\omega}^\varepsilon) \leb_j(S_{n,\frac{\xi}{\varepsilon},\omega}>0 \ | \ \mathcal{E}_{j,n,\omega}^\varepsilon).$$
    Thus by \lem{lem:lebj_ess}, 
    \begin{align*}
        \sum_{n=0}^{\frac{S}{\varepsilon}}\int_{I_j}\mathds{1}_{\mathcal{E}_{j,n,\omega}^\varepsilon}(x)\mathds{1}_{S_{n,\frac{\xi}{\varepsilon},\omega}>0}(x)\, \dleb(x) &\leq \max_{n\leq \frac{S}{\varepsilon}}\leb_j(S_{n,\frac{\xi}{\varepsilon},\omega}>0 \ | \ \mathcal{E}_{j,n,\omega}^\varepsilon) \sum_{\jp{r}=0}^{\frac{S}{\varepsilon}}\leb_j(\mathcal{E}_{j,\jp{r},\omega}^\varepsilon)\\
        &\leq \tilde{M}\varepsilon\left( \frac{S}{\varepsilon}+1 \right) \max_{n\leq \frac{S}{\varepsilon}}\leb_j(S_{n,\frac{\xi}{\varepsilon},\omega}>0 \ | \ \mathcal{E}_{j,n,\omega}^\varepsilon).
    \end{align*}
    We conclude by showing that $\max_{n\leq \frac{S}{\varepsilon}}\leb_j(S_{n,\frac{\xi}{\varepsilon},\omega}>0 \ | \ \mathcal{E}_{j,n,\omega}^\varepsilon)$ is small. Indeed, we note that for a fixed $M_0$, $S_{n,M_0,\omega}(x)=0$ for all $x\in \mathcal{E}_{j,n,\omega}^\varepsilon$ due to \hyperref[list:I5]{\textbf{(I5)}}, provided $\varepsilon$ is small.\footnote{Note that \hyperref[list:I5]{\textbf{(I5)}} implies that for every $\jp{l}>0$, $\mathcal{C}^0\cap (T^{0\, (\jp{l})})^{-1}(H^0)=\emptyset$. } Let $\jp{l}\geq M_0$ and set  $J=H_{j,\sigma^{n+\jp{l}}\omega}^\varepsilon$ in the statement of \lem{lem:growth}. Through a similar argument to that made in the proof of \lem{lem:inessential}, for a fixed $n\in \mathbb{N}$ there exists $M_1,M_2,C_2,C_3>0$ such that for $\varepsilon>0$ sufficiently small, the probability of landing in $H_{j,\sigma^{n+\jp{l}}\omega}^\varepsilon$ after $n+\jp{l}$ steps is bounded by  
    \begin{align}
        \leb_j({r_{n+\jp{l},\omega}(x)\leq M_1\varepsilon})&\leq \leb_{j}(r_{0,\omega}(x)\leq M_1\frac{\varepsilon}{\Lambda^{n+\jp{l}}})+M_2\varepsilon  \leb_j(H_{j,\sigma^{n+\jp{l}}\omega}^\varepsilon)\nonumber \\&\leq C_2\frac{\varepsilon}{\Lambda^{\jp{l}}}+C_3\varepsilon^2. \label{eqn:in_hole}
    \end{align}
    In the above, we have used \lem{lem:leb_estimate} for $\varepsilon>0$ sufficiently small. So, to conclude, we see that with the same $M_1>0$ as above,
    \begin{align*}
        \sum_{n=0}^{\frac{S}{\varepsilon}}\int_{I_j}&\mathds{1}_{\mathcal{E}_{j,n,\omega}^\varepsilon}(x)\mathds{1}_{S_{n,\frac{\xi}{\varepsilon},\omega}>0}(x)\, \dleb(x)\\ &\leq \tilde{M}\varepsilon\left( \frac{S}{\varepsilon}+1 \right)  \max_{n\leq \frac{S}{\varepsilon}}\leb_j(S_{n,\frac{\xi}{\varepsilon},\omega}>0 \ | \ \mathcal{E}_{j,n,\omega}^\varepsilon) \\
        &=\tilde{M}\varepsilon\left( \frac{S}{\varepsilon}+1 \right)  \max_{n\leq \frac{S}{\varepsilon}}\leb_j(S_{n+M_0,\frac{\xi}{\varepsilon}-M_0,\omega}>0 \ | \ \mathcal{E}_{j,n,\omega}^\varepsilon)\\
        &=\tilde{M}\varepsilon\left( \frac{S}{\varepsilon}+1 \right) \max_{n\leq \frac{S}{\varepsilon}}\sum_{\jp{l}=M_0+1}^{\frac{\xi}{\varepsilon}}\bigg(\leb_j(\mathds{1}_{H_{j,\sigma^{n+\jp{l}}\omega}^\varepsilon}( T_{\omega}^{\varepsilon\, (n+\jp{l})}(x))>0 \ | \ \mathcal{E}_{j,n,\omega}^\varepsilon)\bigg)\\
        &\leq \tilde{M}\varepsilon\left( \frac{S}{\varepsilon}+1 \right) \max_{n\leq \frac{S}{\varepsilon}}\sum_{\jp{l}=M_0+1}^{\frac{\xi}{\varepsilon}}\frac{\leb_j({r_{n+\jp{l},\omega}(x)\leq M_1\varepsilon})}{\leb_j(\mathcal{E}_{j,n,\omega}^\varepsilon)}.
    \end{align*}
    But, thanks to \eqref{eqn:in_hole}, there exists a $K_2>0$ such that for $\varepsilon>0$ sufficiently small, $\leb_j({r_{n+\jp{l},\omega}(x)\leq M_1\varepsilon})\leq K_2\varepsilon(\varepsilon+1/\Lambda^{\jp{l}})$.
    So, applying \lem{lem:lebj_ess} we find that there exists a $C>0$ such that
    \begin{align*}
        \sum_{n=0}^{\frac{S}{\varepsilon}}\int_{I_j}\mathds{1}_{\mathcal{E}_{j,n,\omega}^\varepsilon}(x)\mathds{1}_{S_{n,\frac{\xi}{\varepsilon},\omega}>0}(x)\, \dleb(x) &\leq \tilde{M}\varepsilon\left( \frac{S}{\varepsilon}+1 \right) \sum_{\jp{l}=M_0+1}^{\frac{\xi}{\varepsilon}}\frac{K_2}{\tilde{m}}\left(\varepsilon+\frac{1}{\Lambda^{\jp{l}}}\right)\\
        &\leq C(S+\varepsilon)\sum_{\jp{l}=M_0+1}^{\frac{\xi}{\varepsilon}}\left(\varepsilon+\frac{1}{\Lambda^{\jp{l}}}\right)\\
        &= C(S+\varepsilon)\left(\frac{\Lambda \xi+M_0\varepsilon(1-\Lambda)+\Lambda^{-M_0}-\Lambda^{-\frac{\xi}{\varepsilon}}-\xi}{\Lambda -1}\right).
    \end{align*}
    Thus
    $$\lim_{\varepsilon\to 0}\sum_{n=0}^{\frac{S}{\varepsilon}}\int_{I_j}\mathds{1}_{\mathcal{E}_{j,n,\omega}^\varepsilon}(x)\mathds{1}_{S_{n,\frac{\xi}{\varepsilon},\omega}>0}(x)\, \dleb(x) =\frac{CS}{\Lambda - 1}(\Lambda^{-M_0}-\xi +\Lambda\xi)$$
    which tends to zero by taking $\xi\to 0$ and $M_0$ large.
\end{proof}

We may now prove the main result of this section.
\begin{proof}[Proof of \thrm{thrm:conv_jump}]
The result follows by induction on $p$. We divide the proof into several steps. We begin with \Step{stp:B-lim} and \Step{stp:A-lim}, which allow us to prove the base case when $p=1$. 
\begin{step} Fix $j,r_1\in \{1,\cdots,m\}$ and let $[a_1,b_1]$ be an interval. Then for $\mathbb{P}$-a.e. $\omega\in\Omega$, \jp{the class of functions $(B_{j,r_1,\omega}^\varepsilon)_{\varepsilon>0}$ defined by}
\begin{equation}
    B_{j,r_1,\omega}^\varepsilon:=\nu_{j,\omega}^\varepsilon(\phi_j)O_{\varepsilon\to 0}\left(\esssup_{\omega\in\Omega}\sup_{x\in H_{j,r_1,\omega}^\varepsilon}|\phi_{j,\omega}^\varepsilon(x) - \phi_j(x)| \right)\sum_{n=\frac{a_1}{\varepsilon}}^{\frac{b_1}{\varepsilon}}\lambda_{j,\omega}^{\varepsilon\, (n)} \leb_j({H_{j,r_1,\sigma^n\omega}^\varepsilon})\label{eqn:B}
\end{equation}
satisfies
$$\lim_{\varepsilon\to 0}B_{j,r_1,\omega}^\varepsilon=0.$$
    \label{stp:B-lim}
\end{step}
\begin{proof}
    Recall by \lem{lem:leb_estimate}, there exists an $M>0$ such that for $\mathbb{P}$-a.e. $\omega\in\Omega$ \jp{and $\varepsilon>0$ sufficiently small}, $\leb_j(H_{j,r_1,\sigma^n\omega}^\varepsilon)\leq M\varepsilon$. In turn, for $\mathbb{P}$-a.e. $\omega\in\Omega$, due to \lem{lem:leb_estimate} and \thrm{thrm:openopprop}(a)
\begin{align*}
    B_{j,r_1,\omega}^\varepsilon&\leq \nu_{j,\omega}^\varepsilon(\phi_j) O_{\varepsilon\to 0}\left(\esssup_{\omega\in\Omega}\sup_{x\in H_{j,r_1,\omega}^\varepsilon}|\phi_{j,\omega}^\varepsilon(x) - \phi_j(x)| \right)M\varepsilon\sum_{n=\frac{a_1}{\varepsilon}}^{\frac{b_1}{\varepsilon}}\lambda_{j,\omega}^{\varepsilon\, ({n}{})} \nonumber \\
    &\leq\nu_{j,\omega}^\varepsilon(\phi_j) O_{\varepsilon\to 0}\left(\esssup_{\omega\in\Omega}\sup_{x\in H_{j,r_1,\omega}^\varepsilon}|\phi_{j,\omega}^\varepsilon(x) - \phi_j(x)| \right)M\left({b_1-a_1}+{\varepsilon}\right).\label{eqn:S1ineq} 
\end{align*}
Observe that \thrm{thrm:openopprop}(b) asserts that $\lim_{\varepsilon\to 0} \esssup_{\omega\in\Omega}|\nu_{j,\omega}^\varepsilon(\phi_j) - \leb_j(\phi_j)| = 0$ where $\leb_j(\phi_j)=1$ since $\esssup_{\omega\in\Omega}\|\nu_{j,\omega}^\varepsilon - \leb_j\|_{\BV^*(I_j)}\geq \esssup_{\omega\in\Omega}|\nu_{j,\omega}^\varepsilon(\phi_j) - \leb_j(\phi_j)|$. Thus, by applying \thrm{thrm:openopprop}(d), the result follows.
\end{proof}
\begin{step} Fix $j,r_1\in \{1,\cdots,m\}$ and let $[a_1,b_1]$ be an interval. Then for $\mathbb{P}$-a.e. $\omega\in\Omega$, the function
\begin{equation}
    A_{j,r_1,\omega}^\varepsilon:=\nu_{j,\omega}^\varepsilon(\phi_j)\sum_{n=\frac{a_1}{\varepsilon}}^{\frac{b_1}{\varepsilon}}\lambda_{j,\omega}^{\varepsilon\, (n)}\mu_j(H_{j,r_1,\sigma^n\omega}^\varepsilon)\label{eqn:A}
\end{equation}
satisfies
$$\lim_{\varepsilon\to 0}A_{j,r_1,\omega}^\varepsilon=\bar{\beta}_{j,r_1}\int_{a_1}^{b_1} e^{-t\left(\bar{\beta}_{j,j-1}+\bar{\beta}_{j,j+1}\right)}\, dt.$$
    \label{stp:A-lim}
\end{step}
\begin{proof}
 Due to \thrm{thrm:openopprop}(a) and \hyperref[list:P4]{\textbf{(P4)}} we find that
\begin{align*}
    A_{j,r_1,\omega}^\varepsilon &= \nu_{j,\omega}^\varepsilon(\phi_j) \sum_{n=\frac{a_1}{\varepsilon}}^{{\frac{b_1}{\varepsilon}}{}}(\varepsilon\beta_{j,r_1,\sigma^n \omega}+o_{\varepsilon\to 0}(\varepsilon))\prod_{k=0}^{n-1}(1-\varepsilon(\beta_{j,j-1,\sigma^k\omega}+\beta_{j,j+1,\sigma^k\omega}) +o_{\varepsilon\to 0}(\varepsilon))\\
    &=\nu_{j,\omega}^\varepsilon(\phi_j)\lambda_{j,\omega}^{\varepsilon\, (\frac{a_1}{\varepsilon})} \sum_{n=0}^{{\frac{b_1-a_1}{\varepsilon}}{}}(\varepsilon\beta_{j,r_1,\sigma^{n+\frac{a_1}{\varepsilon}} \omega}+o_{\varepsilon\to 0}(\varepsilon))\prod_{k=0}^{n-1}(1-\varepsilon(\beta_{j,j-1,\sigma^{k+\frac{a_1}{\varepsilon}}\omega}\\
    &\qquad+\beta_{j,j+1,\sigma^{k+\frac{a_1}{\varepsilon}}\omega}) +o_{\varepsilon\to 0}(\varepsilon)).
 \end{align*}
As in \Step{stp:B-lim}, we know that due to \thrm{thrm:openopprop}(b), $\lim_{\varepsilon\to 0}\esssup_{\omega\in\Omega}|\nu_{j,\omega}^\varepsilon(\phi_j)-1|=0$. Further, \cor{cor:lambda_iter} asserts that \Change{$\lambda_{j,\omega}^{\varepsilon\, (\frac{a_1}{\varepsilon})}= \exp(-a_1(\bar{\beta}_{j,j-1}+\bar{\beta}_{j,j+1}+o_{\omega,\varepsilon\to 0}(1)))$}. Thus, recalling \rem{rem:prod-error} and applying \lem{lem:exp_dist} with $t=b_1-a_1-\varepsilon$, 
\begin{align*}
    \lim_{\varepsilon\to 0}A_{j,r_1,\omega}^\varepsilon &= \bar{\beta}_{j,r_1}\int_{a_1}^{b_1} e^{-t\left(\bar{\beta}_{j,j-1}+\bar{\beta}_{j,j+1}\right)}\, dt.
\end{align*}    
\end{proof}
We may now prove the base case.
\begin{step}[The base case]
Fix $j\in\{1,\cdots, m\}$. Take an interval $\Delta_1=[a_1,b_1]$ and a number $r_1\in \{1,\cdots, m\}$. Then for $\mathbb{P}$-a.e. $\omega\in\Omega$,
\begin{align*}
    &\lim_{\varepsilon\to 0}\mu_j\left(\left\{x\in I \ \big| \ \varepsilon\mathcal{T}_{1,\omega}^\varepsilon(x)\in \Delta_1 \ \mathrm{and} \ z(T_{\omega}^{\varepsilon \, (t_{1,\omega}^\varepsilon(x))}(x))=r_1 \right\}  \right)\\
    &\qquad=\bar{\beta}_{j,r_1}\int_{a_1}^{b_1} e^{-t\left(\bar{\beta}_{j,j-1}+\bar{\beta}_{j,j+1}\right)}\, dt.
\end{align*}\label{stp:base}
\end{step}
\begin{proof}
    
Fix $p=1$ and $n\in\mathbb{N}$. Take an interval $\Delta_1=[a_1,b_1]$ and a number $r_1\in \{1,\cdots, m\}$, then
\begin{align*}
    &\mu_j\left(\left\{x\in I \ \big| \ \mathcal{T}_{1,\omega}^\varepsilon(x)=n \ \mathrm{and} \ z(T_{\omega}^{\varepsilon \, (t_{1,\omega}^\varepsilon(x))}(x))=r_1\right\}  \right)\\
    &\qquad= \int_{\left\{y\in I \ \big| \ {t}_{1,\omega}^\varepsilon(y)=n \ \mathrm{and} \ z(T_{\omega}^{\varepsilon \, (t_{1,\omega}^\varepsilon(y))}(y))=r_1\right\}} \phi_j(x)\, \dleb(x)\\
    &\qquad=\int_{H_{j,r_1,\sigma^n\omega}^\varepsilon} \mathcal{L}_{j,\omega}^{\varepsilon\, (n)}(\phi_j)(x)\, \dleb(x).
\end{align*}
By \thrm{thrm:openopprop}(e),
 \begin{align*}
    \int_{H_{j,r_1,\sigma^n\omega}^\varepsilon} \mathcal{L}_{j,\omega}^{\varepsilon\, (n)}(\phi_j)(x)\, \dleb(x)&= \lambda_{j,\omega}^{\varepsilon\, (n)}\nu_{j,\omega}^\varepsilon(\phi_j)\int_{H_{j,r_1,\sigma^n\omega}^\varepsilon}\phi_{j,\sigma^n\omega}^\varepsilon(x)\, \dleb(x) +O_{\varepsilon\to 0}(\theta^n).    
 \end{align*}
Summing over $n\in\Delta_1/\varepsilon$, we find that 
\begin{align}
     \sum_{n=\frac{a_1}{\varepsilon}}^{\frac{b_1}{\varepsilon}}\int_{H_{j,r_1,\sigma^n\omega}^\varepsilon} \mathcal{L}_{j,\omega}^{\varepsilon\, (n)}(\phi_j)(x)\, \dleb(x)&= \nu_{j,\omega}^\varepsilon(\phi_j) \sum_{n=\frac{a_1}{\varepsilon}}^{\frac{b_1}{\varepsilon}}\lambda_{j,\omega}^{\varepsilon\, (n)}\int_{H_{j,r_1,\sigma^n\omega}^\varepsilon}\phi_{j,\sigma^n\omega}^\varepsilon(x) \, \dleb(x) \label{eqn:add-sub} \phantom{sdf}\\
     &\quad +O_{\varepsilon\to 0}\left(\frac{\theta^{\frac{a_1}{\varepsilon}}-\theta^{\frac{b_1}{\varepsilon}+1}}{1-\theta} \right) \nonumber\\ &=A_{j,r_1,\omega}^\varepsilon+B_{j,r_1,\omega}^\varepsilon+O_{\varepsilon\to 0}\left(\frac{\theta^{\frac{a_1}{\varepsilon}}-\theta^{\frac{b_1}{\varepsilon}+1}}{1-\theta} \right) \nonumber
 \end{align}
where $A_{j,r_1,\omega}^\varepsilon$ and $B_{j,r_1,\omega}^\varepsilon$ are given by \eqref{eqn:A} and \eqref{eqn:B}, respectively. In the last line, we have added and subtracted $\phi_j \in \BV(I_j)$ under the integral appearing in \eqref{eqn:add-sub}. One can verify that $\lim_{\varepsilon \to 0}O_{\varepsilon\to 0}\left(\frac{\theta^{\frac{a_1}{\varepsilon}}-\theta^{\frac{b_1}{\varepsilon}+1}}{1-\theta} \right) =0$, and thus, by \Step{stp:B-lim} and \Step{stp:A-lim},
\begin{align*}
    &\lim_{\varepsilon\to 0}\mu_j\left(\left\{x\in I \ \big| \ \varepsilon\mathcal{T}_{1,\omega}^\varepsilon(x)\in \Delta_1 \ \mathrm{and} \ z(T_{\omega}^{\varepsilon \, (t_{1,\omega}^\varepsilon(x))}(x))=r_1 \right\}  \right)\\
    &\qquad=\bar{\beta}_{j,r_1}\int_{a_1}^{b_1} e^{-t\left(\bar{\beta}_{j,j-1}+\bar{\beta}_{j,j+1}\right)}\, dt.
\end{align*}
\end{proof}
For the inductive step, let $p\in\mathbb{N}$ and suppose \thrm{thrm:conv_jump} holds for $k=1,\dots, p$ and define the sets 
\begin{equation}
    \Gamma_{p,\omega}^\varepsilon:= \left\{x\in I \ | \ \varepsilon\mathcal{T}_{k,\omega}^\varepsilon(x)\in \Delta_k \ \mathrm{and} \ z(T_{\omega}^{\varepsilon\, (t_{k,\omega}^\varepsilon(x))}(x))=r_k \ \mathrm{for} \ k=1,\cdots,p\right\}.\label{eqn:Gamma}
\end{equation}
\jp{Set $\textbf{n}_p=(n_1,\cdots,n_p)\in \mathbb{N}^p$ and let}
\jp{\begin{equation}
\mathcal{L}_{\Gamma_{p,\textbf{n}_p,\omega}^\varepsilon}(f):= \mathcal{L}_{\sigma^{n_1+\cdots+n_p-1}\omega}^\varepsilon\left(\mathcal{L}_{r_{p-1},\sigma^{n_1+\cdots+n_{p-1}}\omega}^{\varepsilon\,(n_p-1)}(\mathcal{L}_{\Gamma_{p-1,\textbf{n}_p,\omega}^\varepsilon}(f))\cdot \mathds{1}_{H_{r_{p-1},r_{p},\sigma^{n_1+\cdots+n_p-1}\omega}^\varepsilon} \right)     \label{eqn:gamman_op}
\end{equation}
where for $p=1$, 
\begin{equation}
\mathcal{L}_{\Gamma_{1,\textbf{n}_1,\omega}^\varepsilon}(f):= \mathcal{L}_{\sigma^{n_1-1}\omega}^\varepsilon\left(\mathcal{L}_{j,\omega}^{\varepsilon\,(n_1-1)}(f)\cdot \mathds{1}_{H_{j,r_1,\sigma^{n_1-1}\omega}^\varepsilon} \right). \label{eqn:gamma1_op}
\end{equation}
We define the operator $\mathcal{L}_{\Gamma_{p,\omega}^\varepsilon}:\BV(I)\to \BV(I)$ as
\begin{equation}
    \mathcal{L}_{\Gamma_{p,\omega}^\varepsilon}(f):= \sum_{n_p=\frac{a_p}{\varepsilon}}^{\frac{b_p}{\varepsilon}}\cdots\sum_{n_1=\frac{a_1}{\varepsilon}}^{\frac{b_1}{\varepsilon}} \mathcal{L}_{\Gamma_{p,\textbf{n}_p,\omega}^\varepsilon}(f).\label{eqn:gamma_op}
\end{equation}
It remains to show that \thrm{thrm:conv_jump} holds for $k=p+1$, that is, {for a fixed $j\in\{1,\cdots ,m \}$}, an interval $\Delta_{p+1}=[a_{p+1},b_{p+1}]$, and a number $r_{(p+1)}\in\{1,\cdots, m\}$,  
\begin{align}
    &\lim_{\varepsilon\to 0} \mu_j\left(\left\{x\in I \ | \ \varepsilon\mathcal{T}_{p+1,\omega}^\varepsilon(x)\in \Delta_{p+1} \ \mathrm{and} \ z(T_{\omega}^{\varepsilon\, (t_{p+1,\omega}^\varepsilon(x))}(x))=r_{(p+1)}  \right\}\cap\Gamma_{p,\omega}^\varepsilon \right)\nonumber\\
    &\quad =\bar{\beta}_{r_p,r_{(p+1)}}\int_{a_{p+1}}^{b_{p+1}}e^{-t\left(\bar{\beta}_{r_p,r_p-1}+\bar{\beta}_{r_p,r_p+1}\right)}\, dt\lim_{\varepsilon\to 0}\mu_{j}(\Gamma_{p,\omega}^\varepsilon).\label{eqn:WTS} 
\end{align}
Fix $\xi>0$. In what follows, consider the function 
\begin{equation}
\hat{\gamma}_{\xi,j,r_p,\textbf{n}_p,\omega}^\varepsilon:=\mathcal{L}_{r_p,\sigma^{n_1+\cdots+ n_p}\omega}^{\varepsilon\, (\frac{\xi}{\varepsilon})}(\mathds{1}_{I_{r_p}}\cdot \mathcal{L}_{\Gamma_{p,\textbf{n}_p,\omega}^\varepsilon}(\phi_j))\label{eqn:little_gamman}
\end{equation}
and define
\begin{equation}
\hat{\gamma}_{\xi,j,r_p,\omega}^\varepsilon:=\left(\sum_{n_p=\frac{a_p}{\varepsilon}}^{\frac{b_p}{\varepsilon}}\cdots\sum_{n_1=\frac{a_1}{\varepsilon}}^{\frac{b_1}{\varepsilon}} \hat{\gamma}_{\xi,j,r_p,\textbf{n}_p,\omega}^\varepsilon\right)\cdot \mathds{1}_{I_{r_p}}.\label{eqn:little_gamma}
\end{equation}
\jpt{Given the initial density $\phi_j$, the object given by \eqref{eqn:little_gamma} is the density in $I_{{r_p}}$ pushed forward along the path prescribed by \eqref{eqn:Gamma}, and then another $\frac{\xi}{\varepsilon}$ time steps. We emphasise that the operator defined by \eqref{eqn:gamma_op} will not be iterated. We will only be interested in the resulting object given by \eqref{eqn:little_gamma}.} In \Step{stp:unif-Gamma} we show that for $\mathbb{P}$-a.e. $\omega\in\Omega$, $\|\hat{\gamma}_{\xi,j,r_p,\omega}^\varepsilon\|_{\BV(I)}$ is uniformly bounded over $\varepsilon>0$ sufficiently small, crucial to obtain \eqref{eqn:WTS}.
\begin{step}
    Fix $\xi>0,\, p\in\mathbb{N}$ and $j,r_p\in\{1,\cdots ,m\}$. Then for all $\varepsilon>0$ sufficiently small and $\mathbb{P}$-a.e. $\omega\in\Omega$, there exists $C_\omega>0$ such that
    $$\|\hat{\gamma}_{\xi,j,r_p,\omega}^\varepsilon\|_{\BV(I)}\leq C_\omega,$$
    where $\hat{\gamma}_{\xi,j,r_p,\omega}^\varepsilon$ is as in \eqref{eqn:little_gamma}.
    \label{stp:unif-Gamma}
\end{step}
\begin{proof}
    The result follows by induction on $p\in\mathbb{N}$. \jp{When $p=1$, due to \eqref{eqn:gamma1_op}, \eqref{eqn:gamma_op}}, \hyperref[list:P3]{\textbf{(P3)}}, \hyperref[list:P5]{\textbf{(P5)}}, and \lem{lem:ULY_open}, there exists $C>0$ such that for $\mathbb{P}$-a.e. $\omega\in\Omega$,
    \begin{align}
        \|\hat{\gamma}_{\xi,j,r_1,\omega}^\varepsilon\|_{\BV(I)}&\leq C\left( \sum_{n=\frac{a_1}{\varepsilon}}^{\frac{b_1}{\varepsilon}}\left(r^{\frac{\xi}{\varepsilon}}\|\mathcal{L}_{j,\omega}^{\varepsilon\,(n-1)}(\phi_j)\|_{\BV(I)}+ \| \mathcal{L}_{j,\omega}^{\varepsilon\,(n-1)}(\phi_j)\cdot \mathds{1}_{H_{j,r_1,\sigma^{n-1}\omega}^\varepsilon}\|_{L^1(\leb)}\right)\right). \label{eqn:gamma1BV}   \end{align}
    From \Step{stp:base}, observe that by \eqref{eqn:add-sub} 
    \begin{align*}
    &\sum_{n=\frac{a_1}{\varepsilon}}^{\frac{b_1}{\varepsilon}}\| \mathcal{L}_{j,\omega}^{\varepsilon\,(n-1)}(\phi_j)\cdot \mathds{1}_{H_{j,r_1,\sigma^{n-1}\omega}^\varepsilon}\|_{L^1(\leb)}\\ &\qquad=\nu_{j,\omega}^\varepsilon(\phi_j) \sum_{n=\frac{a_1}{\varepsilon}}^{\frac{b_1}{\varepsilon}}\lambda_{j,\omega}^{\varepsilon\, (n-1)}\int_{H_{j,r_1,\sigma^{n-1}\omega}^\varepsilon}\phi_{j,\sigma^{n-1}\omega}^\varepsilon(x) \, \dleb(x) +O_{\varepsilon\to 0}\left(\frac{\theta^{\frac{a_1}{\varepsilon}-1}-\theta^{\frac{b_1}{\varepsilon}}}{1-\theta} \right).    
    \end{align*}
        Therefore, following the same argument from \Step{stp:base}, we find that for $\mathbb{P}$-a.e. $\omega\in\Omega$, 
 \begin{align}
     &\sum_{n=\frac{a_1}{\varepsilon}}^{\frac{b_1}{\varepsilon}}\| \mathcal{L}_{j,\omega}^{\varepsilon\,(n-1)}(\phi_j)\cdot \mathds{1}_{H_{j,r_1,\sigma^{n-1}\omega}^\varepsilon}\|_{L^1(\leb)} \nonumber\\
     &\qquad= \bar{\beta}_{j,r_1}\int_{a_1}^{b_1} e^{-t\left(\bar{\beta}_{j,j-1}+\bar{\beta}_{j,j+1}\right)}\, dt+o_{\omega,\varepsilon\to 0}(1). \label{eqn:L1p1-bound}
 \end{align}
       Due to \thrm{thrm:openopprop}, for all $\varepsilon>0$ sufficiently small and $\mathbb{P}$-a.e. $\omega\in\Omega$, there exists $K>0$ such that $\|\nu_{j,\omega}^\varepsilon\|_{\BV^*(I_j)},\|\phi_{j,\omega}^\varepsilon\|_{\BV(I_j)}\leq K$. \jp{Consider the class of functions $(\tilde{\beta}_{j,\omega}^\varepsilon)_{\varepsilon>0}$ whereby} $\tilde{\beta}_{j,\omega}^\varepsilon:=\bar{\beta}_{j,j-1}+\bar{\beta}_{j,j+1}+o_{\omega,\varepsilon\to 0}(1)$. Using \thrm{thrm:openopprop}(e) and \cor{cor:lambda_iter}, for $\mathbb{P}$-a.e. $\omega\in\Omega$,
        \begin{align}
            &\sum_{n=\frac{a_1}{\varepsilon}}^{\frac{b_1}{\varepsilon}}r^{\frac{\xi}{\varepsilon}}\|\mathcal{L}_{j,\omega}^{\varepsilon\,(n-1)}(\phi_j)\|_{\BV(I)}\\
            &\qquad=r^{\frac{\xi}{\varepsilon}}\nu_{j,\omega}^\varepsilon(\phi_j) \sum_{n=\frac{a_1}{\varepsilon}}^{\frac{b_1}{\varepsilon}}\lambda_{j,\omega}^{\varepsilon\, (n-1)}\|\phi_{j,\sigma^{n-1}\omega}^\varepsilon\|_{\BV(I_j)}+r^{\frac{\xi}{\varepsilon}}O_{\varepsilon\to 0}\left(\frac{\theta^{\frac{a_1}{\varepsilon}-1}-\theta^{\frac{b_1}{\varepsilon}}}{1-\theta} \right)\nonumber\\
            &\qquad\leq Kr^{\frac{\xi}{\varepsilon}}e^{-(a_1+\varepsilon)\tilde{\beta}_{j,\omega}^\varepsilon}\left(\frac{e^{\varepsilon\tilde{\beta}_{j,\omega}^\varepsilon}-e^{-(b_1-a_1)\tilde{\beta}_{j,\omega}^\varepsilon}}{e^{\varepsilon\tilde{\beta}_{j,\omega}^\varepsilon}-1}\right)+r^{\frac{\xi}{\varepsilon}}O_{\varepsilon\to 0}\left(\frac{\theta^{\frac{a_1}{\varepsilon}-1}-\theta^{\frac{b_1}{\varepsilon}}}{1-\theta} \right)\nonumber\\
            &\qquad= o_{\omega,\varepsilon\to 0}(1) \label{eqn:gamma1-bvbound}
        \end{align}
        since $\xi>0$ is fixed and $0<r<1$. Returning to \eqref{eqn:gamma1BV} and applying \eqref{eqn:L1p1-bound} and \eqref{eqn:gamma1-bvbound}, for all $\varepsilon>0$ sufficiently small, the $p=1$ case holds. Next, suppose that there exists $C_\omega>0$ such that $\|\hat{\gamma}_{\xi,j,r_p,\omega}^\varepsilon\|_{\BV(I)}\leq C_\omega$. We aim to prove that there exists $\tilde{C}_\omega>0$ such that $\|\hat{\gamma}_{\xi,j,r_{(p+1)},\omega}^\varepsilon\|_{\BV(I)}\leq \tilde{C}_\omega$. Indeed, observe that for $p\in\mathbb{N}$, with \eqref{eqn:gamma_op}, and by following \eqref{eqn:gamma1BV}, for $\xi>0$,
    \begin{align}
    \|\hat{\gamma}_{\xi,j,r_{(p+1)},\omega}^\varepsilon\|_{\BV(I)}&\leq  C\Bigg( \sum_{n_{p+1}=\frac{a_{p+1}}{\varepsilon}}^{\frac{b_{p+1}}{\varepsilon}}\cdots  \sum_{n_1=\frac{a_{1}}{\varepsilon}}^{\frac{b_{1}}{\varepsilon}}\Bigg(r^{\frac{\xi}{\varepsilon}}\|\mathcal{L}_{r_{p},\sigma^{n_1+\cdots+n_{p}}\omega}^{\varepsilon\,(n_{p+1}-1)}(\mathcal{L}_{\Gamma_{p,\textbf{n}_p,\omega}^\varepsilon}(\phi_j))\|_{\BV(I)}\nonumber\\
    &\qquad+ \| \mathcal{L}_{r_{p},\sigma^{n_1+\cdots+n_{p}}\omega}^{\varepsilon\,(n_{p+1}-1)}(\mathcal{L}_{\Gamma_{p,\textbf{n}_p,\omega}^\varepsilon}(\phi_j))\cdot \mathds{1}_{H_{r_{p},r_{(p+1)},\sigma^{n_1+\cdots+n_{p+1}-1}\omega}^\varepsilon}\|_{L^1(\leb)}\Bigg)\Bigg) \\
    &= C\Bigg( \sum_{n_{p+1}=\frac{a_{p+1}}{\varepsilon}}^{\frac{b_{p+1}}{\varepsilon}}\cdots  \sum_{n_1=\frac{a_{1}}{\varepsilon}}^{\frac{b_{1}}{\varepsilon}}\Bigg(r^{\frac{\xi}{\varepsilon}}\|\mathcal{L}_{r_{p},\sigma^{n_1+\cdots+n_p+\frac{\xi}{\varepsilon}}\omega}^{\varepsilon\,(n_{p+1}-1-\frac{\xi}{\varepsilon})}(\hat{\gamma}_{\xi,j,r_p,\textbf{n}_p,\omega}^\varepsilon)\|_{\BV(I)} \nonumber \\
    &\qquad + \| \mathcal{L}_{r_{p},\sigma^{n_1+\cdots+n_p+\frac{\xi}{\varepsilon}}\omega}^{\varepsilon\,(n_{p+1}-1-\frac{\xi}{\varepsilon})}(\hat{\gamma}_{\xi,j,r_p,\textbf{n}_p,\omega}^\varepsilon)\cdot \mathds{1}_{H_{r_{p},r_{(p+1)},\sigma^{n_1+\cdots+n_{p+1}-1}\omega}^\varepsilon}\|_{L^1(\leb)}\Bigg)\Bigg).\label{eqn:gammapp+1BV}
    \end{align}
    Through a similar computation to that made in the $p=1$ case, by the inductive hypothesis and \thrm{thrm:openopprop}, for $\mathbb{P}$-a.e. $\omega\in\Omega$, 
    \begin{align}
        &\sum_{n_{p+1}=\frac{a_{p+1}}{\varepsilon}}^{\frac{b_{p+1}}{\varepsilon}} \cdots \sum_{n_1=\frac{a_{1}}{\varepsilon}}^{\frac{b_{1}}{\varepsilon}}\| \mathcal{L}_{r_{p},\sigma^{n_1+\cdots+n_p+\frac{\xi}{\varepsilon}}\omega}^{\varepsilon\,(n_{p+1}-1-\frac{\xi}{\varepsilon})}(\hat{\gamma}_{\xi,j,r_p,\textbf{n}_p,\omega}^\varepsilon)\cdot \mathds{1}_{H_{r_{p},r_{(p+1)},\sigma^{n_1+\cdots+n_{p+1}-1}\omega}^\varepsilon}\|_{L^1(\leb)}\nonumber\\&=\sum_{n_{p}=\frac{a_{p}}{\varepsilon}}^{\frac{b_{p}}{\varepsilon}} \cdots \sum_{n_1=\frac{a_{1}}{\varepsilon}}^{\frac{b_{1}}{\varepsilon}} \Bigg(\nu_{{r_p},\omega}^\varepsilon(\hat{\gamma}_{\xi,j,r_p,\textbf{n}_p,\omega}^\varepsilon)\sum_{n_{p+1}=\frac{a_{p+1}}{\varepsilon}}^{\frac{b_{p+1}}{\varepsilon}}\Bigg(\lambda_{{r_p},\sigma^{n_1+\cdots+n_p+\frac{\xi}{\varepsilon}}\omega}^{\varepsilon\, (n_{p+1}-\frac{\xi}{\varepsilon}-1)}\\
        &\quad \times \int_{H_{r_p,r_{(p+1)},\sigma^{n_1+\cdots+n_{p+1}-1}\omega}^\varepsilon}\phi_{{r_p},\sigma^{n_1+\cdots+n_{p+1}-1} \omega}^\varepsilon(x)\, \dleb(x)\Bigg)\Bigg)\nonumber \\
        &\qquad +O_{\varepsilon\to 0}\left(\frac{\theta^{-\frac{\xi}{\varepsilon}}(\theta^{\frac{a_{p+1}}{\varepsilon}-1}-\theta^{\frac{b_{p+1}}{\varepsilon}})}{1-\theta}\prod_{d=1}^p\left(\frac{b_d-a_d+\varepsilon}{\varepsilon} \right) \right) \label{eqn:L1Gammap+1}\\
        &= \nu_{{r_p},\omega}^\varepsilon(\hat{\gamma}_{\xi,j,r_p,\omega}^\varepsilon) \left((1+o_{\xi \to 0}(1))\bar{\beta}_{r_p,r_{(p+1)}}\int_{a_{p+1}}^{b_{p+1}}e^{-t\left(\bar{\beta}_{r_p,r_p-1}+\bar{\beta}_{r_p,r_p+1}\right)}\, dt+o_{\omega,\varepsilon\to 0}(1)\right)\\
        &\quad+O_{\varepsilon\to 0}\left(\frac{\theta^{-\frac{\xi}{\varepsilon}}(\theta^{\frac{a_{p+1}}{\varepsilon}-1}-\theta^{\frac{b_{p+1}}{\varepsilon}})}{1-\theta}\prod_{d=1}^p\left(\frac{b_d-a_d+\varepsilon}{\varepsilon} \right) \right).\\
        &\leq C_\omega^\prime \left((1+o_{\xi \to 0}(1))\bar{\beta}_{r_p,r_{(p+1)}}\int_{a_{p+1}}^{b_{p+1}}e^{-t\left(\bar{\beta}_{r_p,r_p-1}+\bar{\beta}_{r_p,r_p+1}\right)}\, dt+o_{\omega,\varepsilon\to 0}(1)\right).\label{eqn:lastlineL1}
    \end{align}
    Here, by the inductive hypothesis, $C_\omega^\prime := \|\hat{\gamma}_{\xi,j,r_p,\omega}^\varepsilon\|_{\BV(I)} \|\nu_{{r_p},\omega}^\varepsilon\|_{\BV^*(I_{r_p})} \leq C_\omega \|\nu_{{r_p},\omega}^\varepsilon\|_{\BV^*(I_{r_p})}$ where, by \thrm{thrm:openopprop}, $\|\nu_{{r_p},\omega}^\varepsilon\|_{\BV^*(I_{r_p})}$ is uniformly bounded over $\varepsilon>0$ sufficiently small. Finally, by a similar argument to the $p=1$ case, and by following \eqref{eqn:L1Gammap+1}, for $\mathbb{P}$-a.e. $\omega\in\Omega$,
    \begin{align}
        &\sum_{n_{p+1}=\frac{a_{p+1}}{\varepsilon}}^{\frac{b_{p+1}}{\varepsilon}}\cdots  \sum_{n_1=\frac{a_{1}}{\varepsilon}}^{\frac{b_{1}}{\varepsilon}}r^{\frac{\xi}{\varepsilon}}\|\mathcal{L}_{r_{p},\sigma^{n_1+\cdots+n_p+\frac{\xi}{\varepsilon}}\omega}^{\varepsilon\,(n_{p+1}-1-\frac{\xi}{\varepsilon})}(\hat{\gamma}_{\xi,j,r_p,\textbf{n}_p,\omega}^\varepsilon)\|_{\BV(I)} \\&\qquad=r^{\frac{\xi}{\varepsilon}}\Bigg(\sum_{n_{p}=\frac{a_{p}}{\varepsilon}}^{\frac{b_{p}}{\varepsilon}} \cdots \sum_{n_1=\frac{a_{1}}{\varepsilon}}^{\frac{b_{1}}{\varepsilon}} \Bigg(\nu_{{r_p},\omega}^\varepsilon(\hat{\gamma}_{\xi,j,r_p,\textbf{n}_p,\omega}^\varepsilon)\sum_{n_{p+1}=\frac{a_{p+1}}{\varepsilon}}^{\frac{b_{p+1}}{\varepsilon}}\Bigg(\lambda_{{r_p},\sigma^{n_1+\cdots+n_p+\frac{\xi}{\varepsilon}}\omega}^{\varepsilon\, (n_{p+1}-\frac{\xi}{\varepsilon}-1)}\\
        &\qquad\quad \times \|\phi_{{r_p},\sigma^{n_1+\cdots+n_{p+1}-1} \omega}^\varepsilon\|_{\BV(I_{r_p})}\Bigg)\Bigg)+O_{\varepsilon\to 0}\left(\frac{\theta^{-\frac{\xi}{\varepsilon}}(\theta^{\frac{a_{p+1}}{\varepsilon}-1}-\theta^{\frac{b_{p+1}}{\varepsilon}})}{1-\theta}\prod_{d=1}^p\left(\frac{b_d-a_d+\varepsilon}{\varepsilon} \right) \right)  \Bigg)\nonumber\\
        &\qquad=o_{\omega,\varepsilon\to 0}(1) \label{eqn:lastlineBV}.
    \end{align}
    Returning to \eqref{eqn:gammapp+1BV} and applying \eqref{eqn:lastlineL1} and \eqref{eqn:lastlineBV}, the result follows.
\end{proof}}
We use \Step{stp:unif-Gamma}, \Step{stp:D-lim} and \Step{stp:C-lim} to perform the inductive step.
\begin{step}
Fix $\xi>0$, take an interval $[a_{p+1},b_{p+1}]$ and numbers $r_{p},r_{(p+1)}\in\{1,\cdots, m\}$. Then for $\mathbb{P}$-a.e. $\omega\in\Omega$, \jp{the class of functions $(D_{\xi,r_p,r_{(p+1)},\omega}^\varepsilon)_{\varepsilon>0}$} defined by 
\jp{\begin{align}
        D_{\xi,r_p,r_{(p+1)},\omega}^\varepsilon&:= O_{\varepsilon\to 0}\left(\esssup_{\omega\in\Omega}\sup_{x\in H_{r_p,r_{(p+1)},\omega}^\varepsilon}|\phi_{r_p,\omega}^\varepsilon(x) - \phi_{r_p}(x)| \right)\sum_{n_{p}=\frac{a_{p}}{\varepsilon}}^{\frac{b_{p}}{\varepsilon}} \cdots \sum_{n_1=\frac{a_{1}}{\varepsilon}}^{\frac{b_{1}}{\varepsilon}} \Bigg(\nu_{{r_p},\omega}^\varepsilon(\hat{\gamma}_{\xi,j,r_p,\textbf{n}_p,\omega}^\varepsilon)\\
        &\times\sum_{n_{p+1}=\frac{a_{p+1}}{\varepsilon}}^{\frac{b_{p+1}}{\varepsilon}}\Bigg(\lambda_{{r_p},\sigma^{n_1+\cdots+n_p+\frac{\xi}{\varepsilon}}\omega}^{\varepsilon\, (n_{p+1}-\frac{\xi}{\varepsilon})}\leb_{r_p}(H_{r_p,r_{(p+1)},\sigma^{n_1+\cdots+n_{p+1}}\omega}^\varepsilon)\Bigg)\Bigg)
       \label{eqn:D-lim}
\end{align}
satisfies
$$\lim_{\varepsilon\to 0}D_{\xi,r_p,r_{(p+1)},\omega}^\varepsilon = 0.$$}
 \label{stp:D-lim}
\end{step}
\begin{proof}
    This follows in a similar manner to \Step{stp:B-lim}. Indeed, for $\mathbb{P}$-a.e. $\omega\in\Omega$, due to \lem{lem:leb_estimate} and \thrm{thrm:openopprop}(a), there exists $M>0$ such that \jp{for $\varepsilon>0$ sufficiently small},
    \jp{\begin{align}
     D_{\xi,r_p,r_{(p+1)},\omega}^\varepsilon&\leq  \nu_{{r_p},\omega}^\varepsilon(\hat{\gamma}_{\xi,j,r_p,\omega}^\varepsilon) O_{\varepsilon\to 0}\left(\esssup_{\omega\in\Omega}\sup_{x\in H_{r_p,r_{(p+1)},\omega}^\varepsilon}|\phi_{r_p,\omega}^\varepsilon(x) - \phi_{r_p}(x)| \right)M\nonumber\\
    &\qquad \times(b_{p+1}-a_{p+1}+\varepsilon).\label{eqn:d-esti}
    \end{align}
    It remains to estimate $\lim_{\varepsilon\to 0}\nu_{{r_p},\omega}^\varepsilon(\hat{\gamma}_{\xi,j,r_p,\omega}^\varepsilon)$. However, thanks to \Step{stp:unif-Gamma} and \thrm{thrm:openopprop}, \begin{equation}
    \|\hat{\gamma}_{\xi,j,r_p,\omega}^\varepsilon\|_{\BV(I)} \|\nu_{{r_p},\omega}^\varepsilon\|_{\BV^*(I_{r_p})} \leq C_\omega \|\nu_{{r_p},\omega}^\varepsilon\|_{\BV^*(I_{r_p})}\label{eqn:nugam-est}
    \end{equation} 
where, $\|\nu_{{r_p},\omega}^\varepsilon\|_{\BV^*(I_{r_p})}$ is uniformly bounded over $\varepsilon>0$ sufficiently small. Therefore, taking $\varepsilon\to 0$ in \eqref{eqn:d-esti} and applying \thrm{thrm:openopprop}(d) and \eqref{eqn:nugam-est}, the result follows.} 
   \end{proof}  

\begin{step}
Fix $\xi>0$, take an interval $[a_{p+1},b_{p+1}]$ and numbers $r_{p},r_{(p+1)}\in\{1,\cdots, m\}$. Then for $\mathbb{P}$-a.e. $\omega\in\Omega$, the function 
\jp{\begin{equation}
    C_{\xi,r_p,r_{(p+1)},\omega}^\varepsilon:=\sum_{n_{p}=\frac{a_{p}}{\varepsilon}}^{\frac{b_{p}}{\varepsilon}} \cdots \sum_{n_1=\frac{a_{1}}{\varepsilon}}^{\frac{b_{1}}{\varepsilon}} \Bigg(\nu_{{r_p},\omega}^\varepsilon(\hat{\gamma}_{\xi,j,r_p,\textbf{n}_p,\omega}^\varepsilon)\sum_{n_{p+1}=\frac{a_{p+1}}{\varepsilon}}^{\frac{b_{p+1}}{\varepsilon}}\lambda_{{r_p},\sigma^{n_1+\cdots+n_p+\frac{\xi}{\varepsilon}}\omega}^{\varepsilon\, (n_{p+1}-\frac{\xi}{\varepsilon})}\mu_{r_p}(H_{r_p,r_{(p+1)},\sigma^{n_1+\cdots+n_{p+1}}\omega}^\varepsilon)\Bigg) \label{eqn:C-lim}
\end{equation}}
satisfies
\begin{align*}
    \lim_{\varepsilon\to 0}C_{\xi,r_p,r_{(p+1)},\omega}^\varepsilon&=(1+o_{\xi \to 0}(1))\bar{\beta}_{r_p,r_{(p+1)}}\int_{a_{p+1}}^{b_{p+1}}e^{-\left(\bar{\beta}_{r_p,r_p-1}+\bar{\beta}_{r_p,r_p+1}\right)}\, dt \\ &\qquad \times\lim_{\varepsilon\to 0}\leb_{r_p}(\hat{\gamma}_{\xi,j,r_p,\omega}^\varepsilon).
\end{align*}
 \label{stp:C-lim}
\end{step}
\begin{proof}
    This follows by a similar argument to that made in \Step{stp:A-lim}. Through \thrm{thrm:openopprop}(a) and \hyperref[list:P4]{\textbf{(P4)}}, we find that 
\jp{\begin{align*}
{C}_{\xi,r_p,r_{(p+1)},\omega}^\varepsilon&= 
\sum_{n_{p}=\frac{a_{p}}{\varepsilon}}^{\frac{b_{p}}{\varepsilon}} \cdots \sum_{n_1=\frac{a_{1}}{\varepsilon}}^{\frac{b_{1}}{\varepsilon}} \Bigg(\nu_{{r_p},\omega}^\varepsilon(\hat{\gamma}_{\xi,j,r_p,\textbf{n}_p,\omega}^\varepsilon)\lambda_{{r_p},\sigma^{n_1+\cdots+n_p+\frac{\xi}{\varepsilon}}\omega}^{\varepsilon\, (\frac{a_{p+1}-\xi}{\varepsilon})} \\
&\quad\times\sum_{n_{p+1}=0}^{\frac{b_{p+1}-a_{p+1}}{\varepsilon}}(\varepsilon\beta_{r_p,r_{(p+1)},\sigma^{n_1+\cdots+n_{p+1}+\frac{a_{p+1}}{\varepsilon}}\omega}+o_{\varepsilon\to 0}(\varepsilon))\\
&\quad \times\prod_{k=0}^{n_{p+1}-1}(1-\varepsilon(\beta_{r_p,r_p-1,\sigma^{k+n_1+\cdots+n_{p}+\frac{a_{p+1}}{\varepsilon}}}+\beta_{r_p,r_{p}+1,\sigma^{k+n_1+\cdots+n_{p}+\frac{a_{p+1}}{\varepsilon}}})+o_{\varepsilon\to 0}(\varepsilon) )\Bigg).
\end{align*}}
As in \Step{stp:A-lim}, applying \thrm{thrm:openopprop}(b), \cor{cor:lambda_iter}, \lem{lem:exp_dist} and \Step{stp:unif-Gamma}, for $\mathbb{P}$-a.e. $\omega\in\Omega$ 
\begin{align*}
    \lim_{\varepsilon\to 0}{C}_{\xi,r_p,r_{(p+1)},\omega}^\varepsilon 
    &=e^{\xi\left(\bar{\beta}_{r_p,r_p-1}+\bar{\beta}_{r_p,r_p+1}\right)}\bar{\beta}_{r_p,r_{(p+1)}}\\
    &\quad \times \int_{a_{p+1}}^{b_{p+1}}e^{-t\left(\bar{\beta}_{r_p,r_p-1}+\bar{\beta}_{r_p,r_p+1}\right)}\, dt\lim_{\varepsilon\to 0}\leb_{r_p}(\hat{\gamma}_{\xi,j,r_p,\omega}^\varepsilon)\\
    &=(1+o_{\xi \to 0}(1))\bar{\beta}_{r_p,r_{(p+1)}}\int_{a_{p+1}}^{b_{p+1}}e^{-t\left(\bar{\beta}_{r_p,r_p-1}+\bar{\beta}_{r_p,r_p+1}\right)}\, dt\\ &\qquad \times\lim_{\varepsilon\to 0}\leb_{r_p}(\hat{\gamma}_{\xi,j,r_p,\omega}^\varepsilon).
\end{align*}
\end{proof}
We may now continue towards the inductive step.
\begin{step}[Towards the inductive step]
    Fix $j\in \{1,\cdots,m\}$ and $\xi>0$. Take an interval $\Delta_{p+1}=[a_{p+1},b_{p+1}]$ and numbers $r_p,r_{(p+1)}\in \{1,\cdots, m\}$. Then, for $\mathbb{P}$-a.e. $\omega\in\Omega$, 
    \begin{align*}
        &\lim_{\varepsilon\to 0} \mu_j\left(\left\{x\in I \ | \ \varepsilon \mathcal{T}_{p+1,\omega}^\varepsilon(x)\in \Delta_{p+1} \ \mathrm{and} \ z(T_{\omega}^{\varepsilon\, (t_{p+1,\omega}^\varepsilon(x))}(x))=r_{(p+1)}  \right\}\cap\Gamma_{p,\omega}^\varepsilon \right) \\
        &\qquad =(1+o_{\xi \to 0}(1))\bar{\beta}_{r_p,r_p+1}\int_{a_{p+1}}^{b_{p+1}}e^{-t\left(\bar{\beta}_{r_p,r_p-1}+\bar{\beta}_{r_p,r_p+1}\right)}\, dt\\
    &\qquad\qquad \times \lim_{\varepsilon\to 0}(\mu_{j}\left(\Gamma_{p,\omega}^\varepsilon\right)+o_{\xi\to 0}(1))
    \end{align*}
    \label{stp:ind-step}
    where $\Gamma_{p,\omega}^\varepsilon$ is as in \eqref{eqn:Gamma}.
\end{step}
\begin{proof}
For a fixed $\xi>0$, observe that by using \eqref{eqn:gamma_op},
\jp{\begin{align*}
\mu_j\left(\Gamma_{p+1,\omega}^\varepsilon \right)&= \sum_{n_{p+1}=\frac{a_{p+1}}{\varepsilon}}^{\frac{b_{p+1}}{\varepsilon}} \cdots \sum_{n_1=\frac{a_{1}}{\varepsilon}}^{\frac{b_{1}}{\varepsilon}} \int_{H_{r_p,r_{(p+1)},\sigma^{n_1+\cdots+n_{p+1}}\omega}} \mathcal{L}_{r_p,\sigma^{n_1+\cdots+n_p}\omega}^{\varepsilon\, (n_{p+1})}(\mathds{1}_{I_{r_p}}\cdot \mathcal{L}_{\Gamma_{p,\textbf{n}_p,\omega}^\varepsilon}(\phi_j))(x)\, \dleb(x)\\
&=\sum_{n_{p+1}=\frac{a_{p+1}}{\varepsilon}}^{\frac{b_{p+1}}{\varepsilon}} \cdots \sum_{n_1=\frac{a_{1}}{\varepsilon}}^{\frac{b_{1}}{\varepsilon}}\int_{H_{r_p,r_{(p+1)},\sigma^{n_1+\cdots+n_{p+1}}\omega}^\varepsilon} \mathcal{L}_{r_p,\sigma^{n_1+\cdots+n_{p}+\frac{\xi}{\varepsilon}}\omega}^{\varepsilon\, (n_{p+1}-\frac{\xi}{\varepsilon})}(\hat{\gamma}_{\xi,j,r_p,\textbf{n}_p,\omega}^\varepsilon)(x)\, \dleb(x).
\end{align*}
Recall that thanks to \Step{stp:unif-Gamma}, $\hat{\gamma}_{\xi,j,r_p,\omega}^\varepsilon=\left(\sum_{n_p=\frac{a_p}{\varepsilon}}^{\frac{b_p}{\varepsilon}}\cdots\sum_{n_1=\frac{a_1}{\varepsilon}}^{\frac{b_1}{\varepsilon}} \hat{\gamma}_{\xi,j,r_p,\textbf{n}_p,\omega}^\varepsilon\right)\cdot \mathds{1}_{I_{r_p}}$ has uniformly bounded $\BV(I_{r_p})$ norm over $\varepsilon>0$. Thus, by the same argument made in \Step{stp:base},
\begin{align*}
    \mu_j(\Gamma_{p+1,\omega}^\varepsilon)&=\sum_{n_{p}=\frac{a_{p}}{\varepsilon}}^{\frac{b_{p}}{\varepsilon}} \cdots \sum_{n_1=\frac{a_{1}}{\varepsilon}}^{\frac{b_{1}}{\varepsilon}} \Bigg(\nu_{{r_p},\omega}^\varepsilon(\hat{\gamma}_{\xi,j,r_p,\textbf{n}_p,\omega}^\varepsilon)\sum_{n_{p+1}=\frac{a_{p+1}}{\varepsilon}}^{\frac{b_{p+1}}{\varepsilon}}\Bigg(\lambda_{{r_p},\sigma^{n_1+\cdots+n_p+\frac{\xi}{\varepsilon}}\omega}^{\varepsilon\, (n_{p+1}-\frac{\xi}{\varepsilon})}\\
        &\quad \times \int_{H_{r_p,r_{(p+1)},\sigma^{n_1+\cdots+n_{p+1}}\omega}^\varepsilon}\phi_{{r_p},\sigma^{n_1+\cdots+n_{p+1}} \omega}^\varepsilon(x)\, \dleb(x)\Bigg)\Bigg)\nonumber \\
        &\qquad +O_{\varepsilon\to 0}\left(\frac{\theta^{\frac{a_{p+1}-\xi}{\varepsilon}}-\theta^{\frac{b_{p+1}-\xi}{\varepsilon}+1}}{1-\theta}\prod_{d=1}^p\left(\frac{b_d-a_d+\varepsilon}{\varepsilon} \right) \right)\\
        &={C}_{\xi,r_p,r_{(p+1)},\omega}^\varepsilon+ {D}_{\xi,r_p,r_{(p+1)},\omega}^\varepsilon +O_{\varepsilon\to 0}\left(\frac{\theta^{\frac{a_{p+1}-\xi}{\varepsilon}}-\theta^{\frac{b_{p+1}-\xi}{\varepsilon}+1}}{1-\theta}\prod_{d=1}^p\left(\frac{b_d-a_d+\varepsilon}{\varepsilon} \right) \right).
\end{align*}}
Here, $C_{\xi,r_p,r_{(p+1)},\omega}^\varepsilon$ and ${D}_{\xi,r_p,r_{(p+1)},\omega}^\varepsilon$ are given by \eqref{eqn:C-lim} and \eqref{eqn:D-lim}, respectively.
One can check that since $\theta\in(0,1)$, as $\varepsilon\to 0$, the error vanishes for $\xi$ small enough. So, by \Step{stp:D-lim} and \Step{stp:C-lim}, 

\begin{align*}
    &\lim_{\varepsilon\to 0}\mu_j\left(\left\{x\in I \ | \ \varepsilon \mathcal{T}_{p+1,\omega}^\varepsilon(x)\in \Delta_{p+1} \ \mathrm{and} \ z(T_{\omega}^{\varepsilon\, (t_{p+1,\omega}^\varepsilon(x))}(x))=r_{(p+1)}  \right\}\cap\Gamma_{p,\omega}^\varepsilon \right)\\
    &\qquad=(1+o_{\xi \to 0}(1))\bar{\beta}_{r_p,r_p+1}\int_{a_{p+1}}^{b_{p+1}}e^{-t\left(\bar{\beta}_{r_p,r_p-1}+\bar{\beta}_{r_p,r_p+1}\right)}\, dt\\ &\qquad \qquad \times\lim_{\varepsilon\to 0}\leb_{r_p}(\hat{\gamma}_{\xi,j,r_p,\omega}^\varepsilon).
\end{align*}
By an inductive argument, 
\jp{\begin{align*}
    \lim_{\varepsilon\to 0}\leb_{r_p}(\hat{\gamma}_{\xi,j,r_p,\omega}^\varepsilon)&=\lim_{\varepsilon\to 0}\int_{I_{r_p}}\sum_{n_{p}=\frac{a_{p}}{\varepsilon}}^{\frac{b_{p}}{\varepsilon}} \cdots \sum_{n_1=\frac{a_{1}}{\varepsilon}}^{\frac{b_{1}}{\varepsilon}}\mathcal{L}_{r_p,\sigma^{n_1+\cdots +n_p}\omega}^{\varepsilon\, (\frac{\xi}{\varepsilon})}(\mathds{1}_{I_{r_p}}\cdot \mathcal{L}_{\Gamma_{p,\textbf{n}_p,\omega}^\varepsilon}(\phi_j))(x)\, \dleb(x)\\
    &=\lim_{\varepsilon\to 0}\mu_{j}\left(\left\{x\in I_{} \ | \ \mathcal{T}_{p+1,\omega}^\varepsilon(x)>\frac{\xi}{\varepsilon}\right\}\cap\Gamma_{p,\omega}^\varepsilon\right).
\end{align*}}
It remains to show that 
$$\lim_{\varepsilon\to 0}\mu_{j}\left(\left\{x\in I_{} \ | \ \mathcal{T}_{p+1,\omega}^\varepsilon(x)>\frac{\xi}{\varepsilon}\right\}\cap\Gamma_{p,\omega}^\varepsilon\right) = \lim_{\varepsilon\to 0}\mu_{j}\left(\Gamma_{p,\omega}^\varepsilon\right)+o_{\xi \to 0}(1).$$
However, this follows immediately from \lem{lem:no-jump} as $\left\{x\in I_{} \ | \ \mathcal{T}_{p+1,\omega}^\varepsilon(x)>\frac{\xi}{\varepsilon}\right\}$ is contained in the complement of the set appearing in the statement of \lem{lem:no-jump}.
\end{proof}
\Change{To complete the inductive step, we wish to obtain \eqref{eqn:WTS}, however, this follows immediately from \Step{stp:ind-step} by taking $\xi\to 0$.}   
\end{proof}

\section{The diffusion coefficient}
\label{sec:var}
In this section, we show that in the setting of \thrm{thrm:openopprop}, if $\varepsilon>0$ is fixed, then the collection of random dynamical systems $\{(\Omega,\mathcal{F},\mathbb{P},\sigma,\BV(I),\mathcal{L}^\varepsilon)\}_{\varepsilon> 0}$ associated with metastable maps $T_\omega^\varepsilon:I\to I$ satisfies a quenched version of the Central Limit Theorem (CLT) \cite[Theorem B]{DFGTV_limthrm}. Additionally, we provide an approximation for the diffusion coefficient (the variance of the limiting normal distribution in the CLT) when $\varepsilon>0$ is small. This approximation is expressed in terms of the averaged Markov jump process introduced in \Sec{sec:avg_markov}. 
\\
\\
We begin by introducing the class of \textit{observables} for which our results apply. This class is similar to that considered in \cite{DFGTV_limthrm}. In what follows, let $\phi_\omega^0:= \sum_{j=1}^m p_j\phi_j$ be the limiting invariant measure admitted by \cite[Theorem 7.2]{gtp_met}. 
\begin{definition}
    We call the measurable map $\psi:\Omega\times I \to \mathbb{R}$ a \textit{\Change{regular, fibrewise centered} observable} if:
    \begin{itemize}
        \item[(a)] Regularity.
        \begin{equation}
            \esssup_{\omega \in\Omega}\| \psi_\omega\|_{\BV(I)}=C<\infty.
            \label{eqn:obs-reg}
        \end{equation}
        \item[(b)] Fibrewise centering.
        \begin{equation}
            \int_I \psi_\omega(x)\, d\mu_\omega^0(x)=\int_I \psi_\omega(x)\phi_\omega^0(x)\, \dleb(x)=\sum_{j=1}^m p_j \int_{I_j} \psi_\omega(x)\phi_j(x)\, \dleb(x)=0
            \label{eqn:obs-centering2}
        \end{equation}
        for $\mathbb{P}$-a.e. $\omega\in\Omega$.
    \end{itemize}
    \label{def:obs}
\end{definition}
For all $\varepsilon>0$ we define the $\varepsilon$-\textit{centered observable}
\begin{equation}
\tilde{\psi}_\omega^\varepsilon:=\psi_\omega - \mu_{\omega}^\varepsilon(\psi_\omega)\label{eqn:obs-centering1}    
\end{equation}
and consider
\begin{align}
    (\Sigma^\varepsilon(\tilde{\psi}^\varepsilon))^2&:= \int_\Omega \int_I \tilde{\psi}_\omega^\varepsilon(x)^2\phi_\omega^\varepsilon(x)\, \dleb(x)\, d\mathbb{P}(\omega) \nonumber\\
    &\qquad + 2\sum_{n=1}^\infty \int_\Omega \int_I \mathcal{L}_\omega^{\varepsilon\, (n)}(\tilde{\psi}_\omega^\varepsilon(x)\phi_\omega^\varepsilon(x)) \tilde{\psi}_{\sigma^n\omega}^\varepsilon(x)\, \, \dleb(x)\, d\mathbb{P}(\omega).
    \label{eqn:var}
\end{align}
Let $\Psi_\omega(j):= \int_{I_j}\psi_\omega(x) \phi_j(x)\, \dleb(x)$. 
\begin{remark}
    Note that due to \eqref{eqn:obs-reg}, for a fixed $\varepsilon>0$, each $x\in I$ and $\mathbb{P}$-a.e. $\omega\in \Omega$, the $\varepsilon$-centered observable defined in \eqref{eqn:obs-centering1} satisfies 
    \begin{align*}
        | \tilde{\psi}_\omega^\varepsilon(x)|&\leq \|{\psi}_\omega\|_{\BV(I)}+\int_I|\psi_\omega||\phi_\omega^\varepsilon|\, \dleb(x)\leq \|{\psi}_\omega\|_{\BV(I)}+\|{\psi}_\omega\|_{\BV(I)}\mu_\omega^\varepsilon(I)\leq 2C.
    \end{align*}
   
    \label{rmk:bv-center}
\end{remark}
The main result of this section is \thrm{thrm:variance-est}. We proceed by proving a sequence of lemmata. Let $\BV_0(I):=\{f\in \BV(I) \ | \ \int_If(x)\, \dleb(x)=0\}$. The following results allow us to control the tail end of the series appearing in \eqref{eqn:var}. 
\begin{lemma}
\Change{Fix $t>0$ and let $\varepsilon>0$ be sufficiently small. In the setting of \thrm{thrm:openopprop}, if there exists $\alpha>0$} such that for all $f\in\BV_0(I)$ and $\mathbb{P}$-a.e. $\omega\in \Omega$ 
\begin{equation}
    \| \mathcal{L}_\omega^{\varepsilon\, (\frac{t}{2\varepsilon})}f\|_{L^1(\leb)}\leq \alpha{\|f\|_{\BV_0(I)}}, \label{eqn:L1/2}
\end{equation}
then 
$$\| \mathcal{L}_{\omega}^{\varepsilon\, (\frac{t}{\varepsilon})} f\|_{\BV_0(I)}\leq \frac{\|f\|_{\BV_0(I)}}{2}$$
for $\mathbb{P}$-a.e. $\omega\in \Omega$.
\label{lem:L1/2WTS}
\end{lemma}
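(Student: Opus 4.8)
The plan is to use the hypothesised $L^1(\leb)$-contraction \eqref{eqn:L1/2} together with the uniform Lasota--Yorke inequality \hyperref[list:P5]{\textbf{(P5)}} to produce the $\BV_0(I)$-contraction. First I would record three elementary preliminaries. Since $(T_\omega^\varepsilon)^{-1}(I)=I$, the Perron--Frobenius operator $\mathcal{L}_\omega^\varepsilon$ preserves $\int_I f\,\dleb$, so it maps $\BV_0(I)$ into itself; on $\BV_0(I)$ the norm $\|\cdot\|_{\BV_0(I)}$ agrees with $\|\cdot\|_{\BV(I)}$; and for every $f$ one has $\|f\|_{L^1(\leb)}\le\|f\|_{\BV(I)}$. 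I also read $t/\varepsilon$ and $t/(2\varepsilon)$ as their integer parts, which is immaterial, and work on the full-measure set of $\omega$ on which \eqref{eqn:L1/2} holds.

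Next, fix $f\in\BV_0(I)$, write $m=t/(2\varepsilon)$, and decompose $\mathcal{L}_\omega^{\varepsilon\,(t/\varepsilon)}=\mathcal{L}_{\sigma^{m}\omega}^{\varepsilon\,(m)}\circ\mathcal{L}_\omega^{\varepsilon\,(m)}$. Applying \hyperref[list:P5]{\textbf{(P5)}}, with constants $C_1,C_2>0$ and $0<r<R\le 1$ and weak norm $\|\cdot\|_{L^1(\leb)}$, to the first block and using $r^m,R^m\le1$ and $\|f\|_{L^1(\leb)}\le\|f\|_{\BV_0(I)}$ gives
\[
\|\mathcal{L}_\omega^{\varepsilon\,(m)}f\|_{\BV(I)}\le C_1 r^{m}\|f\|_{\BV_0(I)}+C_2 R^{m}\|f\|_{L^1(\leb)}\le (C_1+C_2)\|f\|_{\BV_0(I)}.
\]
Applying \hyperref[list:P5]{\textbf{(P5)}} again to $g:=\mathcal{L}_\omega^{\varepsilon\,(m)}f\in\BV_0(I)$ and inserting the line above together with \eqref{eqn:L1/2} in the form $\|g\|_{L^1(\leb)}\le\alpha\|f\|_{\BV_0(I)}$, one obtains
\[
\|\mathcal{L}_\omega^{\varepsilon\,(t/\varepsilon)}f\|_{\BV_0(I)}\le C_1 r^{m}\|g\|_{\BV(I)}+C_2 R^{m}\|g\|_{L^1(\leb)}\le\Big(C_1(C_1+C_2)\,r^{t/(2\varepsilon)}+C_2\alpha\Big)\|f\|_{\BV_0(I)}.
\]
Since $t>0$ is fixed and $0<r<1$, the term $C_1(C_1+C_2)r^{t/(2\varepsilon)}\to0$ as $\varepsilon\to0$, so for $\varepsilon$ small enough the bracket is at most $\frac{1}{2}$, which is the assertion.

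I expect the only genuinely delicate point to be the bookkeeping of constants: the stated bound $\frac{1}{2}$ forces $C_2\alpha<\frac{1}{2}$, so \eqref{eqn:L1/2} must be applied with $\alpha$ below the threshold $\frac{1}{2C_2}$ (equivalently, the factor $\frac{1}{2}$ is to be read as some constant strictly below $1$). This smallness of $\alpha$ is precisely the content of \eqref{eqn:L1/2} and is not produced here: in the application it comes from the analysis of \Sec{sec:jump}, since over a window of length $t/(2\varepsilon)$ the averaged Markov jump process of \Sec{sec:avg_markov} has nearly equilibrated, and the limiting mass distribution of a density of total mass $0$ is $0\cdot p=0$, so the surviving $L^1$-mass is small once $t$ is large. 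Given \eqref{eqn:L1/2}, the upgrade to $\BV_0(I)$ carried out above is a routine iteration of the Lasota--Yorke inequality.
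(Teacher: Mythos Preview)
Your argument is correct and follows essentially the same route as the paper: split the cocycle into two halves of length $t/(2\varepsilon)$, apply the uniform Lasota--Yorke inequality \hyperref[list:P5]{\textbf{(P5)}} to the outer half, and feed in the hypothesis \eqref{eqn:L1/2} to control the $L^1$-norm after the inner half. Your observation that the conclusion forces $\alpha$ below a threshold determined by the Lasota--Yorke constants is exactly what the paper does as well (it explicitly chooses $\alpha\le\frac{1}{4K}$); the only cosmetic difference is that the paper tracks $\var$ and $L^1$ separately rather than working with the $\BV$-norm form of \hyperref[list:P5]{\textbf{(P5)}}.
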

\begin{proof}
Thanks to \hyperref[list:P5]{\textbf{(P5)}}, there exist constants $K,r>0$ with $r<1$ such that for all $n\in\mathbb{N}$, $f\in \BV_0(I)$ and $\mathbb{P}$-a.e. $\omega\in\Omega$, 
    \begin{align}     \var_I(\mathcal{L}_\omega^{\varepsilon\, (n)}f)&\leq K(r^n \var_I(f) +\|f\|_{L^1(\leb)}) \label{eqn:ly-var}
    \end{align}
    and
\begin{align}
    \|\mathcal{L}_\omega^{\varepsilon\, (n)}f\|_{L^1(\leb)}\leq K\|f\|_{L^1(\leb)}.\label{eqn:ly-L1}
\end{align}
Thus, with \eqref{eqn:ly-var} and \eqref{eqn:ly-L1}, for $\mathbb{P}$-a.e. $\omega\in\Omega$,
\begin{align*}
    \|\mathcal{L}_\omega^{\varepsilon\, (\frac{t}{\varepsilon})}f\|_{\BV_0(I)}&=\|\mathcal{L}^{\varepsilon\, (\frac{t}{2\varepsilon})}_{\sigma^{\frac{t}{2\varepsilon}}\omega}( \mathcal{L}_\omega^{\varepsilon\, (\frac{t}{2\varepsilon})}f)\|_{\BV_0(I)}\\
    &= \|\mathcal{L}^{\varepsilon\, (\frac{t}{2\varepsilon})}_{\sigma^{\frac{t}{2\varepsilon}}\omega}( \mathcal{L}_\omega^{\varepsilon\, (\frac{t}{2\varepsilon})}f)\|_{L^1(\leb)}+\var_I(\mathcal{L}^{\varepsilon\, (\frac{t}{2\varepsilon})}_{\sigma^{\frac{t}{2\varepsilon}}\omega}( \mathcal{L}_\omega^{\varepsilon\, (\frac{t}{2\varepsilon})}f))\\
    &\leq K\|\mathcal{L}_\omega^{\varepsilon\, (\frac{t}{2\varepsilon})}f\|_{L^1(\leb)}+K(r^{\frac{t}{2\varepsilon}} \var_I(\mathcal{L}_\omega^{\varepsilon\, (\frac{t}{2\varepsilon})}f)+\|\mathcal{L}_\omega^{\varepsilon\, (\frac{t}{2\varepsilon})}f\|_{L^1(\leb)})\\
    &\stackrel{(\star)}{\leq} 2\alpha K {\|f\|_{\BV_0(I)}}{}+Kr^{\frac{t}{2\varepsilon}} \var_I(\mathcal{L}_\omega^{\varepsilon\, (\frac{t}{2\varepsilon})}f)\\
    &\leq2\alpha K {\|f\|_{\BV_0(I)}}+K^2r^{\frac{t}{\varepsilon}}\var_I(f) +K^2r^{\frac{t}{2\varepsilon}} \|f\|_{L^1(\leb)}.
\end{align*}
Note that at $(\star)$, we have used \eqref{eqn:L1/2}. \Change{Choose $\alpha \leq \frac{1}{4K}$}. Since $0<r<1$, taking $\varepsilon>0$ sufficiently small, the result follows. 
\end{proof}
We aim to control $\| \mathcal{L}_\omega^{\varepsilon\, (\frac{t}{2\varepsilon})}f\|_{L^1(\leb)}$ so that we can apply \lem{lem:L1/2WTS}. This may be achieved due to the following.
\begin{lemma}
\Change{Fix $\delta>0$. In the setting of \thrm{thrm:openopprop}, if there exists $\varepsilon_0,\delta_1>0$ such that $\varepsilon\leq \varepsilon_0$, and for any $f\in\BV(I)$ with $\|f\|_{\BV(I)}=1$, $0\leq |\int_{I_k} f(x)\, \dleb(x)|\leq \delta_1$ for each $k\in\{1,\cdots, m\}$, then for $\mathbb{P}$-a.e. $\omega\in\Omega$ and $n_0\in\mathbb{N}$ sufficiently large}
$$\|\mathcal{L}_\omega^{\varepsilon\, (n_0)}f\|_{L^1(\leb)}<\delta.$$
\label{lem:L1/2ctrl}
\begin{proof}
    Thanks to \hyperref[list:P1]{\textbf{(P1)}} and \rem{rem:p1-imp-trip}, 
    \begin{align}
        \sup_{\|f\|_{\BV(I)}=1}\|(\mathcal{L}_\omega^\varepsilon-\mathcal{L}^0)f\|_{L^1(\leb)}=o_{\varepsilon\to 0}(1).
        \label{eqn:n=1-trip}
    \end{align}
    However, for $n>1$, due to the identity that
    \begin{equation}
        \mathcal{L}_\omega^{\varepsilon\, (n)}-\mathcal{L}^{0\, (n)}= \sum_{j=0}^{n-1} \mathcal{L}_{\sigma^{n-j}\omega}^{\varepsilon\, (j)}(\mathcal{L}_{\sigma^{n-j-1}\omega}^{\varepsilon}-\mathcal{L}_{}^0)\mathcal{L}_{}^{0\, (n-j-1)},\label{eqn:Ltrip-iter}
    \end{equation}
    through \eqref{eqn:n=1-trip}, it follows that for each $n\in\mathbb{N}$
    \begin{align}
        \sup_{\|f\|_{\BV(I)}=1}\|(\mathcal{L}_\omega^{\varepsilon\, (n)}-\mathcal{L}^{0\, (n)})f\|_{L^1(\leb)}=o_{\varepsilon\to 0}(1).
        \label{eqn:n-trip}
    \end{align}
    From \eqref{eqn:n-trip} we apply the reverse triangle inequality so that
    \Change{\begin{align}
        o_{\varepsilon\to 0}(1)&=\sup_{\|f\|_{\BV(I)}=1}\|(\mathcal{L}_\omega^{\varepsilon\, (n)}-\mathcal{L}^{0\, (n)})f\|_{L^1(\leb)} \nonumber \\
        &\geq \sup_{\|f\|_{\BV(I)}=1}\left\tnorm\mathcal{L}_\omega^{\varepsilon\, (n)}f \|_{L^1(\leb)}-\|\mathcal{L}^{0\, (n)}f\|_{L^1(\leb)} \right|. \label{eqn:Leps-in-L0}
    \end{align}}\noindent
We conclude by arguing that \Change{$\sup_{\|f\|_{\BV(I)}=1}\|\mathcal{L}^{0\, (n)}f\|_{L^1(\leb)}$} is small. Indeed, setting $\varepsilon =0$ in \eqref{eqn:openop}, observe that $\mathcal{L}^{0\, (n)}=\sum_{j=1}^m \mathcal{L}^{0\, (n)}_j$. Further, since $T^0_j:I_j\to I_j$ is mixing,\footnote{Recall that for each $j\in \{1,\cdots, m\}$, $T^0_j:I_j\to I_j$ denotes the map associated with the Perron-Frobenius operator $\mathcal{L}^{0}_j$.} there exist constants $C>0$ and $\kappa \in (0,1)$ such that for each $j\in\{1,\cdots, m\}$, $n\in\mathbb{N}$ and $f\in\BV(I_j)$,
    \begin{equation}
    \left\| \mathcal{L}_j^{0\, (n)}f - \leb(f\cdot \mathds{1}_{I_j})\phi_j\right\|_{L^1(\leb)}\leq C\kappa^n\|f\|_{\BV(I_j)}.
        \label{eqn:SG-Lj}
    \end{equation}
    Therefore, by assumption, if $0\leq |\int_{I_k} f(x)\, \dleb(x)|\leq \delta_1$ for each $k\in\{1,\cdots, m\}$, using \eqref{eqn:SG-Lj}
    \begin{align}
      \|\mathcal{L}^{0\,(n)}f\|_{L^1(\leb)}&\leq\sum_{j=1}^m\| \mathcal{L}_j^{0\, (n)}(f\cdot \mathds{1}_{I_j}) \|_{L^1(\leb)}\nonumber \\
      &\leq \sum_{j=1}^m |\leb(f\cdot\mathds{1}_{I_j})|\|\phi_j\|_{L^1(\leb)}+O_{n\to \infty}(\kappa^n) \nonumber\\
      &\leq m\delta_1+O_{n\to \infty}(\kappa^n). \label{eqn:b4-Leps}
    \end{align}
    Applying \eqref{eqn:b4-Leps} to \eqref{eqn:Leps-in-L0} it follows that for all $\delta>0$, \Change{if there exists $\varepsilon_0,\delta_1>0$ such that $\varepsilon\leq \varepsilon_0$, and for any $f\in\BV(I)$ with $\|f\|_{\BV(I)}=1$, $0\leq |\int_{I_k} f(x)\, \dleb(x)|\leq \delta_1$ for each $k\in\{1,\cdots, m\}$, then for $\mathbb{P}$-a.e. $\omega\in\Omega$ and $n_0$ large}
    $$\|\mathcal{L}_\omega^{\varepsilon\, (n_0)}f \|_{L^1(\leb)}\leq\|\mathcal{L}^{0\, (n_0)}f\|_{L^1(\leb)} +o_{\varepsilon\to 0}(1)\leq m\delta_1+O_{n_0\to \infty}(\kappa^{n_0})+o_{\varepsilon\to 0}(1)<\delta.$$
\end{proof}
\end{lemma}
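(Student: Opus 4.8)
The plan is to transfer the decay question from the perturbed cocycle $\mathcal{L}_\omega^{\varepsilon\,(n_0)}$ to the autonomous operator $\mathcal{L}^{0\,(n_0)}$, and then to exploit the fact that $\mathcal{L}^0$ decomposes as a direct sum of operators $\mathcal{L}_j^0$, one per invariant interval $I_j$, each with a spectral gap. For the first step I would begin from the triple-norm estimate $\esssup_{\omega\in\Omega}|||\mathcal{L}_\omega^\varepsilon-\mathcal{L}^0|||_{\BV(I)-L^1(\leb)}=o_{\varepsilon\to0}(1)$ provided by \hyperref[list:P2]{\textbf{(P2)}} via \rem{rem:p1-imp-trip}, and upgrade it to any fixed number of iterates. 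Using the standard telescoping identity
\[
\mathcal{L}_\omega^{\varepsilon\,(n)}-\mathcal{L}^{0\,(n)}=\sum_{j=0}^{n-1}\mathcal{L}_{\sigma^{n-j}\omega}^{\varepsilon\,(j)}\bigl(\mathcal{L}_{\sigma^{n-j-1}\omega}^\varepsilon-\mathcal{L}^0\bigr)\mathcal{L}^{0\,(n-j-1)},
\]
the Lasota--Yorke inequality for $\mathcal{L}^0$ (\rem{rem:LY0}) to bound $\|\mathcal{L}^{0\,(k)}f\|_{\BV(I)}$ uniformly in $k\ge0$ when $\|f\|_{\BV(I)}=1$, the triple-norm bound to turn each middle factor into an $L^1(\leb)$-error of size $o_{\varepsilon\to0}(1)$ uniformly in $\omega$, and the $L^1(\leb)$-contractivity of $\mathcal{L}_{\sigma^{n-j}\omega}^{\varepsilon\,(j)}$ (a general property of Perron--Frobenius operators, cf.\ the computation in \lem{lem:L1bound-closed}), one obtains $\sup_{\|f\|_{\BV(I)}=1}\|(\mathcal{L}_\omega^{\varepsilon\,(n)}-\mathcal{L}^{0\,(n)})f\|_{L^1(\leb)}=o_{\varepsilon\to0}(1)$ for each fixed $n$, uniformly over $\omega$ outside a $\mathbb{P}$-null set. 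The reverse triangle inequality then gives $\|\mathcal{L}_\omega^{\varepsilon\,(n_0)}f\|_{L^1(\leb)}\le\|\mathcal{L}^{0\,(n_0)}f\|_{L^1(\leb)}+o_{\varepsilon\to0}(1)$.

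For the second step, setting $\varepsilon=0$ in \eqref{eqn:openop} yields $\mathcal{L}^{0\,(n_0)}f=\sum_{j=1}^m\mathcal{L}_j^{0\,(n_0)}(f\cdot\mathds{1}_{I_j})$, since each $I_j$ is invariant under $T^0$. Each branch map $T_j^0:I_j\to I_j$ is mixing (which follows from the standing hypotheses, e.g.\ \hyperref[list:I6]{\textbf{(I6)}} together with uniqueness of the ACIM in \hyperref[list:I4]{\textbf{(I4)}}), so $\mathcal{L}_j^0$ on $\BV(I_j)$ has a spectral gap: there exist $C>0$ and $\kappa\in(0,1)$ with $\|\mathcal{L}_j^{0\,(n)}g-\leb(g\cdot\mathds{1}_{I_j})\phi_j\|_{L^1(\leb)}\le C\kappa^n\|g\|_{\BV(I_j)}$ for all $g\in\BV(I_j)$ and $n\in\mathbb{N}$. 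Applying this to $g=f\cdot\mathds{1}_{I_j}$, and using $\|\phi_j\|_{L^1(\leb)}=1$, the hypothesis $|\leb(f\cdot\mathds{1}_{I_j})|=\bigl|\int_{I_j}f\,\dleb\bigr|\le\delta_1$, and $\|f\cdot\mathds{1}_{I_j}\|_{\BV(I_j)}\le\|f\|_{\BV(I)}=1$, summing over $j$ gives $\|\mathcal{L}^{0\,(n_0)}f\|_{L^1(\leb)}\le m\delta_1+mC\kappa^{n_0}$.

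Putting the two steps together yields $\|\mathcal{L}_\omega^{\varepsilon\,(n_0)}f\|_{L^1(\leb)}\le m\delta_1+mC\kappa^{n_0}+o_{\varepsilon\to0}(1)$, and the proof closes by choosing the parameters in the correct order: given $\delta>0$, first take $\delta_1$ with $m\delta_1<\delta/3$, then fix $n_0$ so large that $mC\kappa^{n_0}<\delta/3$ (a choice depending only on the gaps of the $\mathcal{L}_j^0$, not on $\varepsilon$ or $\omega$), and only afterwards pick $\varepsilon_0$ small enough that the $n_0$-dependent error satisfies $o_{\varepsilon\to0}(1)<\delta/3$ for all $\varepsilon\le\varepsilon_0$ and $\mathbb{P}$-a.e.\ $\omega$. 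The one delicate point is precisely this quantifier order: the perturbative control of $\mathcal{L}_\omega^{\varepsilon\,(n)}-\mathcal{L}^{0\,(n)}$ is only $o_{\varepsilon\to0}(1)$ for fixed $n$, with the implied constant growing in $n$, so $n_0$ must be pinned down from the spectral gaps before $\varepsilon_0$ is selected. Everything else follows routinely from \rem{rem:p1-imp-trip}, the Lasota--Yorke bound of \rem{rem:LY0}, $L^1$-contractivity, and the mixing of the unperturbed branch maps.
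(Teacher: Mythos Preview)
Your proposal is correct and follows essentially the same route as the paper: transfer to $\mathcal{L}^{0\,(n_0)}$ via the telescoping identity and triple-norm bound, then use the decomposition $\mathcal{L}^{0\,(n)}=\sum_j\mathcal{L}_j^{0\,(n)}$ together with the spectral gap of each $\mathcal{L}_j^0$. Your version is in fact slightly more explicit than the paper about why the telescoping works (Lasota--Yorke plus $L^1$-contractivity) and about the correct order in which $\delta_1$, $n_0$, and $\varepsilon_0$ must be chosen.
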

The following lemma relies on \lem{lem:L1/2WTS} and \lem{lem:L1/2ctrl}. In particular, we use \lem{lem:L1/2ctrl} to obtain \eqref{eqn:L1/2}, from which we apply \lem{lem:L1/2WTS}. 
\begin{lemma}
In the setting of \thrm{thrm:openopprop}, \Change{for all $\varepsilon>0$ sufficiently small} there exist $\rho\in (0,1)$ and $t>0$ such that for all $f\in\BV_0(I)$ \Change{with $\|f\|_{\BV(I)}=1$}, $n\in\mathbb{N}$, and $\mathbb{P}$-a.e. $\omega\in\Omega$
\begin{equation}
    \|\mathcal{L}_{\omega}^{\varepsilon\, (\frac{nt}{\varepsilon})}f\|_{\BV_0(I)}\leq \rho^n\|f\|_{\BV_0(I)}.\label{eqn:tail-est-wts}
\end{equation}
\label{lem:tail-ctrl}
\end{lemma}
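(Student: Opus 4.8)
The plan is to verify the hypothesis \eqref{eqn:L1/2} of \lem{lem:L1/2WTS} for a suitably chosen fixed $t>0$, and then to bootstrap. Since $\|\cdot\|_{\BV_0(I)}$ is merely the restriction of $\|\cdot\|_{\BV(I)}$ to $\BV_0(I)$, homogeneity reduces everything to proving the single-block inequality $\|\mathcal{L}_\omega^{\varepsilon\,(t/\varepsilon)}f\|_{\BV_0(I)}\le\tfrac12\|f\|_{\BV_0(I)}$ for every $f\in\BV_0(I)$ with $\|f\|_{\BV(I)}=1$, every sufficiently small $\varepsilon$, and $\mathbb{P}$-a.e.\ $\omega$. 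Granting this, \eqref{eqn:tail-est-wts} follows with $\rho=\tfrac12$ by induction on $n$: transfer operators preserve $\int_I(\cdot)\,\dleb$ and hence map $\BV_0(I)$ into itself, so the single-block inequality applies to $\mathcal{L}_\omega^{\varepsilon\,((n-1)t/\varepsilon)}f$ with base point $\sigma^{(n-1)t/\varepsilon}\omega$; one uses the cocycle identity $\mathcal{L}_\omega^{\varepsilon\,(nt/\varepsilon)}=\mathcal{L}_{\sigma^{(n-1)t/\varepsilon}\omega}^{\varepsilon\,(t/\varepsilon)}\circ\cdots\circ\mathcal{L}_\omega^{\varepsilon\,(t/\varepsilon)}$ and chooses the $\mathbb{P}$-null exceptional set $\sigma$-invariant. (Integer parts of $t/\varepsilon$ and similar are suppressed throughout; this is harmless.)

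To establish \eqref{eqn:L1/2} I split $[0,t/\varepsilon)$ into two blocks. In the first block we equidistribute the mass of $f$ over $I_1,\dots,I_m$. Write $v_\omega^{(n)}:=\bigl(\int_{I_k}\mathcal{L}_\omega^{\varepsilon\,(n)}f\,\dleb\bigr)_{k=1}^m$; because $f\in\BV_0(I)$ one has $\sum_k (v_\omega^{(n)})_k=\int_I f\,\dleb=0$ for all $n$. Using the uniform spectral gap of the open operators from \thrm{thrm:openopprop}(e) to relax inside each $I_j$, one checks (exactly as in \Sec{sec:jump}, cf.\ \cite[Section 7]{gtp_met}) that, up to an error $o_{\varepsilon\to 0}(1)$, the sequence $n\mapsto v_\omega^{(n)}$ over $n\le t_1/\varepsilon$ steps evolves under the cocycle of transposed transition matrices $(M_{\sigma^n\omega}^\varepsilon)^T$ restricted to the $\sigma$-invariant zero-sum subspace. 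By \rem{rem:neighbour} together with the uniform bounds $\beta_{i,j,\omega}\ge\beta^*$ of \hyperref[list:P4]{\textbf{(P4)}}, the averaged generator $\bar{G}$ is irreducible, so $e^{t\bar{G}^T}$ restricted to the zero-sum subspace decays like $Ce^{-ct}$ for some $c>0$; combining this with Birkhoff's ergodic theorem and the fact that each $M_\omega^\varepsilon$ is stochastic, we fix $t_1>0$ for which $\|v_\omega^{(t_1/\varepsilon)}\|_\infty$ is as small as desired, for all small $\varepsilon$ and $\mathbb{P}$-a.e.\ $\omega$. Simultaneously, \hyperref[list:P5]{\textbf{(P5)}} gives $\|\mathcal{L}_\omega^{\varepsilon\,(t_1/\varepsilon)}f\|_{\BV(I)}\le C_1 r^{t_1/\varepsilon}+C_2\|f\|_{L^1(\leb)}\le C_3$ with $C_3$ independent of $\varepsilon$ and $\omega$.

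In the second block, normalise $g:=\mathcal{L}_\omega^{\varepsilon\,(t_1/\varepsilon)}f/\|\mathcal{L}_\omega^{\varepsilon\,(t_1/\varepsilon)}f\|_{\BV(I)}$; if its $\BV(I)$ norm is below a small fixed threshold the Lasota--Yorke inequality of \hyperref[list:P5]{\textbf{(P5)}} alone already forces $\|\mathcal{L}_\omega^{\varepsilon\,((t_1+t_2)/\varepsilon)}f\|_{L^1(\leb)}$ below any prescribed level after $t_2/\varepsilon$ further steps, and otherwise the first block guarantees $|\int_{I_k}g\,\dleb|\le\delta_1$ for each $k$ with $\delta_1$ arbitrarily small, so \lem{lem:L1/2ctrl} (applied with $n_0=t_2/\varepsilon$, which is large for small $\varepsilon$) gives $\|\mathcal{L}_{\sigma^{t_1/\varepsilon}\omega}^{\varepsilon\,(t_2/\varepsilon)}g\|_{L^1(\leb)}<\delta$ and hence $\|\mathcal{L}_\omega^{\varepsilon\,((t_1+t_2)/\varepsilon)}f\|_{L^1(\leb)}<C_3\delta$. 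Choosing the threshold, $\delta_1$ and $\delta$ small enough that the resulting bound is at most $\alpha:=\tfrac{1}{4K}$, with $K$ the Lasota--Yorke constant as in \lem{lem:L1/2WTS}, and setting $t:=2(t_1+t_2)$, the hypothesis \eqref{eqn:L1/2} is met, so \lem{lem:L1/2WTS} yields the single-block inequality.

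The crux is the first block: upgrading the heuristic ``the $(I_1,\dots,I_m)$-mass profile of $\mathcal{L}_\omega^{\varepsilon\,(t/\varepsilon)}f$ is carried by the averaged jump process, which contracts the zero-sum subspace'' to a bound uniform in $\omega$ and in small $\varepsilon$. This is precisely where the total-mass condition $f\in\BV_0(I)$ is indispensable --- it places the mass vector in the subspace on which $e^{t\bar{G}^T}$ decays to $0$ rather than converging to the stationary profile $p$ --- and where the uniform irreducibility supplied by \hyperref[list:P4]{\textbf{(P4)}} is needed; once the first block is in hand, the remaining steps are routine consequences of \thrm{thrm:openopprop} and \hyperref[list:P5]{\textbf{(P5)}}.
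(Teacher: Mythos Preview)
Your overall strategy matches the paper's: verify the hypothesis \eqref{eqn:L1/2} of \lem{lem:L1/2WTS} by first driving the interval-wise masses $\int_{I_k}\mathcal{L}_\omega^{\varepsilon\,(\cdot)}f\,\dleb$ close to zero over a long block of length $\asymp 1/\varepsilon$, and then invoking \lem{lem:L1/2ctrl} to get the $L^1$-smallness. However there is one concrete gap and one under-justified step.

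\textbf{The gap.} You apply \lem{lem:L1/2ctrl} with $n_0=t_2/\varepsilon$, arguing this is ``large for small $\varepsilon$''. But the proof of \lem{lem:L1/2ctrl} relies on the telescoping estimate \eqref{eqn:Ltrip-iter}, which gives $|||\mathcal{L}_\omega^{\varepsilon\,(n_0)}-\mathcal{L}^{0\,(n_0)}|||_{\BV-L^1}\le C\,n_0\,|||\mathcal{L}_\omega^{\varepsilon}-\mathcal{L}^0|||_{\BV-L^1}$; this is $o_{\varepsilon\to 0}(1)$ only for \emph{fixed} $n_0$, and with $n_0=t_2/\varepsilon$ it becomes $(t_2/\varepsilon)\cdot o_{\varepsilon\to 0}(1)$, which need not vanish. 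Thus the $\varepsilon_0$-threshold in \lem{lem:L1/2ctrl} depends on $n_0$, and letting $n_0\to\infty$ with $\varepsilon\to 0$ is not allowed as stated. The paper avoids this precisely by keeping $n_0$ \emph{fixed}: it runs the long block for $\bar n_t^\varepsilon:=\tfrac{t}{2\varepsilon}-n_0$ steps, checks that the interval-wise integrals of $\mathcal{L}_\omega^{\varepsilon\,(\bar n_t^\varepsilon)}f$ are below $\delta_1$, and only then applies \lem{lem:L1/2ctrl} for the final fixed $n_0$ steps. Your argument is easily repaired by the same device: take the second block of fixed length $n_0$ rather than $t_2/\varepsilon$.

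\textbf{The under-justified step.} In your first block you assert that the mass vector $v_\omega^{(n)}$ evolves, up to $o_{\varepsilon\to 0}(1)$, under the cocycle $(M_{\sigma^n\omega}^\varepsilon)^T$, citing \Sec{sec:jump} and \cite{gtp_met}. This is not automatic for a general $f\in\BV_0(I)$: the Markov-chain picture becomes accurate only once the density has relaxed within each $I_j$ to a multiple of $\phi_j$. The paper handles this cleanly without redoing any cocycle analysis: it first applies a \emph{fixed} number $n_1$ of steps and uses the estimate \eqref{eqn:spec-proj} (which is just $\mathcal{L}^0$-mixing plus the triple-norm bound for a fixed iterate) to replace $f$ by $\sum_j\leb(f\cdot\mathds{1}_{I_j})\phi_j$ up to $O(\kappa^{n_1})+o_{\varepsilon\to 0}(1)$; it then invokes the already-proved \thrm{thrm:conv_jump} to get $\int_{I_k}\mathcal{L}^{\varepsilon\,(\bar n_t^\varepsilon-n_1)}\phi_j\,\dleb\to p_{jk}(t/2)$, and finally uses $p_{jk}(t/2)\to p_k$ as $t\to\infty$ together with $\sum_j\leb(f\cdot\mathds{1}_{I_j})=0$ to conclude that \eqref{eqn:lemtail-WTS} holds. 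This is both shorter and rigorous, since all the delicate ``$o(1)$ over $1/\varepsilon$ many steps'' bookkeeping is encapsulated in \thrm{thrm:conv_jump}. I recommend you invoke \thrm{thrm:conv_jump} directly at this point rather than sketching a parallel Markov-cocycle argument.
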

\begin{proof}
    \Change{Fix $\delta>0$. Let $\delta_1,\varepsilon_0>0$ and $n_0\in\mathbb{N}$ be as in \lem{lem:L1/2ctrl}. Set $\bar{n}_{t}^{\varepsilon}:=\frac{t}{2\varepsilon}-n_0$. If $\varepsilon\leq \varepsilon_0$, and for any $f\in \BV(I)$ with $\|f\|_{\BV(I)}=1$,
    \begin{equation}
        0\leq\left| \int_{I_k} \mathcal{L}_{\omega}^{\varepsilon\, (\bar{n}_{t}^{\varepsilon})}(f)(x)\, \dleb(x) \right| \leq \delta_1 \label{eqn:lemtail-WTS}
    \end{equation}
    for each $k\in\{1,\cdots, m\}$ and $\mathbb{P}$-a.e. $\omega\in\Omega$, then \lem{lem:L1/2ctrl} asserts that for $\mathbb{P}$-a.e. $\omega\in\Omega$, 
    \begin{equation}
    \|\mathcal{L}_{\sigma^{\bar{n}_{t}^{\varepsilon}}\omega}^{\varepsilon\, (n_0)}(\mathcal{L}_{\omega}^{\varepsilon\, (\bar{n}_{t}^{\varepsilon})})(f)\|_{L^1(\leb)}=\|\mathcal{L}_{\omega}^{\varepsilon\, (\frac{t}{2\varepsilon})}f\|_{L^1(\leb)}<\delta.
        \label{eqn:t/2eps-bound}
    \end{equation}
    Due to the above, it remains to show that \eqref{eqn:lemtail-WTS} holds. By applying \lem{lem:L1/2WTS}, the result follows. Through similar estimates to \eqref{eqn:Leps-in-L0} and \eqref{eqn:SG-Lj}, for any $n\in\mathbb{N}$ and $f\in\BV(I)$ with $\|f\|_{\BV(I)}=1$, 
    \begin{equation}
    \left| \leb (\mathcal{L}_\omega^{\varepsilon\, (n)}f)\right|=\left|\leb\left( \sum_{j=1}^m\leb(f\cdot \mathds{1}_{I_j}) \phi_j\right) \right|+O_{n\to \infty}(\kappa^n)+o_{\varepsilon\to 0}(1). \label{eqn:spec-proj}   
    \end{equation}
    Fix $n_1\in\mathbb{N}$, then by the duality relation of the Perron-Frobenius operator, \eqref{eqn:spec-proj} implies that
    \begin{align}
        \left| \leb(\mathcal{L}_\omega^{\varepsilon\, (\bar{n}_{t}^{\varepsilon})}f)\right|&= \left| \leb(\mathcal{L}_{\sigma^{n_1}\omega}^{\varepsilon\, (\bar{n}_{t}^{\varepsilon}-n_1)}(\mathcal{L}_\omega^{\varepsilon\, ({n}_1)})(f)) 
 \right|\nonumber\\
 &= \left| \leb\left(\mathcal{L}_{\sigma^{n_1}\omega}^{\varepsilon\, (\bar{n}_{t}^{\varepsilon}-n_1)}\left(\sum_{j=1}^m \leb(f\cdot\mathds{1}_{I_j})\phi_j\right)
 \right)\right| +O_{n_1\to \infty}(\theta^{n_1})+o_{\varepsilon\to 0}(1).\phantom{asasfdsdf} \label{eqn:n1large}
    \end{align}}
Given the above, to obtain \eqref{eqn:lemtail-WTS} we need to show that 
\begin{equation}
    \int_{I_k} \mathcal{L}_{\sigma^{n_1}\omega}^{\varepsilon\, (\bar{n}_{t}^{\varepsilon}-n_1)}\left(\sum_{j=1}^m \leb(f\cdot\mathds{1}_{I_j})\phi_j\right)(x)\, \dleb(x) = \sum_{j=1}^m \leb(f\cdot\mathds{1}_{I_j})\int_{I_k}\mathcal{L}_{\sigma^{n_1}\omega}^{\varepsilon\, (\bar{n}_{t}^{\varepsilon}-n_1)}\left(\phi_j\right)(x)\, \dleb(x)
    \label{eqn:wts-small}
\end{equation}
is small for each $k\in \{1,\cdots, m\}$. However, by \thrm{thrm:conv_jump}, for $\mathbb{P}$-a.e. $\omega\in\Omega$
\begin{align*}
    \lim_{\varepsilon\to 0}\int_{I_k}\mathcal{L}_{\sigma^{n_1}\omega}^{\varepsilon\, (\bar{n}_{t}^{\varepsilon}-n_1)}\left(\phi_j\right)(x)\, \dleb(x)&
    = \lim_{\varepsilon\to 0}\mu_j\left((T_{\sigma^{n_1}\omega}^{\varepsilon\, (\frac{t}{2\varepsilon}-{n}_0-n_1)})^{-1}(I_k)\right)
    =p_{jk}\left(\frac{t}{2}\right)
\end{align*}
where $p_{jk}\left(\frac{t}{2}\right) := (e^{\frac{t}{2}\bar{G}})_{jk}$ denotes the transition probability of an initial condition starting in state $I_j$ , and jumping to state $I_k$ at time $\frac{t}{2}$ for the averaged Markov jump process introduced in \Sec{sec:avg_markov}.\footnote{Recall that $\bar{G}$ is the generator of the averaged Markov jump process defined in \Sec{sec:avg_markov}.} But, $\lim_{t\to \infty }p_{jk}\left(\frac{t}{2}\right) = p_k$, where $p_k$ is as in \eqref{eqn:obs-centering2}. So, from \eqref{eqn:wts-small}, for $\mathbb{P}$-a.e. $\omega\in\Omega$,
\begin{align*}
    \lim_{t \to \infty}\lim_{\varepsilon\to 0}\left|\sum_{j=1}^m \leb(f\cdot\mathds{1}_{I_j})\int_{I_k} \mathcal{L}_{\sigma^{n_1}\omega}^{\varepsilon\, (\bar{n}_{t}^{\varepsilon}-n_1)}\left(   \phi_j(x)\right)\, \dleb(x)\right| 
    &=0
\end{align*}
since ${f}\in\BV_0(I)$. Therefore, taking $n_1$ large in \eqref{eqn:n1large}, we obtain \eqref{eqn:lemtail-WTS}. 
\end{proof}
We may now prove the main result of this section.
\begin{proof}[Proof of \thrm{thrm:variance-est}]   
\Change{The proof of \thrm{thrm:variance-est} is divided into several steps. In \Step{step:clt-lemma}, we apply \cite[Theorem B]{DFGTV_limthrm} to show that \eqref{eqn:clt-limit} holds. The remaining steps concern the computation of \eqref{eqn:var-limit}. \Step{stp:var-step1} and \Step{stp:esttail} show that various terms in $\varepsilon(\Sigma^\varepsilon(\tilde{\psi}^\varepsilon))$ tend to zero with $\varepsilon$. In \Step{stp:mixing} and \Step{stp:err-ctrl}, we compute estimates for the remaining terms, which we use in \Step{step:simplify} and \Step{step:b4-sum} to obtain \eqref{eqn:var-limit}.   
\begin{stp}
    In the setting of \thrm{thrm:variance-est}, for every bounded and continuous function $f:\mathbb{R}\to \mathbb{R}$ and $\mathbb{P}$-a.e. $\omega\in\Omega$, we have 
    \begin{equation*}
        \lim_{n\to \infty} \int_{\mathbb{R}}f\left( \frac{1}{\sqrt{n}}\sum_{k=0}^{n-1}(\tilde{\psi}^\varepsilon_{\sigma^{k}\omega}\circ T_{\omega}^{\varepsilon\, (k)})\right)(x)\, d\mu_{\omega}^\varepsilon(x) = \int_\mathbb{R}f(x)\, d\mathcal{N}(0,(\Sigma^\varepsilon(\tilde{\psi}^\varepsilon))^2)(x).
    \end{equation*}    
    \label{step:clt-lemma}
\end{stp}
\begin{proof}
    \Change{It suffices to check that for each $\varepsilon>0$ sufficiently small, \cite[Theorem B]{DFGTV_limthrm} applies to the sequence of random dynamical systems $\{(\Omega,\mathcal{F},\mathbb{P},\sigma,\BV(I),\mathcal{L}^\varepsilon)\}_{\varepsilon> 0}$ of metastable maps $T_\omega^\varepsilon:I\to I$. Due to \cite[Section 2.3.1]{DFGTV_limthrm}, for a fixed $\varepsilon>0$, if \hyperref[list:P1]{\textbf{(P1)}}-\hyperref[list:P7]{\textbf{(P7)}} hold, and $(T_\omega^\varepsilon)_{\omega\in\Omega}$ is uniformly (over $\omega\in\Omega$) covering, then \cite[Theorem B]{DFGTV_limthrm} applies. By assumption, \hyperref[list:P1]{\textbf{(P1)}}-\hyperref[list:P7]{\textbf{(P7)}} hold, and thus it remains to show that $(T_\omega^\varepsilon)_{\omega\in\Omega}$ is uniformly (over $\omega\in\Omega$) covering. However, thanks to \hyperref[list:P4]{\textbf{(P4)}}, particularly the fact that the measure of the holes for $T_\omega^\varepsilon$ are bounded uniformly over $\omega\in\Omega$ away from zero, this is satisfied.} 
\end{proof}
    } 
    \begin{stp} In the setting of \thrm{thrm:variance-est}, 
    $$\lim_{\varepsilon\to 0}\varepsilon \int_\Omega \int_I \tilde{\psi}_\omega^\varepsilon(x)^2\phi_\omega^\varepsilon(x)\, \dleb(x)\, d\mathbb{P}(\omega)=0.$$
    \label{stp:var-step1}
    \end{stp}
    \begin{proof}
    Due to \eqref{eqn:obs-reg}, there exists a constant $C>0$ such that $\esssup_{\omega\in\Omega}\| \psi_\omega\|_{\BV(I)}\leq C$. Therefore, recalling \rem{rmk:bv-center},
    \begin{align*}
        \varepsilon\left| \int_\Omega \int_I \tilde{\psi}_\omega^\varepsilon(x)^2\phi_\omega^\varepsilon(x)\, \dleb(x)\, d\mathbb{P}(\omega) \right|
        &\leq 4\varepsilon C^2. 
    \end{align*}
    Taking $\varepsilon\to 0$, the result follows.
    \end{proof}
    \begin{stp} In the setting of \thrm{thrm:variance-est},  
 \begin{align*}
     \lim_{t\to \infty}\lim_{\varepsilon\to 0}\mathcal{T}^\varepsilon_t:= \lim_{t\to \infty}\lim_{\varepsilon\to 0}2\varepsilon\sum_{n=\frac{t}{\varepsilon}+1}^\infty\int_\Omega \int_I \mathcal{L}_\omega^{\varepsilon\, (n)}(\tilde{\psi}_{\omega}^\varepsilon(x)\phi_\omega^\varepsilon(x)) \tilde{\psi}_{\sigma^n\omega}^\varepsilon(x)\, \, \dleb(x)\, d\mathbb{P}(\omega) =0.
 \end{align*}    
 \label{stp:esttail}
 
\end{stp}
\begin{proof}
Recall that due to \rem{rmk:bv-center}, for $\mathbb{P}$-a.e. $\omega \in \Omega$, 
\begin{align}
    \left|\mathcal{T}^\varepsilon_t\right|&\leq 4C\varepsilon\ \sum_{n=\frac{t}{\varepsilon}+1}^\infty\int_\Omega \int_I |\mathcal{L}_\omega^{\varepsilon\, (n)}(\tilde{\psi}_{\omega}^\varepsilon(x)\phi_\omega^\varepsilon(x))| \, \, \dleb(x)\, d\mathbb{P}(\omega) \nonumber\\
    &\leq 4C\varepsilon \sum_{n=\frac{t}{\varepsilon}+1}^\infty\esssup_{\omega\in\Omega} \|\mathcal{L}_\omega^{\varepsilon\, (n)}(\tilde{\psi}_{\omega}^\varepsilon\phi_\omega^\varepsilon)\|_{\BV(I)}.\label{eqn:tail-line}
\end{align}
Observe that 
\begin{align*}
    \sum_{n=\frac{t}{\varepsilon}+1}^\infty\esssup_{\omega\in\Omega} \|\mathcal{L}_\omega^{\varepsilon\, (n)}(\tilde{\psi}_{\omega}^\varepsilon\phi_\omega^\varepsilon)\|_{\BV(I)}
    &= \sum_{j=1}^\infty \sum_{k=0}^{\frac{t}{\varepsilon}-1} \esssup_{\omega\in\Omega}\|\mathcal{L}_\omega^{\varepsilon\, (k+\frac{jt}{\varepsilon}+1)}(\tilde{\psi}_{\omega}^\varepsilon\phi_\omega^\varepsilon)\|_{\BV(I)}\\
    &=\sum_{j=1}^\infty \sum_{k=0}^{\frac{t}{\varepsilon}-1} \esssup_{\omega\in\Omega}\|\mathcal{L}_{\sigma^{\frac{jt}{\varepsilon}}\omega}^{\varepsilon\, (k+1)}(\mathcal{L}_\omega^{\varepsilon\, (\frac{jt}{\varepsilon})}(\tilde{\psi}_{\omega}^\varepsilon\phi_\omega^\varepsilon))\|_{\BV(I)}.
\end{align*}
By \hyperref[list:P5]{\textbf{(P5)}}, there exists a constant $M>0$ such that for $\mathbb{P}$-a.e. $\omega\in\Omega$   
\Change{\begin{align*}
    \sum_{j=1}^\infty \sum_{k=0}^{\frac{t}{\varepsilon}-1} &\esssup_{\omega\in\Omega}\|\mathcal{L}_{\sigma^{\frac{jt}{\varepsilon}}\omega}^{\varepsilon\, (k+1)}(\mathcal{L}_\omega^{\varepsilon\, (\frac{jt}{\varepsilon})}(\tilde{\psi}_{\omega}^\varepsilon\phi_\omega^\varepsilon))\|_{\BV(I)} \\&\leq M \sum_{j=1}^\infty \sum_{k=0}^{\frac{t}{\varepsilon}-1} \esssup_{\omega\in\Omega}\|\mathcal{L}_\omega^{\varepsilon\, (\frac{jt}{\varepsilon})}(\tilde{\psi}_{\omega}^\varepsilon\phi_\omega^\varepsilon)\|_{\BV(I)} \\
    &=\frac{Mt\|\tilde{\psi}_{\omega}^\varepsilon\phi_\omega^\varepsilon\|_{\BV(I)}}{\varepsilon}\sum_{j=1}^\infty \esssup_{\omega\in\Omega}\Bigg\|\mathcal{L}_\omega^{\varepsilon\, (\frac{jt}{\varepsilon})}\Bigg(\frac{\tilde{\psi}_{\omega}^\varepsilon\phi_\omega^\varepsilon}{\|\tilde{\psi}_{\omega}^\varepsilon\phi_\omega^\varepsilon\|_{\BV(I)}}\Bigg)\Bigg\|_{\BV(I)}. 
\end{align*}
Due to \rem{rmk:bv-center} and \cite[Lemma 4.2]{gtp_met}, $\tilde{\psi}_{\omega}^\varepsilon\phi_\omega^\varepsilon$ has uniformly bounded $\BV(I)$ norm over $\varepsilon>0$ sufficiently small, and over $\omega\in\Omega$ away from a $\mathbb{P}$-null set. Thus, applying \lem{lem:tail-ctrl} with $f=\frac{\tilde{\psi}_{\omega}^\varepsilon\phi_\omega^\varepsilon}{\|\tilde{\psi}_{\omega}^\varepsilon\phi_\omega^\varepsilon\|_{\BV(I)}}$, there exist constants $\rho\in (0,1)$ and $\tilde{M}>0$ such that for $\varepsilon>0$ sufficiently small and $\mathbb{P}$-a.e. $\omega\in\Omega$
\begin{align}
    \frac{Mt}{\varepsilon}\sum_{j=1}^\infty \esssup_{\omega\in\Omega}\left\|\mathcal{L}_\omega^{\varepsilon\, (\frac{jt}{\varepsilon})}\left(\frac{\tilde{\psi}_{\omega}^\varepsilon\phi_\omega^\varepsilon}{\|\tilde{\psi}_{\omega}^\varepsilon\phi_\omega^\varepsilon\|_{\BV(I)}}\right)\right\|_{\BV(I)}\|\tilde{\psi}_{\omega}^\varepsilon\phi_\omega^\varepsilon\|_{\BV(I)}&\leq \frac{\tilde{M}t}{\varepsilon}\sum_{j=1}^\infty (\rho^t)^j\nonumber\\
    &= \frac{\tilde{M}t\rho^t}{\varepsilon(1-\rho^t)}\label{eqn:tail-estlem}.
\end{align}
In turn, combining \eqref{eqn:tail-line} and \eqref{eqn:tail-estlem}, we find that there exists a constant $\tilde{C}>0$ such that
$$\lim_{\varepsilon\to 0}|\mathcal{T}_{t}^\varepsilon|\leq \tilde{C}\frac{t\rho^t}{1-\rho^t},$$
which is small for $t$ large. }
\end{proof}
From \Step{stp:var-step1} and \Step{stp:esttail},
\begin{equation}
    \lim_{\varepsilon\to 0}\varepsilon(\Sigma^\varepsilon(\tilde{\psi}^\varepsilon))^2=\lim_{t\to\infty}\lim_{\varepsilon\to 0}2\varepsilon\sum_{n=1}^{\frac{t}{\varepsilon}} \int_\Omega \int_I \mathcal{L}_\omega^{\varepsilon\,(n)}(\tilde{\psi}_{\omega}^\varepsilon(x)\phi_\omega^\varepsilon(x)) \tilde{\psi}_{\sigma^n\omega}^\varepsilon(x)\, \, \dleb(x)\, d\mathbb{P}(\omega).\label{eqn:lim-after-steps}
\end{equation}
Further, from \eqref{eqn:lim-after-steps} observe that for $K\in\mathbb{N}$ fixed,
\begin{equation}
    \lim_{\varepsilon\to 0}\varepsilon(\Sigma^\varepsilon(\tilde{\psi}^\varepsilon))^2=\lim_{t\to\infty}\lim_{\varepsilon\to 0}2\varepsilon\sum_{n=K}^{\frac{t}{\varepsilon}} \int_\Omega \int_I \mathcal{L}_\omega^{\varepsilon\,(n)}(\tilde{\psi}_{\omega}^\varepsilon(x)\phi_\omega^\varepsilon(x)) \tilde{\psi}_{\sigma^n\omega}^\varepsilon(x)\, \, \dleb(x)\, d\mathbb{P}(\omega).\label{eqn:lim-after-steps_K}
\end{equation}
The following steps allow us to compute \eqref{eqn:lim-after-steps}.
\begin{stp}
Fix $n_0\in\mathbb{N}$. In the setting of \thrm{thrm:variance-est} and for $n\geq 2n_0$, let 
\begin{equation}
    \chi_{n,\omega}^\varepsilon:= \sum_{j=1}^m\left(\int_{I_j}\tilde{\psi}_{\omega}^\varepsilon(x)\phi_\omega^\varepsilon(x)\, \dleb(x)\right) \mathcal{L}_{\sigma^{n_0}\omega}^{\varepsilon\, (n-2n_0)}(\phi_j).\label{eqn:chi}
\end{equation}
Then for $\mathbb{P}$-a.e. $\omega\in\Omega$,
\begin{align*}
    \int_{I} \mathcal{L}_{\sigma^{n-n_0}\omega}^{\varepsilon\, (n_0)}(\chi_{n,\omega}^\varepsilon)(x)\tilde{\psi}_{\sigma^n\omega}^\varepsilon(x)\, \, \dleb(x)&= \sum_{k=1}^m \int_{I_k} \phi_k(x) \tilde{\psi}_{\sigma^n\omega}^\varepsilon(x) \, \dleb(x)\left(\int_{I_k} \chi_{n,\omega}^\varepsilon(x)\, \dleb(x) \right) \\
   &\quad +O_{n_0\to\infty}(\kappa^{n_0})+  o_{\varepsilon\to 0}(1). 
\end{align*}
\label{stp:mixing}
\end{stp}\begin{proof}
    Observe that for $n\geq 2n_0$ 
    \begin{align}
        \int_{I} \mathcal{L}_{\sigma^{n-n_0}\omega}^{\varepsilon\, (n_0)}(\chi_{n,\omega}^\varepsilon)(x)\tilde{\psi}_{\sigma^n\omega}^\varepsilon(x)\, \, \dleb(x) = \sum_{k=1}^m\int_{I_k} \mathcal{L}_{\sigma^{n-n_0}\omega}^{\varepsilon\, (n_0)}(\chi_{n,\omega}^\varepsilon)(x)\tilde{\psi}_{\sigma^n\omega}^\varepsilon(x)\, \, \dleb(x).\label{eqn:mix-decomp}
    \end{align}
    For each $k\in\{1,\cdots,m\}$, due to \eqref{eqn:SG-Lj},
    \begin{align}
        \int_{I_k} \mathcal{L}_{}^{0\, (n_0)}(\chi_{n,\omega}^\varepsilon)(x)\tilde{\psi}_{\sigma^n\omega}^\varepsilon(x)\, \, \dleb(x)&= \int_{I_k} \mathcal{L}_{k}^{0\, (n_0)}(\chi_{n,\omega}^\varepsilon)(x)\tilde{\psi}_{\sigma^n\omega}^\varepsilon(x)\, \, \dleb(x)\nonumber\\
        &=\leb(\chi_{n,\omega}^\varepsilon\cdot\mathds{1}_{I_k})\int_{I_k}\phi_k(x)\tilde{\psi}_{\sigma^n\omega}^\varepsilon(x)\, \dleb(x)\nonumber \\
    &\qquad+O_{n_0\to \infty}(\kappa^{n_0})\|\chi_{n,\omega}^\varepsilon\|_{\BV(I)}.\label{eqn:mixing-est1}
    \end{align}
   Due to \hyperref[list:P5]{\textbf{(P5)}} and \eqref{eqn:obs-reg}, $\|\chi_{n,\omega}^\varepsilon\|_{\BV(I)}$ is bounded uniformly over $\omega\in\Omega$, $\varepsilon>0$ and $n\in\mathbb{N}$. Thus, for each $k\in\{1,\cdots, m\}$, due to \rem{rmk:bv-center}, since $n_0\in\mathbb{N}$ is fixed,
    \begin{align*}
        &\int_{I_k} (\mathcal{L}_{\sigma^{n-n_0}\omega}^{\varepsilon\, (n_0)}- \mathcal{L}_{}^{0\, (n_0)})(\chi_{n,\omega}^\varepsilon)(x)\tilde{\psi}_{\sigma^n\omega}^\varepsilon(x)\, \, \dleb(x) \\
        &\quad\leq 2C\|\chi_{n,\omega}^\varepsilon\|_{\BV(I)}\|(\mathcal{L}_{\sigma^{n-n_0}\omega}^{\varepsilon\, (n_0)}- \mathcal{L}_{}^{0\, (n_0)})(\mathds{1}_{I})\|_{L^1(\leb)}= o_{\varepsilon\to 0}(1).
    \end{align*}
      \Change{In the last line we have used \eqref{eqn:obs-reg}, \eqref{eqn:Ltrip-iter} and \rem{rem:p1-imp-trip}. Due to \eqref{eqn:mixing-est1}, and recalling that $\|\chi_{n,\omega}^\varepsilon\|_{\BV(I)}$ is bounded uniformly over $\omega\in\Omega$, $\varepsilon>0$ and $n\in\mathbb{N}$,} we find that for each $k\in\{1,\cdots, m\}$,
      \begin{align*}
       &\int_{I_k} \mathcal{L}_{\sigma^{n-n_0}\omega}^{\varepsilon\, (n_0)}(\chi_{n,\omega}^\varepsilon)(x)\tilde{\psi}_{\sigma^n\omega}^\varepsilon(x)\, \dleb(x)\\
       &\quad= \int_{I_k} \mathcal{L}_{}^{0\, (n_0)}(\chi_{n,\omega}^\varepsilon)(x)\tilde{\psi}_{\sigma^n\omega}^\varepsilon(x)\, \dleb(x) +o_{\varepsilon\to 0}(1)\\
       &\quad=\leb(\chi_{n,\omega}^\varepsilon\cdot\mathds{1}_{I_k})\int_{I_k}\phi_k(x)\tilde{\psi}_{\sigma^n\omega}^\varepsilon(x)\, \dleb(x) +O_{n_0\to \infty}(\kappa^{n_0}) +o_{\varepsilon\to 0}(1).
      \end{align*}
      Summing over $k\in\{1,\cdots,m\}$, the result follows.
\end{proof}
In the remainder of the proof, we will make use of \thrm{thrm:conv_jump}. It is crucial that we study the average (over $\omega\in\Omega$) speed of convergence of the distribution of jumps for $T_\omega^\varepsilon$ as $\varepsilon\to0$. 
\begin{stp}
 In the setting of \thrm{thrm:conv_jump}, set
\begin{align*}
    E_{j,\omega}^\varepsilon&:=\Bigg|\mu_j\left(\left\{x\in I \ \big| \ \varepsilon\mathcal{T}_{k,\omega}^\varepsilon(x)\in \Delta_k \ \mathrm{and} \ z(T_{\omega}^{\varepsilon \, (t_{k,\omega}^\varepsilon(x))}(x))=r_k \ \mathrm{for} \ k=1,\dots, p\right\}  \right) \\
    &\quad -  \mathbb{P}^j(\mathcal{T}_k^M\in \Delta_k\ \mathrm{and} \ z_k^M=r_k \ \mathrm{for}\, k=1,\dots,p) \Bigg|.
\end{align*}
 Then, 
\begin{align*}
&\lim_{\varepsilon\to 0}\int_\Omega E_{j,\omega}^\varepsilon \, d\mathbb{P}(\omega) = 0.
\end{align*}   
\label{stp:err-ctrl}
\end{stp}
\begin{proof}
\Change{Observe that for all $\varepsilon>0$ sufficiently small and each $\omega\in \Omega$, $|E_{j,\omega}^\varepsilon|\leq 2$. Thus, by \thrm{thrm:conv_jump} and the dominated convergence theorem, the result follows.}
\end{proof}\Change{
\begin{stp}In the setting of \thrm{thrm:variance-est}, for a \Change{fixed $n_0\in\mathbb{N}$} and $\mathbb{P}$-a.e. $\omega\in \Omega$, if $n\geq 2n_0$
\begin{align*}
    \Sigma_{n,\omega}^\varepsilon&:= \int_I \mathcal{L}_\omega^{\varepsilon\, (n)}(\tilde{\psi}_{\omega}^\varepsilon(x)\phi_\omega^\varepsilon(x)) \tilde{\psi}_{\sigma^n\omega}^\varepsilon(x)\, \, \dleb(x)\\
    &= \sum_{j,k=1}^m p_j (\Psi_{\sigma^n\omega}(k)+o_{\varepsilon\to 0}(1))\left(\Psi_\omega(j)+o_{\varepsilon\to 0}(1)\right)\mu_j(T_{\sigma^{n_0}\omega}^{\varepsilon\, (n-2n_0)}(x)\in I_k) \\
    &\quad+O_{n_0\to\infty}(\kappa^{n_0})+  o_{\varepsilon\to 0}(1).
\end{align*}
\label{step:simplify}
\end{stp}}
\begin{proof}   
Due to \eqref{eqn:spec-proj}, observe that for $\mathbb{P}$-a.e. $\omega\in\Omega$, if $n\geq 2n_0$
\begin{align*}
    \Sigma_{n,\omega}^\varepsilon&:= \int_I \mathcal{L}_\omega^{\varepsilon\, (n)}(\tilde{\psi}_{\omega}^\varepsilon(x)\phi_\omega^\varepsilon(x)) \tilde{\psi}_{\sigma^n\omega}^\varepsilon(x)\, \, \dleb(x)\\
    &= \int_I \mathcal{L}_{\sigma^{n_0}\omega}^{\varepsilon\, (n-2n_0)}(\mathcal{L}_\omega^{\varepsilon\, (n_0)}(\tilde{\psi}_{\omega}^\varepsilon(x)\phi_\omega^\varepsilon(x))) \tilde{\psi}_{\sigma^n\omega}^\varepsilon(T_{\sigma^{n-n_0}\omega}^{\varepsilon\, (n_0)}(x))\, \, \dleb(x)\\
    &=\int_I \chi_{n,\omega}^\varepsilon(x)\tilde{\psi}_{\sigma^n\omega}^\varepsilon(T_{\sigma^{n-n_0}\omega}^{\varepsilon\, (n_0)}(x))\, \dleb(x) +O_{n_0\to \infty}(\kappa^{n_0})+o_{\varepsilon\to 0}(1) 
\end{align*}
where $\chi_{n,\omega}^\varepsilon$ is as in \eqref{eqn:chi}. Now, thanks to \Step{stp:mixing}, 
\begin{align*}
     \Sigma_{n,\omega}^\varepsilon&= \int_I \mathcal{L}_{\sigma^{n-n_0}\omega}^{\varepsilon\, (n_0)}(\chi_{n,\omega}^\varepsilon)(x)\tilde{\psi}_{\sigma^n\omega}^\varepsilon(x)\, \dleb(x) +O_{n_0\to \infty}(\kappa^{n_0})+o_{\varepsilon\to 0}(1)\\
     &= \sum_{k=1}^m \int_{I_k} \tilde{\psi}_{\sigma^n\omega}^\varepsilon(x)\phi_k(x)  \, \dleb(x)\left(\int_{I_k} \chi_{n,\omega}^\varepsilon(x)\, \dleb(x) \right)  +O_{n_0\to\infty}(\kappa^{n_0})+  o_{\varepsilon\to 0}(1)\\
   &= \sum_{j,k=1}^m \left( \int_{I_k} \tilde{\psi}_{\sigma^n\omega}^\varepsilon(x)\phi_k(x)  \, \dleb(x) \right)\left(\int_{I_j}\tilde{\psi}_{\omega}^\varepsilon(x)\phi_\omega^\varepsilon(x)\, \dleb(x)\right)\\
   &\qquad \times \int_{I_k}  \mathcal{L}_{\sigma^{n_0}\omega}^{\varepsilon\, (n-2n_0)}(\phi_j)(x)\, \dleb(x) +O_{n_0\to\infty}(\kappa^{n_0})+  o_{\varepsilon\to 0}(1)
\end{align*}
Recall from the statement of \thrm{thrm:variance-est} that $\Psi_{\omega}(k):= \int_{I_k} \phi_k(x) {\psi}_{\omega}(x) \, \dleb(x)$. Thus, using \eqref{eqn:obs-centering1}
\begin{align*}
    \Sigma_{n,\omega}^\varepsilon&= \sum_{j,k=1}^m \left( \Psi_{\sigma^n\omega}(k) - \mu_{\sigma^n\omega}^\varepsilon(\psi_{\sigma^n\omega}) \right)\left(\int_{I_j}({\psi}_{\omega}(x)-\mu_\omega^\varepsilon(\psi_\omega))\phi_\omega^\varepsilon(x)\, \dleb(x)\right)\\
   &\qquad \times \int_{I_k}  \mathcal{L}_{\sigma^{n_0}\omega}^{\varepsilon\, (n-2n_0)}(\phi_j)(x)\, \dleb(x) +O_{n_0\to\infty}(\kappa^{n_0})+  o_{\varepsilon\to 0}(1).
\end{align*}
Now, by \cite[Theorem 7.2]{gtp_met}, $$\int_{I_j} \psi_\omega(x)\phi_\omega^\varepsilon(x)\, \dleb(x)=p_j\int_{I_j} \psi_\omega(x)\phi_j(x)\, \dleb(x)+o_{\varepsilon\to 0}(1)=p_j\Psi_\omega(j)+o_{\varepsilon\to 0}(1).$$ \Change{So, recalling that for $\mathbb{P}$-a.e. $\omega\in\Omega$, $\mu_\omega^\varepsilon(\psi_\omega)=\sum_{i=1}^m p_i\Psi_\omega(i)+o_{\varepsilon\to 0}(1)$}, the result follows since
\begin{align*}
    \Sigma_{n,\omega}^\varepsilon
   &=\sum_{j,k=1}^m \left( \Psi_{\sigma^n\omega}(k) - \sum_{i=1}^m p_i \Psi_{\sigma^n\omega}(i) + o_{\varepsilon\to 0}(1)\right)\left( p_j\left(\Psi_\omega(j) -\sum_{i=1}^m p_i \Psi_\omega(i)\right)+o_{\varepsilon\to 0}(1)\right)\\
   &\quad \times\int_{I_k}  \mathcal{L}_{\sigma^{n_0}\omega}^{\varepsilon\, (n-2n_0)}(\phi_j)(x)\, \dleb(x) +O_{n_0\to\infty}(\kappa^{n_0})+  o_{\varepsilon\to 0}(1) \\
   &=\sum_{j,k=1}^m p_j (\Psi_{\sigma^n\omega}(k)+o_{\varepsilon\to 0}(1))\left(\Psi_\omega(j)+o_{\varepsilon\to 0}(1)\right)\mu_j(T_{\sigma^{n_0}\omega}^{\varepsilon\, (n-2n_0)}(x)\in I_k) \\
    &\qquad+O_{n_0\to\infty}(\kappa^{n_0})+  o_{\varepsilon\to 0}(1). 
\end{align*}
In the last line we have used \eqref{eqn:obs-centering2} which implies that $\sum_{i=1}^m p_i \Psi_\omega(i)=0$ for $\mathbb{P}$-a.e. $\omega\in\Omega$.\end{proof}
We may now compute \eqref{eqn:lim-after-steps}.
\Change{\begin{stp}
In the setting of \thrm{thrm:variance-est},  
\begin{equation}
     \lim_{\varepsilon\to 0}\varepsilon(\Sigma^\varepsilon(\tilde{\psi}^\varepsilon))^2=2\sum_{j,k=1}^mp_j\int_\Omega \Psi_\omega(j)\, d\mathbb{P}(\omega)\int_\Omega \Psi_\omega(k)\, d\mathbb{P}(\omega)\int_0^\infty p_{jk}(s)\, \dleb(s). \label{eqn:S7lim}
\end{equation}
\label{step:b4-sum}
\end{stp}}
\begin{proof}\jpt{
Fix $n_0\in\mathbb{N}$. Due to \Step{step:simplify} and \eqref{eqn:lim-after-steps_K} (with $K=2n_0$), \eqref{eqn:lim-after-steps} becomes
\begin{align}
    \lim_{\varepsilon\to 0}\varepsilon(\Sigma^\varepsilon(\tilde{\psi}^\varepsilon))^2&= \sum_{j,k=1}^m\lim_{t\to \infty}\lim_{\varepsilon\to 0}\Bigg(2p_j\varepsilon  \int_\Omega\left(\Psi_\omega(j)+o_{\varepsilon\to 0}(1)\right) \sum_{n=2n_0}^{\frac{t}{\varepsilon}}\Bigg((\Psi_{\sigma^n\omega}(k)+o_{\varepsilon\to 0}(1))\nonumber\\
    &\quad\times\mu_j(T_{\sigma^{n_0}\omega}^{\varepsilon\, (n-2n_0)}(x)\in I_k)\Bigg) \,d\mathbb{P}(\omega)+ 2{t}{}\left( O_{n_0\to\infty}(\kappa^{n_0})+  o_{\varepsilon\to 0}(1)  \right)\Bigg).\label{eqn:b4-sub-split}
\end{align}
Fix $\delta>0$. From \eqref{eqn:b4-sub-split}, we write
\begin{align}
    &\sum_{n=2n_0}^{\frac{t}{\varepsilon}}(\Psi_{\sigma^n\omega}(k)+o_{\varepsilon\to 0}(1))\mu_j(T_{\sigma^{n_0}\omega}^{\varepsilon\, (n-2n_0)}(x)\in I_k)\nonumber\\&\qquad= \sum_{u=0}^{\frac{t}{\delta}-1}\sum_{i=0}^{\frac{\delta}{\varepsilon}-2n_0}(\Psi_{\sigma^{i+\frac{u\delta}{\varepsilon}+2n_0}\omega}(k)+o_{\varepsilon\to 0}(1))\mu_j\nonumber( T_{\sigma^{n_0}\omega}^{\varepsilon\, (i+\frac{u\delta}{\varepsilon})}(x)\in I_k)\nonumber\\
    &\qquad =\sum_{u=0}^{\frac{t}{\delta}-1}\sum_{i=0}^{\frac{\delta}{\varepsilon}-2n_0}(\Psi_{\sigma^{i+\frac{u\delta}{\varepsilon}+2n_0}\omega}(k)+o_{\varepsilon\to 0}(1))(p_{jk}(u\delta)+E_{j,\omega}^\varepsilon) \label{eqn:var-jump-pro}
\end{align}
where in the last line we have applied \thrm{thrm:conv_jump}, and $E_{j,\omega}^\varepsilon$ is the error term appearing in \Step{stp:err-ctrl} satisfying $\lim_{\varepsilon\to 0}\int_\Omega E_{j,\omega}^\varepsilon\, d\mathbb{P}(\omega)=0$. For $j,k\in\{1,\cdots, m\}$, let 
\begin{align}
    L_{j,k,\delta,t}^\varepsilon&:= 2p_j\varepsilon  \int_\Omega\Psi_\omega(j)\sum_{u=0}^{\frac{t}{\delta}-1}\sum_{i=0}^{\frac{\delta}{\varepsilon}-2n_0}(\Psi_{\sigma^{i+\frac{u\delta}{\varepsilon}+2n_0}\omega}(k)+o_{\varepsilon\to 0}(1))p_{jk}(u\delta)\,d\mathbb{P}(\omega)\\ 
    V_{j,k,\delta,t}^\varepsilon&:=2p_j\varepsilon  \int_\Omega\Psi_\omega(j)\sum_{u=0}^{\frac{t}{\delta}-1}\sum_{i=0}^{\frac{\delta}{\varepsilon}-2n_0}(\Psi_{\sigma^{i+\frac{u\delta}{\varepsilon}+2n_0}\omega}(k)+o_{\varepsilon\to 0}(1))E_{j,\omega}^\varepsilon\nonumber\\ 
 &\qquad+o_{\varepsilon\to 0}(1)\sum_{u=0}^{\frac{t}{\delta}-1}\sum_{i=0}^{\frac{\delta}{\varepsilon}-2n_0}(\Psi_{\sigma^{i+\frac{u\delta}{\varepsilon}+2n_0}\omega}(k)+o_{\varepsilon\to 0}(1))(p_{jk}(u\delta)+E_{j,\omega}^\varepsilon) \,d\mathbb{P}(\omega).
\end{align}
In this way, by \eqref{eqn:var-jump-pro}, \eqref{eqn:b4-sub-split} becomes 
\begin{align}
    \lim_{\varepsilon\to 0}\varepsilon(\Sigma^\varepsilon(\tilde{\psi}^\varepsilon))^2
    &=\sum_{j,k=1}^m\lim_{t\to \infty}\lim_{\varepsilon\to 0}\Bigg( L_{j,k,\delta,t}^\varepsilon+V_{j,k,\delta,t}^\varepsilon+ 2{t}{}\left( O_{n_0\to\infty}(\kappa^{n_0})+  o_{\varepsilon\to 0}(1)  \right)\Bigg).\label{eqn:var-error-est}
\end{align}
By \eqref{eqn:obs-reg}, $\Psi_\omega(k)= \int_{I_k} \psi_\omega(x)\phi_k(x)\, \dleb(x) \leq \esssup_{\omega \in\Omega}\|\psi_\omega\|_{\BV(I)}\leq C$. Thus, 
\begin{align*}
 V_{j,k,\delta,t}^\varepsilon
 &\leq  2p_j\left(t - \frac{\varepsilon t (2n_0 -1) }{\delta}\right) \left({ (C^2+o_{\varepsilon\to 0}(1))}{}\int_\Omega { E_{j,\omega}^\varepsilon}{}\, d\mathbb{P}(\omega) + o_{\varepsilon\to 0}(1){ }{}\right)\\
 &=\left(t - \frac{\varepsilon t (2n_0 -1) }{\delta}\right)o_{\varepsilon\to 0}(1),
\end{align*}
where in the last equality we have used \Step{stp:err-ctrl}. Therefore, we can rewrite \eqref{eqn:var-error-est} as 
\begin{align}
    \lim_{\varepsilon\to 0}\varepsilon(\Sigma^\varepsilon(\tilde{\psi}^\varepsilon))^2&=\sum_{j,k=1}^m\lim_{t\to\infty}\lim_{\varepsilon\to 0}\Bigg(L_{j,k,\delta,t}^\varepsilon+2{t}{}\left( O_{n_0\to\infty}(\kappa^{n_0})+  o_{\varepsilon\to 0}(1)  \right)\Bigg)\label{eqn:finalvarlim}.
\end{align}
Now, observe that for all $\delta>0$ 
\begin{align}
    \lim_{\varepsilon\to 0} L_{j,k,\delta,t}^\varepsilon&=\lim_{\varepsilon\to 0}2p_j\varepsilon  \int_\Omega\Psi_\omega(j)\sum_{u=0}^{\frac{t}{\delta}-1}\sum_{i=0}^{\frac{\delta}{\varepsilon}-2n_0}(\Psi_{\sigma^{i+\frac{u\delta}{\varepsilon}+2n_0}\omega}(k)+o_{\varepsilon\to 0}(1))p_{jk}(u\delta)\,d\mathbb{P}(\omega)\nonumber\\ & =\lim_{\varepsilon\to 0}2p_j(\delta-2\varepsilon n_0+\varepsilon)  \int_\Omega\Psi_\omega(j)\sum_{u=0}^{\frac{t}{\delta}-1}p_{jk}(u\delta)\frac{\varepsilon}{\delta-2\varepsilon n_0+\varepsilon }\\
    &\qquad\times\sum_{i=0}^{\frac{\delta}{\varepsilon}-2n_0}(\Psi_{\sigma^{i+\frac{u\delta}{\varepsilon}+2n_0}\omega}(k)+o_{\varepsilon\to 0}(1))\,d\mathbb{P}(\omega)\nonumber\\
     & \stackrel{(\star)}{=}2p_j  \int_\Omega\Psi_\omega(j)\delta\sum_{u=0}^{\frac{t}{\delta}-1}p_{jk}(u\delta)\,d\mathbb{P}(\omega)\int_\Omega \Psi_\omega(k)\, d\mathbb{P}(\omega)\nonumber\\
    & \stackrel{(\star\star)}{=} 2p_j\int_\Omega \Psi_\omega(j)\, d\mathbb{P}(\omega)\int_\Omega \Psi_\omega(k)\, d\mathbb{P}(\omega)\Bigg(\int_0^t p_{jk}(s)\, \dleb(s) +o_{\delta\to 0}(1)\Bigg). \label{eqn:main-lim}
\end{align}}
At $(\star)$ we have used the dominated convergence theorem (thanks to \eqref{eqn:obs-reg}) along with Birkhoff's ergodic theorem, whilst at $(\star\star)$ we used the fact that $\delta\sum_{u=0}^{\frac{t}{\delta}-1}p_{jk}(u\delta)$ is a Riemann sum. Therefore, using \eqref{eqn:main-lim}, \Change{since $\delta>0$ and $n_0\in\mathbb{N}$ are fixed}, taking limits in \eqref{eqn:finalvarlim} in the order of $\varepsilon \to 0$, $\delta\to 0$, $n_0\to \infty$, and then $t\to \infty$, the result follows.
 \end{proof}
To conclude the proof of \thrm{thrm:variance-est}, since $p_{jk}(s):= (e^{s\bar{G}})_{jk}$,\footnote{Recall that $\bar{G}$ is the generator for the averaged Markov jump process defined in \Sec{sec:avg_markov}} \Change{by computing the sum over $j,k\in\{1,\cdots, m\}$ in \eqref{eqn:S7lim}, we obtain \eqref{eqn:var-limit}.}
\end{proof}
\section{Example: Random paired tent maps}
\label{sec:ptm}
In this section, we show that our results may be used to approximate the diffusion coefficient for random paired tent maps depending on a parameter $\varepsilon$. This class of random dynamical systems was introduced by Horan in \cite{horan_lin,Horan} and has been investigated as a non-autonomous system that admits metastability \cite{GTQ_quarantine,gtp_met}. For this reason, we benchmark our results against such a model. \\
\\
We consider a similar setup to \cite[Section 8]{gtp_met}. Let $I=[-1,1]$ and for $0< a,b\leq 1$ consider the paired tent map $T_{a,b}:I\to I$ given by
\begin{equation}
    T_{a,b}(x) := \begin{cases} 2(1+b)(x+1)-1, \ \ \ \ \ &x\in [-1,-1/2]\\
    -2(1+b)x-1, &x\in[-1/2,0)\\
    0, & x=0\\
    -2(1+a)x+1, &x\in(0,1/2]\\
    2(1+a)(x-1)+1, &x\in[1/2,1]
    \end{cases}.
    \label{eqn:ptm}
\end{equation}
This map satisfies $T_{a,b}(-1)=-1$, $T_{a,b}(-1/2)=b$ and $\lim_{x\to 0^-}T(x)=-1$; $\lim_{x\to 0^+}T_{a,b}(x)=1$, $T_{a,b}(1/2)=-a$ and $T_{a,b}(1)=1$. The map $T_{0,0}$ comprises of tent maps on disjoint subintervals $I_L=[-1,0]$ and $I_R=[0,1]$. For small positive $a$ and $b$, there is a small amount of leakage between these subintervals: points near $-1/2$ are mapped to $I_R$ whilst points near $1/2$ are mapped to $I_L$. This behaviour may be seen in \fig{fig:ptm}.
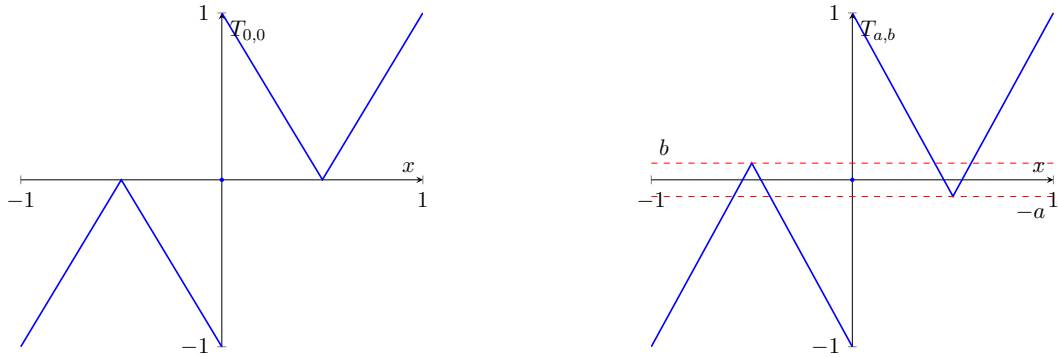
\begin{figure}[H]

\begin{subfigure}{0.45\linewidth}
\begin{tikzpicture}[
  declare function={
    func(\x)= (\x <-1/2) * (2*\x+1)   +
              and(\x >= -1/2, \x < 0) * (-2*\x-1)     +
              and(\x > 0, \x <= 1/2) * (-2*\x+1)   +
              (\x > 1/2) * (2*\x-1);
  }
,scale=0.78]
\begin{axis}[
  samples = 7, axis x line=middle, axis y line=middle,
  ymin=-1, ymax=1, ytick={-1,...,1}, ylabel=$T_{0,0}$,
  xmin=-1, xmax=1, xtick={-1,...,1}, xlabel=$x$
]
\addplot [blue, domain=-1:0, thick] {func(x)}; 
\addplot [blue, domain=0.001:1, thick] {func(x)}; 
\addplot[blue, only marks, mark=*, mark size=0.8] coordinates {(0,0)};
\end{axis}
\end{tikzpicture}
    \label{fig:ptma}
    \end{subfigure}\hfill
\begin{subfigure}{0.45\linewidth}
\begin{tikzpicture}[
  declare function={
    func(\x)= (\x <-1/2) * (2.2*\x+1.2)   +
              and(\x >= -1/2, \x < 0) * (-2.2*\x-1)     +
              and(\x > 0, \x <= 1/2) * (-2.2*\x+1)   +
              (\x > 1/2) * (2.2*\x-1.2);
  }
,scale=0.78]
\begin{axis}[
  samples = 7, axis x line=middle, axis y line=middle,
  ymin=-1, ymax=1, ytick={-1,...,1}, ylabel=$T_{a,b}$,
  xmin=-1, xmax=1, xtick={-1,...,1}, xlabel=$x$
]
\addplot [blue, domain=-1:0, thick] {func(x)}; 
\addplot [blue, domain=0.001:1, thick] {func(x)}; 
\addplot[blue, only marks, mark=*, mark size=0.8] coordinates {(0,0)};
\addplot [mark=none, red, samples=2,dashed] {0.1};
\node[above right] at (axis cs: -1, 0.1) {$b$};

\addplot [mark=none, red, samples=2,dashed] {-0.1};
\node[below left] at (axis cs: 1, -0.1) {$-a$};
\end{axis}
\end{tikzpicture}
    \label{fig:ptmb}
    \end{subfigure}
    \caption{Paired tent map $T_{a,b}$ on $I=[-1,1]$ with $a=b=0$ (left) and $a=b=\frac{1}{10}$ (right)}
    \label{fig:ptm}
\end{figure}
\noindent
Now consider a cocycle of paired tent maps driven by an ergodic, measure-preserving and invertible transformation $\sigma:\Omega \to \Omega$. The $\omega$-dependence is introduced in this system by making the leakage between intervals $I_L$ and $I_R$ random. This is guaranteed by considering the evolution of a point $x\in I$ under the dynamics $T_\omega := T_{a_\omega,b_\omega}$ where \Change{for $\beta^*>0$, $a,b:\Omega \to [\beta^*,1]$ are measurable functions}, and thus $a,b\in L^\infty(\mathbb{P})$. Since we are interested in the behaviour of such a system when small amounts of leakage can occur, we consider for sufficiently small $\varepsilon>0$ the map $T_\omega^\varepsilon := T_{\varepsilon a_\omega,\varepsilon b_\omega}$.\footnote{One can interpret the parameter $\varepsilon$ as a leakage controller. The larger $\varepsilon$ is, the more likely it is for points to switch between $I_L$ and $I_R$.} Here, there is a low probability that a point will be mapped from $I_L$ to $I_R$ in one step and vice-versa. When $\varepsilon = 0$, $T^0 := T_{0,0}$ and the map consists of tent maps on disjoint subintervals. Furthermore, the dynamics is autonomous with the $\varepsilon=0$ perturbation, removing the randomness ($\omega$-dependence) of the system. 
\\
\\
The random maps $(T_\omega^\varepsilon)_{\omega\in\Omega} := (T_{\varepsilon a_\omega, \varepsilon b_\omega})_{\omega\in\Omega}$, driven by $\sigma$, are the primary focus of this section. We verify the conditions of  \thrm{thrm:variance-est} to provide an approximation of the diffusion coefficient for random paired tent maps. These include \hyperref[list:I1]{\textbf{(I1)}}-\hyperref[list:I6]{\textbf{(I6)}}, \hyperref[list:P1]{\textbf{(P1)}}-\hyperref[list:P7]{\textbf{(P7)}} and \hyperref[list:O1]{\textbf{(O1)}}.   \\
\\
The main result of this section is the following. 
\begin{theorem}
 \Change{Let $\beta^*>0$ and suppose that $a,b :\Omega \to [\beta^*,1]$ are measurable functions.} Let $\{(\Omega,\mathcal{F},\mathbb{P},\sigma,\BV(I),\mathcal{L}^\varepsilon)\}_{\varepsilon\geq 0}$ be a sequence of random dynamical systems of paired tent maps $(T^\varepsilon_\omega)_{\omega\in\Omega}:= (T_{\varepsilon a_\omega, \varepsilon b_\omega})_{\omega\in\Omega}$ satisfying \hyperref[list:P1]{\textbf{(P1)}}. Fix $\varepsilon>0$ and take an observable $\psi:\Omega\times I \to \mathbb{R}$ satisfying \eqref{eqn:obs-reg} and \eqref{eqn:obs-centering2}. Assume that the variance defined in \eqref{eqn:var} satisfies $(\Sigma^\varepsilon(\tilde{\psi}^\varepsilon))^2>0$ where $\tilde{\psi}^\varepsilon:\Omega\times I \to \mathbb{R}$ is as in \eqref{eqn:obs-centering1}. Then, for every bounded and continuous function $f:\mathbb{R}\to \mathbb{R}$ and $\mathbb{P}$-a.e. $\omega\in\Omega$, we have 
    \begin{equation*}
        \lim_{n\to \infty} \int_{\mathbb{R}}f\left( \frac{1}{\sqrt{n}}\sum_{k=0}^{n-1}(\tilde{\psi}^\varepsilon_{\sigma^{k}\omega}\circ T_{\omega}^{\varepsilon\, (k)})\right)(x)\, d\mu_{\omega}^\varepsilon(x) = \int_\mathbb{R}f(x)\, d\mathcal{N}(0,(\Sigma^\varepsilon(\tilde{\psi}^\varepsilon))^2)(x).
    \end{equation*}
    Furthermore,
    \begin{equation}
        \lim_{\varepsilon\to 0} \varepsilon(\Sigma^\varepsilon(\tilde{\psi}^\varepsilon))^2 =  \frac{2 \int_\Omega b_\omega \, d\mathbb{P}(\omega)}{\int_\Omega a_\omega \, d\mathbb{P}(\omega)\int_\Omega a_\omega+b_\omega\, d\mathbb{P}(\omega)}\Bigg(\int_\Omega\int_{I_R}\psi_\omega(x)\, \dleb(x)d\mathbb{P}(\omega) \Bigg)^2.\label{eqn:ptm-limit}
    \end{equation}
    \label{thrm:ptm-var}
\end{theorem}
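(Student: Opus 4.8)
The plan is to derive Theorem~\ref{thrm:ptm-var} as a direct specialisation of Theorems~\ref{thrm:conv_jump} and~\ref{thrm:variance-est} to the paired tent map cocycle $(T_\omega^\varepsilon)_{\omega\in\Omega}=(T_{\varepsilon a_\omega,\varepsilon b_\omega})_{\omega\in\Omega}$. First I would check that this cocycle, together with its $\varepsilon=0$ limit $T^0=T_{0,0}$, satisfies all the standing assumptions \hyperref[list:I1]{\textbf{(I1)}}--\hyperref[list:I6]{\textbf{(I6)}}, \hyperref[list:P1]{\textbf{(P1)}}--\hyperref[list:P7]{\textbf{(P7)}} and \hyperref[list:O1]{\textbf{(O1)}}, so that Theorem~\ref{thrm:variance-est} applies. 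Here $m=2$, $I_1=I_L=[-1,0]$, $I_2=I_R=[0,1]$, and $T_{0,0}$ restricts to a full tent map on each half, so \hyperref[list:I1]{\textbf{(I1)}}--\hyperref[list:I4]{\textbf{(I4)}} hold with $\Lambda=2$ and $\phi_L=\mathds{1}_{I_L}$, $\phi_R=\mathds{1}_{I_R}$ (the normalised Lebesgue densities on each half-interval, since the tent map has uniform invariant density). The infinitesimal holes are the preimages of the boundary point $\{0\}$: one computes $H^0\cap I_L=\{-1/2\}$ and $H^0\cap I_R=\{1/2\}$, neither of which is in the forward orbit of the critical set $\{-1,0,1\}$ (which is eventually fixed at $\pm1$), giving \hyperref[list:I5]{\textbf{(I5)}}; and $\phi_L(-1/2)=\phi_R(1/2)=1>0$ gives \hyperref[list:I6]{\textbf{(I6)}}. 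For the perturbations: \hyperref[list:P1]{\textbf{(P1)}} is assumed; \hyperref[list:P2]{\textbf{(P2)}} holds because $T_{\varepsilon a_\omega,\varepsilon b_\omega}$ depends smoothly on $\varepsilon$ with a fixed critical set; a direct computation of the hole $H^\varepsilon_{L,R,\omega}=I_L\cap(T_\omega^\varepsilon)^{-1}(I_R)$ shows it is an interval shrinking to $\{-1/2\}$ with $\mu_L(H^\varepsilon_{L,R,\omega})=\tfrac{\varepsilon b_\omega}{2(1+\varepsilon b_\omega)}=\tfrac{\varepsilon}{2}b_\omega+o(\varepsilon)$, so \hyperref[list:P3]{\textbf{(P3)}} and \hyperref[list:P4]{\textbf{(P4)}} hold with $\beta_{L,R,\omega}=b_\omega/2$, $\beta_{R,L,\omega}=a_\omega/2$ (and the uniform lower bound $\beta^*/2$ comes from $a,b\geq\beta^*$); \hyperref[list:P5]{\textbf{(P5)}} follows since all branches have slope $\geq2$ uniformly; \hyperref[list:P6]{\textbf{(P6)}} is known for these maps (cf.\ \cite{Horan,gtp_met}); \hyperref[list:P7]{\textbf{(P7)}} holds since $T_{a,b}(\pm1)=\pm1$ and $0$ is a turning point with $T(0^-)=-1<0<T(0^+)=1$. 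Finally \hyperref[list:O1]{\textbf{(O1)}}: \eqref{eqn:enough-expansion} holds for $n'=4$ since $2^{-4}<1/9$, and \eqref{eqn:large-survival} follows from checkable condition (\hyperref[list:7b]{\ref*{eqn:large-survival}b}) as the critical points of the tent map are not periodic in the relevant range, or from (\hyperref[list:7a]{\ref*{eqn:large-survival}a}) using that the tent map is covering.

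Next I would specialise the formula \eqref{eqn:var-limit} of Theorem~\ref{thrm:variance-est}. With $m=2$ and the labelling $1=L$, $2=R$, the averaged generator is
\begin{equation*}
\bar G=\begin{pmatrix}-\tfrac12\int_\Omega b_\omega\,d\mathbb{P} & \tfrac12\int_\Omega b_\omega\,d\mathbb{P}\\[2pt] \tfrac12\int_\Omega a_\omega\,d\mathbb{P} & -\tfrac12\int_\Omega a_\omega\,d\mathbb{P}\end{pmatrix}.
\end{equation*}
The stationary vector $p=(p_L,p_R)^T$ solving \eqref{eqn:invariant-measure-markov} with $p_L+p_R=1$ is, by the usual two-state birth--death computation, $p_L=\int_\Omega a_\omega\,d\mathbb{P}/\int_\Omega(a_\omega+b_\omega)\,d\mathbb{P}$ and $p_R=\int_\Omega b_\omega\,d\mathbb{P}/\int_\Omega(a_\omega+b_\omega)\,d\mathbb{P}$. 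Writing $\bar\Psi:=\int_\Omega\Psi_\omega\,d\mathbb{P}(\omega)=(\bar\Psi_L,\bar\Psi_R)^T$ with $\bar\Psi_k=\int_\Omega\int_{I_k}\psi_\omega(x)\phi_k(x)\,\dleb(x)\,d\mathbb{P}(\omega)=\int_\Omega\int_{I_k}\psi_\omega(x)\,\dleb(x)\,d\mathbb{P}(\omega)$ (using $\phi_k\equiv1$ on $I_k$), the centering hypothesis \eqref{eqn:obs-centering2} forces $p_L\bar\Psi_L+p_R\bar\Psi_R=0$ in the averaged sense only after integration; more usefully, the fibrewise centering gives $p_L\Psi_\omega(L)+p_R\Psi_\omega(R)=0$ for a.e.\ $\omega$, hence $p_L\bar\Psi_L=-p_R\bar\Psi_R$. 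The remaining task is to evaluate $2\langle p\odot\bar\Psi,\ \big(\int_0^\infty e^{t\bar G}\,dt\big)\bar\Psi\rangle$. The matrix $\int_0^\infty e^{t\bar G}\,dt$ does not converge as a whole since $\bar G$ has a zero eigenvalue, but the pairing is finite because $\bar\Psi$ lies in the complement of the kernel direction — indeed $\bar G$ has eigenvalues $0$ and $-\tfrac12\int_\Omega(a_\omega+b_\omega)\,d\mathbb{P}=:-\lambda$, with the $0$-eigenspace spanned by $(1,1)^T$ (the all-ones vector), and $p\odot\bar\Psi$ is orthogonal to $(1,1)^T$ precisely because $p_L\bar\Psi_L+p_R\bar\Psi_R=0$. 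So I would decompose $\bar\Psi$ along the eigenbasis of $\bar G$, note the $0$-eigenvector component is annihilated in the pairing, and on the $\lambda$-eigenspace $\int_0^\infty e^{-\lambda t}\,dt=1/\lambda$.

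Carrying this out explicitly: on the span of the eigenvector for $-\lambda$, which one computes to be proportional to $(b_0,-a_0)^T$ where $a_0:=\int_\Omega a_\omega\,d\mathbb{P}$, $b_0:=\int_\Omega b_\omega\,d\mathbb{P}$ (so $\lambda=(a_0+b_0)/2$), the vector $\bar\Psi$ decomposes and the pairing reduces, after substituting $p_L=a_0/(a_0+b_0)$, $p_R=b_0/(a_0+b_0)$ and using $\bar\Psi_L=-(p_R/p_L)\bar\Psi_R=-(b_0/a_0)\bar\Psi_R$, to
\begin{equation*}
2\big\langle p\odot\bar\Psi,\ \textstyle\int_0^\infty e^{t\bar G}\,dt\,\bar\Psi\big\rangle=\frac{2}{\lambda}\cdot\frac{b_0}{a_0(a_0+b_0)}\,\bar\Psi_R^2\cdot(\text{const}),
\end{equation*}
and tracking the constant through the factor-of-$1/2$ convention ($\beta_{L,R}=b/2$, etc.) should collapse everything to exactly $\dfrac{2\,b_0}{a_0(a_0+b_0)}\bar\Psi_R^2$, which is \eqref{eqn:ptm-limit} once one notes $\bar\Psi_R=\int_\Omega\int_{I_R}\psi_\omega(x)\,\dleb(x)\,d\mathbb{P}(\omega)$. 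The CLT statement \eqref{eqn:clt-limit} transfers verbatim. I expect the main obstacle to be bookkeeping rather than anything deep: getting the normalisation constants consistent between the $\beta_{i,j}$'s (which carry a factor $1/2$ from the slope-$2(1+\varepsilon\cdot)$ branches), the definition of $\bar G$, the stationary vector $p$, and the Hadamard-product pairing, so that the final expression matches \eqref{eqn:ptm-limit} without a stray factor of $2$ or $1/2$. A secondary point requiring a short argument is justifying that $\int_0^\infty e^{t\bar G}\,dt$ applied to $\bar\Psi$ (equivalently to $p\odot\bar\Psi$ in the pairing) is well-defined despite $\bar G$ being singular — this is exactly where the fibrewise-centering hypothesis \eqref{eqn:obs-centering2} does its work, ensuring the relevant vector avoids the kernel of $\bar G^T$, and I would state this cleanly as the key lemma of the proof.
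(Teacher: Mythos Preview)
Your approach matches the paper's: verify the standing hypotheses for the paired tent cocycle (the paper largely defers this to \cite[Section~8]{gtp_met} and checks \hyperref[list:O1]{\textbf{(O1)}} directly), then substitute the concrete data $p$, $\bar G$, $\int_\Omega\Psi_\omega\,d\mathbb{P}$ into \eqref{eqn:var-limit}, using the fibrewise centering \eqref{eqn:obs-centering2} to kill the contribution from the zero eigenvalue of $\bar G$. The paper does the last step by writing $e^{t\bar G}$ explicitly as its rank-one limit plus an exponentially decaying piece, which is exactly your eigenbasis decomposition in coordinates.

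One concrete slip to fix, since you flagged it yourself: the hole $H^\varepsilon_{L,R,\omega}$ is a neighbourhood of $-1/2$ that meets \emph{two} monotonicity branches of $T^\varepsilon_\omega$ (both pieces of the left tent peak above $0$), so its length is $\dfrac{\varepsilon b_\omega}{1+\varepsilon b_\omega}$, not half that. Hence $\beta_{L,R,\omega}=b_\omega$ and $\beta_{R,L,\omega}=a_\omega$, and the generator is $\bar G=\begin{pmatrix}-\bar b&\bar b\\ \bar a&-\bar a\end{pmatrix}$ with no factor of $1/2$; the formula \eqref{eqn:ptm-limit} then drops out without any compensating constant. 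A smaller point: the critical set of $T^0$ is $\{-1,-1/2,0,1/2,1\}$, not $\{-1,0,1\}$, though your verification of \hyperref[list:I5]{\textbf{(I5)}} still goes through since the forward orbit of this set under $T^0$ is $\{-1,0,1\}$, disjoint from $H^0=\{-1/2,1/2\}$.
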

\begin{remark}
    We will find that due to \eqref{eqn:obs-centering2}, one can deduce from \eqref{eqn:ptm-limit} that
    \begin{align*}
            \lim_{\varepsilon\to 0} \varepsilon(\Sigma^\varepsilon(\tilde{\psi}^\varepsilon))^2&=\frac{2\int_\Omega a_\omega\, d\mathbb{P}(\omega)}{\int_\Omega b_\omega\, d\mathbb{P}(\omega)\int_\Omega a_\omega+b_\omega\, d\mathbb{P}(\omega)}\left(\int_\Omega \int_{I_L}\psi_\omega(x)\, \dleb(x)d\mathbb{P}(\omega)\right)^2\\
    &=-\frac{2}{\int_\Omega a_\omega+b_\omega\, d\mathbb{P}(\omega)}\int_\Omega \int_{I_L}\psi_\omega(x)\, \dleb(x)d\mathbb{P}(\omega)\int_\Omega \int_{I_R}\psi_\omega(x)\, \dleb(x)d\mathbb{P}(\omega).
    \end{align*}
\end{remark}
Due to \cite[Section 8]{gtp_met}, by enforcing \hyperref[list:P1]{\textbf{(P1)}}, the sequence of random dynamical systems of random paired tent maps $(T^\varepsilon_\omega)_{\omega\in\Omega}:= (T_{\varepsilon a_\omega, \varepsilon b_\omega})_{\omega\in\Omega}$ satisfies \hyperref[list:I1]{\textbf{(I1)}}-\hyperref[list:I6]{\textbf{(I6)}}, and \hyperref[list:P1]{\textbf{(P1)}}-\hyperref[list:P7]{\textbf{(P7)}}.
\begin{remark}
    Note that our condition \hyperref[list:P4]{\textbf{(P4)}} is stronger than that asked in \cite{gtp_met}. In our setting, this is required to ensure the sequence of random paired tent maps, $(T_\omega^\varepsilon)_{\omega\in\Omega}$, is uniformly (over $\omega\in\Omega$) covering.\footnote{See \Step{step:clt-lemma} in the proof of \thrm{thrm:variance-est}.} For this reason, in \thrm{thrm:ptm-var}, \Change{we let $\beta^*>0$ and assume that $a,b:\Omega\to [\beta^*,1]$ are measurable functions, ensuring \hyperref[list:P4]{\textbf{(P4)}} holds.}
\end{remark}
\Change{It remains to verify \hyperref[list:O1]{\textbf{(O1)}.} Observe that \eqref{eqn:enough-expansion} is satisfied with $n'=4$ since one can take $\Lambda= 2$. Further, \eqref{eqn:large-survival} follows from the fact that for every $\varepsilon>0$ and $j\in\{L,R\}$, the maps $T^{\varepsilon}_{j,\omega}:I_j\to I_j$ have all full branches, and $\omega\mapsto T_{j,\omega}^\varepsilon$ has finite range (by \hyperref[list:P1]{\textbf{(P1)}}). We may now prove the main result for this section.} 
\begin{proof}[Proof of \thrm{thrm:ptm-var}]
It remains to compute the relevant quantities appearing in \eqref{eqn:var-limit}. Let $\bar{a}:= \int_\Omega a_\omega\, d\mathbb{P}(\omega),\ \bar{b}:= \int_\Omega b_\omega\, d\mathbb{P}(\omega)$, and for $j\in \{L,R\}$ let $\bar{\psi}_j:=\int_\Omega \int_{I_j}\psi_\omega(x)\, \dleb(x)d\mathbb{P}(\omega)$. Thanks to \cite[Theorem 8.1]{gtp_met}, 
\begin{equation}
    p=\begin{pmatrix}
    \frac{\bar{a}}{\bar{a} + \bar{b}}\\ \frac{\bar{b}}{\bar{a}+\bar{b}} 
\end{pmatrix} \label{eqn:ptm-inv-meas}
\end{equation}
and
\begin{equation}
    \int_\Omega\Psi_\omega\,d\mathbb{P}(\omega)=\begin{pmatrix}
     \bar{\psi}_L \\  \bar{\psi}_R  
\end{pmatrix}.\label{eqn:ptm-psi}
\end{equation}
Further, due to \cite[Lemma 8.6]{gtp_met}, the averaged Markov jump process from \Sec{sec:avg_markov} is generated by 
\begin{equation}
    \bar{G}=\begin{pmatrix}
    -{\bar{b}} & {\bar{b}}\\ {\bar{a}} & -{\bar{a}}
\end{pmatrix}.\label{eqn:ptm-gen}
\end{equation}
Note that due to \eqref{eqn:obs-centering2}, we require that for $\mathbb{P}$-a.e. $\omega\in\Omega$, 
\begin{align}
    \frac{\bar{a}}{\bar{a} + \bar{b}}\int_{I_L}\psi_\omega(x)\, \dleb(x) + \frac{\bar{b}}{\bar{a} + \bar{b}}\int_{I_R}\psi_\omega(x)\, \dleb(x)=0.\label{eqn:ptm-centering}
\end{align}
Therefore, substituting \eqref{eqn:ptm-inv-meas}, \eqref{eqn:ptm-psi} and \eqref{eqn:ptm-gen} into \eqref{eqn:var-limit}, we find that for random paired tent maps
\begin{align*}
    p\odot\int_\Omega \Psi_\omega\, d\mathbb{P}(\omega)= \frac{1}{\bar{a} + \bar{b}}\begin{pmatrix}
     {\bar{a}}{}\bar{\psi}_L \\  {\bar{b}}\bar{\psi}_R
\end{pmatrix}
\end{align*}
and
\begin{align*}
    \int_0^\infty e^{t\bar{G}}\,dt\int_\Omega\Psi_\omega\, d\mathbb{P}(\omega)&=\frac{1}{\bar{a} + \bar{b}}\int_0^\infty\left( \begin{pmatrix} {{\bar{a}}} & {{\bar{b}}}\\
{{\bar{a}}} & {{\bar{b}}} \end{pmatrix} - \bar{G} e^{-t(\bar{a} + \bar{b})}\right)\begin{pmatrix}
     \bar{\psi}_L  \\ \bar{\psi}_R   
\end{pmatrix}\, dt \\
&\stackrel{(\star)}{=}-\frac{1}{\bar{a} + \bar{b}}\int_0^\infty \bar{G} e^{-t(\bar{a} + \bar{b})}\begin{pmatrix}
    \bar{\psi}_L  \\  \bar{\psi}_R  
\end{pmatrix} \, dt \\
&=\frac{1}{(\bar{a} + \bar{b})^2}\left(\begin{matrix}
     \bar{b}\Big(\bar{\psi}_L -\bar{\psi}_R\Big) 
     \\
     \bar{a}\Big(\bar{\psi}_R-\bar{\psi}_L\Big) 
     \end{matrix}\right).
\end{align*}
Note that at $(\star)$ we have applied \eqref{eqn:ptm-centering}. Therefore, by \thrm{thrm:variance-est} and \eqref{eqn:ptm-centering},
\begin{align*}
    \lim_{\varepsilon\to 0} \varepsilon(\Sigma^\varepsilon(\tilde{\psi}^\varepsilon))^2&=\frac{2\bar{a}\bar{b}}{(\bar{a} + \bar{b})^3}\left( \bar{\psi}_L-\bar{\psi}_R\right)^2=\frac{2\bar{b}}{\bar{a}(\bar{a} + \bar{b})}\bar{\psi}_R^2.\\
\end{align*}
\end{proof}

\section*{Conflict of interest}
The authors declare that there is no conflict of interest regarding the publication of this paper.

\section*{Acknowledgments}
The authors acknowledge support from the Australian Research Council. JP acknowledges the Australian Government Research Training Program for financial support. The authors thank Jason Atnip and Wael Bahsoun for helpful discussions and Dmitry Dolgopyat for suggesting this topic.

\footnotesize
\bibliographystyle{plain}
\bibliography{diffusion}

\begin{thebibliography}{10}

\bibitem{MR_Limthrm}
M.~Abdelkader and R.~Aimino.
\newblock On the quenched central limit theorem for random dynamical systems.
\newblock {\em J. Phys. A}, 49(24):244002, 13, 2016.

\bibitem{LIMTHRM_ANV}
R.~Aimino, M.~Nicol, and S.~Vaienti.
\newblock Annealed and quenched limit theorems for random expanding dynamical systems.
\newblock {\em Probab. Theory Related Fields}, 162(1-2):233--274, 2015.

\bibitem{RDS_A}
L.~Arnold.
\newblock {\em Random dynamical systems}.
\newblock Springer Monographs in Mathematics. Springer-Verlag, Berlin, 1998.

\bibitem{thermoformalism}
J.~Atnip, G.~Froyland, C.~Gonz\'{a}lez-Tokman, and S.~Vaienti.
\newblock Thermodynamic formalism and perturbation formulae for quenched random open dynamical systems, 2023.
\newblock arXiv: 2307.00774, to appear in \textit{Dissertationes Mathematicae}.

\bibitem{wael_intermittent}
W.~Bahsoun and S.~Vaienti.
\newblock Metastability of certain intermittent maps.
\newblock {\em Nonlinearity}, 25(1):107--124, 2012.

\bibitem{BS_rand}
W.~Bahsoun and S.~Vaienti.
\newblock Escape rates formulae and metastability for randomly perturbed maps.
\newblock {\em Nonlinearity}, 26(5):1415--1438, 2013.

\bibitem{B_positiveop}
V.~Baladi.
\newblock {\em Positive transfer operators and decay of correlations}, volume~16 of {\em Advanced Series in Nonlinear Dynamics}.
\newblock World Scientific Publishing Co., Inc., River Edge, NJ, 2000.

\bibitem{RGL_BE}
O.~Butterley and P.~Eslami.
\newblock Exponential mixing for skew products with discontinuities.
\newblock {\em Trans. Amer. Math. Soc.}, 369(2):783--803, 2017.

\bibitem{Unique-RACIM}
J.~Buzzi.
\newblock Exponential decay of correlations for random {L}asota-{Y}orke maps.
\newblock {\em Comm. Math. Phys.}, 208(1):25--54, 1999.

\bibitem{Crimmins}
H.~Crimmins.
\newblock Stability of hyperbolic {O}seledets splittings for quasi-compact operator cocycles.
\newblock {\em Discrete Contin. Dyn. Syst.}, 42(6):2795--2857, 2022.

\bibitem{DFH}
M.~{Dellnitz}, G.~{Froyland}, C.~{Horenkamp}, K.~{Padberg-Gehle}, and A.~S. {Gupta}.
\newblock {Seasonal variability of the subpolar gyres in the Southern Ocean: a numerical investigation based on transfer operators}.
\newblock {\em Nonlinear Processes in Geophysics}, 16(6):655--663, 2009.

\bibitem{SD}
D.~Dolgopyat and P.~Wright.
\newblock The diffusion coefficient for piecewise expanding maps of the interval with metastable states.
\newblock {\em Stoch. Dyn.}, 12(1):1150005, 13, 2012.

\bibitem{DFGTV_limthrm}
D.~Dragi\v{c}evi\'{c}, G.~Froyland, C.~Gonz\'{a}lez-Tokman, and S.~Vaienti.
\newblock A spectral approach for quenched limit theorems for random expanding dynamical systems.
\newblock {\em Comm. Math. Phys.}, 360(3):1121--1187, 2018.

\bibitem{quenchedLT_DS}
D.~Dragi\v{c}evi\'{c} and J.~Sedro.
\newblock Quenched limit theorems for expanding on average cocycles.
\newblock {\em Stoch. Dyn.}, 23(6):Paper No. 2350046, 42, 2023.

\bibitem{EG_LY}
P.~Eslami and P.~G\'{o}ra.
\newblock Stronger {L}asota-{Y}orke inequality for one-dimensional piecewise expanding transformations.
\newblock {\em Proc. Amer. Math. Soc.}, 141(12):4249--4260, 2013.

\bibitem{gibbs}
A.~Ferguson and M.~Pollicott.
\newblock Escape rates for {G}ibbs measures.
\newblock {\em Ergodic Theory Dynam. Systems}, 32(3):961--988, 2012.

\bibitem{FGTQ_LYmapStab}
G.~Froyland, C.~Gonz\'{a}lez-Tokman, and A.~Quas.
\newblock Stability and approximation of random invariant densities for {L}asota-{Y}orke map cocycles.
\newblock {\em Nonlinearity}, 27(4):647--660, 2014.

\bibitem{FLQ_coherent}
G.~Froyland, S.~Lloyd, and A.~Quas.
\newblock Coherent structures and isolated spectrum for {P}erron-{F}robenius cocycles.
\newblock {\em Ergodic Theory Dynam. Systems}, 30(3):729--756, 2010.

\bibitem{FLQ_semi}
G.~Froyland, S.~Lloyd, and A.~Quas.
\newblock A semi-invertible {O}seledets theorem with applications to transfer operator cocycles.
\newblock {\em Discrete Contin. Dyn. Syst.}, 33(9):3835--3860, 2013.

\bibitem{FPE}
G.~Froyland, K.~Padberg, M.~H. England, and A.~M. Treguier.
\newblock Detection of coherent oceanic structures via transfer operators.
\newblock {\em Phys. Rev. Lett.}, 98:224503, 2007.

\bibitem{FSM}
G.~Froyland, N.~Santitissadeekorn, and A.~Monahan.
\newblock Optimally coherent sets in geophysical flows: A transfer-operator approach to delimiting the stratospheric polar vortex.
\newblock {\em Phys. Rev. E}, 82:056311, 2010.

\bibitem{FSN}
G.~Froyland, N.~Santitissadeekorn, and A.~Monahan.
\newblock Transport in time-dependent dynamical systems: finite-time coherent sets.
\newblock {\em Chaos}, 20(4):043116, 10, 2010.

\bibitem{open}
G.~Froyland and O.~Stancevic.
\newblock Escape rates and {P}erron-{F}robenius operators: Open and closed dynamical systems.
\newblock {\em Discrete and Continuous Dynamical Systems - B}, 14(2):457--472, 2010.

\bibitem{GS_entropy}
G.~Froyland and O.~Stancevic.
\newblock Metastability, {L}yapunov exponents, escape rates, and topological entropy in random dynamical systems.
\newblock {\em Stoch. Dyn.}, 13(4):1350004, 26, 2013.

\bibitem{FSS}
G.~Froyland, R.~M. Stuart, and E.~van Sebille.
\newblock How well-connected is the surface of the global ocean?
\newblock {\em Chaos}, 24(3):033126, 10, 2014.

\bibitem{GTHW_metastable}
C.~Gonz\'{a}lez-Tokman, B.~R. Hunt, and P.~Wright.
\newblock Approximating invariant densities of metastable systems.
\newblock {\em Ergodic Theory Dynam. Systems}, 31(5):1345--1361, 2011.

\bibitem{GTQ_quarantine}
C.~Gonz\'{a}lez-Tokman and A.~Quas.
\newblock Lyapunov exponents for transfer operator cocycles of metastable maps: a quarantine approach.
\newblock {\em Trans. Moscow Math. Soc.}, 82:65--76, 2021.

\bibitem{GTQ_stab}
C.~Gonz\'{a}lez-Tokman and A.~Quas.
\newblock Stability and collapse of the {L}yapunov spectrum for {P}erron-{F}robenius operator cocycles.
\newblock {\em J. Eur. Math. Soc. (JEMS)}, 23(10):3419--3457, 2021.

\bibitem{gtp_met}
Cecilia Gonz\'alez-Tokman and Joshua Peters.
\newblock Averaging for random metastable systems.
\newblock {\em Nonlinearity}, 38(10):Paper No. 105018, 47, 2025.

\bibitem{horan_lin}
J.~Horan.
\newblock Dynamical spectrum via determinant-free linear algebra, 2020.
\newblock arXiv: 2001.06788.

\bibitem{Horan}
J.~Horan.
\newblock Asymptotics for the second-largest {L}yapunov exponent for some {P}erron-{F}robenius operator cocycles.
\newblock {\em Nonlinearity}, 34(4):2563--2610, 2021.

\bibitem{K_StochStab}
G.~Keller.
\newblock Stochastic stability in some chaotic dynamical systems.
\newblock {\em Monatsh. Math.}, 94(4):313--333, 1982.

\bibitem{hitting}
G.~Keller.
\newblock Rare events, exponential hitting times and extremal indices via spectral perturbation.
\newblock {\em Dyn. Syst.}, 27(1):11--27, 2012.

\bibitem{KL_escape}
G.~Keller and C.~Liverani.
\newblock Rare events, escape rates and quasistationarity: some exact formulae.
\newblock {\em J. Stat. Phys.}, 135(3):519--534, 2009.

\bibitem{Kingman}
J.~F.~C. Kingman.
\newblock Subadditive ergodic theory.
\newblock {\em Ann. Probability}, 1:883--909, 1973.

\bibitem{LY_acim}
A.~Lasota and J.~A. Yorke.
\newblock On the existence of invariant measures for piecewise monotonic transformations.
\newblock {\em Trans. Amer. Math. Soc.}, 186:481--488 (1974), 1973.

\bibitem{Liverani_DOC}
C.~Liverani.
\newblock Decay of correlations for piecewise expanding maps.
\newblock {\em J. Statist. Phys.}, 78(3-4):1111--1129, 1995.

\bibitem{Liv_CLTDet}
C.~Liverani.
\newblock Central limit theorem for deterministic systems.
\newblock In {\em International {C}onference on {D}ynamical {S}ystems ({M}ontevideo, 1995)}, volume 362 of {\em Pitman Res. Notes Math. Ser.}, pages 56--75. Longman, Harlow, 1996.

\bibitem{MC_N}
J.~R. Norris.
\newblock {\em Markov chains}, volume~2 of {\em Cambridge Series in Statistical and Probabilistic Mathematics}.
\newblock Cambridge University Press, Cambridge, 1998.
\newblock Reprint of 1997 original.

\bibitem{Annealed_O}
T.~Ohno.
\newblock Asymptotic behaviors of dynamical systems with random parameters.
\newblock {\em Publ. Res. Inst. Math. Sci.}, 19(1):83--98, 1983.

\bibitem{Oseledets}
V.~I. Oseledets.
\newblock A multiplicative ergodic theorem. {C}haracteristic {L}japunov, exponents of dynamical systems.
\newblock {\em Trudy Moskov. Mat. Ob\v{s}\v{c}.}, 19:179--210, 1968.

\bibitem{conformations2}
C.~Sch\"{u}tte, W.~Huisinga, and P.~Deuflhard.
\newblock Transfer operator approach to conformational dynamics in biomolecular systems.
\newblock In {\em Ergodic theory, analysis, and efficient simulation of dynamical systems}, pages 191--223. Springer, Berlin, 2001.

\bibitem{conformations}
C.~Sch\"{u}tte and M.~Sarich.
\newblock {\em Metastability and {M}arkov state models in molecular dynamics}, volume~24 of {\em Courant Lecture Notes in Mathematics}.
\newblock Courant Institute of Mathematical Sciences, New York; American Mathematical Society, Providence, RI, 2013.
\newblock Modeling, analysis, algorithmic approaches.

\bibitem{IOC_TP}
P.~Thieullen.
\newblock Fibr\'{e}s dynamiques asymptotiquement compacts. {E}xposants de {L}yapounov. {E}ntropie. {D}imension.
\newblock {\em Ann. Inst. H. Poincar\'{e} Anal. Non Lin\'{e}aire}, 4(1):49--97, 1987.

\bibitem{VO_ET}
M.~Viana and K.~Oliveira.
\newblock {\em Foundations of ergodic theory}, volume 151 of {\em Cambridge Studies in Advanced Mathematics}.
\newblock Cambridge University Press, Cambridge, 2016.

\bibitem{jumps_YZ}
G.~G. Yin and Q.~Zhang.
\newblock {\em Continuous-time {M}arkov chains and applications}, volume~37 of {\em Stochastic Modelling and Applied Probability}.
\newblock Springer, New York, second edition, 2013.
\newblock A two-time-scale approach.

\end{thebibliography}

\end{document}